\documentclass[a4paper]{amsart}
\usepackage{common}
\usepackage{graphicx} 
\usepackage{todonotes}

\newcommand{\mc}{\mathcal}
\newtheorem*{remark}{Remark}

\DeclareMathOperator*{\essinf}{ess\,inf}

\usepackage{cleveref}

\author{Francisco Gon\c{c}alves}
\address{Delft Institute of Applied Mathematics, Delft University of
Technology, P.O. Box 5031, 2600 GA Delft, The Netherlands}
\email{f.g.j.diasdecarvalho@tudelft.nl}
\author{Tuomas Oikari}
\address{Aalto University, Department of Mathematics and Systems Analysis, P.O. Box 11100,
	FI-00076 Aalto, Finland}
\email{tuomas.oikari@gmail.com}

\title{Bounds for the maximal and Riesz potential operators with variable fractionality}
\date{\today}

\begin{document}

	\begin{abstract} 
		We prove $L^{p(\cdot)}$-to-$L^{q(\cdot)}$ bounds for variable versions of the fractional maximal $M^{\alpha(\cdot)}$ and Riesz potential $I^{\alpha(\cdot)}$ operators. The changing  fractionality in these operators is given by averaging the function $\alpha(\cdot)$ over balls.
		The bounds for $M^{\alpha(\cdot)}$ are in terms of a three-exponent Muckenhoupt condition relating $p(\cdot),q(\cdot),$ and $\alpha(\cdot)$, while the bounds for 
        $I^{\alpha(\cdot)}$ are in terms of the boundedness of $M^{\alpha(\cdot)}$ and a packing condition on $\alpha(\cdot).$ 
        These bounds hold under Hardy--Littlewood maximal function boundedness and Muckenhoupt conditions on the individual exponents $p(\cdot),q(\cdot),\alpha(\cdot).$ 
		The proofs are based on an adaptation of sparse domination to variable fractionality and an embedding into variable sequential spaces. 
	\end{abstract}

\maketitle
	

\section{Introduction}

\subsection{Background}
The Hardy--Littlewood maximal function $M$ \cite{HarLit1930max} is bounded on $L^p(\mathbb{R}^n)$ for $1<p\leq\infty$, while 
for $\alpha\in(0,1)$ and $1<p<1/\alpha$, the fractional maximal operator $M^{\alpha}$ and the Riesz potential $I^{\alpha}$ are $L^p(\mathbb{R}^n)$-to-$L^q(\mathbb{R}^n)$ bounded when $1/q=1/p-\alpha$, as shown by Hardy--Littlewood--Sobolev  \cites{HarLit1928frac, Sob1938frac}. 
We study boundedness of these operators, and their generalizations, on the scale $L^{p(\cdot)}(\mathbb{R}^n)$ of variable exponent Lebesgue spaces,  which allow integrability to change through a variable exponent $p(\cdot):\mathbb{R}^n\to[1,\infty].$
Variable exponent Lebesgue spaces have been studied for their intrinsic interest and for their applications to PDEs and variational problems with non-standard growth. Applications include image restoration and models of electrorheological fluids; see \cites{ChaMul97,Fan03PxLap,HarHasLeNuo10,Mingione06,Ruzicka00,AcMi02,DienRuz03}.   

A central problem has been to characterize when $M$ is bounded on $L^{p(\cdot)}$ in terms of testing conditions on the exponent $p(\cdot).$ 
A logarithmic H\"older continuity condition $\mathcal{LH}$, consisting of its local $\mathcal{LH}_0$ and global $\mathcal{LH}_{\infty}$ parts, was shown to be sufficient in the combined works of
Cruz-Uribe--Fiorenza--Neugebauer \cite{CruFioNeu03} and Diening \cites{Dien04, Dien07Habil}.
The condition $\mathcal{LH}_{\infty}$ was then weakened by Nekvinda \cite{Nekv04} to an integrated condition $``\mathcal{N}_{\infty}"$, while
Kopaliani \cite{Kop2007} weakened $\mathcal{LH}_0$ to the condition $``\mathcal{K}_0"$, which is, as a necessary condition, highly relevant. The condition $p(\cdot)\in \mathcal{K}_0$ is equivalent to the finiteness of the following Muckenhoupt type quantity,
\[
[1]_{A_{p(\cdot)}(\mathbb{R}^n)} := \sup_B\frac{\|1_B\|_{L^{p(\cdot)}}\|1_B\|_{L^{p'(\cdot)}}}{|B|},
\]
where $\sup_B$ is over all balls. 
Full characterizations are more subtle and currently assume the exponent to be uniformly bounded away from infinity.
Diening's \cite{Dien05} characterization is in terms of the uniform boundedness of sums of disjointly supported averaging operators, and Lerner's \cites{Ler25MaxVar, Ler26MaxVar} in terms of certain $A_{\infty}$-type conditions.
We direct the reader to \cite{Ler26MaxVar} for an up-to-date history of this ongoing research direction. 
For the fractional maximal operator $M^{\alpha}$ the corresponding $L^{p(\cdot)}$-to-$L^{q(\cdot)}$ problem is naturally a two-exponent problem, which is largely open. Quite recently, Cruz-Uribe--Roberts \cite{CruTro2024} gave necessary conditions in this setting, while the special case under the $\mathcal{LH}$ assumption was treated in Diening and Capone--Cruz-Uribe--Fiorenza \cites{Dien04RieszPot, CapCruFio07}.
In the present article, we allow fractionality to change through a function $\alpha(\cdot)$, which leads to the operators $M^{\alpha(\cdot)},I^{\alpha(\cdot)}$ (see \eqref{eq:defn:VarMax}, \eqref{eq:defn:VarRiesz} below). We recognize that any full three-exponent characterization for $L^{p(\cdot)}$-to-$L^{q(\cdot)}$ boundedness of these operators is a very difficult problem; our goal is more modest. We aim to make minimal assumptions on the
individual exponents $\alpha(\cdot),p(\cdot),q(\cdot)$ (see \eqref{eq:intro:Struc1}, \eqref{eq:intro:Struc2}, \eqref{eq:intro:Struc3} below) and, under these assumptions, prove $L^{p(\cdot)}$-to-$L^{q(\cdot)}$ boundedness results for $M^{\alpha(\cdot)},I^{\alpha(\cdot)}$. 

We use averaging $\langle \alpha\rangle_B := \frac{1}{|B|}\int_B\alpha(\cdot)$ to introduce variable fractionality into the following generalizations of the maximal and Riesz potential operators
\begin{align}
	M^{\alpha(\cdot)}f(x) &:= \sup_{B}\frac{1_B(x)}{|B|^{1-\langle \alpha\rangle_B}}\int_B|f(y)|\ud y, \label{eq:defn:VarMax} \\ 
	I^{\alpha(\cdot)}f(x) & := \int_{\mathbb{R}^n\setminus\{x\}}\frac{f(y)\ud y}{|x-y|^{n\left(1-\langle\alpha\rangle_{B(x,|x-y|)}\right)}}.  \label{eq:defn:VarRiesz}
\end{align}
Taking averages of $\alpha(\cdot)$ is natural when constructing a maximal operator. On a more technical side, Theorem \ref{thm:MainIntro2} below demonstrates that such definitions can also lead to good quantitative bounds. 

\begin{remark}
	To the best of our knowledge, these exact operators \eqref{eq:defn:VarMax}, \eqref{eq:defn:VarRiesz} have not been considered before.
	Under the $\mathcal{LH}$ assumption, the quantities $|B|\|1_B\|_{L^{\frac{1}{\alpha(\cdot)}}}^{-1}$ were used in the denominators by Melchiori--Pradolini \cite{MelPra18} and Pastrana--Riveros--Vidal \cite{PasRivVid26}.  If $1\in A_{\frac{1}{\alpha(\cdot)}}$, then $\|1_B\|_{L^{\frac{1}{\alpha(\cdot)}}} \sim |B|^{\langle\alpha\rangle_B}$, by Lemma \ref{lem:IndicatorKopaliani} in the body text, and hence those operators are equivalent to those considered in the present article.  
	In a different direction, and now under the $\mathcal{LH}_0$ assumption,
	the pointwise evaluations $|B|^{\alpha(x)}$, for $x\in B,$ appear in the works \cites{HarSyr18ModRieszPot, HarSyr21VarRieszPot, KokSam04, AlmHasSam08} on bounded domains $\Omega$. Again, these operators are equivalent to those considered here, by Lemma \ref{lem:LogHolderStability} in Appendix \ref{sect:logHolder}.
\end{remark}

 \subsection{Main results}
 The log-H\"older classes $\mathcal{LH}_0, \mathcal{LH}_{\infty}$ allow much of the classical harmonic analysis on \emph{constant} exponent Lebesgue spaces to be worked out in the variable exponent setting; we refer to the classics \cites{CruzFio2013Book, DienHHR2011Book}.
 The challenge in using log-H\"older type assumptions is their qualitative character, which leaves little room for quantitative bounds.
Our main results replace logarithmic H\"older continuity assumptions by the following quantities
\begin{align}
	&[1]_{A_{1/\alpha(\cdot)}(\mathbb{R}^n)}, \label{eq:intro:Struc1}  \\
	&\|M\|_{L^{q'(\cdot)}(\mathbb{R}^n)}:= \|M\|_{L^{q'(\cdot)}(\mathbb{R}^n)\to L^{q'(\cdot)}(\mathbb{R}^n)}, \label{eq:intro:Struc2} 
     \\ 
	&\|M\|_{L^{p(\cdot)}(\mathbb{R}^n)} :=\|M\|_{L^{p(\cdot)}(\mathbb{R}^n)\to L^{p(\cdot)}(\mathbb{R}^n)}. \label{eq:intro:Struc3} 
\end{align}
\begin{remark}
     By Kopaliani \cite{Kop2007} the quantity 
    $[1]_{A_{\frac{1}{\alpha(\cdot)}}(\mathbb{R}^n)}$ is finite whenever $\|M\|_{L^{\frac{1}{\alpha(\cdot)}}(\mathbb{R}^n)}$ is finite. The reverse implication does not hold: even for a constant parameter $\alpha$, $\|M\|_{L^{\frac{1}{\alpha}}(\mathbb{R}^n)}$ blows up as $\alpha\to 1$ (the maximal function is unbounded on $L^1$), whereas $[1]_{A_{1/\alpha}(\mathbb{R}^n)} = 1$, uniformly in $\alpha\in [0,1].$ Thus \eqref{eq:intro:Struc1} avoids imposing maximal-function boundedness on $L^{1/\alpha(\cdot)}$ and, already for constant fractionality, does not deteriorate as $\alpha\to1.$ 
\end{remark}
We treat the three hypotheses \eqref{eq:intro:Struc1}, \eqref{eq:intro:Struc2}, \eqref{eq:intro:Struc3} as independent background assumptions.  For relating $p(\cdot)$, $q(\cdot)$ and $\alpha(\cdot)$ to each other, we often do not demand the usual pointwise relation $\alpha(\cdot) = \tfrac{1}{p(\cdot)}-\tfrac{1}{q(\cdot)}$\footnote{See Corollary \ref{cor:AlphaRelation} in the body text.} to hold, but instead quantify their interaction through the following three-exponent Muckenhoupt type quantity
	\[
	[1]_{A_{p(\cdot),q(\cdot)}^{\alpha(\cdot)}(\mathbb{R}^n)} := \sup_{B} \frac{\|1_B\|_{L^{q(\cdot)}}\|1_B\|_{L^{p'(\cdot)}}}{|B|^{1-\langle \alpha\rangle_B}}.
	\]
We write $1\in A_{p(\cdot),q(\cdot)}^{\alpha(\cdot)}(\mathbb{R}^n)$ when $[1]_{A_{p(\cdot),q(\cdot)}^{\alpha(\cdot)}(\mathbb{R}^n)}<\infty.$
The following theorem is our first result. To the best of our knowledge, these bounds are new already when only one of the exponents $\alpha(\cdot),p(\cdot),q(\cdot)$ is allowed to vary.
 	\begin{theorem}\label{thm:MainIntro1}
	Let  $p(\cdot), q(\cdot), 1/\alpha(\cdot)\in \mathcal{P}(\mathbb{R}^n)$ be such that $1<p(\cdot)\leq q(\cdot)<\infty.$ Then, 
	\begin{equation*}
		\begin{split}
			[1]_{A_{p(\cdot),q(\cdot)}^{\alpha(\cdot)}(\mathbb{R}^n)} &\lesssim  \| M^{\alpha(\cdot)}\|_{L^{p(\cdot)}(\mathbb{R}^n)\to L^{q(\cdot)}(\mathbb{R}^n)} \\ 
			&\lesssim_n [1]_{A_{p(\cdot),q(\cdot)}^{\alpha(\cdot)}(\mathbb{R}^n)} \big([1]_{A_{\frac{1}{\alpha(\cdot)}}(\mathbb{R}^n)}\big)^3     \|M\|_{ L^{q'(\cdot)}(\mathbb{R}^n)} \|M\|_{ L^{p(\cdot)}(\mathbb{R}^n)}.
		\end{split}
	\end{equation*}
\end{theorem}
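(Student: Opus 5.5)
For the lower bound I would test the operator on functions supported in a single ball. Fix a ball $B$ and $f\ge 0$ supported in $B$. Then $M^{\alpha(\cdot)}f(x)\ge 1_B(x)\,|B|^{\langle\alpha\rangle_B-1}\int_B f$, so taking $L^{q(\cdot)}$-norms gives
\[
\|1_B\|_{L^{q(\cdot)}}\,|B|^{\langle\alpha\rangle_B-1}\int_B f\le \|M^{\alpha(\cdot)}\|_{L^{p(\cdot)}\to L^{q(\cdot)}}\|f\|_{L^{p(\cdot)}}.
\]
Taking the supremum over $\|f\|_{L^{p(\cdot)}}\le1$ and using the norm conjugate formula $\sup_{\|f\|_{p(\cdot)}\le 1}\int_B|f|\sim\|1_B\|_{L^{p'(\cdot)}}$ (equivalent up to an absolute constant) gives $[1]_{A^{\alpha(\cdot)}_{p(\cdot),q(\cdot)}}\lesssim\|M^{\alpha(\cdot)}\|$.

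For the upper bound I would first reduce to dyadic operators by the three-lattice trick. Every ball $B$ lies in a cube $Q$ from one of $3^n$ shifted dyadic lattices with $|Q|\lesssim_n|B|$. To replace $|B|^{1-\langle\alpha\rangle_B}$ by $|Q|^{1-\langle\alpha\rangle_Q}$, I would use Lemma \ref{lem:IndicatorKopaliani}: $|B|^{\langle\alpha\rangle_B}\sim\|1_B\|_{L^{1/\alpha(\cdot)}}$ with constants depending on $[1]_{A_{1/\alpha(\cdot)}}$. Monotonicity of the norm in the set then gives $|B|^{\langle\alpha\rangle_B-1}\int_B|f|\lesssim |Q|^{\langle\alpha\rangle_Q-1}\int_Q|f|$. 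I expect this step to account for part of the power $([1]_{A_{1/\alpha(\cdot)}})^3$, with one more factor arising when the three-exponent condition is transferred from balls to cubes.

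Next, fix a lattice $\mathscr D$ and $f\ge0$ (reducing to finitely many cubes by monotone convergence). A stopping-time argument should yield a sparse family $\mathcal S\subset\mathscr D$ with pairwise disjoint sets $E_Q\subset Q$, $|E_Q|\ge\frac12|Q|$, and the pointwise bound
\[
M^{\alpha(\cdot)}_{\mathscr D}f\lesssim\sum_{Q\in\mathcal S}|Q|^{\langle\alpha\rangle_Q}\langle f\rangle_Q1_{E_Q}.
\]
Since the fractionality is not monotone in the cube, the stopping should be done on the quantities $|Q|^{\langle\alpha\rangle_Q}\langle f\rangle_Q$ themselves, through the level sets $\{M^{\alpha(\cdot)}_{\mathscr D}f>2^k\}$ and their maximal cubes. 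The fact that only the supremum is taken, and no sum, is what allows the disjoint sets $E_Q$.

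For the estimate itself I would dualize against $g\ge0$ with $\|g\|_{L^{q'(\cdot)}}\le1$. The pairing is then bounded by
\[
\sum_{Q\in\mathcal S}|Q|^{\langle\alpha\rangle_Q-1}\Big(\int_Q f\Big)\Big(\int_{E_Q}g\Big),
\]
and the $A^{\alpha(\cdot)}_{p(\cdot),q(\cdot)}$ condition gives $|Q|^{\langle\alpha\rangle_Q-1}\le[1]\,\|1_Q\|_{L^{q(\cdot)}}^{-1}\|1_Q\|_{L^{p'(\cdot)}}^{-1}$. Combining this with the generalized Hölder inequalities $|Q|\le\|1_Q\|_{L^{p(\cdot)}}\|1_Q\|_{L^{p'(\cdot)}}$ and $|Q|\le\|1_Q\|_{L^{q(\cdot)}}\|1_Q\|_{L^{q'(\cdot)}}$, each term is at most
\[
[1]_{A^{\alpha(\cdot)}_{p(\cdot),q(\cdot)}}\cdot\langle f\rangle_Q\|1_Q\|_{L^{p(\cdot)}}\cdot\frac{\int_{E_Q}g}{\|1_Q\|_{L^{q(\cdot)}}}.
\]
The main obstacle is summing these terms, namely a Hölder-type inequality in variable sequence spaces:
\[
\sum_{Q\in\mathcal S}a_Qb_Q\lesssim\Big\|\sum_{Q\in\mathcal S}\langle f\rangle_Q1_{E_Q}\Big\|_{L^{p(\cdot)}}\Big\|\sum_{Q\in\mathcal S}\langle g\rangle_Q1_{E_Q}\Big\|_{L^{q'(\cdot)}},
\]
with $a_Q=\langle f\rangle_Q\|1_Q\|_{L^{p(\cdot)}}$ and $b_Q=\|1_Q\|_{L^{q(\cdot)}}^{-1}\int_{E_Q}g$. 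Here $p\le q$ should play the role of the embedding $\ell^p\subset\ell^q$. I would try to prove this by embedding $\{a_Q\}$ and $\{b_Q\}$ into variable sequence spaces normed through the disjoint sets $E_Q$, using $1_Q\le 2M1_{E_Q}$ to pass from $\|1_Q\|$ to $\|1_{E_Q}\|$; this is where the factors $\|M\|_{L^{p(\cdot)}}$ and $\|M\|_{L^{q'(\cdot)}}$ would enter. Once this is done, the pointwise bounds $\sum_Q\langle f\rangle_Q1_{E_Q}\le Mf$ and $\sum_Q\langle g\rangle_Q1_{E_Q}\le Mg$ complete the estimate with the stated dependence on the constants.
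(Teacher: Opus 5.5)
Your architecture matches the paper's: testing plus the norm-conjugate formula for the lower bound, the $3^n$ lattices with Lemma~\ref{lem:IndicatorKopaliani} for the reduction, a sparse bound stopping on $|Q|^{\langle\alpha\rangle_Q}\langle f\rangle_Q$, dualisation, and variable sequence spaces in which $p\le q$ acts as $\ell^p\subset\ell^q$. However, the two substantive steps are not carried out, and as sketched neither one closes.

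\textbf{The sparse bound.} Stopping on $|Q|^{\langle\alpha\rangle_Q}\langle f\rangle_Q$ only controls $\sum_P|P|^{1-\langle\alpha\rangle_P}$ over the next-generation cubes $P\subset Q$, not $\sum_P|P|$. To reach $|E_Q|\ge\frac12|Q|$ you must convert using $|P|^{\langle\alpha\rangle_P}\le 25[1]_{A_{1/\alpha(\cdot)}}|Q|^{\langle\alpha\rangle_Q}$, which is Lemma~\ref{lem:IndicatorKopaliani} with exponent $1/\alpha(\cdot)$. This forces the stopping ratio, and hence the domination constant, to be of order $[1]_{A_{1/\alpha(\cdot)}}$. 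The paper uses principal cubes with ratio $1/(1-\eta)=50[1]_{A_{1/\alpha(\cdot)}}$, and this is where the third power in the theorem comes from; your accounting omits it. With levels $2^k$, the claim $|E_Q|\ge\frac12|Q|$ already fails for $\alpha\equiv 0$. A level-set version also needs a parent comparison for $|Q|^{1-\langle\alpha\rangle_Q}$, which costs a further factor of $[1]_{A_{1/\alpha(\cdot)}}$.

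\textbf{The summation step.} You flag this as the main obstacle, and it is left open; the route you suggest points the wrong way.
\begin{itemize}
\item Once you insert $|Q|\le 4\|1_Q\|_{L^{p(\cdot)}}\|1_Q\|_{L^{p'(\cdot)}}$, the factor $\|1_Q\|_{L^{p(\cdot)}}$ sits in a numerator and needs an upper bound. Using $1_Q\le 2M1_{E_Q}$ pays $\|M\|_{L^{p(\cdot)}}$, and likewise $\|M\|_{L^{q'(\cdot)}}$ for $b_Q$. The final bounds $\sum_Q\langle f\rangle_Q1_{E_Q}\le Mf$ and $\sum_Q\langle g\rangle_Q1_{E_Q}\le Mg$ then charge these again, so at best you get $\|M\|_{L^{p(\cdot)}}^2\|M\|_{L^{q'(\cdot)}}^2$, not the stated dependence.
\item Worse, the inequality you would then need is false for general disjoint sets:
\[
\sum_Q x_Qy_Q\|1_{E_Q}\|_{L^{p(\cdot)}}\|1_{E_Q}\|_{L^{q'(\cdot)}}\lesssim\Bigl\|\sum_Q x_Q1_{E_Q}\Bigr\|_{L^{p(\cdot)}}\Bigl\|\sum_Q y_Q1_{E_Q}\Bigr\|_{L^{q'(\cdot)}}.
\]
For arbitrary $E$ only $|E|^{1/p_E}\le 5\|1_E\|_{L^{p(\cdot)}}$ holds. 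The ratio $\|1_E\|_{L^{p(\cdot)}}\|1_E\|_{L^{p'(\cdot)}}/|E|$ is unbounded: split a small set evenly between regions where $p\equiv p_1<p_2$. The inequality can only be rescued by going back to $Q$ and paying $[1]_{A_{p(\cdot)}}[1]_{A_{q(\cdot)}}$ on top, and you would still need an embedding mechanism, which your sketch does not supply.
\end{itemize}

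\textbf{The missing idea: normalise by harmonic means on $E_Q$.}
\begin{itemize}
\item Leave $\|1_Q\|_{L^{q(\cdot)}}\|1_Q\|_{L^{p'(\cdot)}}$ in the denominator. Monotonicity and the easy direction of Lemma~\ref{lem:IndicatorKopaliani} give
\[
\|1_Q\|_{L^{q(\cdot)}}\|1_Q\|_{L^{p'(\cdot)}}\ge\tfrac1{25}|E_Q|^{1/q_{E_Q}+1/(p')_{E_Q}}.
\]
\item With $|Q|\le 2|E_Q|$, each term is then
\[
\lesssim[1]_{A^{\alpha(\cdot)}_{p(\cdot),q(\cdot)}}\,\langle f\rangle_Q|E_Q|^{1/p_{E_Q}}\cdot\langle g\rangle_Q|E_Q|^{1/(q')_{E_Q}}.
\]
\item Apply H\"older in $\ell^{p_{E_Q}}$, then Lemma~\ref{lem:sequential} for $\ell^{(q')_{E_Q}}\subset\ell^{(p')_{E_Q}}$.
\item Finish with the hypothesis-free embedding
\[
\bigl\|\{x_Q|E_Q|^{1/p_{E_Q}}\}_Q\bigr\|_{\ell^{p_{E_Q}}}\lesssim\Bigl\|\sum_Qx_Q1_{E_Q}\Bigr\|_{L^{p(\cdot)}}
\]
of Proposition~\ref{prop:CarlesonEmbedding}. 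This rests on the Jensen-type Lemma~\ref{lem:ConstAveBound}, $(c/2)^{p_E}\le\frac1{|E|}\int_E c^{p(x)}\,\mathrm{d}x$.
\end{itemize}
The maximal operator then enters exactly once on each side, through $\sum_Q\langle f\rangle_Q1_{E_Q}\le Mf$ and $\sum_Q\langle g\rangle_Q1_{E_Q}\le Mg$.
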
 
 Theorem \ref{thm:MainIntro1} follows from passing between non-dyadic and dyadic objects -- costing two powers of $[1]_{A_{\frac{1}{\alpha(\cdot)}}(\mathbb{R}^n)}$ -- and the following purely dyadic bounds of Theorem \ref{thm:MainIntro2}. We refer to Section \ref{sect:dyadic} for the precise definitions involved.
 \begin{theorem}\label{thm:MainIntro2}
 	Let $(X,\mu)$ be a $\sigma$-finite measure space and $p(\cdot), q(\cdot),1/\alpha(\cdot)\in \mathcal{P}(X,\mu)$ be such that $1<p(\cdot)\leq q(\cdot)<\infty.$  Let $\mathcal{D}$ be an arbitrary collection of dyadic sets on $(X,\mu).$
 	Then, 
	\begin{equation*}
		\begin{split}
			[1]_{A_{p(\cdot),q(\cdot)}^{\alpha(\cdot)}(\mathcal{D})} &\lesssim  \| M^{\alpha(\cdot)}_{\mu,\mathcal{D}}\|_{L^{p(\cdot)}(X)\to L^{q(\cdot)}(X)} \\ 
			&\lesssim [1]_{A_{p(\cdot),q(\cdot)}^{\alpha(\cdot)}(\mathcal{D})}  [1]_{A_{\frac{1}{\alpha(\cdot)}}(\mathcal{D})}    \|M_{\mu,\mathcal{D}}\|_{ L^{q'(\cdot)}(X)} \|M_{\mu,\mathcal{D}}\|_{ L^{p(\cdot)}(X)},
		\end{split}
	\end{equation*}
where the implicit constants are absolute and do not depend on any of the displayed data.\end{theorem}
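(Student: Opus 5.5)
The lower bound comes from testing the operator on indicators. Fix $Q\in\mathcal{D}$ and take a nonnegative $f$ supported on $Q$. For $x\in Q$ we have $M^{\alpha(\cdot)}_{\mu,\mathcal{D}}f(x)\ge \mu(Q)^{\langle\alpha\rangle_Q-1}\int_Q f\,\ud\mu$. Hence
\[
\|1_Q\|_{L^{q(\cdot)}}\,\mu(Q)^{\langle\alpha\rangle_Q-1}\int_Q f \le \|M^{\alpha(\cdot)}_{\mu,\mathcal{D}}\|\,\|f1_Q\|_{L^{p(\cdot)}}.
\]
Taking the supremum over such $f$ and using the norm-conjugate formula for $L^{p(\cdot)}$ gives $\sup\{\int_Q f: \|f\|_{L^{p(\cdot)}}\le1\}\gtrsim\|1_Q\|_{L^{p'(\cdot)}}$, with an absolute constant of at most $2$. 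This is the first inequality.

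For the upper bound I would use sparse domination together with a duality argument. Fix $f\ge0$ and $g\ge0$ with $\|g\|_{L^{q'(\cdot)}}\le1$; the aim is to bound $\int M^{\alpha(\cdot)}_{\mathcal{D}}f\,g$.

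\textbf{Step 1: linearization.} Linearize $M^{\alpha(\cdot)}_{\mathcal{D}}f$ by a stopping-time (Calderón–Zygmund) construction. This gives a sparse family $\mathcal{S}\subset\mathcal{D}$ with pairwise disjoint sets $E_Q\subset Q$ and $\mu(E_Q)\ge\frac12\mu(Q)$, such that
\[
\int M^{\alpha(\cdot)}_{\mathcal{D}}f\,g \lesssim \sum_{Q\in\mathcal{S}} \mu(Q)^{\langle\alpha\rangle_Q}\langle f\rangle_Q\int_Q g .
\]
The step I expect to be the main obstacle is doing this with variable fractionality. The usual comparison $\mu(Q)^{\alpha}\lesssim \mu(E_Q)^{\alpha}$ is harmless for constant $\alpha$, but here the exponent $\langle\alpha\rangle_Q$ differs from cube to cube. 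Rather than passing to $E_Q$ via powers, I would stop on the quantity $\mu(Q)^{\langle\alpha\rangle_Q-1}\int_Q f$ itself and let each point choose a maximizing cube. The disjointness of the $E_Q$ should then come from a level-set decomposition in $2^k$, exactly as in the proof for $M$. I would then keep all fractional factors attached to $Q$ rather than to $E_Q$.

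\textbf{Step 2: splitting the sparse sum.} Write each term using the three-exponent characteristic:
\[
\mu(Q)^{\langle\alpha\rangle_Q}\langle f\rangle_Q\int_Q g
\le [1]_{A^{\alpha}_{p,q}}\,\frac{\mu(Q)\,\langle f\rangle_Q\langle g\rangle_Q\,\mu(Q)}{\|1_Q\|_{L^{q(\cdot)}}\|1_Q\|_{L^{p'(\cdot)}}}.
\]
Then split this into a product of two pieces: $\langle f\rangle_Q\,\mu(Q)/\|1_Q\|_{L^{p'(\cdot)}}$ and $\langle g\rangle_Q\,\mu(Q)/\|1_Q\|_{L^{q(\cdot)}}$.

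\textbf{Step 3: embedding into variable sequence spaces.} Bound the sum by Hölder in a variable sequence space. Define the sequence norm on $\mathcal{S}$ through the functions $\sum_Q a_Q1_{E_Q}/\|1_Q\|_{L^{p(\cdot)}}$-type sums, i.e. via the disjoint $E_Q$. This is where $[1]_{A_{1/\alpha}}$ enters: it is needed to compare $\mu(Q)^{\langle\alpha\rangle_Q}$ with $\mu(E_Q)^{\langle\alpha\rangle_Q}$. By Lemma \ref{lem:IndicatorKopaliani}-type estimates, $\mu(Q)^{\langle\alpha\rangle_Q}\sim\|1_Q\|_{L^{1/\alpha}}$, and $\|1_{Q}\|_{L^{1/\alpha}}\le[1]_{A_{1/\alpha}}\mu(Q)/\|1_Q\|_{L^{(1/\alpha)'}}\le 2[1]_{A_{1/\alpha}}\|1_{E_Q}\|_{L^{1/\alpha}}$. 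This produces the single factor of $[1]_{A_{1/\alpha}}$.

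\textbf{Step 4: closing with maximal function bounds.} Since the $E_Q$ are disjoint, the $f$-part is bounded by $\|\sum_Q\langle f\rangle_Q1_{E_Q}\|_{L^{p(\cdot)}}\le\|M_{\mathcal{D}}f\|_{L^{p(\cdot)}}\le\|M_{\mathcal{D}}\|_{L^{p(\cdot)}}\|f\|_{L^{p(\cdot)}}$. The $g$-part is bounded similarly in $L^{q'(\cdot)}$, giving $\|M_{\mathcal{D}}\|_{L^{q'(\cdot)}}$. The hypothesis $p\le q$ is used to place the $f$-piece, whose natural normalization is by $q$, into $L^{p}$-type sequence norms, via the inclusion $\ell^{p}\subset\ell^{q}$ applied locally to disjoint sets. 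Combining the factors gives the stated product bound.
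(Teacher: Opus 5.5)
Your lower bound, and your Steps 2 and 4, are essentially the paper's argument (Proposition \ref{prop:Necessity} and Theorem \ref{thm:SparseLpLq}). Once $\mu(Q)^{\langle\alpha\rangle_Q}$ is absorbed into $[1]_{A^{\alpha(\cdot)}_{p(\cdot),q(\cdot)}}$, Lemma \ref{lem:IndicatorKopaliani} gives $\mu(Q)/\|1_Q\|_{L^{p'(\cdot)}}\le 5\,\frac{\mu(Q)}{\mu(E_Q)}\,\mu(E_Q)^{1/p_{E_Q}}$, and likewise for the $g$-piece. The Carleson-type embedding and $\ell^{p_{E_Q}}\subset\ell^{q_{E_Q}}$ then finish. (Note that the $f$-piece is naturally normalized by $p$, not $q$; the paper instead applies $\ell^{(q')_E}\subset\ell^{(p')_E}$ to the $g$-piece, and either works.)

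All of this needs $\mu(E_Q)\gtrsim\mu(Q)$, and Step 1 does not deliver it. Stopping on $\langle f\rangle^{\psi}_Q:=\psi(Q)^{-1}\int_Q f$, with $\psi(A):=\mu(A)^{1-\langle\alpha\rangle_A}$, at a fixed threshold gives disjoint $E_Q$. Using sub-additivity of $\psi$ (Lemma \ref{lem:PsiSubAdd}), and principal cubes rather than level sets since there is no doubling, it gives at best $\psi(S\setminus E_S)\le\frac12\psi(S)$. For variable $\alpha(\cdot)$ this does not control $\mu$. Suppose $\mu(S)=\varepsilon$, $\alpha=1$ on a subset of measure $\delta\varepsilon$, and $\alpha=0$ on the remainder $G$. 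Then $\psi(G)/\psi(S)=(1-\delta)\varepsilon^{\delta}\le\frac12$ once $\varepsilon\le 2^{-1/\delta}$, while $\mu(G)/\mu(S)=1-\delta$ can be as close to $1$ as you like. For constant $\alpha$ the implication from $\psi$-minority to $\mu$-minority does hold, which is why the step looks harmless.

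The missing idea is to make the stopping threshold depend on $[1]_{A_{1/\alpha(\cdot)}}$. Stop at maximal $Q\subsetneq S$ with $\langle f\rangle^\psi_Q>\frac{1}{1-\eta}\langle f\rangle^\psi_S$, where $1-\eta=(50[1]_{A_{1/\alpha(\cdot)}})^{-1}$. Then $G=S\setminus E(S)$ satisfies $\psi(G)\le(1-\eta)\psi(S)$, and Lemma \ref{lem:IndicatorKopaliani} applied to $1/\alpha(\cdot)$ gives
\[
\frac{\mu(G)}{\mu(S)}=\frac{\psi(G)}{\psi(S)}\cdot\frac{\mu(G)^{\langle\alpha\rangle_G}}{\mu(S)^{\langle\alpha\rangle_S}}\le(1-\eta)\cdot 25[1]_{A_{1/\alpha(\cdot)}}=\frac12 .
\]
The cost is the constant $\frac1{1-\eta}=50[1]_{A_{1/\alpha(\cdot)}}$ in the pointwise domination. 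That, not your Step 3, is where the single power of $[1]_{A_{1/\alpha(\cdot)}}$ comes from.

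Your Step 3 is both superfluous and circular. After Step 2 there is no factor $\mu(Q)^{\langle\alpha\rangle_Q}$ left to compare. Moreover, the comparison you write already uses $\mu(Q)\le 2\mu(E_Q)$, which is exactly the bound Step 1 failed to provide. With Step 1 done as above and Step 3 dropped, your outline yields the stated estimate.
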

The lower bounds and the dyadic reduction steps are straightforward, so the main challenge is the upper bound in Theorem \ref{thm:MainIntro2}. Our approach is through sparse domination adapted to variable fractionality.

The stopping time (Proposition \ref{prop:SDomSub}) is taken with respect to the corresponding variable fractional averages. Consequently, the major-subset condition is measured by the sub-additive set function $A\mapsto \mu(A)^{1-\langle\alpha\rangle_A}$ (Lemma \ref{lem:PsiSubAdd}, Proposition \ref{prop:SDomSub}). For constant fractionality this sparse notion is equivalent to ordinary sparsity after changing the sparseness parameter; its role is that it remains stable when $\alpha(\cdot)$ varies. We follow the resulting pointwise domination (Proposition \ref{prop:SDomAlpha}) with a new sparse Carleson-type embedding (Proposition \ref{prop:CarlesonEmbedding}) taking values in variable sequence spaces. These methods appear to be new in the context of variable exponent Lebesgue spaces.

We turn to variable Riesz potentials and start with the dyadic formulation on $\mathbb{R}^n.$ We say that a collection of cubes $\mathcal{D} = \cup_{k\in\mathbb{Z}}\mathcal{D}_k$ is a dyadic lattice (or, filtration) on $\mathbb{R}^n$ if $Q\cap P\in \{Q,P,\emptyset\}$, for $Q,P\in\mathcal{D}$, and $\mathcal{D}_k = \{Q\in\mathcal{D} : \ell(Q) = 2^k\}$ and $\mathbb{R}^n = \cup\mathcal{D}_k$, for all $k\in\mathbb{Z}.$
We will control the dyadic Riesz potential 
	\begin{align*}
	I^{\alpha(\cdot)}_{\mathcal{D}}f(x) := \sum_{Q\in\mathcal{D}} \frac{1_Q(x)}{|Q|^{1-\langle\alpha\rangle_Q}}\int_Q f
\end{align*}
with the smaller operator $M^{\alpha(\cdot)}_{\mathcal{D}}$
	and the following packing condition
 \begin{align}\label{eq:thm:DyadRieszNecINTRO}
 	C_{\downarrow}^{\alpha(\cdot)}(\mathcal{D}) := \sup_{Q_0\in\mathcal{D}}  \frac{\sum_{Q\in\mathcal{D}(Q_0)}  |Q|^{1+\langle \alpha\rangle _Q}}{|Q_0|^{1+\langle \alpha\rangle _{Q_0}}} < \infty,
 \end{align}
where $\mathcal{D}(Q_0)= \{Q\in\mathcal{D}: Q\subset Q_0\}.$
When $\alpha(\cdot)=\alpha>0$ is constant, this condition is automatic on dyadic lattices because the sum over descendants is geometric. For variable $\alpha(\cdot)$, the condition records the amount of scale summation that is no longer guaranteed by the value at the top cube alone. 
In the next theorem, under mild stability assumptions on the individual exponents, we show $C^{\alpha(\cdot)}_{\downarrow}(\mathcal{D})$'s finiteness and $M^{\alpha(\cdot)}_{\mathcal{D}}$'s boundedness to be both sufficient and necessary for the boundedness of the variable fractional Riesz potential. 
	\begin{theorem}\label{thm:MainIntro3} 
		Let  $p(\cdot),$  $q(\cdot),$ $1/\alpha(\cdot)\in \mathcal{P}(\mathbb R^n)$ be such that $1<p(\cdot)\leq q(\cdot)<\infty.$
        Let $\mathcal{D}$ be a dyadic lattice on $\mathbb{R}^n.$
	The following assertions hold. 
		\begin{itemize} 
			\item Suppose that $M_{\mathcal{D}}$ is bounded on $L^{q'(\cdot)}$ and $1\in A_{\frac{1}{\alpha(\cdot)}}(\mathcal{D}).$ If $C_{\downarrow}^{\alpha(\cdot)}(\mathcal{D}) < \infty$ and $M^{\alpha(\cdot)}_{\mathcal{D}}:L^{p(\cdot)}\to L^{q(\cdot)}$ is bounded, then $I^{\alpha(\cdot)}_{\mathcal{D}}:L^{p(\cdot)}\to L^{q(\cdot)}$ is bounded.
			\item  Suppose that $1\in A_{p(\cdot)}(\mathcal{D})\cap A_{q(\cdot)}(\mathcal{D})$ and $\alpha(\cdot) = \frac{1}{p(\cdot)}-\frac{1}{q(\cdot)}.$ If $I^{\alpha(\cdot)}_{\mathcal{D}}:L^{p(\cdot)}\to L^{q(\cdot)}$ is bounded, then $C_{\downarrow}^{\alpha(\cdot)}(\mathcal{D}) < \infty$ and $M^{\alpha(\cdot)}_{\mathcal{D}}:L^{p(\cdot)}\to L^{q(\cdot)}$ is bounded.
		\end{itemize}
	\end{theorem}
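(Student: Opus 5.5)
For the sufficiency part we will reduce $I^{\alpha(\cdot)}_{\mathcal{D}}$ to $M^{\alpha(\cdot)}_{\mathcal{D}}$ by duality and a stopping-time argument. We may take $f,g\ge 0$, $\|f\|_{L^{p(\cdot)}}\le1$, $\|g\|_{L^{q'(\cdot)}}\le 1$, and aim to bound
\[
\langle I^{\alpha(\cdot)}_{\mathcal{D}}f,g\rangle=\sum_{Q\in\mathcal{D}}|Q|^{\langle\alpha\rangle_Q}\langle f\rangle_Q\langle g\rangle_Q|Q|.
\]
By truncation and monotone convergence it suffices to handle finite subcollections, or all cubes inside a fixed top cube $Q_0$, with bounds uniform in the truncation. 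We first run the stopping time of Proposition \ref{prop:SDomSub} on $g$ with respect to the ordinary averages $\langle g\rangle_Q$. This produces a sparse family $\mathcal{S}$ in which each $Q$ has stopping parent $\pi Q\in\mathcal{S}$ with $\langle g\rangle_Q\lesssim\langle g\rangle_{\pi Q}$. Regrouping gives
\[
\sum_{S\in\mathcal{S}}\langle g\rangle_S\sum_{Q:\pi Q=S}|Q|^{\langle\alpha\rangle_Q}\langle f\rangle_Q|Q|.
\]
The inner sum is $\int_S \sum_{Q\subset S}|Q|^{\langle\alpha\rangle_Q-1}\int_Q f\,1_Q$, a localized Riesz potential.

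The key step, which we expect to be the main obstacle, is to control this localized sum by $M^{\alpha(\cdot)}_{\mathcal{D}}$ using the packing constant $C^{\alpha(\cdot)}_{\downarrow}(\mathcal{D})$. For constant $\alpha$ one splits according to whether $\langle f\rangle_Q$ is small or large relative to a threshold and sums geometrically. Here we instead use Cauchy--Schwarz-free pointwise bounds. Writing
\[
|Q|^{\langle\alpha\rangle_Q}\langle f\rangle_Q|Q| = |Q|^{\langle\alpha\rangle_Q-1}\int_Q f\cdot |Q|\le \inf_Q M^{\alpha(\cdot)}_{\mathcal{D}}f\,|Q|,
\]
we lose the summation, since $\sum_{Q\subset S}|Q|$ diverges logarithmically in the number of scales. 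The packing condition has to recover it. The plan is to keep part of the fractional power and write
\[
|Q|^{\langle\alpha\rangle_Q}\langle f\rangle_Q = \big(|Q|^{\langle\alpha\rangle_Q-1}\textstyle\int_Q f\big)^{\theta}\,\big(\ldots\big)^{1-\theta}.
\]
For example, we bound $\langle f\rangle_Q\le (M_{\mathcal{D}}f)$ on one factor and $M^{\alpha(\cdot)}_{\mathcal{D}}f$ on the other. This gives a Hedberg-type inequality in which the sum $\sum_{Q\subset S}|Q|^{1+\langle\alpha\rangle_Q}/|S|^{1+\langle\alpha\rangle_S}\le C^{\alpha(\cdot)}_\downarrow$ supplies the geometric decay.

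Concretely, we aim for
\[
\sum_{Q\subset S}|Q|^{\langle\alpha\rangle_Q}\langle f\rangle_Q|Q|\lesssim C^{\alpha(\cdot)}_\downarrow\,[1]_{A_{1/\alpha(\cdot)}}\int_S M^{\alpha(\cdot)}_{\mathcal{D}}(f1_S).
\]
Here $[1]_{A_{1/\alpha(\cdot)}}$ is used through Lemma \ref{lem:IndicatorKopaliani} to compare $|Q|^{\langle\alpha\rangle_Q}$ with $\|1_Q\|_{L^{1/\alpha(\cdot)}}$. This allows the exchange of $|Q|^{\langle\alpha\rangle_Q}$ between nested cubes with controlled constants. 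If this direct estimate fails, we would fall back on a stopping subfamily for $f$ inside each $S$, chosen by the variable fractional averages as in Proposition \ref{prop:SDomAlpha}, and apply the packing condition only to the cubes between consecutive stopping cubes. Assuming the estimate, we obtain $\langle I f,g\rangle\lesssim\sum_S\langle g\rangle_S\int_S M^{\alpha(\cdot)}_{\mathcal{D}}f$. By sparseness this is at most $\int M_{\mathcal{D}}g\cdot M^{\alpha(\cdot)}_{\mathcal{D}}f$, which Hölder's inequality bounds by $\|M_{\mathcal{D}}\|_{L^{q'(\cdot)}}\|M^{\alpha(\cdot)}_{\mathcal{D}}\|_{L^{p(\cdot)}\to L^{q(\cdot)}}$.

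For necessity, boundedness of $M^{\alpha(\cdot)}_{\mathcal{D}}$ is immediate from the pointwise bound $M^{\alpha(\cdot)}_{\mathcal{D}}f\le I^{\alpha(\cdot)}_{\mathcal{D}}|f|$. For the packing condition we test with $f=1_{Q_0}$ and pair with $g=1_{Q_0}$. This gives
\[
\sum_{Q\subset Q_0}|Q|^{1+\langle\alpha\rangle_Q}\le\langle I1_{Q_0},1_{Q_0}\rangle\le \|I\|\,\|1_{Q_0}\|_{L^{p(\cdot)}}\|1_{Q_0}\|_{L^{q'(\cdot)}}.
\]
It then remains to show $\|1_{Q_0}\|_{L^{p(\cdot)}}\|1_{Q_0}\|_{L^{q'(\cdot)}}\lesssim|Q_0|^{1+\langle\alpha\rangle_{Q_0}}$. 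Using $1\in A_{p(\cdot)}\cap A_{q(\cdot)}$ and Lemma \ref{lem:IndicatorKopaliani}, we get $\|1_{Q_0}\|_{L^{p(\cdot)}}\sim|Q_0|^{\langle 1/p\rangle_{Q_0}}$ and $\|1_{Q_0}\|_{L^{q'(\cdot)}}\sim|Q_0|^{1-\langle1/q\rangle_{Q_0}}$. The product is $|Q_0|^{1+\langle\alpha\rangle_{Q_0}}$ because $\alpha=1/p-1/q$.
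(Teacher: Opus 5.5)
There is a genuine gap exactly at the step you flag as the main obstacle. The localized estimate
\[
\sum_{Q\in\mathcal D(S)}|Q|^{\langle\alpha\rangle_Q}\int_Q f\lesssim C^{\alpha(\cdot)}_{\downarrow}(\mathcal D)\int_S M^{\alpha(\cdot)}_{\mathcal D}f
\]
is true, but you only assume it, and neither route you sketch proves it.

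The Hedberg-type splitting works with pointwise bounds. But $C^{\alpha(\cdot)}_{\downarrow}(\mathcal D)$ is an integrated condition and gives no pointwise geometric decay of $|Q|^{\langle\alpha\rangle_Q}$ along the chain of cubes through a point. It also produces factors of $M_{\mathcal D}f$, about which the hypotheses say nothing.

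The fallback, stopping on the variable fractional averages as in Proposition \ref{prop:SDomAlpha}, freezes $|Q|^{\langle\alpha\rangle_Q}\langle f\rangle_Q$ at the stopping cube. That leaves precisely the divergent sum $\sum_{Q\subset P}|Q|$ you identified yourself. The packing condition cannot be inserted, because it needs $\langle f\rangle_Q$ itself to be frozen.

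The missing idea is to stop inside $S$ on the \emph{ordinary} averages of $f$; this is the paper's stopping rule for the Riesz potential. Let $\mathcal F(S)$ be the principal cubes of $f$ in $S$, and let $P\in\mathcal F(S)$ be the minimal one containing $Q$. Then $\langle f\rangle_Q\le 2\langle f\rangle_P$, so
\begin{align*}
\sum_{Q\in\mathcal D(S)}|Q|^{1+\langle\alpha\rangle_Q}\langle f\rangle_Q
&\le 2\sum_{P\in\mathcal F(S)}\langle f\rangle_P\sum_{Q\in\mathcal D(P)}|Q|^{1+\langle\alpha\rangle_Q}\\
&\le 2C^{\alpha(\cdot)}_{\downarrow}(\mathcal D)\sum_{P\in\mathcal F(S)}|P|\,\frac{\int_P f}{|P|^{1-\langle\alpha\rangle_P}}
\le 4C^{\alpha(\cdot)}_{\downarrow}(\mathcal D)\int_S M^{\alpha(\cdot)}_{\mathcal D}f .
\end{align*}
The last step uses $|P|\le 2|E(P)|$ with pairwise disjoint $E(P)\subset S$, and $\frac{\int_P f}{|P|^{1-\langle\alpha\rangle_P}}\le M^{\alpha(\cdot)}_{\mathcal D}f$ on $P$. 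No $A_{1/\alpha(\cdot)}$ comparison is needed.

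Your final step is also mis-justified. Sparseness alone does not give $\sum_{S\in\mathcal S}\langle g\rangle_S\int_S F\lesssim\int M_{\mathcal D}g\,F$: take a sparse chain of $N$ nested cubes, $g$ the indicator of the top cube and $F$ that of the bottom one, and you lose a factor $N$. The bound holds because $\mathcal S$ is the principal-cube family of $g$ itself. Averages more than double along every stopping chain, so $\sum_{S\in\mathcal S}\langle g\rangle_S1_S\le 2M_{\mathcal D}g$ pointwise.

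With these two repairs your argument closes, and it is a genuinely different and more economical route than the paper's. The paper stops only on $f$, so $h_P=\sum_{Q\in\mathcal D(P)}1_Q|Q|^{\langle\alpha\rangle_Q}$ must be paired with $g$ over all of $P$. That forces two extra ingredients:
\begin{itemize}
\item the John--Nirenberg upgrade of the packing condition to $L^{s'}$-averages (Lemma \ref{lem:JNforSequences}), where $1\in A_{1/\alpha(\cdot)}(\mathcal D)$ enters through $\sup\beta_Q/\beta_P$;
\item the self-improvement of $M_{\mathcal D}$ to $M_{s,\mathcal D}$ on $L^{q'(\cdot)}$.
\end{itemize}
Your double stopping, on $g$ outside and on $f$ inside, uses the packing condition only in its raw $L^1$ form and $M_{\mathcal D}$ itself. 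It gives
\[
\|I^{\alpha(\cdot)}_{\mathcal D}\|_{L^{p(\cdot)}\to L^{q(\cdot)}}\lesssim C^{\alpha(\cdot)}_{\downarrow}(\mathcal D)\,\|M_{\mathcal D}\|_{L^{q'(\cdot)}}\,\|M^{\alpha(\cdot)}_{\mathcal D}\|_{L^{p(\cdot)}\to L^{q(\cdot)}}
\]
without the hypothesis $1\in A_{1/\alpha(\cdot)}(\mathcal D)$.

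Your treatment of the second bullet is correct and is essentially the paper's argument: the pointwise bound $M^{\alpha(\cdot)}_{\mathcal D}f\le I^{\alpha(\cdot)}_{\mathcal D}|f|$, testing on $1_{Q_0}$, and Lemma \ref{lem:IndicatorKopaliani} together with $[1]_{A_{q'(\cdot)}}=[1]_{A_{q(\cdot)}}$.
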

        \begin{remark}
The assumption
	$\alpha(\cdot) = \tfrac{1}{p(\cdot)}-\tfrac{1}{q(\cdot)}$  in the second bullet is made to avoid a side condition; we direct the reader to the brief Remark \ref{rem:AlphaRelRieszPot}.
Moreover, we refer to Theorem \ref{thm:RieszPot} in Section \ref{sect:dyadic2} for the full dyadic statement on a $\sigma$-finite measure space $(X,\mu)$ and with more degrees of freedom. 
\end{remark}

 The main challenge in Theorem \ref{thm:MainIntro3} is to prove the first bullet.
 We initiate the proof by stopping with respect to the averages $\langle |f|\rangle_Q$, resulting in the standard principal stopping cubes.
 The remaining scale summation is then handled by a John--Nirenberg type inequality adapted to the packing condition $C^{\alpha(\cdot)}_{\downarrow}$, together with the self-improving property of $M_{\mathcal{D}}$ on $L^{q'(\cdot)}.$

 Notably, the stopping condition used to control the Riesz potential $I^{\alpha(\cdot)}$ is different from the one used for $M^{\alpha(\cdot)}$.
 For the maximal operator the stopping time is adapted to the variable fractional averages, while for the Riesz potential the fractionality is carried by the subsequent summation over scales.
	
To pass from the dyadic Riesz potential result to the non-dyadic one, we will use the condition \eqref{eq:thm:DyadRieszNecINTRO} over all dyadic lattices and a separate upward packing condition
	\begin{align}
	C_{\downarrow}^{\alpha(\cdot)} &:=	\sup_{\mathcal{D}}C_{\downarrow}^{\alpha(\cdot)}(\mathcal{D}), \\
	C_{\uparrow}^{\alpha(\cdot)} &:=
	\sup_{\mathcal{D}}\sup_{Q_0\in\mathcal{D}}
	\sum_{Q_0\subset Q\in\mathcal{D}}
	\frac{|Q_0|^{1-\langle\alpha\rangle_{Q_0}}}{|Q|^{1-\langle\alpha\rangle_Q}}.
	\end{align}
The condition $C_{\uparrow}^{\alpha(\cdot)}$ controls any dyadic Riesz potential $I^{\alpha(\cdot)}_{\mathcal{D}}$ by $I^{\alpha(\cdot)}$; recall its definition \eqref{eq:defn:VarRiesz} above. Moreover, $C_{\downarrow}^{\alpha(\cdot)}$ controls, by itself, simultaneously all dyadic Riesz potentials.

\begin{theorem}\label{thm:MainIntro4}
Let $p(\cdot), q(\cdot), 1/\alpha(\cdot)\in \mathcal{P}(\mathbb{R}^n)$ be such that $1<p(\cdot)\leq q(\cdot)<\infty$. 
The following assertions hold.
\begin{itemize}
\item Suppose that $M$ is bounded on $L^{q'(\cdot)}$ and $1\in A_{\frac{1}{\alpha(\cdot)}}(\mathbb{R}^n).$ If $C_{\downarrow}^{\alpha(\cdot)} <\infty$ and
\(
M^{\alpha(\cdot)}:L^{p(\cdot)}(\mathbb R^n)\to L^{q(\cdot)}(\mathbb R^n)
\)
is bounded,
 then 
\(
I^{\alpha(\cdot)}:L^{p(\cdot)}(\mathbb R^n)\to L^{q(\cdot)}(\mathbb R^n)
\)
is bounded.

\item Suppose that $1\in A_{p(\cdot)}(\mathbb{R}^n)\cap A_{q(\cdot)}(\mathbb{R}^n)$ and that $1\in A_{\frac{1}{\alpha(\cdot)}}(\mathbb{R}^n)$, $\alpha(\cdot) = \tfrac{1}{p(\cdot)}-\tfrac{1}{q(\cdot)}$ and $C_{\uparrow}^{\alpha(\cdot)} <\infty.$ If \(
I^{\alpha(\cdot)}:L^{p(\cdot)}(\mathbb R^n)\to L^{q(\cdot)}(\mathbb R^n)
\)
is bounded, then 
$C_{\downarrow}^{\alpha(\cdot)} <\infty$ and \(
M^{\alpha(\cdot)}:L^{p(\cdot)}(\mathbb R^n)\to L^{q(\cdot)}(\mathbb R^n)
\)
is bounded.
\end{itemize}
\end{theorem}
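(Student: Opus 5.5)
Both bullets reduce to the dyadic Theorem \ref{thm:MainIntro3}, via the three-lattice lemma and the comparability of balls and cubes. First we fix the standard $3^n$ (or $n+1$) shifted dyadic lattices $\mathcal{D}^1,\dots,\mathcal{D}^N$. Every ball $B$ is contained in some $Q\in\mathcal{D}^j$ with $|Q|\lesssim_n|B|$. Since $1\in A_{\frac{1}{\alpha(\cdot)}}(\mathbb{R}^n)$, Lemma \ref{lem:IndicatorKopaliani} gives $\|1_E\|_{L^{1/\alpha(\cdot)}}\sim |E|^{\langle\alpha\rangle_E}$ for balls and cubes. Hence $|B|^{1-\langle\alpha\rangle_B}\sim |Q|^{1-\langle\alpha\rangle_Q}$ whenever $B\subset Q$ with comparable measures: by monotonicity of the norm, $|B|^{\langle\alpha\rangle_B}\lesssim |Q|^{\langle\alpha\rangle_Q}$, and conversely via the $A_{1/\alpha}$ condition. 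This is the step that costs powers of $[1]_{A_{1/\alpha}}$, as in the derivation of Theorem \ref{thm:MainIntro1}. It also gives $1\in A_{\frac{1}{\alpha(\cdot)}}(\mathcal{D}^j)$ and $M_{\mathcal{D}^j}\le C_n M$, so the dyadic hypotheses hold.

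For the first bullet, we decompose the kernel of $I^{\alpha(\cdot)}$ over annuli $|x-y|\sim 2^k$. For each pair $(x,y)$ with $|x-y|\sim 2^k$, choose $j$ and $Q\in\mathcal{D}^j$ of side $\sim 2^k$ containing $B(x,|x-y|)$. By the comparability above, $|x-y|^{n(1-\langle\alpha\rangle_{B(x,|x-y|)})}\gtrsim |Q|^{1-\langle\alpha\rangle_Q}$. For $f\ge0$ this yields the pointwise bound
\[
I^{\alpha(\cdot)}f\lesssim \sum_{j=1}^N I^{\alpha(\cdot)}_{\mathcal{D}^j}f.
\]
Each cube $Q$ absorbs only boundedly many annuli, because the side length determines $k$ up to a constant. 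Likewise $M^{\alpha(\cdot)}_{\mathcal{D}^j}f\lesssim M^{\alpha(\cdot)}f$, so $M^{\alpha(\cdot)}_{\mathcal{D}^j}:L^{p(\cdot)}\to L^{q(\cdot)}$. Together with $C_\downarrow^{\alpha(\cdot)}(\mathcal{D}^j)\le C_\downarrow^{\alpha(\cdot)}<\infty$, the first bullet of Theorem \ref{thm:MainIntro3} bounds each $I^{\alpha(\cdot)}_{\mathcal{D}^j}$, and summing over $j$ finishes the first bullet.

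For the second bullet, we go upward. Fix any lattice $\mathcal{D}$ and $f\ge0$. Take $x\in Q_0$ and $y$ in a cube $Q\supset Q_0$ at some generation, and compare $|Q|^{-(1-\langle\alpha\rangle_Q)}$ with the kernel. The difficulty is that $|x-y|$ may be much smaller than $\ell(Q)$. The plan is to write
\[
I^{\alpha(\cdot)}_{\mathcal{D}}f(x)=\sum_{Q\ni x}\frac{1}{|Q|^{1-\langle\alpha\rangle_Q}}\int_Q f,
\]
split $\int_Q f$ over the pieces $y\in R\setminus R'$, where $R$ runs over the ancestors $R\subset Q$ of the minimal cube containing $x$ and $R'$ is the child of $R$ containing $x$, and interchange the sums. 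Then, for fixed $y$ with $x,y\in R$ separated at level $R$, the sum over $Q\supset R$ is bounded by
\[
C_\uparrow^{\alpha(\cdot)}\,|R|^{-(1-\langle\alpha\rangle_R)}.
\]
Averaging over random shifted lattices (or using the three-lattice trick once more) replaces the separation level by $|x-y|$ on average. Combined with the comparability of $|R|^{1-\langle\alpha\rangle_R}$ and $|B(x,|x-y|)|^{1-\langle\alpha\rangle_B}$, this gives $\|I^{\alpha(\cdot)}_{\mathcal{D}}\|\lesssim C_\uparrow^{\alpha(\cdot)}\|I^{\alpha(\cdot)}\|$ uniformly in $\mathcal{D}$. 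The second bullet of Theorem \ref{thm:MainIntro3} then gives $C_\downarrow^{\alpha(\cdot)}(\mathcal{D})$ bounded uniformly in $\mathcal{D}$, with a bound depending only on the operator norm and the $A_{p(\cdot)}$, $A_{q(\cdot)}$ constants. The same result gives boundedness of $M^{\alpha(\cdot)}_{\mathcal{D}^j}$, hence of $M^{\alpha(\cdot)}\lesssim\sum_j M^{\alpha(\cdot)}_{\mathcal{D}^j}$.

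I expect the main obstacle to be this upward comparison. The dyadic kernel is not pointwise dominated by the continuous one when $x$ and $y$ are close but separated by a dyadic boundary. The first thing I would try is the random-shift averaging, checking that $C_\uparrow^{\alpha(\cdot)}$ is exactly what makes the geometric sum over ancestors converge. A fallback is to avoid the pointwise issue and test the second bullet's necessity argument directly against $I^{\alpha(\cdot)}$, with functions $f=1_{Q_0}\cdot(\text{extremizer})$.
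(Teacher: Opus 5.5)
Your first bullet is the paper's argument. It uses the pointwise covering-lattice domination of $I^{\alpha(\cdot)}$ by $\sum_j I^{\alpha(\cdot)}_{\mathcal D^j}$, the reverse bound $M^{\alpha(\cdot)}_{\mathcal D^j}\lesssim M^{\alpha(\cdot)}$, and the transfer of the hypotheses to each $\mathcal D^j$ before calling the dyadic theorem. The annulus counting is unnecessary, since a single cube $Q_{x,y}$ per pair already dominates the kernel, but it does no harm.

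The second bullet has a genuine gap, because you have the direction of the difficulty backwards. After interchanging sums you correctly bound the dyadic kernel at $(x,y)$ by $C^{\alpha(\cdot)}_\uparrow|R|^{-(1-\langle\alpha\rangle_R)}$, where $R$ is the smallest cube containing $x$ and $y$; this is the paper's first step. The dyadic kernel is, however, always pointwise dominated by the continuous one. When $x,y$ are close but separated by a boundary, $R$ is large, so the dyadic kernel is small; that is the favourable direction. No two-sided comparability of $|R|^{1-\langle\alpha\rangle_R}$ with $|B(x,|x-y|)|^{1-\langle\alpha\rangle_{B(x,|x-y|)}}$ is needed, and it is false when $|x-y|\ll\ell(R)$. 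What you need is one-sided quasi-monotonicity, valid for any nested $E\subset F$ whatever their relative sizes. By Lemma \ref{lem:IndicatorKopaliani} with exponent $1/(1-\alpha(\cdot))$, whose conjugate is $1/\alpha(\cdot)$,
\[
|E|^{1-\langle\alpha\rangle_E}\le 5\|1_E\|_{L^{\frac{1}{1-\alpha(\cdot)}}}\le 5\|1_F\|_{L^{\frac{1}{1-\alpha(\cdot)}}}\le 25\,[1]_{A_{\frac{1}{\alpha(\cdot)}}(F)}\,|F|^{1-\langle\alpha\rangle_F}.
\]
Apply this with $E=B(x,|x-y|)$ and $F=B(x,2\operatorname{diam}R)\supset R$, then compare $F$ with $R$, which are nested and of comparable measure. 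This gives $|R|^{-(1-\langle\alpha\rangle_R)}\lesssim |x-y|^{-n(1-\langle\alpha\rangle_{B(x,|x-y|)})}$ with a constant depending only on $n$ and $[1]_{A_{1/\alpha(\cdot)}(\mathcal B)}$. Hence $I^{\alpha(\cdot)}_{\mathcal D}|f|\lesssim C^{\alpha(\cdot)}_\uparrow I^{\alpha(\cdot)}|f|$ pointwise for every fixed lattice, which is \eqref{eq:4}.

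Your proposed substitute, averaging over random shifts, would not work even if carried out. It controls an average of $I^{\alpha(\cdot)}_{\mathcal D^\omega}$ over shifted lattices, not $I^{\alpha(\cdot)}_{\mathcal D}$ for a given lattice. Both the conclusion $C^{\alpha(\cdot)}_\downarrow=\sup_{\mathcal D}C^{\alpha(\cdot)}_\downarrow(\mathcal D)<\infty$ and the application of the dyadic second bullet (Lemma \ref{lem:DyadRieszNec}) require a bound for each individual $\mathcal D$. Once the pointwise bound is in place, the rest of your second bullet goes through as in the paper. That means passing the $A_{p(\cdot)}$ and $A_{q(\cdot)}$ constants uniformly to $\mathcal D$, deducing uniform dyadic packing and maximal bounds, and then using $M^{\alpha(\cdot)}\lesssim\sum_j M^{\alpha(\cdot)}_{\mathcal D^j}$.
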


Beyond the scale of variable exponent Lebesgue spaces, we also study the action of $M^{\alpha(\cdot)},I^{\alpha(\cdot)}$ (and their dyadic variants) on versions of variable exponent Morrey spaces that are defined through the norms
\[
\|f\|_{\mathcal M^{r(\cdot)}_{p(\cdot)}(\mathcal B)}
:=
\sup_{B\in\mathcal{B}}
|B|^{\frac1{r_B}-\frac1{p_B}}
\|1_Bf\|_{L^{p(\cdot)}},
\]
where $\mathcal{B}$ can be taken to be the collection of all balls, for example, and the harmonic means are defined through the formula $s_B^{-1}:=\frac1{|B|}\int_B\frac1{s(x)}\,\ud x.$ 
Similar spaces with pointwise evaluations $r(x)^{-1}$ and under the $\mathcal{LH}_0$ assumption have been studied on bounded domains by Almeida--Hasanov--Samko \cite{AlmHasSam08}; we direct the reader to Appendix \ref{sect:logHolder} for a verification of the equivalence of the two definitions under the $\mathcal{LH}_0$ assumption.
 We give boundedness criteria for $M^{\alpha(\cdot)}$ and $I^{\alpha(\cdot)}$ on $\mathcal M^{r(\cdot)}_{p(\cdot)}(\mathcal B)$ in Theorem \ref{thm:MorreyIff}, which is stated in the body text.
\subsection{Plan of the paper}  
Section \ref{sect:preliminaries} contains well-known preliminaries;
there we also prove the sparse-Carleson type embedding theorem (Proposition \ref{prop:CarlesonEmbedding}).
Section \ref{sect:dyadic} constitutes the technical core of the paper and it contains the proofs of all of our dyadic results.  The dyadic results are recorded for their cleanliness and easy adjustability; notably, they are formulated with respect to general dyadic collections of sets. 
In subsection \ref{sect:dyadic1} we prove results for $M^{\alpha(\cdot)}_{\mu,\mathcal{D}}$, in subsection \ref{sect:dyadic2} for $I^{\alpha(\cdot)}_{\mu,\mathcal{D}}$, and in subsection \ref{sect:dyadic3} we prove boundedness criteria for these operators to be $\mathcal{M}^{r(\cdot)}_{p(\cdot)}(\mathcal{D})$-to-$\mathcal{M}^{s(\cdot)}_{q(\cdot)}(\mathcal{D})$ bounded. In Section \ref{sect:cont-to-dyadic} we deduce the non-dyadic results from their dyadic counterparts. 
Lastly, in Section \ref{sect:logHolder} we briefly discuss how our results look under logarithmic H\"older continuity assumptions.

\subsection*{Acknowledgements and AI tool disclosure}
We thank E. Lorist and T. H\"anninen for discussions on the paper. 
F.G. was supported by the Dutch Research Council (NWO)
through grant VI.Vidi.223.019.
T.O. was supported by the Research Council of Finland, projects 
360184 and 371637. He is a member of the Finnish Centre of Excellence in Randomness and Structures.

The authors used OpenAI's ChatGPT/Codex to identify typographical errors and potential omissions or inconsistencies in the reasoning of a draft manuscript. All revisions and editorial decisions were made by the authors, who take full responsibility for the final content.


\section{Preliminaries}\label{sect:preliminaries}	
	\subsection{Banach function spaces and duality}\label{sect:BFSproperties}
Variable exponent Lebesgue spaces are examples of Banach function spaces. We recall the basic notions needed below; for a detailed discussion of these spaces, see \cite{LorNie2024}.

    	Let $(X,\mu)$ be a $\sigma$-finite measure space, and let $L^0(X)$ denote the space of measurable functions on $(X,\mu)$, identified up to equality almost everywhere. A normed function lattice $Y$ over $(X,\mu)$ is a normed linear subspace of $L^0(X)$, identified up to equality almost everywhere, with the ideal property: if $g\in Y$ and $f\in L^0(X)$ satisfy $|f|\le |g|$ almost everywhere, then $f\in Y$ and $\|f\|_Y\le \|g\|_Y$. In particular, if $f\in Y$, then $|f|\in Y$ and $\||f|\|_Y=\|f\|_Y$. 
    If $Y$ is complete, we call it a Banach function lattice.	
    If, in addition, $Y$ satisfies the saturation property, namely that for every measurable set $E\subset X$ with $\mu(E)>0$ there exists a measurable subset $F\subset E$ with $\mu(F)>0$ and $1_F\in Y$, then $Y$ is called a Banach function space over $(X,\mu)$.

    We say that a Banach function lattice $Y$ has the Fatou property if, whenever $0\le f_n\uparrow f$ almost everywhere and $\sup_n\|f_n\|_Y<\infty$, then $f\in Y$ and
\[
    \|f\|_Y=\sup_{n\in\mathbb N}\|f_n\|_Y=\lim_{n\to\infty}\|f_n\|_Y.
\]

For any normed function lattice \(Y\) over \((X,\mu)\), we define its
associate class by
\[
    \|g\|_{Y'}:=\sup_{\|f\|_Y\le 1}\|fg\|_{L^1(X)},
    \qquad
    Y':=\{g\in L^0(X):\|g\|_{Y'}<\infty\}.
\]
This definition always gives the H\"older-type estimate
\[
    \int_X |fg|\,\mathrm d\mu\le \|f\|_Y\|g\|_{Y'}
\]
for $f\in Y$ and $g\in Y'$. For a general normed function lattice this may
only define a seminorm on \(Y'\). If \(Y\) is saturated, in particular if
\(Y\) is a Banach function space, then it is a norm; in that case \(Y'\) is
the K\"othe dual, or associate space, of \(Y\).
A Banach function space $Y$ has the Fatou property if and only if $Y''=Y$ with equal norm. Equivalently, for every $f\in Y$,
\[
    \|f\|_Y=\sup_{\|g\|_{Y'}\le 1}\int_X |fg|\,\mathrm d\mu.
\]

   \subsection{Notation regarding constants}
    Before turning to variable exponent Lebesgue spaces, we fix notation for implicit constants used throughout the paper. 
    \begin{itemize}
    \item A constant $C>0$ is absolute if it does not depend on the parameters or auxiliary objects appearing in the statement, e.g. $C=1,2,2^{10}.$
    \item We write $A\lesssim B$ if $A\leq CB$ for some absolute constant $C>0.$ 
    \item We write $A\sim B$ if $A\lesssim B$ and $B\lesssim A.$ 
    \item Subscripted or parenthesized data on constants $(C_{a,b}, C(a,b))$ and quantifiers $(\lesssim_{a,b}, \sim_{a,b})$ signify their dependence on that data. 
    \end{itemize}

  \subsection{Variable exponent Lebesgue spaces} 
    We gather in the next list the basic definitions regarding variable exponent Lebesgue spaces.
\begin{itemize}
	\item We assume throughout the article that the underlying measure space $(X,\mu)$ is $\sigma$-finite.
  \item The set of variable exponents $\mathcal P(X,\mu)$ consists of measurable functions $p(\cdot):X\rightarrow[1,\infty]$.
  \item 
Given $p(\cdot)\in \mathcal{P}(X,\mu)$, the norm of the variable exponent Lebesgue space $L^{p(\cdot)}(X,\mu)$ is
\begin{equation*}
  \|f\|_{L^{p(\cdot)}(X,\mu)}:=\inf\{\lambda>0:\rho_{p(\cdot)}(f/\lambda)\leq 1\},
\end{equation*}
where the modular is given by 
\begin{equation*}
			\rho_{p(\cdot)}(f):=
			\int_{X\setminus \{p(\cdot)=\infty\}} |f(x)|^{p(x)}\,\mathrm d\mu(x)
			+
			\| f\|_{L^\infty(\{p(\cdot)=\infty\})}.
		\end{equation*}
We write $f\in L^{p(\cdot)}(X,\mu)$ if $\|f\|_{L^{p(\cdot)}(X,\mu)}<\infty.$
\item The conjugate exponent $p'(\cdot)\in\mathcal{P}(X,\mu)$ is given by the relation 
$$
\frac{1}{p(\cdot)}+ \frac{1}{p'(\cdot)}=1,
$$
with the convention $\frac1\infty=0$.
 \item  The harmonic mean of $p(\cdot)\in\mathcal P(X, \mu)$ over a set of finite measure $B\subset X$ is defined with 
 \begin{equation*}
 	\frac 1{p_B^\mu}:=\frac1{\mu(B)}\int_B\frac 1{p(x)}\mathrm d\mu(x).
 \end{equation*}
\item Given $p(\cdot)\in\mathcal{P}(X,\mu)$ and a set $E\subset X$ we denote
   $$
   p_{-}(E)=\essinf_{E}p(\cdot),\qquad p_{+}(E)=\esssup_{E}p(\cdot).
   $$ 
        \item We often write $\mathcal{P}(X,\mu) = \mathcal{P}(X)$, $\|f\|_{L^{p(\cdot)}(X,\mu)} = \|f\|_{p(\cdot)}$ and $p_B^{\mu} = p_B$, and so on, when the meaning is clear from the context. 
\end{itemize}
		Combining Theorem 3.2.7 and Theorem 2.3.17(d) of \cite{DienHHR2011Book} gives the following.
		If $(X,\mu)$ is $\sigma$-finite, then for any $p(\cdot)\in\mc P(X,\mu)$, the space $L^{p(\cdot)}(X,\mu),$ endowed with the norm $\|f\|_{p(\cdot)},$ is a Banach function space. 
We will also use the Fatou property of $L^{p(\cdot)}$ in several limiting arguments.
    \begin{proposition}[{\cite{DienHHR2011Book}*{Theorem 3.2.13}}]\label{prop:Lp=Fatou}
Suppose $(X,\mu)$ is a $\sigma$-finite measure space. Let $p(\cdot)\in\mathcal P(X,\mu)$. Then $L^{p(\cdot)}(X,\mu)$ is a Banach function space satisfying the Fatou property.
\end{proposition}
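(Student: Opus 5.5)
Since the statement is quoted from \cite{DienHHR2011Book}, I only sketch a direct argument. The Banach function space part was already recorded above, so what remains is the Fatou property. Let $0\le f_n\uparrow f$ almost everywhere with $\Lambda:=\sup_n\|f_n\|_{p(\cdot)}<\infty$. By the ideal property, $\|f_n\|_{p(\cdot)}\le\|f\|_{p(\cdot)}$ whenever $f\in L^{p(\cdot)}$. So it suffices to show $f\in L^{p(\cdot)}$ with $\|f\|_{p(\cdot)}\le\Lambda$. The limit then exists by monotonicity and equals $\Lambda$.

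Fix $\lambda>\Lambda$. I will first show $\rho_{p(\cdot)}(f_n/\lambda)\le 1$ for every $n$. Since $\lambda>\|f_n\|_{p(\cdot)}$, the definition of the norm as an infimum gives some $\lambda_n<\lambda$ with $\rho_{p(\cdot)}(f_n/\lambda_n)\le1$. The modular is monotone in $|g|$, and $f_n/\lambda\le f_n/\lambda_n$, so $\rho_{p(\cdot)}(f_n/\lambda)\le1$.

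Next I pass to the limit in the modular, treating the two parts separately. On $X\setminus\{p(\cdot)=\infty\}$ the integrands $(f_n/\lambda)^{p(x)}$ increase pointwise a.e.\ to $(f/\lambda)^{p(x)}$. Hence the monotone convergence theorem gives convergence of the integrals.

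For the $L^\infty$ part on $E:=\{p(\cdot)=\infty\}$, I claim $\|f_n\|_{L^\infty(E)}\uparrow\|f\|_{L^\infty(E)}$. Take $t<\|f\|_{L^\infty(E)}$. Then $\{f>t\}\cap E$ has positive measure and is the increasing union of the sets $\{f_n>t\}\cap E$. By continuity of measure from below, some $\{f_n>t\}\cap E$ has positive measure, so $\|f_n\|_{L^\infty(E)}>t$ for that $n$. The reverse inequality is immediate from $f_n\le f$.

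Combining the two parts gives $\rho_{p(\cdot)}(f/\lambda)=\lim_n\rho_{p(\cdot)}(f_n/\lambda)\le1$. Hence $\|f\|_{p(\cdot)}\le\lambda$, and letting $\lambda\downarrow\Lambda$ finishes the proof. The only mildly delicate point is the $L^\infty$ component, where monotone convergence is not directly available and the continuity-from-below argument above replaces it. Everything else is monotonicity of the modular.
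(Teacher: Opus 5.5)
Your argument is correct. The paper does not prove this statement itself. It cites Theorem 3.2.13 of \cite{DienHHR2011Book}, and for the Banach function space part it relies on the results recorded just before the proposition, which you also defer to. You instead verify the Fatou property directly from the Luxemburg norm, using three ingredients:
the bound $\rho_{p(\cdot)}(f_n/\lambda)\le 1$ for every $\lambda>\sup_n\|f_n\|_{p(\cdot)}$;
monotone convergence on $\{p(\cdot)<\infty\}$;
continuity of $\mu$ from below for the essential supremum on $E=\{p(\cdot)=\infty\}$.
In the last step the identity $\{f>t\}\cap E=\bigcup_n(\{f_n>t\}\cap E)$ holds only up to a null set, since $f_n\uparrow f$ only almost everywhere, but this is harmless.

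The citation is shorter and covers completeness and saturation at the same time. Your route checks the property for exactly the modular used in this paper, with the additive term $\|f\|_{L^\infty(\{p(\cdot)=\infty\})}$. This is not a mere formality: conventions for the $\{p(\cdot)=\infty\}$ part of the modular differ across the literature, and the exact identity $\|f\|_{p(\cdot)}=\sup_n\|f_n\|_{p(\cdot)}$ does not transfer between equivalent norms.

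If you wanted the whole statement self-contained, the rest is also cheap. Completeness follows from the Fatou property you proved, by the usual Riesz--Fischer argument. Saturation follows from $\sigma$-finiteness and the estimate $\rho_{p(\cdot)}(1_F/\lambda)\le(\mu(F)+1)/\lambda$ for $\lambda\ge1$, as in the proof of Proposition \ref{prop:MorreyProperties}.
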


	We next go through the variable exponent estimates that will be used repeatedly. In the cases where we were not able to find a direct citation, a full proof has been provided.
	\begin{lemma}[{\cite{DienHHR2011Book}*{Theorem 3.2.20}}]
		\label{lem:Holder}
		Let $p(\cdot) \in \mathcal{P}(X, \mu).$ Then for all $f \in L^{p(\cdot)}(X, \mu)$ and $g \in L^{p'(\cdot)}(X, \mu)$ there holds that 
		\begin{equation*}
			\int_X |f(x) g(x)|  \mathrm d\mu(x) \leq 4\| f \|_{p(\cdot)} \| g \|_{p'(\cdot)}.
		\end{equation*}
	\end{lemma}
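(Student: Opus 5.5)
The plan is to prove the H\"older inequality by splitting $X$ according to whether the exponent is finite, and applying the pointwise Young inequality on the finite part after normalizing. By homogeneity it suffices to treat $\|f\|_{p(\cdot)}=\|g\|_{p'(\cdot)}=1$; the cases where either norm vanishes are trivial, since then $f=0$ or $g=0$ a.e. I split
\[
X=X_\infty\cup X_1\cup X_*, \qquad X_\infty=\{p=\infty\},\quad X_1=\{p=1\},\quad X_*=\{1<p<\infty\}.
\]
On $X_\infty$ the conjugate exponent equals $1$, and on $X_1$ it equals $\infty$.

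The first step is to use the norm normalization. Since the modular is left-continuous in $\lambda$, the definition of the norm gives $\rho_{p(\cdot)}(f)\le 1$ and $\rho_{p'(\cdot)}(g)\le1$. This yields three facts:
\begin{itemize}
\item $\|f\|_{L^\infty(X_\infty)}\le1$;
\item $\int_{X\setminus X_\infty}|f|^{p}\,\mathrm d\mu\le1$;
\item the analogous bounds for $g$ with $p'$.
\end{itemize}

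The second step estimates each piece of $\int|fg|$. On $X_\infty$ we have $\int_{X_\infty}|fg|\le \|f\|_{L^\infty(X_\infty)}\int_{X_\infty}|g|\le 1$, because $p'=1$ there and $\int_{X_\infty}|g|\le\rho_{p'(\cdot)}(g)\le1$. Symmetrically, $\int_{X_1}|fg|\le1$. On $X_*$ I apply Young's inequality pointwise, $|f||g|\le |f|^{p(x)}/p(x)+|g|^{p'(x)}/p'(x)\le |f|^{p(x)}+|g|^{p'(x)}$, and integrate to get a bound of at most $2$.

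Summing the pieces gives at most $4$, which matches the stated constant. Finer bookkeeping, using the weights $1/p$ and $1/p'$ and the fact that the $L^\infty$ parts sit in the same modular as the integral parts, would give a sharper constant, but $4$ is all that is needed.

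The only delicate point is justifying that $\rho(f/\|f\|)\le1$ when the modular includes an $L^\infty$ term and the exponent may be unbounded. I plan to handle it via monotone convergence, using that $\lambda\mapsto\rho(f/\lambda)$ is left-continuous: for $\lambda\downarrow\|f\|$ both the integral part and the esssup part increase to their values at $\|f\|$. Alternatively, the result can simply be cited from \cite{DienHHR2011Book}*{Theorem 3.2.20}.
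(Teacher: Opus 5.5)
Your proof is correct and follows essentially the same route as the paper. The paper only cites the lemma, and its remark traces the constant $4$ to the bound $K_{p(\cdot)}\le 4$ of Cruz-Uribe--Fiorenza, which comes from exactly your splitting into $X_\infty$, $X_1$, $X_*$ with Young's inequality on $X_*$; they keep the weights $1/p$ and $1/p'$ there to get $\frac1{p_-}-\frac1{p_+}+1\le 2$, where you bound both weights by $1$.

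Your normalization step via $\lambda\downarrow\|f\|_{p(\cdot)}$ is sound: strictly it is continuity from the right in $\lambda$, that is, left-continuity of $t\mapsto\rho_{p(\cdot)}(tf)$. You could also skip it by running the argument with any $\lambda>\|f\|_{p(\cdot)}$ and then letting $\lambda$ decrease.
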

	\begin{rem}[{\cite{CruzFio2013Book}*{Theorem 2.26}}] The constant $4$ appears as a consequence of bounding the quantity
\begin{equation*}
K_{p(\cdot)}=\Bigl(\frac1{\essinf p}-\frac1{\esssup p}+1\Bigr)\|1_{X_*}\|_{L^\infty}+\|1_{X_\infty}\|_{L^\infty}+\|1_{X_1}\|_{L^\infty}\leq 4,
\end{equation*}
where $X_\infty= \{p(\cdot)=\infty\}$, $X_1= \{p(\cdot)=1\}$ and $X_*=X\setminus(X_1\cup X_\infty)$.
		Instead of carrying the more complicated left-hand side around, the absolute constant $4$ is good enough for our purposes.
	\end{rem}
Closely related to H\"older's inequality is the following duality formula.  The constants are consequences of those in the preceding lemma.
	
	\begin{lemma}[{\cite{DienHHR2011Book}*{Corollary 3.2.14}}]\label{lem:dualityFormula}
		Let $p(\cdot)\in \mc P(X, \mu)$. Then
		\begin{equation*}
\frac 14 \sup_{\|g\|_{L^{p'(\cdot)}}=1}\int_X |fg|\,\mathrm d \mu\leq\|f\|_{L^{p(\cdot)}}\leq 4 \sup_{\|g\|_{L^{p'(\cdot)}}=1}\int_X |fg|\,\mathrm d \mu.		\end{equation*}
	\end{lemma}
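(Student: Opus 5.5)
The statement to prove is the duality formula of Lemma \ref{lem:dualityFormula}. I would deduce both inequalities from the H\"older inequality of Lemma \ref{lem:Holder} and the Fatou property of Proposition \ref{prop:Lp=Fatou}, together with the identity $\|f\|_Y=\sup_{\|g\|_{Y'}\le1}\int|fg|$ valid for Banach function spaces with the Fatou property. The missing link is to compare the associate norm of $L^{p(\cdot)}$ with the Luxemburg norm $\|\cdot\|_{p'(\cdot)}$ up to the constant $4$.

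\emph{Left inequality.} For any $g$ with $\|g\|_{p'(\cdot)}=1$, Lemma \ref{lem:Holder} gives $\int_X|fg|\,\mathrm d\mu\le 4\|f\|_{p(\cdot)}$. Taking the supremum over such $g$ yields the first bound; no further work is needed.

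\emph{Right inequality.} By Proposition \ref{prop:Lp=Fatou}, $L^{p(\cdot)}=L^{p(\cdot)}{}''$ isometrically. Hence $\|f\|_{p(\cdot)}=\sup\{\int|fg|:\|g\|_{(L^{p(\cdot)})'}\le1\}$, and it suffices to show
\[
\|g\|_{p'(\cdot)}\le 4\|g\|_{(L^{p(\cdot)})'} .
\]
Given this, any $g$ with associate norm at most $1$ satisfies $\|g\|_{p'(\cdot)}\le 4$. Rescaling to $\tilde g=g/\|g\|_{p'(\cdot)}$ gives $\int|fg|\le 4\int|f\tilde g|$, with $\tilde g$ of unit Luxemburg norm. (The case $g=0$ is trivial, and by homogeneity one may normalize $\|g\|_{p'(\cdot)}=1$ exactly.)

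\emph{The reverse norm comparison.} This is the main obstacle: one must exhibit, for given $g$, a test function $f$ with $\rho_{p(\cdot)}(f)\le1$ and $\int|fg|\gtrsim\|g\|_{p'(\cdot)}$. I would treat the regions $X_1=\{p=1\}$, $X_\infty=\{p=\infty\}$ and $X_*$ separately.
\begin{itemize}
\item On $X_*$, normalize $\|g\|_{p'(\cdot)}=1$, so that $\rho_{p'(\cdot)}(g)=1$ (after truncation and the Fatou property). Take $f=|g|^{p'(x)-1}$. Then $|f|^{p}=|g|^{p'}$ and $fg=|g|^{p'}$, so $\int_{X_*} fg=\rho_{p'}(g\,1_{X_*})$.
\item On $X_1$, where $p'=\infty$, take $f$ to be a normalized indicator of a set where $|g|$ nearly attains its essential supremum.
\item On $X_\infty$, where $p'=1$, take $f=\operatorname{sgn}\bar g$.
\end{itemize}
Combining these three pieces through the modular splitting gives a constant bounded by the quantity $K_{p(\cdot)}\le4$ from the Remark. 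Since this comparison is exactly Corollary 3.2.14 of \cite{DienHHR2011Book}, I would ultimately cite it for the constant bookkeeping rather than reprove it in full.
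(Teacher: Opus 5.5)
Your proposal is correct. The left inequality is exactly what the paper intends via Lemma~\ref{lem:Holder}. For the right one, the paper gives only the citation of \cite{DienHHR2011Book} and the remark that the constants come from Lemma~\ref{lem:Holder}; that remark directly accounts only for the left-hand side. Your test-function sketch therefore supplies the argument the paper leaves to the reference, and two comments on it are in order.

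First, the passage through $L^{p(\cdot)}=(L^{p(\cdot)})''$ is a detour. Since $p''(\cdot)=p(\cdot)$ and $\|g\|_{(L^{p(\cdot)})'}=\sup_{\|f\|_{p(\cdot)}=1}\int_X|fg|\,\mathrm d\mu$, the inequality $\|g\|_{p'(\cdot)}\le 4\|g\|_{(L^{p(\cdot)})'}$ you reduce to is literally the right-hand inequality of the lemma with $p'(\cdot)$ in place of $p(\cdot)$. So the Lorentz--Luxemburg theorem, a Hahn--Banach type result, is used only to swap $p$ and $p'$. You can run the mirror-image construction directly on $f$: take $|f|^{p(x)-1}$ on $X_*$, $1_{X_1}$ on $X_1$ where $p'=\infty$, and a normalized indicator of a near-supremum set of $|f|$ inside $X_\infty$ where $p'=1$. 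This avoids the biduality theorem and needs only the elementary monotone-convergence Fatou property for the truncation.

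Second, the combination step you leave vague works as follows. Truncation is genuinely needed, since for unbounded $p'$ one can have $\|g\|_{p'(\cdot)}=1$ but $\rho_{p'(\cdot)}(g)<1$. After truncating, the sum $F=f_*+f_1+f_\infty$ of your disjointly supported test functions satisfies $\rho_{p(\cdot)}(F)\le 3$. Hence $\|F\|_{p(\cdot)}\le 3$, because $\rho_{p(\cdot)}(F/3)\le\rho_{p(\cdot)}(F)/3$, while $\int_X|Fg|\,\mathrm d\mu\ge 1-\varepsilon$. The constant this produces is $\|1_{X_*}\|_{L^\infty}+\|1_{X_1}\|_{L^\infty}+\|1_{X_\infty}\|_{L^\infty}\le 3$, not the H\"older constant $K_{p(\cdot)}$. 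It is still at most $4$, which is all the lemma asserts.
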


	\subsection{Harmonic mean and the Muckenhoupt condition}
	In the context of variable exponents, the following Muckenhoupt-type condition originates in Kopaliani \cite{Kop2007},
	\begin{align}\label{eq:MuckForExponent}
		 [1]_{A_{p(\cdot)}(\mathcal{B})}=\sup_{B\in\mathcal{B}}\frac{\|1_B\|_{L^{p(\cdot)}}\|1_B\|_{L^{p'(\cdot)}}}{\mu(B)}.
	\end{align}
	Emphasizing the role of $p(\cdot)$ this condition is also often denoted by $[p(\cdot)]_{K^{0}(\mathcal{B})} :=[1]_{A_{p(\cdot)}(\mathcal{B})};$ however, we use the convention  \eqref{eq:MuckForExponent}.
	When $\mathcal{B} = \{B\}$ is a singleton, we write $[1]_{A_{p(\cdot)}(B)}.$
    For a collection $\mathcal Q$ of measurable sets of finite positive measure, we write $\langle \alpha\rangle_Q:=\mu(Q)^{-1}\int_Q\alpha\,\ud\mu$ and
\[
[1]_{A_{p(\cdot),q(\cdot)}^{\alpha(\cdot)}(\mathcal Q)}
:=
\sup_{Q\in\mathcal Q}
\mu(Q)^{\langle\alpha\rangle_Q-1}
\|1_Q\|_{q(\cdot)}
\|1_Q\|_{p'(\cdot)}.
\]
		Norms of indicators can be estimated with the harmonic average and the Muckenhoupt-type condition by the next lemma. A proof in the Euclidean setting can be found in Cruz-Uribe and Roberts \cite{CruTro2024}*{Proposition 3.8}.
\begin{lemma}\label{lem:IndicatorKopaliani}
		Let $p(\cdot)\in \mc P(X, \mu)$ and suppose $B\subset X$ is such that $0<\mu(B)<\infty$. Then
\begin{equation}\label{eq: cube norm averaging}
			\frac 1{5}\mu(B)^\frac 1{p_{B}^\mu}\leq\|1_B\|_{p(\cdot)}\leq 5  [1]_{A_{p(\cdot)}(B)}  \mu(B)^\frac 1{p_{B}^\mu}.
		\end{equation}
	\end{lemma}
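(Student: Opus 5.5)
We need to prove $\frac15\mu(B)^{1/p_B}\le\|1_B\|_{p(\cdot)}\le 5[1]_{A_{p(\cdot)}(B)}\mu(B)^{1/p_B}$. The lower bound comes from Jensen's inequality applied to the modular. The upper bound then follows from the lower bound applied to the conjugate exponent, together with the $A_{p(\cdot)}$ quantity.

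\emph{Lower bound.} Set $\lambda=\|1_B\|_{p(\cdot)}$ and assume first that $p<\infty$ a.e.\ on $B$. Then $\rho_{p(\cdot)}(1_B/\lambda)\le 1$ by the definition of the norm (the infimum is attained, by the Fatou property or monotone convergence). This reads
\[
\frac{1}{\mu(B)}\int_B \lambda^{-p(x)}\,\ud\mu(x)\le \frac{1}{\mu(B)}.
\]
Write $\lambda^{-p(x)}=\exp(-p(x)\log\lambda)$. The map $t\mapsto \lambda^{-1/t}$ is not obviously convex, so I would apply Jensen in the variable $s=1/p\in[0,1]$. The function $s\mapsto \lambda^{-1/s}$ is awkward, so instead I would split into cases.
\begin{itemize}
\item If $\lambda\ge1$, then $\lambda^{-p(x)}\ge \lambda^{-p_+}$. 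A direct Jensen argument is cleaner, though: the function $u\mapsto \mu(B)^{-u}$ is convex in $u=1/p$.
\item So I would compare with the candidate $\Lambda=\mu(B)^{1/p_B}$ and argue by cases on $\mu(B)\ge1$ or $\mu(B)<1$. For example, if $\mu(B)\le1$ and $\lambda\le \Lambda/5$, I would show the modular exceeds $1$. Indeed, $\int_B(\Lambda/5)^{-p}\ge \int_B 5^{p}\mu(B)^{-p/p_B}$, and Jensen with convex $\exp$ applied to $\log$ of the integrand gives at least $\mu(B)\exp\big(\langle p\rangle_B\log 5 - \langle p/p_B\rangle\log\mu(B)\big)$.
\end{itemize}
The main obstacle is exactly here, since $\langle p\rangle_B\ne p_B$. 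The cleanest route is to work with $s=1/p$ and the density $1_B/\mu(B)$. The pointwise inequality $a^{p}\ge$ (tangent line in $1/p$) is unavailable, so I would instead follow Cruz-Uribe--Roberts' proof (their Prop.~3.8), which handles this via splitting $B$ into $\{p\le 2p_B\}$-type pieces. The constant $5$ absorbs the resulting losses, and the parts where $p=\infty$ are handled by the $L^\infty$ term of the modular.

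\emph{Upper bound.} Apply the lower bound to $p'(\cdot)$. Since $1/p'_B=1-1/p_B$, we get
\[
\|1_B\|_{p'(\cdot)}\ge \tfrac15\mu(B)^{1-1/p_B}.
\]
Then by the definition of $[1]_{A_{p(\cdot)}(B)}$,
\[
\|1_B\|_{p(\cdot)}\le [1]_{A_{p(\cdot)}(B)}\frac{\mu(B)}{\|1_B\|_{p'(\cdot)}}\le 5[1]_{A_{p(\cdot)}(B)}\mu(B)^{1/p_B}.
\]
Thus all the difficulty lies in the lower bound, specifically in the Jensen-type comparison between $\rho_{p(\cdot)}$ and the harmonic mean.
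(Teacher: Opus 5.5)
Your upper bound is exactly the paper's argument: apply the lower bound to $p'(\cdot)$ and use the definition of $[1]_{A_{p(\cdot)}(B)}$. But the lower bound carries all the content, and it is not proved. You hit an obstacle and then defer to Cruz-Uribe--Roberts with a guessed mechanism (``splitting $B$ into $\{p\le 2p_B\}$-type pieces''). You also assert, without checking, that the constant $5$ absorbs the losses. That is a placeholder, not an argument, and it would in any case need to be verified on a general $\sigma$-finite measure space.

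The obstacle is also in the wrong place. In your sample case $\mu(B)\le1$ your computation already closes: since $-\log\mu(B)\ge0$ and $\langle p\rangle_B\ge p_B$ (arithmetic versus harmonic mean), the quantity you wrote is at least $\mu(B)\,5^{\langle p\rangle_B}\mu(B)^{-1}=5^{\langle p\rangle_B}>1$. The genuinely unfavorable regime is $\mu(B)>1$, where $\lambda=\|1_B\|_{p(\cdot)}\ge1$. There Jensen for $\exp(-p(x)\log\lambda)$ only gives the lower bound $\mu(B)\lambda^{-\langle p\rangle_B}$ for $\int_B\lambda^{-p}\,\ud\mu$, which is smaller than the needed $\mu(B)\lambda^{-p_B}$. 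You give nothing for this regime, and nothing concrete for the set $\{p=\infty\}$.

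The missing idea is to bypass the modular of $1_B$ and argue by duality. You already named the right convex function, $u\mapsto\mu(B)^{-u}$. The paper applies Jensen to it with $u=1/p'(x)$ and the probability measure $\mu(B)^{-1}\mu|_B$; since $1/p_B+1/(p')_B=1$, this gives
\[
\mu(B)^{1/p_B}=\mu(B)\,\mu(B)^{-1/(p')_B}\le\int_B\mu(B)^{-1/p'(x)}\,\ud\mu(x).
\]
Split the integral as follows.

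On $B_1:=B\cap\{p=1\}$ the integrand equals $1$, so this piece contributes $\mu(B_1)=\|1_{B_1}\|_{p(\cdot)}\le\|1_B\|_{p(\cdot)}$.

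On $B\setminus B_1$ one has $p'<\infty$. Hence $g:=\mu(B)^{-1/p'(\cdot)}1_{B\setminus B_1}$ satisfies $\rho_{p'(\cdot)}(g)=\mu(B\setminus B_1)/\mu(B)\le1$, so $\|g\|_{p'(\cdot)}\le1$. H\"older's inequality (constant $4$) then bounds this piece by $4\|1_B\|_{p(\cdot)}$.

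This yields the constant $1+4=5$ with no case distinction on $\mu(B)$. The set $\{p=\infty\}$ is handled automatically, since there $\mu(B)^{-1/p'}=\mu(B)^{-1}$.

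If you prefer to stay with the modular, the bad regime can instead be handled by Jensen for the convex function $\Phi(s)=s\,c^{1/s}$ with $c=1/\lambda$, as in Lemma~\ref{lem:ConstAveBound}. This gives $\mu(B)^{1/p_B}\le 2\|1_B\|_{p(\cdot)}$ when $p<\infty$ a.e.\ on $B$, but the set $\{p=\infty\}$ then still needs a separate argument.
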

	\begin{proof}
		Since the exponential is a convex function, Jensen's inequality yields
		\begin{align*}
			\mu(B)^{-\frac 1{(p')_B^\mu}}
			&=\exp\Bigl(-\frac1{\mu(B)}\int_B \frac1{p'(x)}\mathrm d\mu(x) \log \mu(B)\Bigr)\\
			&\leq\frac1{\mu(B)}\int_B \exp\Bigl(-\frac1{p'(x)}\log \mu(B)\Bigr)\mathrm d\mu(x)
			= \frac1{\mu(B)}\int_B \mu(B)^{-\frac 1{p'(x)}}\mathrm d\mu(x).
		\end{align*}
		Multiplying by $\mu(B)$ and using $1/p_B^\mu+1/(p')_B^\mu=1$ and denoting $B_1:=B\cap\{p(\cdot)=1\}$ we have  
		\begin{align*}
					\mu(B)^{\frac 1{p_B^\mu}}\leq \int_B \mu(B)^{-\frac 1{p'(x)}}\mathrm d\mu(x) =    \int_{B_1} \mu(B)^{-\frac 1{p'(x)}}\mathrm d\mu(x)+\int_{B\setminus B_1}\mu(B)^{-\frac1{p'(x)}}\,\mathrm d\mu(x).
		\end{align*}
		We next bound both terms from above as $\lesssim \|1_B\|_{p(\cdot)}$ and in doing so prove the left inequality in \eqref{eq: cube norm averaging}. For the first term, noting that $p'(\cdot) = \infty$ on $B_1$, we have 
		\[
		 \int_{B_1} \mu(B)^{-\frac 1{p'(x)}}\mathrm d\mu(x)=  \mu(B_1) = \|1_{B_1}\|_{L^{p(\cdot)}(B_1)} \leq  \|1_{B}\|_{L^{p(\cdot)}}.
		\]
		For the second term, by H\"older's inequality, Lemma \ref{lem:Holder}, we have 
		\[
		\int_{B\setminus B_1}\mu(B)^{-\frac1{p'(x)}}\,\mathrm d\mu(x) \leq 4\|1_{B\setminus B_1}\|_{L^{p(\cdot)}} \|\mu(B)^{-\frac1{p'(\cdot)}}\|_{L^{p'(\cdot)}(B\setminus B_1)} \leq 4\|1_{B}\|_{L^{p(\cdot)}}
		\]
		where we used the bound $\|\mu(B)^{-\frac1{p'(\cdot)}}\|_{L^{p'(\cdot)}(B\setminus B_1)}  \leq 1;$ indeed, since $p'(\cdot) < \infty$ almost everywhere on $B\setminus B_1$ we have that 
		\[
			\rho_{p'(\cdot)}\bigl( \mu(B)^{-\frac1{p'(\cdot)}}\bigr)
			=
			\int_{B\setminus B_1}\mu(B)^{-1}\,\mathrm d\mu
			\leq 1;
		\]
		 and thus directly from the definition of the norm that $\|\mu(B)^{-\frac1{p'(\cdot)}}\|_{L^{p'(\cdot)}(B\setminus B_1)}  \leq 1.$

Then we verify the right bound.
Applying the left bound with $p'(\cdot)$ we have 
$	\|1_B\|_{p'(\cdot)} \geq \mu(B)^{\frac 1{(p')_B^\mu}}/5,$ and using the definition of $[1]_{A_{p(\cdot)}(B)}$ we get  
\begin{equation*}
	\|1_B\|_{p(\cdot)} = [1]_{A_{p(\cdot)}(B)} \frac{\mu(B)}{	\|1_B\|_{p'(\cdot)}}\leq 
	 5[1]_{A_{p(\cdot)}(B)}\mu(B) {\mu(B)^{-\frac 1{(p')_B^\mu}}}
	=5[1]_{A_{p(\cdot)}(B)}\,\mu(B)^{\frac1{p_B^\mu}},
\end{equation*}
proving the second inequality of \eqref{eq: cube norm averaging}.
\end{proof}

	The next lemma is a formal generalisation of \cite{diening_maximal_2009}*{Lemma 6.1} to arbitrary measure spaces.
	\begin{lemma}\label{lem:ConstAveBound}
		Let $(X,\mu)$ be any measure space, let $B$ be any measurable set with
		$0<\mu(B)<\infty$ and let $p(\cdot):X\to[1,\infty)$ be measurable.
		Then, for every $c>0$, there holds that 
		\begin{equation*}
			\Bigl(\frac{c}{2}\Bigr)^{p_{B}^{\mu}}
			\leq \frac{1}{\mu(B)}\int_B c^{p(x)}\,\mathrm d\mu(x).
		\end{equation*}
	\end{lemma}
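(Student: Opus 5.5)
We need to show $(c/2)^{p_B}\le \frac1{\mu(B)}\int_B c^{p(x)}\,d\mu$, where $1/p_B=\langle 1/p\rangle_B$. The plan is to split into the cases $c\ge 1$ and $c<1$. In both cases the first step is to normalize by the probability measure $d\nu=\mu(B)^{-1}1_B\,d\mu$ on $B$, and set $I:=\int c^{p}\,d\nu$.

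\emph{Case $c\ge 1$.} The function $t\mapsto c^{1/t}$ is convex on $(0,1]$, since $c^{1/t}=e^{(\log c)/t}$ with $\log c\ge0$ and $1/t$ is convex. Apply Jensen with $t=1/p(x)\in(0,1]$:
\[
c^{p_B}=c^{1/\langle 1/p\rangle_\nu}\le \int c^{p(x)}\,d\nu=I .
\]
This gives the claim even without the factor $1/2$. If $p$ is unbounded, $p_B$ is still finite because $1/p>0$ pointwise forces $\langle 1/p\rangle_B>0$, so no issue arises.

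\emph{Case $c<1$.} Now $t\mapsto c^{1/t}$ is no longer convex, and this is where the $1/2$ must be used. Write $I=\int c^{p}\,d\nu\in(0,1)$ and split $B$ into
\[
E:=\{x\in B:\ c^{p(x)}\ge I/2\},\qquad B\setminus E .
\]
On $B\setminus E$ the contribution to $I$ is at most $I/2$, so $\int_E c^{p}\,d\nu\ge I/2$, and since $c^p\le1$ this yields $\nu(E)\ge I/2$. On $E$ we have $p(x)\le \log(I/2)/\log c=:s$, that is, $1/p\ge 1/s$. Therefore $\langle 1/p\rangle_\nu\ge \nu(E)/s\ge I/(2s)$, which only gives $p_B\le 2s/I$. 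That is too lossy, so instead I will compare directly with $s$ using a weighted Jensen on $E$.

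\emph{Alternative, cleaner route for $c<1$.} The function $\varphi(u)=c^{1/u}$ on $(0,1]$ is increasing and equals $0$ at $0$. Hence for any $\lambda\ge$ its concave majorant behaves well. A slicker approach is the standard trick from \cite{diening_maximal_2009}. Since $c^{p(x)}/c^{p_B}$ is hard to handle directly, I will use Young-type reasoning: $1/p_B=\int (1/p)\,d\nu$, and aim to show $\int (c/2)^{p_B}\,d\nu\le \int c^{p}\,d\nu$. Pointwise, if $p(x)\le p_B$ then $c^{p(x)}\ge c^{p_B}\ge (c/2)^{p_B}$. If $p(x)>p_B$, write $1/p(x)=\theta/p_B$ with $\theta<1$. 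Then the mass condition $\int(1/p)\,d\nu=1/p_B$ forces $\nu(\{p\le 2p_B\})\ge 1/2$ by Markov applied to $1-p_B/p\ge0$. Note that the average of $p_B/p$ is $1$ and $p_B/p\le p_B$; the cleaner statement is $\nu(\{p_B/p\ge 1/2\})\ge 1/(2p_B)$. This would give $I\ge c^{2p_B}/(2p_B)$, which is still not enough.

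I expect this case to be the main obstacle. The fallback I would try is convexity in the variable $1/p$ of the function $u\mapsto (c/2)^{1/u}$ combined with the elementary inequality $(c/2)^{1/u}\le c^{1/u}\cdot 2^{-1/u}$, handling small $u$ separately, where $2^{-1/u}$ kills the non-convexity. Concretely, $\psi(u)=(c/2)^{1/u}$ has a convex minorant on $(0,1]$ lying below $c^{1/u}$, namely its lower convex envelope. Jensen on that envelope, evaluated at the mean $1/p_B$, would then give the claim, provided the envelope agrees with $\psi$ at $1/p_B$ or lies above $(c/2)^{p_B}$, which needs checking.
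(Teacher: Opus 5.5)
\textbf{The case $c<1$ is not proved.} Your case $c\ge 1$ is correct: for $\log c\ge 0$ the map $u\mapsto c^{1/u}$ is convex on $(0,1]$, and Jensen even gives $c^{p_B}\le\int c^{p}\,\mathrm d\nu$. The level-set argument and the Markov argument for $0<c<1$ are, as you observe yourself, too lossy.

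The fallback rests on a proviso that fails in general. For $\psi(u)=(c/2)^{1/u}$ one has
\[
\psi''(u)=\psi(u)\,u^{-4}\log(c/2)\bigl(\log(c/2)+2u\bigr).
\]
So for $2e^{-2}<c<1$ the function $\psi$ is strictly concave on $[\tfrac12\log(2/c),1]$. Its lower convex envelope therefore lies strictly below $\psi$ on $(\tfrac12\log(2/c),1)$. This interval contains $u=\tfrac12$ (i.e.\ $p_B=2$) once $c>2/e$. At such points, Jensen applied to the envelope of $\psi$ does not return $(c/2)^{p_B}$.

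\textbf{The missing idea} is a different convex minorant of $u\mapsto c^{1/u}$, namely $\Phi(u)=u\,c^{1/u}$. One computes $\Phi''(u)=(\log c)^2u^{-3}c^{1/u}\ge0$ for \emph{every} $c>0$; the prefactor $u$ repairs the non-convexity when $\log c<0$. Also $\Phi(u)\le c^{1/u}$ because $u\le 1$. Jensen with $u=1/p(x)$ then gives
\[
\frac{1}{p_B}\,c^{p_B}=\Phi\Bigl(\int \frac1p\,\mathrm d\nu\Bigr)\le\int\frac{1}{p}\,c^{p}\,\mathrm d\nu\le\int c^{p}\,\mathrm d\nu .
\]
The factor $\tfrac12$ is used only to absorb the weight $1/p_B$: since $t\le 2^t$ for $t\ge1$, we get $(c/2)^{p_B}\le \frac{1}{p_B}c^{p_B}$.

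This is exactly the paper's proof. The inequality $\Phi\ge\psi$ is precisely the domination property your fallback needed. The argument also makes the split into $c\ge1$ and $c<1$ unnecessary.
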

	\begin{proof}
Let $\nu:=\mu(B)^{-1}\mu|_B$. Then $\nu$ is a probability measure on $B$ and
\(
    \int_B \frac1{p(x)}\,\mathrm d\nu(x)=\frac1{p_B^\mu}.
\)
Fix $c>0$ and define $\Phi:(0,1]\to(0,\infty)$ by
\(
    \Phi(s):=s c^{1/s}.
\)
Since $\Phi''(s)=(\log c)^2s^{-3}c^{1/s}\geq0$, the function $\Phi$ is convex. Hence Jensen's inequality gives
\[
    \frac1{p_B^\mu}c^{p_B^\mu}
    =\Phi\left(\int_B\frac1{p(x)}\,\mathrm d\nu(x)\right)
    \leq \int_B \Phi\left(\frac1{p(x)}\right)\,\mathrm d\nu(x)
    =\int_B \frac1{p(x)}c^{p(x)}\,\mathrm d\nu(x).
\]
Since $p(x)\geq1$, this is bounded above by
\[
    \int_B c^{p(x)}\,\mathrm d\nu(x)
    =\frac1{\mu(B)}\int_B c^{p(x)}\,\mathrm d\mu(x).
\]
Finally, $t\leq 2^t$ for $t\geq1$, so
\(
    (\tfrac c2)^t\leq \tfrac1t c^t.
\)
Applying this with $t=p_B^\mu$ gives the desired estimate.
\end{proof}

\subsection{Embeddings of sequential spaces}
Consider the measure space $(X,\mu)=(\mathbb{N},\#)$, where $\#$ denotes the counting measure. 
In this discrete setting, a variable exponent $p(\cdot)$ is a sequence 
$\{p_n\}_{n\in\mathbb{N}}$ and we write
\(
\ell^{p(\cdot)}:=\ell^{p_n} := L^{p(\cdot)}(\mathbb{N},\#).
\)
For a collection $\mathcal B$ of measurable sets with $0<\mu(B)<\infty$, write
\[
M_{\mathcal B}h(x):=\sup_{B\in\mathcal B}\frac{1_B(x)}{\mu(B)}\int_B|h|\,\ud\mu.
\]
The following is a Carleson-type embedding theorem in the variable exponent setting. 
\begin{proposition}\label{prop:CarlesonEmbedding}
	Let $s(\cdot)\in \mathcal{P}(X,\mu)$ be such that $1< s(\cdot)<\infty$ almost everywhere. Let $\mathcal{B}$ be a countable collection of measurable sets such that $0<\mu(B)<\infty$ for every $B\in\mathcal{B}$. Let $E(B)\subset B$ be pairwise disjoint measurable subsets such that $0<\mu(E(B))<\infty$ and let $a_B\geq 0$ for each $B\in\mathcal{B}.$
	Then, there holds that 
	\begin{align}\label{eq:prop:CarlesonEmbedding}
		\left\| \left\{  a_B \mu(E(B))^\frac1{s^\mu_{E(B)}} \right\}_{B\in\mathcal{B}}\right\|_{\ell^{s^\mu_{E(B)}}}   \lesssim	\Bigl\| \sum_{B\in\mathcal{B}}a_B1_{E(B)} \Bigr\|_{L^{s(\cdot)}(X,\mu)}.
	\end{align}
Moreover, the reverse bound holds in the following form 
	\begin{align}\label{eq:prop:CarlesonEmbedding2}
\Bigl\| \sum_{B\in\mathcal{B}}a_B1_{E(B)} \Bigr\|_{L^{s(\cdot)}(X,\mu)} 
	\lesssim
	\|M_{\mathcal{B}}\|_{L^{s'(\cdot)}(X,\mu)}
	\left\|  \left\{  \frac{\mu(B)}{\mu(E(B))}   \cdot a_B \mu(E(B))^{1/s^{\mu}_{E(B)}} \right\}\right\|_{\ell^{s^{\mu}_{E(B)}}(\mathcal{B})}.
\end{align}
\end{proposition}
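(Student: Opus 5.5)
The plan is to prove \eqref{eq:prop:CarlesonEmbedding} directly at the level of modulars, using Lemma \ref{lem:ConstAveBound} on each set $E(B)$. We then obtain \eqref{eq:prop:CarlesonEmbedding2} by duality, applying \eqref{eq:prop:CarlesonEmbedding} to the conjugate exponent $s'(\cdot)$.

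\emph{Step 1: the bound \eqref{eq:prop:CarlesonEmbedding}.} Write $F:=\sum_{B}a_B1_{E(B)}$. By homogeneity we may assume $\|F\|_{L^{s(\cdot)}}=1$; the case $F=0$ is trivial. Since $s(\cdot)<\infty$ a.e., the modular is left-continuous in the scaling parameter, so $\rho_{s(\cdot)}(F)\le 1$. The sets $E(B)$ are pairwise disjoint, so this reads
\[
\sum_{B\in\mathcal B}\int_{E(B)}a_B^{s(x)}\,\ud\mu(x)\le 1 .
\]
Applying Lemma \ref{lem:ConstAveBound} on $E(B)$ with $c=a_B$ gives
\[
\Bigl(\tfrac{a_B}{2}\Bigr)^{s^\mu_{E(B)}}\mu(E(B))\le\int_{E(B)}a_B^{s(x)}\,\ud\mu(x).
\]
The left-hand side equals $\bigl(\tfrac12 a_B\mu(E(B))^{1/s^\mu_{E(B)}}\bigr)^{s^\mu_{E(B)}}$. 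Summing over $B$, the $\ell^{s^\mu_{E(B)}}$-modular of the sequence $\{\tfrac12 a_B\mu(E(B))^{1/s^\mu_{E(B)}}\}_B$ is at most $1$. Hence its norm is at most $1$, which gives \eqref{eq:prop:CarlesonEmbedding} with constant $2$.

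\emph{Step 2: the bound \eqref{eq:prop:CarlesonEmbedding2}.} By Lemma \ref{lem:dualityFormula}, it suffices to bound $\int_X F|g|\,\ud\mu$ for $\|g\|_{L^{s'(\cdot)}}=1$. Since $E(B)\subset B$, and using $\tfrac{1}{s^\mu_{E(B)}}+\tfrac{1}{(s')^\mu_{E(B)}}=1$ (harmonic means of conjugate exponents are conjugate), we have
\[
\int_X F|g|\,\ud\mu\le\sum_B a_B\mu(B)\langle |g|\rangle_B
=\sum_B\Bigl[\tfrac{\mu(B)}{\mu(E(B))}a_B\mu(E(B))^{\frac1{s^\mu_{E(B)}}}\Bigr]\Bigl[\langle |g|\rangle_B\,\mu(E(B))^{\frac1{(s')^\mu_{E(B)}}}\Bigr].
\]
Applying Lemma \ref{lem:Holder} on $(\mathcal B,\#)$ with exponent $s^\mu_{E(B)}$ bounds this by $4$ times the $\ell^{s^\mu_{E(B)}}$-norm of the first bracket times the $\ell^{(s')^\mu_{E(B)}}$-norm of the second. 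For the second factor we apply Step 1 with $s'(\cdot)$ in place of $s(\cdot)$, which is legitimate because $1<s'(\cdot)<\infty$ a.e., and with $a_B:=\langle|g|\rangle_B$. This bounds it by $2\|\sum_B\langle |g|\rangle_B1_{E(B)}\|_{L^{s'(\cdot)}}$. By the disjointness of the $E(B)$ and $E(B)\subset B$, we have $\sum_B\langle|g|\rangle_B1_{E(B)}\le M_{\mathcal B}g$ pointwise. The lattice property then gives a bound of $\|M_{\mathcal B}\|_{L^{s'(\cdot)}}$. Combining with the factor $4$ from Lemma \ref{lem:dualityFormula} yields \eqref{eq:prop:CarlesonEmbedding2}.

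\emph{Main obstacle.} The only genuinely nontrivial point is Step 1: comparing the variable modular on $E(B)$ with the constant exponent $s^\mu_{E(B)}$. The estimate must go in the correct direction and must not depend on a Muckenhoupt constant, and Lemma \ref{lem:ConstAveBound} (Jensen with $s\mapsto sc^{1/s}$) is exactly what provides this. The remaining details are routine: the normalization $\rho\le1$ at norm one, and the fact that $\mathcal B$ is countable so that all sums are meaningful, for which we pass to finite subcollections and use the Fatou property.
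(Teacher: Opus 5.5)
Your proposal is correct and follows essentially the paper's proof: Step 1 is the same normalization plus modular estimate via Lemma \ref{lem:ConstAveBound}, and Step 2 is the same duality argument followed by the sequential H\"older inequality with the factor $\mu(B)/\mu(E(B))$. The only difference is packaging: you control the second H\"older factor by invoking Step 1 with $s'(\cdot)$ (the argument of Corollary \ref{corollary:carlesonembeddingapplication}), whereas the paper inlines the same Lemma \ref{lem:ConstAveBound} computation directly against $M_{\mathcal B}g$.
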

\begin{proof}
	We verify \eqref{eq:prop:CarlesonEmbedding} first. By homogeneity we may assume that 
	$$
		\Bigl\| \sum_{B\in\mathcal{B}}a_B1_{E(B)} \Bigr\|_{L^{s(\cdot)}(X,\mu)}= 1,
	$$
	and it is enough, under this standing assumption, to show that 
	\begin{align}\label{eq:CarlesonEmbed1}
		\rho_{\{s^\mu_{E(B)}\}_{B\in\mathcal{B}}}\Bigl(\Bigl\{\frac{a_B}{2}\mu\bigl(E(B)\bigr)^\frac1{s^\mu_{E(B)}}\Bigr\}_{B\in\mathcal{B}}\Bigr) \leq 1.
	\end{align}
	We have by Lemma \ref{lem:ConstAveBound} that 
	\begin{equation}\label{eq:homog6}
		\begin{split}
			\sum_{B\in\mathcal{B}}	\mu\bigl(E(B)\bigr) \left(\frac{a_B }{2}\right)^{s_{E(B)}^{\mu}}  &\leq	\sum_{B\in\mathcal{B}}\mu\bigl(E(B)\bigr) \frac{1}{\mu\bigl(E(B)\bigr)}   \int_{E(B)}a_B^{s(x)}\mathrm d\mu(x) \\
			&=\int_X\Bigl(  \sum_{B\in\mathcal{B}}a_B1_{E(B)}(x) \Bigr)^{s(x)}\mathrm d\mu(x) = 1.
		\end{split}
	\end{equation}
This proves \eqref{eq:CarlesonEmbed1}, and hence \eqref{eq:prop:CarlesonEmbedding}.

Then we verify \eqref{eq:prop:CarlesonEmbedding2}. By Lemma \ref{lem:dualityFormula} and pairwise disjointness of the sets $E(B)$ we have 
\[
\Bigl\| \sum_{B\in\mathcal{B}}a_B1_{E(B)} \Bigr\|_{L^{s(\cdot)}(X,\mu)} \lesssim \sup_{\|g\|_{{s'(\cdot)}}\leq 1}\sum_{B\in\mathcal{B}}a_B \int_{E(B)}g\ud\mu.
\]
By Lemma \ref{lem:Holder} we bound the right-hand side as 
\begin{multline*}
	\sum_{B\in\mathcal{B}}a_B \int_{E(B)}g\ud\mu \leq \sum_{B\in\mathcal{B}} \frac{\mu(B)}{\mu(E(B))} \mu(E(B))^{1/s^{\mu}_{E(B)}}a_B  \mu(E(B))^{1/(s')^{\mu}_{E(B)}} \langle |g|\rangle_{B} \\
	\lesssim 
	\left\|  \left\{  a_B\frac{\mu(B)}{\mu(E(B))} \mu(E(B))^{1/s^{\mu}_{E(B)}} \right\}\right\|_{\ell^{s^{\mu}_{E(B)}}(\mathcal{B})}	\left\|  \left\{  \mu(E(B))^{1/(s')^{\mu}_{E(B)}} \frac{\langle |g|\rangle_{B}}{2} \right\}\right\|_{\ell^{(s')^{\mu}_{E(B)}}(\mathcal{B})}.
\end{multline*}
To bound the right-most term, we have by Lemma \ref{lem:ConstAveBound} that 
\begin{align*}
	\sum_{B\in\mathcal{B}}\mu(E(B)) \Bigl(\frac{\langle |g|\rangle_{B}}{2} \Bigr)^{(s')^{\mu}_{E(B)}}  \leq 	\sum_{B\in\mathcal{B}}\int_{E(B)} \langle |g|\rangle_{B}^{s'(x)}\ud\mu(x) \leq \int_X\left(M_{\mathcal{B}}g(x)\right)^{s'(x)}\ud\mu(x).
\end{align*}
By homogeneity it follows that 
\[
\left\|  \left\{  \mu(E(B))^{1/(s')^{\mu}_{E(B)}} \frac{\langle |g|\rangle_{B}}{2} \right\}\right\|_{\ell^{(s')^{\mu}_{E(B)}}(\mathcal{B})} \leq  \|M_{\mathcal{B}}g\|_{L^{s'(\cdot)}(X,\mu)}
\]
and chaining the above estimates together concludes the proof.
\end{proof}
Proposition \ref{prop:CarlesonEmbedding} with the choice $a_B=\ave{|f|}_{\mu, B}$ gives the following corollary.
\begin{corollary}\label{corollary:carlesonembeddingapplication}
 Let $s(\cdot)\in \mathcal{P}(X,\mu)$ be such that $1< s(\cdot)<\infty$ almost everywhere. Let $\mathcal{B}$ be a countable collection of measurable sets such that $0<\mu(B)<\infty$ for every $B\in\mathcal{B}$, with associated pairwise disjoint measurable subsets $E(B)\subset B$ satisfying $0<\mu(E(B))<\infty.$
	Then, there holds that 
\begin{align}\label{eq:cor:CarlesonEmbeddingapplication}
		\left\| \left\{  \ave{|f|}_{\mu,B}\mu(E(B))^\frac1{s^\mu_{E(B)}} \right\}_{B\in\mathcal{B}}\right\|_{\ell^{s^\mu_{E(B)}}}   \lesssim 	\left\|M_{\mathcal{B}}\right\|_{L^{s(\cdot)}(X,\mu)}\left\| f\right\|_{L^{s(\cdot)}(X,\mu)}.
	\end{align}
\end{corollary}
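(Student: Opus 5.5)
Corollary~\ref{corollary:carlesonembeddingapplication} follows from the first bound \eqref{eq:prop:CarlesonEmbedding} of Proposition~\ref{prop:CarlesonEmbedding}, applied with the choice $a_B=\ave{|f|}_{\mu,B}$. The work is to bound the right-hand side of \eqref{eq:prop:CarlesonEmbedding} by the maximal function.

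The hypotheses of the proposition are met directly. Indeed, $s(\cdot)$ satisfies $1<s(\cdot)<\infty$, the collection $\mathcal{B}$ is countable, and the sets $E(B)\subset B$ are pairwise disjoint with positive finite measure. We may assume $f\in L^{s(\cdot)}$ and $\|M_{\mathcal B}\|_{L^{s(\cdot)}}<\infty$, since otherwise there is nothing to prove. Each average $a_B=\ave{|f|}_{\mu,B}$ is a nonnegative number. It is finite because H\"older's inequality, Lemma~\ref{lem:Holder}, gives $\int_B|f|\le 4\|f\|_{s(\cdot)}\|1_B\|_{s'(\cdot)}$; if some $a_B=\infty$, the right-hand side is infinite anyway.

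Proposition~\ref{prop:CarlesonEmbedding} then bounds the left-hand side of \eqref{eq:cor:CarlesonEmbeddingapplication} by
\[
\Bigl\|\sum_{B\in\mathcal B}\ave{|f|}_{\mu,B}1_{E(B)}\Bigr\|_{L^{s(\cdot)}(X,\mu)}.
\]
The key pointwise observation uses the disjointness of the sets $E(B)$. At any point $x$, at most one term of the sum is nonzero. If $x\in E(B)\subset B$, that term is $\ave{|f|}_{\mu,B}1_B(x)\le M_{\mathcal B}f(x)$. Hence the sum is dominated pointwise by $M_{\mathcal{B}}f$.

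By the ideal (lattice) property of the norm of $L^{s(\cdot)}$, the norm of the sum is therefore at most $\|M_{\mathcal B}f\|_{L^{s(\cdot)}}$. This in turn is at most $\|M_{\mathcal B}\|_{L^{s(\cdot)}}\|f\|_{L^{s(\cdot)}}$, which concludes the argument.

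There is no genuine obstacle here. The only point to check is that the implicit constant stays absolute, and it does, since it is inherited from \eqref{eq:prop:CarlesonEmbedding}, namely the factor $2$ from Lemma~\ref{lem:ConstAveBound}.
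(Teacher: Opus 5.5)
Your proposal is correct and follows the paper's proof: you apply the left bound \eqref{eq:prop:CarlesonEmbedding} of Proposition~\ref{prop:CarlesonEmbedding} with $a_B=\ave{|f|}_{\mu,B}$, dominate $\sum_{B}\ave{|f|}_{\mu,B}1_{E(B)}$ pointwise by $M_{\mathcal B}f$ using the disjointness of the sets $E(B)$, and conclude with the lattice property and the operator norm. Your extra check that the averages $a_B$ are finite is a harmless addition that the paper leaves implicit.
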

\begin{proof} Applying the left bound of \eqref{eq:prop:CarlesonEmbedding} with the choice $a_B=\langle |f|\rangle_{\mu,B}$ gives
	\[
		\left\| \left\{  \ave{|f|}_{\mu,B}\mu(E(B))^\frac1{s^\mu_{E(B)}} \right\}_{B\in\mathcal{B}}\right\|_{\ell^{s^\mu_{E(B)}}}  \lesssim \Bigl\| \sum_{B\in\mathcal{B}}\ave{|f|}_{\mu, B}1_{E(B)} \Bigr\|_{L^{s(\cdot)}(X,\mu)} \leq	\left\|  M_{\mathcal{B}}f\right\|_{L^{s(\cdot)}(X,\mu)},
	\]
    where we have used the pointwise estimate $\langle |f|\rangle_{\mu,B}\leq M_\mathcal{B}f$ and the pairwise disjointness of $E(B)$ in the last estimate.
\end{proof}

We will also use the following embedding of sequential variable exponent spaces. 
\begin{lemma}\label{lem:sequential}
	Let $p(\cdot) = \{p_n\}_n$ and $q(\cdot) = \{q_n\}_n$ satisfy $1\leq p_n\leq q_n < \infty,$ for all indices $n$. Then, 
	$$
	\|\cdot\|_{\ell^{\infty}}\leq \|\cdot\|_{\ell^{q(\cdot)}}\leq \|\cdot\|_{\ell^{p(\cdot)}}.
	$$
\end{lemma}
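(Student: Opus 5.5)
The plan is to work directly with modulars, using the unit-ball characterization of the Luxemburg norm on $(\mathbb{N},\#)$. For a sequence $a=\{a_n\}$ we have $\rho_{p(\cdot)}(a)=\sum_n |a_n|^{p_n}$, because every $p_n<\infty$ and so the $L^\infty$ part of the modular is absent. By homogeneity of the norm, each inequality reduces to a statement about sequences of norm at most one.

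For the first inequality, $\|a\|_{\ell^\infty}\le\|a\|_{\ell^{q(\cdot)}}$, I would take any $\lambda>0$ with $\sum_n |a_n/\lambda|^{q_n}\le 1$. Then each single term satisfies $|a_n/\lambda|^{q_n}\le1$, and hence $|a_n|\le\lambda$ for every $n$. Taking the infimum over all admissible $\lambda$ gives $\sup_n|a_n|\le \|a\|_{\ell^{q(\cdot)}}$.

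For the second inequality, $\|a\|_{\ell^{q(\cdot)}}\le\|a\|_{\ell^{p(\cdot)}}$, I would fix $\lambda>\|a\|_{\ell^{p(\cdot)}}$. Since the modular is monotone in the scaling parameter (by the definition of the norm as an infimum), we get $\sum_n|a_n/\lambda|^{p_n}\le1$. In particular every term $t_n:=|a_n/\lambda|$ satisfies $t_n\le 1$. Because $q_n\ge p_n$ and $0\le t_n\le1$, we have $t_n^{q_n}\le t_n^{p_n}$. Summing gives $\rho_{q(\cdot)}(a/\lambda)\le\rho_{p(\cdot)}(a/\lambda)\le1$, so $\|a\|_{\ell^{q(\cdot)}}\le\lambda$. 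Letting $\lambda\downarrow\|a\|_{\ell^{p(\cdot)}}$ concludes the argument.

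The only mildly delicate point is the boundary case $\lambda=\|a\|_{\ell^{p(\cdot)}}$ itself, which I avoid by taking $\lambda$ strictly larger and passing to the limit. The degenerate cases $a=0$ and $\|a\|_{\ell^{p(\cdot)}}=\infty$ are trivial. I therefore expect no genuine obstacle: the whole argument is the observation that the unit modular ball of $\ell^{p(\cdot)}$ consists of sequences with entries bounded by $1$, and on $[0,1]$ powers decrease as the exponent increases.
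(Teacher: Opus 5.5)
Your proposal is correct and follows the paper's proof essentially step for step. The first inequality comes from the termwise bound $|a_n/\lambda|^{q_n}\le 1$ for every admissible $\lambda$. The second comes from taking $\lambda>\|a\|_{\ell^{p(\cdot)}}$, observing that every entry of $a/\lambda$ is at most $1$, and using $t^{q_n}\le t^{p_n}$ on $[0,1]$ inside the modular. (The step $\rho_{p(\cdot)}(a/\lambda)\le 1$ for $\lambda>\|a\|_{\ell^{p(\cdot)}}$ is justified by picking an admissible $\lambda'<\lambda$ and using that $\lambda\mapsto\rho_{p(\cdot)}(a/\lambda)$ is nonincreasing.)
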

\begin{proof}
	Let \(\lambda>0\) be such that
	\(
	\rho_{q(\cdot)}(\tfrac x\lambda)\le 1.
	\)
	Then for every \(k\), we have
	\(
	|\tfrac{x_k}{\lambda}|^{q_k}\le 1,
	\)
	and hence \(|x_k|\le \lambda\). Taking the supremum over \(k\), and then the infimum over all such \(\lambda\), gives
	the first inequality of the statement.
	For the second inequality, let \(\lambda>\|x\|_{\ell^{p(\cdot)}}\). Then
	\(
	\rho_{p(\cdot)}(x/\lambda)\le 1.
	\)
	In particular \(|x_n|/\lambda\le 1\) for all \(n\). Since \(p_n\le q_n\), we have
	\(
	\left|\frac{x_n}{\lambda}\right|^{q_n}
	\le
	\left|\frac{x_n}{\lambda}\right|^{p_n}
	\)
	and thus
	\[
	\rho_{q(\cdot)}(x/\lambda)
	=
	\sum_n \left|\frac{x_n}{\lambda}\right|^{q_n}
	\le
	\sum_n \left|\frac{x_n}{\lambda}\right|^{p_n}
	=
	\rho_{p(\cdot)}(x/\lambda)
	\le 1.
	\]
Taking the infimum over
	\(\lambda>\|x\|_{\ell^{p(\cdot)}}\) yields $	\|x\|_{\ell^{q(\cdot)}}\le \|x\|_{\ell^{p(\cdot)}}.$
\end{proof}

\subsection{Variable exponent Morrey spaces}
We next give a definition of Morrey spaces where the fractionality of the scaling is allowed to change, leading to Morrey spaces with variable fractionality. 

\begin{definition}\label{defn:MorreyNorm}
	Let $\mathcal B$ be a nonempty collection of measurable sets on a $\sigma$-finite measure space $(X,\mu)$ such that
	 \(0<\mu(B)<\infty\) for every
	\(B\in\mathcal B\). Assume that \(U:=\bigcup_{B\in\mathcal B}B\)
	is measurable and that there is a countable subfamily
	\(\mathcal B_0\subset\mathcal B\) such that
	\(\mu(U\setminus\bigcup_{B\in\mathcal B_0}B)=0\).
	Let $p(\cdot), r(\cdot)\in \mathcal P(U,\mu)$.
	We define $\mathcal M^{r(\cdot)}_{p(\cdot)}(\mathcal B,\mu)$ to consist of all measurable functions on \(U\),
	identified up to equality almost everywhere, 
	such that
	\begin{align}\label{eq:defn:Morrey}
		\|f\|_{\mathcal M^{r(\cdot)}_{p(\cdot)}(\mathcal B)} :=	\|f\|_{\mathcal M^{r(\cdot)}_{p(\cdot)}(\mathcal B,\mu)} 
		:=
		\sup_{B\in\mathcal B}
		\mu(B)^{\frac1{r_B^\mu}-\frac1{p_B^\mu}}
		\|1_B f\|_{L^{p(\cdot)}} < \infty.
	\end{align}
\end{definition}
Definition \ref{defn:MorreyNorm} is a variable analogue of the usual Morrey space norm originating in \cite{Morrey1938}. Indeed, when the collection $\mathcal{B}$ consists of all balls and the exponents are constant, the factor becomes $\mu(B)^{1/r-1/p},$ recovering the classical Morrey spaces. Under the hypothesis that $p(\cdot),r(\cdot)\in \mathcal{LH}_0(X)$ (see Section \ref{sect:logHolder} for the definition) similar spaces have been considered on bounded domains $X$ in \cite{AlmHasSam08}. In Section \ref{sect:logHolder} we verify the equivalence of Definition \ref{defn:MorreyNorm} with that given in \cite{AlmHasSam08} provided that $p(\cdot),r(\cdot)\in \mathcal{LH}_0(X)$.

\begin{proposition}\label{prop:MorreyProperties}
	Let \(\mathcal B,p(\cdot),r(\cdot)\) be as in Definition
	\ref{defn:MorreyNorm}, and set \(U:=\bigcup\mathcal B\). 
	Then, $\mathcal M^{r(\cdot)}_{p(\cdot)}(\mathcal B,\mu)$ is a Banach
	function lattice over \((U,\mu)\) and has the Fatou property.
    If, in addition, \(p(\cdot)\leq r(\cdot)\) almost everywhere on \(U\) and
	\([1]_{A_{p(\cdot)}(\mathcal B)}<\infty\),
	then \(\mathcal M^{r(\cdot)}_{p(\cdot)}(\mathcal B,\mu)\) is saturated,
	and hence is a Banach function space over \((U,\mu)\).

\end{proposition}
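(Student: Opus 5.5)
The plan is to verify the defining properties one at a time, reducing each to the corresponding property of $L^{p(\cdot)}$ given by Proposition \ref{prop:Lp=Fatou}. The weight $w_B:=\mu(B)^{1/r_B^\mu-1/p_B^\mu}$ is a fixed positive finite number for each $B\in\mathcal B$. Hence $\|f\|_{\mathcal M}=\sup_B w_B\|1_Bf\|_{p(\cdot)}$ is a supremum of seminorms, and so is a seminorm.

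\textbf{Norm and ideal property.} If $\|f\|_{\mathcal M}=0$, then $1_Bf=0$ a.e.\ for every $B\in\mathcal B_0$. Since $\mathcal B_0$ is countable and covers $U$ up to a null set, $f=0$ a.e.\ on $U$. For the ideal property, $|f|\le|g|$ gives $\|1_Bf\|_{p(\cdot)}\le\|1_Bg\|_{p(\cdot)}$ for each $B$, and taking the supremum over $B$ gives the claim.

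\textbf{Fatou property.} Let $0\le f_n\uparrow f$ with $\sup_n\|f_n\|_{\mathcal M}<\infty$. For each $B$ we have $1_Bf_n\uparrow 1_Bf$, so the Fatou property of $L^{p(\cdot)}$ gives
\[
w_B\|1_Bf\|_{p(\cdot)}=\sup_n w_B\|1_Bf_n\|_{p(\cdot)}.
\]
Interchanging the two suprema (over $B$ and over $n$) yields $\|f\|_{\mathcal M}=\sup_n\|f_n\|_{\mathcal M}$.

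\textbf{Completeness.} A normed lattice with the Fatou property is complete, by the Riesz--Fischer type argument. Take $\sum_k\|h_k\|_{\mathcal M}<\infty$. The partial sums of $\sum_k|h_k|$ increase and have uniformly bounded norms, so by Fatou $F:=\sum_k|h_k|$ lies in $\mathcal M$. In particular $F<\infty$ a.e.: the case $F=\infty$ on a positive-measure subset of some $B\in\mathcal B_0$ is excluded because $L^{p(\cdot)}$ functions are finite a.e. So $\sum_k h_k$ converges a.e. Applying Fatou again to the tails gives
\[
\Bigl\|\sum_{k>N}h_k\Bigr\|_{\mathcal M}\le\sum_{k>N}\|h_k\|_{\mathcal M}\to0.
\]
Hence every absolutely convergent series converges, and $\mathcal M$ is a Banach function lattice with the Fatou property.

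\textbf{Saturation.} Assume $p\le r$ and $[1]_{A_{p(\cdot)}(\mathcal B)}<\infty$. Let $E\subset U$ with $\mu(E)>0$. Then some $B_0\in\mathcal B_0$ has $\mu(E\cap B_0)>0$. I would try $F=E\cap B_0$, possibly shrunk so that $p$ is bounded on it (fine, since $p<\infty$ a.e.\ is not assumed, but one can shrink to where $p$ is bounded or handle $p=\infty$ separately). It remains to bound $w_B\|1_{B\cap F}\|_{p(\cdot)}$ uniformly in $B\in\mathcal B$.

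By Lemma \ref{lem:IndicatorKopaliani},
\[
w_B\|1_{B\cap F}\|_{p(\cdot)}\le 5[1]_{A_{p(\cdot)}(\mathcal B)}\,\mu(B)^{1/r_B}.
\]
Since $p\le r$, the exponent $1/r_B-1/p_B$ is $\le0$, so $w_B$ is a negative power of $\mu(B)$. For $\mu(B)\ge1$ this gives $w_B\le1$, and then
\[
w_B\|1_{B\cap F}\|_{p(\cdot)}\le\|1_F\|_{p(\cdot)}<\infty,
\]
provided $\|1_F\|_{p(\cdot)}<\infty$. This last point holds because $\mu(F)\le\mu(B_0)<\infty$ and, by Lemma \ref{lem:IndicatorKopaliani}, $\|1_{B_0}\|_{p(\cdot)}<\infty$. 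For $\mu(B)<1$ we use the Kopaliani bound displayed above, and $\mu(B)^{1/r_B}\le1$.

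So in both cases the quantity is bounded by $\max(5[1]_{A_{p(\cdot)}(\mathcal B)},\|1_{B_0}\|_{p(\cdot)})$, giving $1_F\in\mathcal M$. The main step is this saturation estimate. The remaining care points are that the sign condition $p\le r$ handles large sets, and that the Muckenhoupt condition makes $\|1_B\|_{p(\cdot)}$ comparable to $\mu(B)^{1/p_B}$ for small sets.
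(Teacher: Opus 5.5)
Your proof is correct. The normed-lattice, Fatou and saturation parts are essentially the paper's; the genuine difference is completeness.

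\emph{Completeness.} The paper proves it by hand. For a Cauchy sequence $(f_n)$, each $1_{B_j}f_n$ with $B_j\in\mathcal B_0$ is Cauchy in $L^{p(\cdot)}$. The local limits are patched together along the disjointification $D_j=B_j\setminus\bigcup_{i<j}B_i$. One then checks $1_Bf_n\to 1_Bf$ in $L^{p(\cdot)}$ for every $B\in\mathcal B$ and lets $m\to\infty$ in the Cauchy estimate.

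You instead establish the Fatou property first and deduce completeness from the Riesz--Fischer criterion (absolutely convergent series converge). This is the standard structural fact that a normed function lattice with the Fatou property is automatically complete. It avoids the bookkeeping with representatives and overlaps. The paper's route, by contrast, uses only completeness of $L^{p(\cdot)}$ and not the Fatou property, so it would also work where only the former is available.

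The one delicate point in your route is that $F=\sum_k|h_k|$ must be finite a.e.\ before the Fatou property of $\mathcal M$ is applied to it. You handle this correctly, but the logical order should be reversed:
\begin{itemize}
\item First, monotone convergence of the modular gives $\|1_BF\|_{L^{p(\cdot)}}=\sup_N\|1_B\sum_{k\le N}|h_k|\|_{L^{p(\cdot)}}<\infty$ for each $B\in\mathcal B_0$.
\item Hence $F<\infty$ a.e.\ on $U$.
\item Only then conclude $F\in\mathcal M$.
\end{itemize}

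\emph{Saturation.} You bound $\|1_{B_0}\|_{L^{p(\cdot)}}$ via Lemma \ref{lem:IndicatorKopaliani}, which is legitimate since $B_0\in\mathcal B$. The paper instead uses a direct modular estimate valid for any set of finite measure. Your aside about shrinking $F$ so that $p$ is bounded on it is unnecessary: neither your estimate nor Lemma \ref{lem:IndicatorKopaliani} requires $p<\infty$.
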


\begin{proof}
	Since \(U\) is measurable and \(X\) is \(\sigma\)-finite, the measure space
	\((U,\mu)\) is \(\sigma\)-finite. Hence Proposition \ref{prop:Lp=Fatou}
	applies to \(L^{p(\cdot)}(U,\mu)\).

	\emph{Normed lattice.}
	The absolute homogeneity, triangle inequality, and lattice property of the
	Morrey norm follow from the corresponding properties of
	\(L^{p(\cdot)}(U,\mu)\). If
	\(\|f\|_{\mathcal M^{r(\cdot)}_{p(\cdot)}(\mathcal B,\mu)}=0\), then
	\(1_Bf=0\) almost everywhere for every \(B\in\mathcal B\). The countable
	subfamily covers \(U\) up to a null set, so \(f=0\) almost everywhere on
	\(U\). Thus \(\mathcal M^{r(\cdot)}_{p(\cdot)}(\mathcal B,\mu)\) is a
	normed function lattice.

	\emph{Completeness.}
	Choose an enumeration \((B_j)_{j\ge1}\) of \(\mathcal B_0\) and let \((f_n)_{n\ge1}\) be a Cauchy sequence in
	\(\mathcal M^{r(\cdot)}_{p(\cdot)}(\mathcal B,\mu)\). For every \(j\),
	\[
		\|1_{B_j}(f_n-f_m)\|_{L^{p(\cdot)}(U,\mu)}
		\le
		\mu(B_j)^{\frac1{p_{B_j}^\mu}-\frac1{r_{B_j}^\mu}}
		\|f_n-f_m\|_{\mathcal M^{r(\cdot)}_{p(\cdot)}(\mathcal B,\mu)}.
	\]
	Hence \((1_{B_j}f_n)_n\) is Cauchy in \(L^{p(\cdot)}(U,\mu)\). Let
	\(F_j\) be its limit. After changing representatives on null sets, the
	functions \(F_j\) vanish outside \(B_j\) and agree on overlaps. Indeed, for
	fixed \(j,k\), both \(1_{B_j\cap B_k}F_j\) and
	\(1_{B_j\cap B_k}F_k\) are limits of \(1_{B_j\cap B_k}f_n\). This can be
	done for all \(j,k\) simultaneously, because only countably many null sets
	are involved.

	Define \(D_1:=B_1\) and \(D_j:=B_j\setminus\bigcup_{i<j}B_i\) for
	\(j\ge2\), and set
	\[
		f:=\sum_{j=1}^{\infty}1_{D_j}F_j
		\qquad\text{on } U.
	\]
	Then \(1_{B_k}f=F_k\) almost everywhere for every \(k\). If
	\(B\in\mathcal B\), then \((1_Bf_n)_n\) is Cauchy in
	\(L^{p(\cdot)}(U,\mu)\). The limit is \(1_Bf\). Indeed, on each
	\(B\cap B_j\), the restriction of this limit and \(1_{B\cap B_j}f\) are
	both limits of \(1_{B\cap B_j}f_n\), and \(\bigcup_j B_j\) covers \(U\) up
	to a null set. Thus
	\(1_Bf_n\to1_Bf\) in \(L^{p(\cdot)}(U,\mu)\) for every \(B\in\mathcal B\).

	We now prove convergence in the Morrey norm. Let \(\varepsilon>0\) and
	choose \(N\) such that
	\[
	\|f_n-f_m\|_{\mathcal M^{r(\cdot)}_{p(\cdot)}(\mathcal B,\mu)}
	\le \varepsilon,\qquad n,m\ge N.
	\]
	For each fixed \(B\in\mathcal B\), this gives
	\[
	\mu(B)^{\frac1{r_B^\mu}-\frac1{p_B^\mu}}
	\|1_B(f_n-f_m)\|_{L^{p(\cdot)}(U,\mu)}
	\le \varepsilon,\qquad n,m\ge N.
	\]
	Since \(1_Bf_m\to 1_Bf\) in
	\(L^{p(\cdot)}(U,\mu)\), we may pass to the limit \(m\to\infty\)
	and obtain
	\[
	\mu(B)^{\frac1{r_B^\mu}-\frac1{p_B^\mu}}
	\|1_B(f_n-f)\|_{L^{p(\cdot)}(U,\mu)}
	\le \varepsilon,\qquad n\ge N.
	\]
	Taking the supremum over \(B\in\mathcal B\) shows that \(f_n\to f\) in
	\(\mathcal M^{r(\cdot)}_{p(\cdot)}(\mathcal B,\mu)\). Hence
	\(f\in\mathcal M^{r(\cdot)}_{p(\cdot)}(\mathcal B,\mu)\), and the space is
	complete.
	
	\emph{Fatou property.}
	Let \(0\le f_n\uparrow f\) almost everywhere on \(U\), and suppose that
	\[
	\sup_n
	\|f_n\|_{\mathcal M^{r(\cdot)}_{p(\cdot)}(\mathcal B,\mu)}
	<\infty.
	\]
	For each \(B\in\mathcal B\), the Fatou property of
	\(L^{p(\cdot)}(U,\mu)\) gives
	\(
	\|1_B f\|_{L^{p(\cdot)}(U,\mu)}
	=\sup_{n\in\mathbb N}\|1_B f_n\|_{L^{p(\cdot)}(U,\mu)}.
	\)
	Taking the supremum over \(B\in\mathcal B\) gives
	\begin{align*}
		\|f\|_{\mathcal M^{r(\cdot)}_{p(\cdot)}(\mathcal B,\mu)}
		&=
		\sup_{B\in\mathcal B}
		\sup_{n\in\mathbb N}
		\mu(B)^{\frac1{r_B^\mu}-\frac1{p_B^\mu}}
		\|1_B f_n\|_{L^{p(\cdot)}(U,\mu)}
		=
		\sup_{n\in\mathbb N}
		\|f_n\|_{\mathcal M^{r(\cdot)}_{p(\cdot)}(\mathcal B,\mu)}
		<\infty.
	\end{align*}
	This proves the Fatou property.

	\emph{Saturation.}
	Assume that \(p(\cdot)\leq r(\cdot)\) almost everywhere and
	\([1]_{A_{p(\cdot)}(\mathcal B)}<\infty\). We first show that
	\(1_{B_0}\in \mathcal M^{r(\cdot)}_{p(\cdot)}(\mathcal B,\mu)\) for
	every \(B_0\in\mathcal B\). For \(B\in\mathcal B\), the pointwise
	inequality \(p(\cdot)\leq r(\cdot)\) gives
	\(
		\frac1{r_B^\mu}-\frac1{p_B^\mu}\leq0.
	\)
	If \(\mu(B)\leq1\), then the lattice property of \(L^{p(\cdot)}\) and
	Lemma \ref{lem:IndicatorKopaliani} give
	\[
	\begin{aligned}
		\mu(B)^{\frac1{r_B^\mu}-\frac1{p_B^\mu}}
		\|1_{B\cap B_0}\|_{L^{p(\cdot)}}
		&\leq
		\mu(B)^{\frac1{r_B^\mu}-\frac1{p_B^\mu}}
		\|1_B\|_{L^{p(\cdot)}}  \\
		&\leq
		5[1]_{A_{p(\cdot)}(\mathcal B)}\,
		\mu(B)^{\frac1{r_B^\mu}}
		\leq
		5[1]_{A_{p(\cdot)}(\mathcal B)}.
	\end{aligned}
	\]
	If \(\mu(B)>1\), then the sign of the exponent and the lattice property give
	\[
		\mu(B)^{\frac1{r_B^\mu}-\frac1{p_B^\mu}}
		\|1_{B\cap B_0}\|_{L^{p(\cdot)}}
		\leq
		\|1_{B_0}\|_{L^{p(\cdot)}}<\infty,
	\]
	because finite-measure indicators belong to \(L^{p(\cdot)}\). Indeed, if
	\(\lambda\geq \mu(B_0)+1\), then
	\[
		\rho_{p(\cdot)}(1_{B_0}/\lambda)
		\leq \frac{\mu(B_0)}{\lambda}+\frac1{\lambda}
		\leq 1.
	\]
	Thus \(\|1_{B_0}\|_{L^{p(\cdot)}}<\infty\). Taking the supremum over
	\(B\in\mathcal B\) proves \(1_{B_0}\in
	\mathcal M^{r(\cdot)}_{p(\cdot)}(\mathcal B,\mu)\).

	Let \(E\subset U\) be measurable with \(\mu(E)>0\). Since the enumeration
	\((B_j)_{j\geq1}\) of \(\mathcal B_0\) fixed above covers \(U\) up to a
	null set,
	\(\mu(E\cap B_j)>0\) for some \(j\). Setting \(F:=E\cap B_j\), the
	lattice property gives \(1_F\in
	\mathcal M^{r(\cdot)}_{p(\cdot)}(\mathcal B,\mu)\). Thus the space is
	saturated.
\end{proof}


\section{Bounds for the dyadic models}\label{sect:dyadic}
We assume in this section that the measure space $(X,\mu)$ is $\sigma$-finite. In particular $L^{p(\cdot)}(X,\mu)$ is a Banach function space, whenever $p(\cdot)\in\mathcal{P}(X,\mu)$, and all the results of Section \ref{sect:preliminaries} and \ref{sect:necessary} (below) can be used.
The goal is to prove norm bounds for the operators
\begin{align*}
	M^{\alpha(\cdot)}_{\mathcal{D}}f = \sup_{Q\in\mathcal{D}}\frac{1_Q}{ \mu(Q)^{1-\langle \alpha\rangle_Q}}\int_Q |f|\ud\mu,\qquad I^{\alpha(\cdot)}_{\mathcal{D}}f=\sum_{Q\in\mathcal{D}}\frac{1_Q}{ \mu(Q)^{1-\langle \alpha\rangle_Q}}\int_Q f\ud\mu,
\end{align*}
defined over a dyadic collection $\mathcal{D}$ of sets in an underlying measure space $(X,\mu)$, and where
\[
\langle \alpha\rangle_Q:=\frac{1}{\mu(Q)}\int_Q\alpha(x)\,\mathrm d\mu(x).
\] 
\subsubsection*{Dyadic structure}
We assume that the collection $\mathcal{D}$ is countable, that $0<\mu(Q)<\infty$ for each $Q\in\mathcal{D},$ and that $\mathcal{D}$ is dyadic, meaning that if $Q,Q' \in\mathcal{D}$ then $Q\cap Q'\in \{Q,Q',\emptyset\}.$ We call the elements of a dyadic collection of sets cubes. 
\subsubsection*{Sparsity with respect to a function}
An important concept will be major subsets and sparsity with respect to a function. More precisely, we will invoke sparsity with the choice of functions $A\mapsto \mu(A)$ and $A\mapsto \mu(A)^{1-\langle \alpha\rangle_{A}},$ resulting in standard sparsity ($\mu(\cdot)$-sparsity) and $\mu(\cdot)^{1-\langle\alpha\rangle_{(\cdot)}}$-sparsity, respectively. 
 This motivates the following definition. 
\begin{definition}\label{defn:MajorSparse}
	Let $\eta\in(0,1]$ and let $\psi:\mathcal A\to [0,\infty)$, where $\mathcal{A}$ is a collection of sets. 
    Whenever $\psi$ is evaluated below, the relevant set is assumed to belong to $\mathcal A$.
	\begin{itemize}
		\item 	A set $E(B)\subset B$ is an $(\eta,\psi)$-major subset of $B$, if $B\setminus E(B)$ is a $(1-\eta,\psi)$-minor subset of $B,$ meaning that:		\begin{align}\label{eq:defn:MajorSparse}
				\psi(B\setminus E(B)) \leq (1-\eta)\psi(B).
		\end{align}
		\item 	The collection $\mathcal{B}$ is said to be $(\eta,\psi)$-sparse, provided that for each $B\in\mathcal B$ there exists an $(\eta,\psi)$-major subset $E(B)\subset B$, and these sets are pairwise disjoint: if $B_1\not=B_2$, then $E(B_1)\cap E(B_2)=\emptyset.$
	\end{itemize}
\end{definition}
If $\psi = \mu$ is a measure and $\mathcal{A}$ is the collection of measurable sets, then \eqref{eq:defn:MajorSparse} is equivalent to $\mu(E(B)) \geq \eta\mu(B)$ and we recover the standard major-subset condition. 
When $\alpha(\cdot)=\alpha\in[0,1)$ is constant and $\psi_\alpha(A)=\mu(A)^{1-\alpha}$, the same condition is equivalent to standard sparsity after changing the parameter: $\psi_\alpha(B\setminus E(B))\leq (1-\eta)\psi_\alpha(B)$ is the same as $\mu(B\setminus E(B))\leq (1-\eta)^{\frac{1}{1-\alpha}}\mu(B)$. Thus in the constant-fractionality case, $\psi_\alpha$-sparsity is not a different sparse geometry; the point of the formulation is that it extends to the variable-fractionality setting.
If $\psi$ is sub-additive (see Definition \ref{defn:SubAdditive} below), which is for example the case with our key function $A\mapsto \mu(A)^{1-\langle \alpha\rangle_{A}},$ see Lemma \ref{lem:PsiSubAdd} below, then 
\begin{align}\label{eq:minor-to-major}
\psi(B) \leq \psi(E(B)) + \psi(B\setminus E(B)) \leq  \psi(E(B)) + (1-\eta)\psi(B) \Longrightarrow \psi(E(B)) \geq \eta\psi(B).
\end{align}

\subsubsection*{Sparse operators}
The main auxiliary operators we will be using are of the following form 
\begin{align*}
	\mathcal{A}^{\alpha(\cdot)}_{\mu,\mathcal{B}}(f) = \sum_{B\in\mathcal{B}}\frac{1_B}{ \mu(B)^{1-\langle\alpha\rangle_B}} \int_Bf\ud\mu
\end{align*}
and if $\mathcal{B}$ is $\mu(\cdot)^{1-\langle\alpha\rangle_{(\cdot)}}$-sparse, we call them $\mu(\cdot)^{1-\langle\alpha\rangle_{(\cdot)}}$-sparse operators. Note that when $\alpha(\cdot) = 0,$ we recover the usual sparse operators.

\subsubsection*{Structure of the sections to follow}
In the next Subsection \ref{sect:necessary} we record a general result, Proposition \ref{prop:Necessity}, that provides, perhaps after minor adjustments, all of our lower bounds/necessary conditions for boundedness that will be proved in the later Subsections \ref{sect:dyadic1}, \ref{sect:dyadic2} and \ref{sect:dyadic3}. 
In Subsection \ref{sect:dyadic1} we prove in Lemma \ref{lem:PsiSubAdd} that $A\mapsto \mu(A)^{1-\langle\alpha\rangle_A}$ is sub-additive and in the subsequent Proposition  \ref{prop:SDomAlpha}, by a principal cubes stopping time algorithm, a pointwise domination of $M^{\alpha(\cdot)}_{\mathcal{D}(Q_0)}$, for a given fixed $Q_0\in\mathcal{D},$ by $\mu(\cdot)^{1-\langle\alpha\rangle_{(\cdot)}}$-sparse operators $\mathcal{A}^{\alpha(\cdot)}_{\mathcal S}$. 
In Theorem \ref{thm:SparseLpLq} we prove the $L^{p(\cdot)}$-to-$L^{q(\cdot)}$ norm bounds for $\mu(\cdot)^{1-\langle\alpha\rangle_{(\cdot)}}$-sparse operators $\mathcal{A}^{\alpha(\cdot)}_{\mathcal S}$ by using Proposition \ref{prop:CarlesonEmbedding}, that is, the sparse Carleson embedding for variable exponent Lebesgue spaces.
In Subsection \ref{sect:dyadic2} we study the dyadic Riesz potential $I^{\alpha(\cdot)}_{\mu,\mathcal{D}}$ and in Theorem \ref{thm:RieszPot} we characterize its boundedness in terms of the smaller operator $M^{\alpha(\cdot)}_{\mu,\mathcal{D}}$ and a packing condition involving the exponent $\alpha(\cdot).$ 
In Subsection \ref{sect:dyadic3} we prove variable Morrey space bounds for $I^{\alpha(\cdot)}_{\mu,\mathcal{D}}$ and $M^{\alpha(\cdot)}_{\mu,\mathcal{D}}.$

\subsection{Necessary conditions}\label{sect:necessary}
Let $(X,\mu)$ be a measure space. We assume that all collections of sets and functions that appear below are measurable. Let $\mathcal B$ be a countable family of subsets of $X$ such that $0<\mu(B)<\infty$ for every $B\in\mathcal B$, and let $\psi:\mathcal B\to(0,\infty)$. We define the maximal operator and Riesz potential associated with $\psi$ and $\mathcal B$ by
\[
M^\psi_{\mathcal B}f(x):=\sup_{B\in\mathcal B}\frac{1_B(x)}{\psi(B)}\int_B |f|\,\mathrm d\mu,\qquad I^\psi_{\mathcal B}f(x):=\sum_{B\in\mathcal B}\frac{1_B(x)}{\psi(B)}\int_B f\,\mathrm d\mu.
\]
The sum is understood pointwise whenever the expression is well-defined. In the applications below, $\mathcal B$ will be a countable dyadic collection. 
The following proposition gives the lower bounds and necessary conditions used later.
\begin{proposition}\label{prop:Necessity}
	Let $Z$ be a Banach function lattice over a measure space $(X,\mu)$. Let $Z'$ be the K\"othe dual of $Z$. Let $Y$ be a normed function lattice over $(X,\mu)$.
	Let $\mathcal B$ be a countable family of measurable sets such that $0<\mu(B)<\infty,$ for $B\in\mathcal{B},$ and let
	$\psi:\mathcal B\to (0,\infty)$. Then, the following bounds hold
	\begin{align}
		\sup_{B\in\mathcal B}
		\frac{\|1_B\|_Y\|1_B\|_{Z'}}{\psi(B)} &\le \|M^\psi_{\mathcal{B}}\|_{Z\to Y}, \label{eq:prop:NecessityMax} \\
		\sup_{B_0\in\mathcal B}
		\|1_{B_0}\|_Y \, \biggl\|\sum_{\substack{B\in\mathcal B\\ B\supset B_0}}
		\frac{1_B}{\psi(B)}\biggr\|_{Z'}
		&\le \|I^{\psi}_{\mathcal{B}}\|_{Z\to Y}, \label{eq:prop:NecessityPotential1} \\
		\sup_{B_0\in\mathcal B}
		\frac{1}{\|1_{B_0}\|_Z}	\biggl\| \sum_{\substack{B\in\mathcal B\\ B\subset B_0}}
		\frac{\mu(B) }{\psi(B)}1_{B} \biggr\|_{Y}
		&\le \|I^{\psi}_{\mathcal{B}}\|_{Z\to Y}. \label{eq:prop:NecessityPotential}
	\end{align}
\end{proposition}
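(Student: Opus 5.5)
All three bounds are obtained by testing the operator on well-chosen functions and using the duality formula for $Z$ against $Z'$. The Fatou property of $Z$ gives $\|g\|_{Z'}=\sup\{\int|fg|\,\ud\mu:\|f\|_Z\le1\}$, since this is exactly the definition of the associate norm. Throughout, fix $B_0\in\mathcal B$ and $f\in Z$ with $\|f\|_Z\le 1$. We may assume the operator norms are finite, for otherwise there is nothing to prove.

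For \eqref{eq:prop:NecessityMax}, take $f\ge0$ and observe the pointwise bound
\[
M^\psi_{\mathcal B}f\ \ge\ 1_{B_0}\,\psi(B_0)^{-1}\int_{B_0}f\,\ud\mu .
\]
By the ideal property of $Y$ this gives
\[
\|1_{B_0}\|_Y\,\psi(B_0)^{-1}\int_{B_0} f\,\ud\mu\ \le\ \|M^\psi_{\mathcal B}\|_{Z\to Y}.
\]
Taking the supremum over such $f$ turns $\int_{B_0}f\,\ud\mu=\int f1_{B_0}\,\ud\mu$ into $\|1_{B_0}\|_{Z'}$, which is the claim. The sign restriction costs nothing, because replacing $f$ by $|f|$ preserves the $Z$-norm.

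For \eqref{eq:prop:NecessityPotential1}, take $f\ge0$ again. Every term of $I^\psi_{\mathcal B}f$ is nonnegative, so on $B_0$ we may discard the terms with $B\not\supset B_0$ and keep only those with $B\supset B_0$; each of these has $1_B=1$ on $B_0$. Hence
\[
I^\psi_{\mathcal B}f\ \ge\ 1_{B_0}\sum_{B\supset B_0}\psi(B)^{-1}\int_B f\,\ud\mu
\ =\ 1_{B_0}\int f\,G\,\ud\mu,\qquad G:=\sum_{B\supset B_0}\frac{1_B}{\psi(B)} .
\]
The exchange of sum and integral is Tonelli, as everything is nonnegative. Using the ideal property of $Y$ and taking the supremum over $f$ yields $\|1_{B_0}\|_Y\|G\|_{Z'}$. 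One point needs care: the supremum defining $\|G\|_{Z'}$ runs over all $f$, so if $\|G\|_{Z'}=\infty$ we simply approximate by large finite values.

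For \eqref{eq:prop:NecessityPotential}, test with $f=1_{B_0}/\|1_{B_0}\|_Z$. This is legitimate since $1_{B_0}\in Z$, which we may assume: if $\|1_{B_0}\|_Z=\infty$ the left side vanishes under the convention $1/\infty=0$. For $B\subset B_0$ we have $\int_B f\,\ud\mu=\mu(B)/\|1_{B_0}\|_Z$, and the remaining terms are nonnegative. Dropping them gives
\[
I^\psi_{\mathcal B}f\ \ge\ \|1_{B_0}\|_Z^{-1}\sum_{B\subset B_0}\frac{\mu(B)}{\psi(B)}1_B,
\]
and the ideal property of $Y$ concludes.

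The only delicate step is the duality identification $\sup_{\|f\|_Z\le1}\int|fg|\,\ud\mu=\|g\|_{Z'}$. This holds by the very definition of the associate space, so no Fatou argument is needed beyond what is used implicitly. A second point to handle is the possibility that $I^\psi f$ is infinite, which is why nonnegative $f$ are used.
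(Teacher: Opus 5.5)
Your proposal is correct and follows the paper's proof essentially step for step. Both arguments test with $f\ge 0$ against the single-set lower bound for \eqref{eq:prop:NecessityMax}. For \eqref{eq:prop:NecessityPotential1} they discard the sets not containing $B_0$ and apply Tonelli, and for \eqref{eq:prop:NecessityPotential} they test with $1_{B_0}$; the ideal property of $Y$ and the definition of the K\"othe dual norm do the rest. As you observe yourself at the end, the identity $\|g\|_{Z'}=\sup_{\|f\|_Z\le 1}\int |fg|\,\mathrm{d}\mu$ is just the definition of the associate norm, so the appeal to the Fatou property in your first paragraph is unnecessary.
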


\begin{proof}
	We start by proving \eqref{eq:prop:NecessityMax}. Fix $B\in\mathcal B$. Then
	\[
	\frac{1_B(x)}{\psi(B)}
	\int_B |f|\,\mathrm d\mu
	\le
	M^\psi_{\mathcal{B}} f(x).
	\]
	Taking the norm in $Y$ and using the lattice property of $Y$ gives 
	\[
	\frac{\|1_B\|_Y}{\psi(B)}
	\int_B |f|\,\mathrm d\mu
	\le
	\|M^\psi_{\mathcal{B}}f\|_Y\leq \|M^\psi_{\mathcal{B}}\|_{Z\to Y}\|f\|_Z .
	\]
	By the definition
	of the K\"othe dual norm, we have
	\[
	\|1_B\|_{Z'}
	=
	\sup_{\|f\|_Z\le 1}
	\int_B |f|\,\mathrm d\mu .
	\]
	Inserting this into the previous estimate gives
	\[
	\frac{\|1_B\|_Y\|1_B\|_{Z'}}{\psi(B)}
	\le \|M^\psi_{\mathcal{B}}\|_{Z\to Y},
	\]
	which proves \eqref{eq:prop:NecessityMax}.
	
	We move to verifying \eqref{eq:prop:NecessityPotential1}. 
	Fix $B_0\in\mathcal B$ and let $f\geq 0$ be non-negative. Restricting the sum to the sets containing $B_0$ gives 
	\[
	1_{B_0}I^\psi_{\mathcal{B}} f
	=
	\sum_{B\in\mathcal B}
	\frac{1_B1_{B_0}}{\psi(B)}
	\int_B f\,\mathrm d\mu
	\ge
	1_{B_0}\sum_{\substack{B\in\mathcal B\\ B_0\subset B}}
	\frac{1}{\psi(B)}
	\int_B f\,\mathrm d\mu 	=
	1_{B_0}
	\int_X f
	\sum_{\substack{B\in\mathcal B\\ B_0\subset B}}
	\frac{1_B}{\psi(B)}
	\,\mathrm d\mu,
	\]
	where we used Tonelli's theorem in the last equality.
	Therefore,
	\[
	1_{B_0}\int_X f
	\sum_{\substack{B\in\mathcal B\\ B_0\subset B}}
	\frac{1_B}{\psi(B)}
	\,\mathrm d\mu
	\le
	1_{B_0}I^\psi_{\mathcal{B}} f \leq I^\psi_{\mathcal{B}} f.
	\]
	By the lattice property of $Y$, we obtain
	\[
	\|1_{B_0}\|_Y
	\int_X f\sum_{\substack{B\in\mathcal B\\ B_0\subset B}}
	\frac{1_B}{\psi(B)}
	\,\mathrm d\mu
	\le
	\|I^\psi_{\mathcal{B}}f\|_Y \le
	\|I^{\psi}_{\mathcal{B}}\|_{Z\to Y}\| f \|_Z .
	\]
	Since the function inside the $Z'$-norm below is non-negative and $Z$ is a lattice, its K\"othe dual norm can be computed by testing against non-negative functions only.
	We have 
	\begin{align*}
		\|1_{B_0}\|_Y \,\Bigl \|\sum_{\substack{B\in\mathcal B\\ B_0\subset B}}
		\frac{1_B}{\psi(B)}\Bigr\|_{Z'} &= 		\|1_{B_0}\|_Y\sup_{\substack{\|f\|_Z\le 1\\ f\geq 0}}\int_X f\sum_{\substack{B\in\mathcal B\\ B_0\subset B}}
		\frac{1_B}{\psi(B)}
		\,\mathrm d\mu \\
		&\le   \sup_{\|f\|_Z\le 1 }\|I^{\psi}_{\mathcal{B}}\|_{Z\to Y}\| f \|_Z \leq \|I^{\psi}_{\mathcal{B}}\|_{Z\to Y}.
	\end{align*}

	We then turn our attention to \eqref{eq:prop:NecessityPotential}. 
	Restricting the sum to the sets contained in \(B_0\) gives 
	\[
	\sum_{\substack{B\in\mathcal B\\ B\subset B_0}}
	\frac{\mu(B) }{\psi(B)}1_{B} \leq 
	\sum_{\substack{B\in\mathcal B \\ B\cap B_0\not=\emptyset}}
	\frac{1_B}{\psi(B)}
	\int_B 1_{B_0}\,\mathrm d\mu
	=	I^\psi_{\mathcal{B}} 1_{B_0}
	\]
	Therefore the lattice property of $Y$ gives
	\[
	\biggl\| \sum_{\substack{B\in\mathcal B\\ B\subset B_0}}
	\frac{\mu(B) }{\psi(B)}1_{B} \biggr\|_{Y} \leq 	\bigl\|  	I^\psi_{\mathcal{B}} 1_{B_0} \bigr\|_{Y} \leq \|I^{\psi}_{\mathcal{B}}\|_{Z\to Y}	\left\|  1_{B_0} \right\|_{Z}.\qedhere
	\]
\end{proof}

\subsubsection{On the pointwise relation of $\alpha(\cdot)$, $1/p(\cdot)$ and $1/q(\cdot)$.}
We use this subsection to briefly discuss whether any relation on $\alpha(\cdot)$, $1/p(\cdot)$ and $1/q(\cdot)$ is forced, since
we are not assuming that $\alpha(\cdot) = 1/p(\cdot)-1/q(\cdot)$ (a well-known necessary condition in the case when $\alpha,p,q$ are all constants). Only one side of the estimate holds, provided $\mathcal{B}$ is a differentiation basis with the shrinking property.
\begin{corollary}\label{cor:AlphaRelation}
	Let $(X,\mu)$ be a non-atomic measure space and $p(\cdot),q(\cdot),1/\alpha(\cdot)\in\mathcal P(X,\mu)$. Suppose $\mathcal{B}$ is a differentiation basis that differentiates $\alpha(\cdot)$, $1/p(\cdot)$ and $1/q(\cdot)$ and admits shrinking sets at almost every differentiation point. More precisely, for almost every such point $x$, there exist $B_k\in\mathcal B$ with $x\in B_k$, $\mu(B_k)\to0$, and
	\[
	\lim_{k\to\infty}\langle \alpha\rangle_{B_k}=\alpha(x),\qquad
	\lim_{k\to\infty}1/p^\mu_{B_k}=1/p(x),\qquad
	\lim_{k\to\infty}1/q^\mu_{B_k}=1/q(x).
	\]	If $M^{\alpha(\cdot)}_{\mathcal{B}}:L^{p(\cdot)}(X,\mu)\to L^{q(\cdot)}(X,\mu)$ is bounded, then 
	$\alpha(\cdot) \geq 1/p(\cdot)-1/q(\cdot)$ holds $\mu$ almost everywhere.   
\end{corollary}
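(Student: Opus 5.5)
Our plan is to combine the necessary condition \eqref{eq:prop:NecessityMax} of Proposition \ref{prop:Necessity} with the two-sided indicator estimate of Lemma \ref{lem:IndicatorKopaliani}, and then localize by the shrinking sets $B_k$. With $Z=L^{p(\cdot)}$, $Z'=L^{p'(\cdot)}$ (the associate space, by the Fatou property in Proposition \ref{prop:Lp=Fatou}), $Y=L^{q(\cdot)}$ and $\psi(B)=\mu(B)^{1-\langle\alpha\rangle_B}$, write $K:=\|M^{\alpha(\cdot)}_{\mathcal B}\|_{L^{p(\cdot)}\to L^{q(\cdot)}}<\infty$. Proposition \ref{prop:Necessity} gives, for every $B\in\mathcal B$,
\[
\|1_B\|_{q(\cdot)}\|1_B\|_{p'(\cdot)}\le K\,\mu(B)^{1-\langle\alpha\rangle_B}.
\]
Only the \emph{lower} bound of Lemma \ref{lem:IndicatorKopaliani} is needed, and it holds with no Muckenhoupt assumption: $\|1_B\|_{q(\cdot)}\ge \tfrac15\mu(B)^{1/q^\mu_B}$ and $\|1_B\|_{p'(\cdot)}\ge\tfrac15\mu(B)^{1/(p')^\mu_B}=\tfrac15\mu(B)^{1-1/p^\mu_B}$. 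This is exactly why only the one-sided inequality is obtained. Combining the two estimates yields
\[
\mu(B)^{\frac1{q^\mu_B}-\frac1{p^\mu_B}+\langle\alpha\rangle_B}\le 25K\qquad\text{for all }B\in\mathcal B.
\]

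Next we localize. Fix a point $x$ where the hypotheses hold, and take the sets $B_k\ni x$ with $\mu(B_k)\to0$. We may assume $0<\mu(B_k)<1$, and then take logarithms:
\[
\Bigl(\frac1{q^\mu_{B_k}}-\frac1{p^\mu_{B_k}}+\langle\alpha\rangle_{B_k}\Bigr)\log\mu(B_k)\le\log(25K).
\]
Since $\log\mu(B_k)\to-\infty$, dividing by $\log\mu(B_k)<0$ flips the inequality:
\[
\frac1{q^\mu_{B_k}}-\frac1{p^\mu_{B_k}}+\langle\alpha\rangle_{B_k}\ge\frac{\log(25K)}{\log\mu(B_k)}\to0.
\]
Letting $k\to\infty$ and using the assumed convergence of the three averages gives $\alpha(x)\ge 1/p(x)-1/q(x)$. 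This holds for $\mu$-a.e.\ $x$, which proves the claim.

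The main point to watch is bookkeeping rather than difficulty. Non-atomicity, together with $\mu(B_k)\to0$, guarantees that $\mu(B_k)$ is eventually below $1$ and positive, so the logarithm has the right sign. We also check that $1/p$, $1/q$ and $\alpha$ are bounded by $1$, so the limits are finite and the degenerate cases $p=1$ or $q=\infty$ need no special treatment. The reverse inequality cannot be expected from this argument, because the upper bound in Lemma \ref{lem:IndicatorKopaliani} requires the $A_{p(\cdot)}$-type constants, which are not assumed.
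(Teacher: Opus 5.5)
Your argument is correct and is essentially the paper's proof: you test with indicators via \eqref{eq:prop:NecessityMax}, use only the lower bound of Lemma \ref{lem:IndicatorKopaliani}, and let the shrinking sets $B_k$ force the exponent $\langle\alpha\rangle_{B_k}-(1/p^\mu_{B_k}-1/q^\mu_{B_k})$ to have a nonnegative limit; your logarithm step is a direct version of the paper's contradiction argument. One small correction: the Fatou property gives $Z''=Z$, not $\|\cdot\|_{Z'}=\|\cdot\|_{L^{p'(\cdot)}}$, so the K\"othe dual norm of $L^{p(\cdot)}$ agrees with the $L^{p'(\cdot)}$ norm only up to a factor $4$ (Lemma \ref{lem:dualityFormula}, which the paper invokes at this point); your constant $25K$ therefore becomes $100K$, which does not affect the conclusion.
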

\begin{remark}The bound $|M^{\alpha(\cdot)}_{\mathcal{B}} f|\leq I^{\alpha(\cdot)}_{\mathcal{B}}|f|$ shows that if $ I^{\alpha(\cdot)}_{\mathcal{B}}:L^{p(\cdot)}(X,\mu)\to L^{q(\cdot)}(X,\mu)$ is bounded, then $ M^{\alpha(\cdot)}_{\mathcal{B}}:L^{p(\cdot)}(X,\mu)\to L^{q(\cdot)}(X,\mu)$ is bounded and the conclusion of Corollary \ref{cor:AlphaRelation} holds also.
\end{remark}
\begin{proof}[Proof of Corollary \ref{cor:AlphaRelation}]
	Lemma \ref{lem:dualityFormula} and \eqref{eq:prop:NecessityMax} of Proposition \ref{prop:Necessity} imply that 
	\begin{multline}\label{eq:cor:AlphaRelation1}
		\mu(B)^{\langle \alpha\rangle_B-\left(1/p^{\mu}_B-1/q^{\mu}_B\right)}
		=\frac{\mu(B)^{1/q^{\mu}_B}\mu(B)^{1/(p')_B^\mu}}{\mu(B)^{1-\langle \alpha\rangle_B}} \\ \leq 16	\frac{\|1_B\|_{L^{q(\cdot)}}\|1_B\|_{L^{p'(\cdot)}}}
		{\mu(B)^{1-\langle \alpha\rangle_B}}
		\lesssim
		\bigl\|M^{\alpha(\cdot)}_{\mathcal B}\bigr\|_{L^{p(\cdot)}\to L^{q(\cdot)}} < \infty. 
	\end{multline}
	Let $x\in X$ be a common differentiation point at which $\mathcal B$ admits shrinking sets. Choose $B_k\in\mathcal B$ with $x\in B_k$, $\mu(B_k)\to0$, and
	\[
	\lim_{k\to\infty}\langle \alpha\rangle_{B_k}-\left(1/p^\mu_{B_k}-1/q^\mu_{B_k}\right)
	=
	\alpha(x)-\left(1/p(x)-1/q(x)\right).
	\]
	Suppose that $\alpha(x)-\left(1/p(x)-1/q(x)\right)<0$. Then there exists $\varepsilon>0$ such that, for all sufficiently large $k$,
	\(
	\langle \alpha\rangle_{B_k}-\left(1/p^\mu_{B_k}-1/q^\mu_{B_k}\right)\leq -\varepsilon.
	\)
	Using that $\mu(B_k)\to0$, we obtain
	\[
	\lim_{k\to\infty} \mu(B_k)^{\langle \alpha\rangle_{B_k}-\left(1/p^{\mu}_{B_k}-1/q^{\mu}_{B_k}\right)} \geq \lim_{k\to\infty} \mu(B_k)^{-\varepsilon} = \infty,
	\]
    which contradicts \eqref{eq:cor:AlphaRelation1}. Thus $\alpha(x)\geq 1/p(x)-1/q(x)$ at every common differentiation point at which the shrinking condition holds. These points have full measure, and the conclusion follows.
\end{proof}

\subsection{Dyadic maximal operator with variable fractionality}\label{sect:dyadic1}

The next goal is to prove Theorem \ref{thm:MainIntro2}.
The lower bound follows from Proposition \ref{prop:Necessity} by testing the operator with indicators. It remains to prove the upper bound. 
The first step is a sparse domination for $M^{\alpha(\cdot)}_{\mu,\mathcal{D}}$, with sparsity measured by $Q\mapsto\mu(Q)^{1-\langle\alpha\rangle_Q}$. 
\begin{proposition}\label{prop:SDomAlpha} Let $\mathcal{D}$ be a finite collection of dyadic cubes and $1/\alpha(\cdot)\in\mathcal{P}(X,\mu)$ and $\eta\in (0,1).$ Let $f\in L^1_{\mathrm{loc}}$ and $Q_0\in\mathcal{D}$ be a fixed cube. Then, there exists a family $\mathcal{S}_f(Q_0) \subset\mathcal{D}(Q_0)$ that is $(\eta,\psi_\alpha)$-sparse for $\psi_\alpha(Q):=\mu(Q)^{1-\langle\alpha\rangle_Q}$, so that 
	\begin{equation*}
		M^{\alpha(\cdot)}_{\mu,\mathcal{D}(Q_0)}f\leq \frac{1}{1-\eta}\sum_{S\in\mathcal{S}_f(Q_0)}\frac{1_{E(S)}}{\mu(S)^{1-\langle\alpha\rangle_S}} \int_S|f|\ud\mu \leq \frac{1}{1-\eta}\mathcal{A}^{\alpha(\cdot)}_{\mu,\mathcal{S}_f(Q_0)}|f|
	\end{equation*}
	holds $\mu$ almost everywhere.
\end{proposition}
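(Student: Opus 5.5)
The plan is to run a principal-cubes stopping time in which the averages are the variable fractional ones, $A(Q):=\psi_\alpha(Q)^{-1}\int_Q|f|\,\ud\mu$ with $\psi_\alpha(Q)=\mu(Q)^{1-\langle\alpha\rangle_Q}$. Set $\lambda:=\frac{1}{1-\eta}>1$.

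\emph{Construction of $\mathcal{S}_f(Q_0)$.} Start with $\mathcal{S}_f(Q_0)\ni Q_0$. For each $S$ already selected, define its stopping children $\mathrm{ch}_{\mathcal S}(S)$ as the maximal cubes $Q\in\mathcal{D}(S)$, $Q\neq S$, with $A(Q)>\lambda A(S)$. Add them to $\mathcal{S}_f(Q_0)$ and iterate. Since $\mathcal{D}$ is finite, the process terminates.

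By dyadicity and maximality, the children of $S$ are pairwise disjoint. Put $F(S):=\bigcup_{Q\in\mathrm{ch}_{\mathcal S}(S)}Q$ and $E(S):=S\setminus F(S)$. The sets $E(S)$ are pairwise disjoint, because any point of $E(S)$ lies in no strictly smaller stopping cube inside $S$, while a point of $E(S')$ with $S'\subsetneq S$ lies in a child of $S$.

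\emph{Sparsity.} This is the only step that is not routine. We must show $\psi_\alpha(F(S))\le(1-\eta)\psi_\alpha(S)$. I would use the sub-additivity of $A\mapsto\mu(A)^{1-\langle\alpha\rangle_A}$ from Lemma \ref{lem:PsiSubAdd}, applied to the finite disjoint union $F(S)$. Together with the stopping condition, this gives
\[
\psi_\alpha(F(S))\le\sum_{Q\in\mathrm{ch}_{\mathcal S}(S)}\psi_\alpha(Q)
<\sum_{Q\in\mathrm{ch}_{\mathcal S}(S)}\frac{\int_Q|f|\,\ud\mu}{\lambda A(S)}
\le\frac{\int_S|f|\,\ud\mu}{\lambda A(S)}=(1-\eta)\psi_\alpha(S).
\]
So $E(S)$ is an $(\eta,\psi_\alpha)$-major subset, and the family is $(\eta,\psi_\alpha)$-sparse.

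If $A(S)=0$, then $f=0$ a.e. on $S$. In that case no children are selected and the condition is trivial.

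The main obstacle is precisely that sub-additivity of $\psi_\alpha$. The averaged exponent changes from set to set, so this is not a measure-theoretic triviality. I take it from Lemma \ref{lem:PsiSubAdd} and, through \eqref{eq:minor-to-major}, it also gives $\psi_\alpha(E(S))\ge\eta\psi_\alpha(S)$ if needed.

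\emph{Pointwise bound.} Fix $x\in Q_0$ outside the null set where things may fail; points outside $Q_0$ give zero. The stopping cubes containing $x$ form a finite chain $Q_0=S_0\supsetneq S_1\supsetneq\dots\supsetneq S_m$. Here $x\in E(S_m)$, and along the chain $A(S_{k+1})>\lambda A(S_k)\ge A(S_k)$, so $A$ increases.

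Now take any $Q\in\mathcal{D}(Q_0)$ with $x\in Q$. Let $\pi(Q)$ be the smallest stopping cube containing $Q$; it is one of the $S_k$. If $A(Q)>\lambda A(\pi(Q))$, then $Q$ would be contained in a stopping child of $\pi(Q)$, contradicting minimality. Hence
\[
A(Q)\le\lambda A(\pi(Q))\le\lambda A(S_m)=\frac{1}{1-\eta}\sum_{S\in\mathcal S_f(Q_0)}1_{E(S)}(x)A(S).
\]
Taking the supremum over $Q$ gives the first inequality. The second follows from $1_{E(S)}\le1_S$.
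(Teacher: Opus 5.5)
Your proposal is correct and follows the paper's proof, which is Proposition \ref{prop:SDomSub} specialised to $\psi_\alpha$: the same principal-cubes stopping time with $\psi_\alpha$-averages, the same sparsity estimate via the sub-additivity of Lemma \ref{lem:PsiSubAdd}, and the same pointwise comparison on each $E(S)$. The differences are cosmetic. You get $Q_0=\bigcup_S E(S)$ directly from the finiteness of $\mathcal D$, whereas the paper uses the decay $\psi(\bigcup\mathcal S_k)\le(1-\eta)^k\psi(Q_0)$ together with $\mu\ll\psi$; and you bound $A(Q)$ through the minimal stopping cube containing $Q$ instead of the paper's case split $Q\subset S$ versus $S\subsetneq Q$.
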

Proposition \ref{prop:SDomAlpha} is a special case of Proposition \ref{prop:SDomSub} below formulated in terms of a general sub-additive function $\psi.$
The upper bound in Theorem \ref{thm:MainIntro2} follows by combining Proposition \ref{prop:SDomAlpha} with the following norm bounds for $\left(\eta, \mu(\cdot)^{1-\langle\alpha\rangle_{(\cdot)}}\right)$-sparse operators with a specific choice of $\eta.$
\begin{theorem}\label{thm:SparseLpLq} 
	Let  $1<p(\cdot), q(\cdot),1/\alpha(\cdot)\in \mathcal{P}(X,\mu)$ be such that $p(\cdot)\leq q(\cdot)<\infty$ almost everywhere. Let 	
	\[
	\eta([1]_{A_{1/\alpha(\cdot)}}) := 1- \frac{1}{50[1]_{A_{1/\alpha(\cdot)}(\mathcal{S})}}
	\] 
and $\mathcal{S}$ be an $\left( \eta([1]_{A_{1/\alpha(\cdot)}}), \mu(\cdot)^{1-\langle\alpha\rangle_{(\cdot)}}\right)$-sparse collection.
	Then, there holds that 
	\begin{equation}\label{eq:thm:Unweighted:2}
		\begin{split}
			[1]_{A_{p(\cdot),q(\cdot)}^{\alpha(\cdot)}(\mathcal{S})} \lesssim  \| \mathcal{A}^{\alpha(\cdot)}_{\mathcal S} \|_{L^{p(\cdot)}\to L^{q(\cdot)}} \lesssim [1]_{A_{p(\cdot),q(\cdot)}^{\alpha(\cdot)}(\mathcal{S})} \|M_{\mathcal S}\|_{ L^{q'(\cdot)}} \|M_{\mathcal S}\|_{ L^{p(\cdot)}}.
		\end{split}
	\end{equation}
\end{theorem}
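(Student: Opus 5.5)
The lower bound will come from Proposition \ref{prop:Necessity} and the upper bound from duality, a splitting of the fractional factor using $[1]_{A_{p(\cdot),q(\cdot)}^{\alpha(\cdot)}(\mathcal S)}$, and two applications of Corollary \ref{corollary:carlesonembeddingapplication} glued by Lemma \ref{lem:sequential}. The step I am least sure of is the transfer between $\mu(S)$ and $\mu(E(S))$ described at the end.

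\emph{Lower bound.} For $f\ge0$ and each $S\in\mathcal S$ we have
\[
\mathcal A^{\alpha(\cdot)}_{\mathcal S}f\ge \frac{1_S}{\mu(S)^{1-\langle\alpha\rangle_S}}\int_S f\,\mathrm d\mu ,
\]
so the testing argument behind \eqref{eq:prop:NecessityMax} applies verbatim. We take $\psi(S)=\mu(S)^{1-\langle\alpha\rangle_S}$, $Z=L^{p(\cdot)}$ and $Y=L^{q(\cdot)}$. Since $L^{p(\cdot)}$ has the Fatou property, Lemma \ref{lem:dualityFormula} gives $\|1_S\|_{Z'}\sim\|1_S\|_{p'(\cdot)}$, and the first inequality of \eqref{eq:thm:Unweighted:2} follows.

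\emph{Upper bound: reduction to sequence norms.} By Lemma \ref{lem:dualityFormula} it suffices to bound, for $f,g\ge0$ with $\|f\|_{p(\cdot)}=\|g\|_{q'(\cdot)}=1$,
\[
\int \mathcal A^{\alpha(\cdot)}_{\mathcal S}f\cdot g\,\mathrm d\mu=\sum_{S\in\mathcal S}\mu(S)^{1+\langle\alpha\rangle_S}\langle f\rangle_S\langle g\rangle_S .
\]
The first step is to show that the choice $\eta=1-(50[1]_{A_{1/\alpha(\cdot)}(\mathcal S)})^{-1}$ forces $\mu(E(S))\ge\frac12\mu(S)$. Let $F=S\setminus E(S)$ and set $r(\cdot)=(1/\alpha(\cdot))'$, so that $1/r_F=1-\langle\alpha\rangle_F$. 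I would test $\|1_F\|_{r(\cdot)}$ against the dual function $1_S/\|1_S\|_{1/\alpha(\cdot)}$ using Lemma \ref{lem:dualityFormula}, and bound $\|1_S\|_{1/\alpha(\cdot)}\le 5[1]_{A_{1/\alpha(\cdot)}}\mu(S)^{\langle\alpha\rangle_S}$ by Lemma \ref{lem:IndicatorKopaliani}. Comparing the result with $\psi(F)\le(1-\eta)\psi(S)$ should give $\mu(F)\le\mu(S)/2$. This is where $\eta$ is calibrated, and the constant $50$ should absorb the factors $4\cdot5$ together with the factor $2$.

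\emph{Upper bound: splitting and summing.} Next we split the fractional factor. By the definition of $[1]_{A^{\alpha(\cdot)}_{p(\cdot),q(\cdot)}}$ and the lower bound in Lemma \ref{lem:IndicatorKopaliani},
\[
\mu(S)^{1+\langle\alpha\rangle_S}=\frac{\mu(S)^2}{\mu(S)^{1-\langle\alpha\rangle_S}}\le [1]_{A^{\alpha(\cdot)}_{p(\cdot),q(\cdot)}}\frac{\mu(S)^2}{\|1_S\|_{q(\cdot)}\|1_S\|_{p'(\cdot)}}\le 25\,[1]_{A^{\alpha(\cdot)}_{p(\cdot),q(\cdot)}}\,\mu(S)^{1/p_S+1/(q')_S}.
\]
I then aim to replace $\mu(S)^{1/p_S}\mu(S)^{1/(q')_S}$ by $\mu(E(S))^{1/p_{E(S)}}\mu(E(S))^{1/(q')_{E(S)}}$ up to an absolute constant, using $\mu(E(S))\ge\mu(S)/2$. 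Once that is done, the sum is controlled by H\"older in the sequence space $\ell^{q_{E(S)}}$ (Lemma \ref{lem:Holder} on $(\mathcal S,\#)$):
\[
\Bigl\|\bigl\{\langle f\rangle_S\mu(E(S))^{1/p_{E(S)}}\bigr\}\Bigr\|_{\ell^{q_{E(S)}}}\,\Bigl\|\bigl\{\langle g\rangle_S\mu(E(S))^{1/(q')_{E(S)}}\bigr\}\Bigr\|_{\ell^{(q_{E(S)})'}}.
\]
For the second factor, $(q_E)'=(q')_E$, and Corollary \ref{corollary:carlesonembeddingapplication} with $s=q'$ bounds it by $\lesssim\|M_{\mathcal S}\|_{q'(\cdot)}$. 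For the first factor, $p\le q$ pointwise gives $p_E\le q_E$, so Lemma \ref{lem:sequential} dominates the $\ell^{q_E}$ norm by the $\ell^{p_E}$ norm. Corollary \ref{corollary:carlesonembeddingapplication} with $s=p$ then bounds it by $\lesssim\|M_{\mathcal S}\|_{p(\cdot)}$, which yields the claimed product bound.

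\emph{Main obstacle.} The delicate point is the transfer $\mu(S)^{1/p_S}\lesssim\mu(E(S))^{1/p_{E(S)}}$: harmonic means over $S$ and over $E(S)$ differ, and we have no $A_{p(\cdot)}$ assumption. If the direct comparison fails, I would not pass to powers of $\mu$ at all. Instead I would keep the norms $\|1_S\|_{q(\cdot)}$ and $\|1_S\|_{p'(\cdot)}$ in the denominator, and absorb the ratio $\mu(S)/\mu(E(S))\le2$ through the reverse embedding \eqref{eq:prop:CarlesonEmbedding2}, which already has the factor $\mu(B)/\mu(E(B))$ built in. I would redo the pairing with $a_S$ chosen to carry $\|1_S\|^{-1}$-type normalizations, and invoke $\|M_{\mathcal S}\|_{q'(\cdot)}$ there rather than in the corollary.
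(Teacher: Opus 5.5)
\emph{Verdict: there is a genuine gap at the step you flagged, and your fallback does not close it.} Your outline matches the paper's: duality, $\mu(E(S))\geq\frac12\mu(S)$, H\"older in variable sequence spaces, Lemma~\ref{lem:sequential}, and two uses of Corollary~\ref{corollary:carlesonembeddingapplication}. Your sequence-space part is fine; putting Lemma~\ref{lem:sequential} on the $f$-side in $\ell^{q_{E(S)}}$, rather than on the $g$-side in $\ell^{p_{E(S)}}$, is equivalent.

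The gap comes from the order of your estimates. In the splitting display you apply the lower bound of Lemma~\ref{lem:IndicatorKopaliani} on $S$. Once $\|1_S\|_{q(\cdot)}\|1_S\|_{p'(\cdot)}$ is replaced by $\frac1{25}\mu(S)^{1/q_S+1/(p')_S}$, you cannot get back to $E(S)$, because $\mu(S)^{1/p_S+1/(q')_S}\lesssim\mu(E(S))^{1/p_{E(S)}+1/(q')_{E(S)}}$ is false in general. Here is an example:
\begin{itemize}
\item Take $\alpha\equiv0$, $p\equiv2$, $q=2$ on $E(S)$ and $q=4$ on $S\setminus E(S)$, with $\mu(S\setminus E(S))=\mu(S)/50$. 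This is exactly the sparseness condition, since $[1]_{A_\infty}=1$.
\item For $\mu(S)\geq1$ one checks $[1]_{A^{0}_{p,q}(S)}\leq1$.
\item Yet the ratio is at least $\mu(S)^{1/200}$.
\end{itemize}
Your fallback does not repair this. Inequality \eqref{eq:prop:CarlesonEmbedding2} bounds a function norm by a sequence norm. The pairing needs the opposite direction: sequences of averages of $f$ and $g$ bounded by $\|f\|_{p(\cdot)}$ and $\|g\|_{q'(\cdot)}$.

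The missing step is to use monotonicity \emph{before} Jensen. Since $E(S)\subset S$,
\[
\|1_S\|_{q(\cdot)}\|1_S\|_{p'(\cdot)}\geq\|1_{E(S)}\|_{q(\cdot)}\|1_{E(S)}\|_{p'(\cdot)}\geq\tfrac1{25}\mu(E(S))^{1/q_{E(S)}+1/(p')_{E(S)}} .
\]
The lower half of Lemma~\ref{lem:IndicatorKopaliani} holds for arbitrary sets, so no Muckenhoupt hypothesis on $E(S)$ is needed. Combined with $\mu(S)\leq2\mu(E(S))$, this gives
\[
\mu(S)^{1+\langle\alpha\rangle_S}\leq[1]_{A^{\alpha(\cdot)}_{p(\cdot),q(\cdot)}(S)}\frac{\mu(S)^2}{\|1_S\|_{q(\cdot)}\|1_S\|_{p'(\cdot)}}\leq100\,[1]_{A^{\alpha(\cdot)}_{p(\cdot),q(\cdot)}(S)}\,\mu(E(S))^{1/p_{E(S)}+1/(q')_{E(S)}} .
\]
This is the paper's computation, written there through $1+\beta(\cdot)=\frac1{p(\cdot)}+\frac1{q'(\cdot)}$.

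Your derivation of $\mu(E(S))\geq\frac12\mu(S)$ fails for the same reason. Let $F=S\setminus E(S)$ and $r(\cdot)=(1/\alpha(\cdot))'$. Testing gives $\mu(F)\leq4\|1_F\|_{r(\cdot)}\|1_S\|_{1/\alpha(\cdot)}$. To bring in $\psi(F)=\mu(F)^{1/r_F}$ you would then need $\|1_F\|_{r(\cdot)}\lesssim\mu(F)^{1/r_F}$. That is the upper half of Lemma~\ref{lem:IndicatorKopaliani}, and it requires a Kopaliani bound on $F$, which is not in $\mathcal S$. Only $\mu(F)^{1/r_F}\leq5\|1_F\|_{r(\cdot)}$ is available.

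The fix is to write $\mu(F)=\psi(F)\,\mu(F)^{\langle\alpha\rangle_F}$ and use
\[
\mu(F)^{\langle\alpha\rangle_F}\leq5\|1_F\|_{1/\alpha(\cdot)}\leq5\|1_S\|_{1/\alpha(\cdot)}\leq25\,[1]_{A_{1/\alpha(\cdot)}(\mathcal S)}\,\mu(S)^{\langle\alpha\rangle_S} .
\]
Then $\mu(F)\leq25[1]_{A_{1/\alpha(\cdot)}(\mathcal S)}(1-\eta)\mu(S)=\frac12\mu(S)$, which is exactly how $\eta$ is calibrated.

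In both places the rule is the same. Use the Jensen lower bound on the non-cube set, use monotonicity to pass to $S$, and use the Muckenhoupt-type constant only on $S$.
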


Next we give a detailed proof of the three results stated above and we begin with the pointwise sparse domination involving a sub-additive function.
\begin{definition}\label{defn:SubAdditive} Let $(X,\mu)$ be a measure space and $\psi$  a function evaluated on the measurable sets of $X.$ We say that $\psi$ is sub-additive provided that 
	\[
	A\cap B=\emptyset \Longrightarrow \psi(A\cup B) \leq \psi(A) + \psi(B).
	\]
\end{definition}

The sub-additivity of our key function $\mu(\cdot)^{1-\langle\alpha\rangle_{(\cdot)}}$ is a consequence of the concavity of the root function; notice that $1-\alpha(\cdot)\in [0,1]$ is a standing assumption, since $1/\alpha(\cdot)\in\mathcal{P}(X,\mu).$ We use the logarithm to write the proof.
\begin{lemma}\label{lem:PsiSubAdd}
Let $s(\cdot)\in \mathcal{P}(X,\mu)$. Define
\(
\psi^{1/s(\cdot)}_{\mu}(A):=\mu(A)^{1/s_A^\mu}
\)
for measurable sets $A\subset X$ with $0<\mu(A)<\infty$. Then $\psi^{1/s(\cdot)}_{\mu}$ is sub-additive.
\end{lemma}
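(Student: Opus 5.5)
We must show that for disjoint $A,B$ of finite positive measure,
\[
\mu(A\cup B)^{1/s^\mu_{A\cup B}}\le \mu(A)^{1/s^\mu_A}+\mu(B)^{1/s^\mu_B}.
\]
The plan is to reduce this to a concavity statement for a function of two variables.

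Write $a=\mu(A)$, $b=\mu(B)$, $t=a/(a+b)\in(0,1)$, $x=1/s^\mu_A$ and $y=1/s^\mu_B$, with $x,y\in[0,1]$. By additivity of the integral over disjoint sets,
\[
\frac{1}{s^\mu_{A\cup B}}=t x+(1-t)y,
\]
so the exponent of the union is the $t$-convex combination of the two exponents. It therefore suffices to prove
\[
(a+b)^{tx+(1-t)y}\le a^x+b^y .
\]

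The cleanest route is weighted AM--GM, i.e.\ concavity of the logarithm. Writing
\[
(a+b)^{tx+(1-t)y}=\bigl((a+b)^x\bigr)^t\bigl((a+b)^y\bigr)^{1-t}\le t(a+b)^x+(1-t)(a+b)^y ,
\]
the claim would follow from the two bounds $t(a+b)^x\le a^x$ and $(1-t)(a+b)^y\le b^y$. The first reads $\frac{a}{a+b}\le\bigl(\frac{a}{a+b}\bigr)^x$, which holds because $a/(a+b)\le1$ and $x\le1$. The second is identical with $b$ and $y$.

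Summing the two gives the claim, so the entire argument is a two-line computation once the harmonic mean over $A\cup B$ is written as a convex combination. Edge cases need a quick check: $x=0$ (i.e.\ $s=\infty$ on $A$) and a single point where $t\in\{0,1\}$ cause no trouble, since $0^0$ never appears when $a,b>0$. The only real obstacle is spotting that the weight in the AM--GM step must be the measure ratio $t$ itself; that choice is exactly what makes the two pieces collapse via $x,y\le1$.
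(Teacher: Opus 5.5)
Your proof is correct and is essentially the paper's argument. The paper likewise writes $1/s^\mu_{A\cup B}$ as the $\mu(A)/\mu(A\cup B)$-weighted convex combination of $1/s^\mu_A$ and $1/s^\mu_B$, applies concavity of the logarithm (your weighted AM--GM step), and then uses $t\,\mu(A\cup B)^{1/s^\mu_A}\le \mu(A)^{1/s^\mu_A}$, which follows from $t\le t^{1/s^\mu_A}$ for $t\in(0,1]$ (the paper phrases this last step as an instance of H\"older's inequality).
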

\begin{proof} Writing
	\[
	\frac{1}{s^{\mu}_{A\cup B}} = \frac{\mu(A)}{\mu(A\cup B)}\frac{1}{s^{\mu}_{A}} +  \frac{\mu(B)}{\mu(A\cup B)}\frac{1}{s^{\mu}_{B}} 
	\]
	and using properties of the logarithm gives 
	\begin{align*}
		\ln\left( 	\psi^{1/s(\cdot)}(A\cup B)\right) &= \ln\left(  \mu(A\cup B)^{1/s^{\mu}_{A\cup B}}\right) \\
		&= \left(\frac{\mu(A)}{\mu(A\cup B)}\frac{1}{s^{\mu}_{A}} +  \frac{\mu(B)}{\mu(A\cup B)}\frac{1}{s^{\mu}_{B}} \right) \ln\left(  \mu(A\cup B)\right) \\
		&= \frac{\mu(A)}{\mu(A\cup B)}\ln\left(  \mu(A\cup B)^{1/s^{\mu}_A}\right) + \frac{\mu(B)}{\mu(A\cup B)}\ln\left(  \mu(A\cup B)^{1/s^{\mu}_B}\right).
	\end{align*}
	As $A,B$ are disjoint we have 
	$
	\frac{\mu(A)}{\mu(A\cup B)} +  \frac{\mu(B)}{\mu(A\cup B)} = 1
	$ and the concavity of logarithm gives 
	\begin{multline}\label{eq:propPsiSubAdd}
		\ln\left( 	\psi^{1/s(\cdot)}(A\cup B)\right) \\ \leq\ln\left( \frac{\mu(A)}{\mu(A\cup B)}  \mu(A\cup B)^{1/s^{\mu}_A}+ \frac{\mu(B)}{\mu(A\cup B)}  \mu(A\cup B)^{1/s^{\mu}_B}\right) 
		\leq \ln\left( \psi^{1/s(\cdot)}(A) + \psi^{1/s(\cdot)}(B)\right),
	\end{multline}
	where we used H\"older's inequality in the last estimate. Indeed, by the assumption $s(\cdot)\geq 1$ we have $s_{V}^{\mu}\geq 1$ for both $V=A,B$ and hence 
	\begin{multline*}
		\frac{\mu(V)}{\mu(A\cup B)}  \mu(A\cup B)^{1/s^{\mu}_V} = \left(\frac{1}{\mu(A\cup B)}\int_{A\cup B}1_V\ud\mu \right) \mu(A\cup B)^{1/s^{\mu}_V} \\
		\leq \left(\frac{1}{\mu(A\cup B)}\int_{A\cup B}1_V\ud\mu \right)^{1/s^{\mu}_V} \mu(A\cup B)^{1/s^{\mu}_V} = \mu(V)^{1/s^{\mu}_V} = 	\psi^{1/s(\cdot)}(V).
	\end{multline*}
	Taking the exponential of \eqref{eq:propPsiSubAdd} concludes the proof. 
\end{proof}

Absolute continuity $\mu\ll \psi^{1/\sigma(\cdot)}_{\mu}$ makes the stopping time algorithm pick up full measure.

\begin{definition} Let $\psi\geq 0$  be a function evaluated on the measurable sets of $(X,\mu).$ 
	We write $\mu\ll\psi$ if for all decreasing sequences $A_1\supset A_2\supset \cdots$ of measurable sets there holds that 
	\[
	\lim_{k\to\infty}\psi(A_k) = 0 \Longrightarrow \lim_{k\to\infty}\mu(A_k) = 0.
	\]
\end{definition}

\begin{proposition}\label{prop:SDomSub}
	Let  $\mathcal{D}$ be a \emph{finite} collection of dyadic cubes on a measure space $(X,\mu).$ Let $\psi$ be a sub-additive function such that $\mu\ll \psi$ and $0<\psi(Q)<\infty$ for all $Q\in\mathcal{D}.$ Let $\eta\in (0,1)$ be fixed.
	Let $Q_0\in\mathcal{D}$ be a fixed top cube and $f\in L^1_{\mathrm{loc}}(X, \mu).$  Then, there exists a $(\eta,\psi)$-sparse family $\mathcal{S}_f(Q_0) \subset\mathcal{D}(Q_0)$ such that 
	\begin{equation*}
		M^{\psi}_{\mu,\mathcal{D}(Q_0)}f\leq \frac{1}{1-\eta}\sum_{S\in\mathcal{S}_f(Q_0)}\langle |f|\rangle_{\mu,S}^{\psi}1_{E(S)}
	\end{equation*}
	holds $\mu$ almost everywhere.
\end{proposition}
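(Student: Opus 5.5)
We construct $\mathcal{S}_f(Q_0)$ by a principal-cubes stopping time adapted to the averages $\langle |f|\rangle^\psi_{\mu,Q}:=\psi(Q)^{-1}\int_Q|f|\,\ud\mu$. Start with $\mathcal{S}_0:=\{Q_0\}$. Given $S\in\mathcal{S}_k$, let $\mathrm{ch}_{\mathcal{S}}(S)$ be the maximal cubes $Q\in\mathcal{D}(S)$, $Q\neq S$, with
\[
\langle |f|\rangle^\psi_{\mu,Q}>\tfrac{1}{1-\eta}\langle |f|\rangle^\psi_{\mu,S}.
\]
Set $\mathcal{S}_{k+1}:=\bigcup_{S\in\mathcal{S}_k}\mathrm{ch}_{\mathcal{S}}(S)$ and $\mathcal{S}_f(Q_0):=\bigcup_k\mathcal{S}_k$. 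Since $\mathcal{D}$ is finite, the process terminates after finitely many generations. We put $E(S):=S\setminus\bigcup_{S'\in \mathrm{ch}_{\mathcal{S}}(S)}S'$. These sets are pairwise disjoint, because the stopping cubes are nested and each $E(S)$ removes exactly the next generation.

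\emph{Sparsity.} The children $S'\in\mathrm{ch}_{\mathcal S}(S)$ are disjoint, by maximality and the dyadic property, and their union is $S\setminus E(S)$. Sub-additivity of $\psi$, applied inductively over this finite disjoint union, gives
\[
\psi(S\setminus E(S))\le\sum_{S'}\psi(S')<(1-\eta)\sum_{S'}\frac{\int_{S'}|f|\,\ud\mu}{\langle|f|\rangle^\psi_{\mu,S}}\le(1-\eta)\frac{\int_S|f|\,\ud\mu}{\langle|f|\rangle^\psi_{\mu,S}}=(1-\eta)\psi(S).
\]
Here we used the stopping inequality in the form $\psi(S')<(1-\eta)\int_{S'}|f|\,\ud\mu/\langle|f|\rangle^\psi_{\mu,S}$ and then the disjointness of the $S'\subset S$. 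This is exactly the $(\eta,\psi)$-major condition \eqref{eq:defn:MajorSparse}. If $\langle|f|\rangle^\psi_{\mu,S}=0$, then $f=0$ a.e. on $S$, and we stop with no children.

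\emph{Pointwise bound.} For $x\in Q_0$, first suppose $x$ lies in some $E(S)$ (for finitely many generations this holds for every $x\in Q_0$). Let $Q\in\mathcal{D}(Q_0)$ with $x\in Q$. Let $S$ be the minimal stopping cube containing $Q$, which exists because $Q_0$ is a stopping cube. Then $Q$ is not contained in any child of $S$. If $\langle|f|\rangle^\psi_{\mu,Q}$ exceeded $\frac1{1-\eta}\langle|f|\rangle^\psi_{\mu,S}$, then $Q$ would lie inside a maximal such cube, which is a child of $S$; this is a contradiction. Hence $\langle|f|\rangle^\psi_{\mu,Q}\le\frac{1}{1-\eta}\langle|f|\rangle^\psi_{\mu,S}$.

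It remains to pass from this $S$ to the stopping cube $S_x$ with $x\in E(S_x)$. The stopping cubes containing $x$ form a chain $Q_0=S^0\supsetneq S^1\supsetneq\dots\supsetneq S^m=S_x$. Along this chain the averages strictly increase: $\langle|f|\rangle^\psi_{\mu,S^{j+1}}>\frac{1}{1-\eta}\langle|f|\rangle^\psi_{\mu,S^j}\ge \langle|f|\rangle^\psi_{\mu,S^j}$. Since $S$ is one of the $S^j$, we get $\langle|f|\rangle^\psi_{\mu,S}\le\langle|f|\rangle^\psi_{\mu,S_x}$. Taking the supremum over $Q$ yields
\[
M^\psi_{\mu,\mathcal{D}(Q_0)}f(x)\le\tfrac1{1-\eta}\langle|f|\rangle^\psi_{\mu,S_x}=\tfrac{1}{1-\eta}\sum_S\langle|f|\rangle^\psi_{\mu,S}1_{E(S)}(x).
\]
Outside $Q_0$ both sides vanish, since the maximal operator is restricted to $\mathcal{D}(Q_0)$.

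\emph{Main obstacle.} The delicate point is showing that $\bigcup_S E(S)$ exhausts $Q_0$ up to a null set; this is where the hypothesis $\mu\ll\psi$ enters. With finite $\mathcal{D}$ the chain terminates, so every point lies in some $E(S)$. To be safe without finiteness, iterate the sparsity estimate: $\psi\bigl(\bigcup_{S\in\mathcal{S}_k}S\bigr)\le\sum_{S\in\mathcal S_k}\psi(S)$. A generation-wise geometric decay $(1-\eta)^k\psi(Q_0)$ then follows, using that sub-additivity and $\psi(S')<(1-\eta)\int_{S'}|f|/\langle|f|\rangle^\psi_{\mu,S}$ telescope through the generations. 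The sets $\bigcup\mathcal{S}_k$ decrease, so $\mu\ll\psi$ forces $\mu\bigl(\bigcap_k\bigcup\mathcal{S}_k\bigr)=0$. This gives the almost-everywhere statement. The last inequality in Proposition \ref{prop:SDomAlpha}, replacing $1_{E(S)}$ by $1_S$, is trivial.
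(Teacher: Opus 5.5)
Your proof is correct and is essentially the paper's argument: the same principal-cubes stopping time on the $\psi$-averages, the same sub-additivity estimate $\psi\bigl(\bigcup\mathrm{ch}(S)\bigr)\le\sum_{S'}\psi(S')\le(1-\eta)\psi(S)$ for sparsity, and the same pointwise bound via monotonicity of the averages along the chain of stopping ancestors. You also note that finiteness of $\mathcal{D}$ alone forces $\bigcup_S E(S)=Q_0$ exactly. That remark is correct, so in the finite case the paper's use of the decay $(1-\eta)^k\psi(Q_0)$ together with $\mu\ll\psi$ is not actually needed.
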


\begin{proof}
	We apply the "principal cubes algorithm" to construct a $\psi$-sparse family of cubes. Let 
	\begin{align*}
		\mathcal{S}&:= \bigcup_{k=0}^\infty\mathcal{S}_{k},&\text{where}&&&\mathcal{S}_0:=\{Q_0\}\\
		\mathcal{S}_{k+1}&:=\bigcup_{S\in \mathcal{S}_k}\text{ch}(S),&\text{where}&&&\text{ch}(S):=\Bigl\{Q\subsetneq S\text{ maximal }: \langle |f|\rangle^{\psi}_{\mu,Q}>\frac{1}{1-\eta}\langle| f|\rangle^{\psi}_{\mu,S}\Bigr\}.
	\end{align*}
	If the stopping time terminates in some cube $S$ we set $E(S) := S$ and the terminal cubes are their own $\psi$-major subsets. If $S\in\mathcal{S}_k$ is not a terminal cube, i.e. there exists some $S'\in\mathcal{S}_{k+1}$ such that $S'\subsetneq S,$ then we set $E(S):=S\setminus\bigcup_{Q\in\text{ch}(S)}Q$. By construction the sets $E(S)$ are pairwise disjoint. 
    
    We next verify that $S\setminus E(S) := \bigcup\mathrm{ch}(S)$ is $\psi$-minor (recall from Definition \ref{defn:MajorSparse} that we have defined majority with the minority of the complement) by using the finite sub-additivity of $\psi.$ If $S$ is a terminal cube, there is nothing to prove. If $S$ is not a terminal cube we use the finite sub-additivity of $\psi$ ($\mathrm{ch}(S)$ is a finite collection, since $\mathcal{D}$ is a finite collection) to bound
	\begin{equation}\label{eq:sdomproof1}
		\begin{split}
		    \psi\big( \bigcup\mathrm{ch}(S)\big)\leq \sum_{Q\in \mathrm{ch}(S)}\psi(Q) &\leq  (1-\eta)\frac{\sum_{Q\in \mathrm{ch}(S)}\int_Q|f|\mathrm d\mu}{\langle |f|\rangle^{\psi}_{\mu,S}} \\ &\leq (1-\eta)\frac{\int_S|f|\mathrm d \mu}{\langle |f|\rangle^{\psi}_{\mu,S}}\leq (1-\eta)\psi(S).
		\end{split}
	\end{equation}
This demonstrates that $S\setminus E(S) := \bigcup_{Q\in\mathrm{ch}(S)}Q$ is $\psi$-minor and hence that $\mathcal{S}$ is a $\psi$-sparse family.
	
	Using the bound \eqref{eq:sdomproof1} repeatedly shows that 
	\begin{align*}
    \psi\bigl(\bigcup_{S\in\mathcal{S}_k}S\bigr)
    &\leq \sum_{S\in\mathcal{S}_{k}}\psi(S)
    = \sum_{\widehat{S}\in \mathcal{S}_{k-1}} \sum_{S\in\text{ch}(\widehat{S})}\psi(S) \\
    &\leq  (1-\eta) \sum_{\widehat{S}\in \mathcal{S}_{k-1}} \psi(\widehat{S})
    \leq \cdots \leq (1-\eta)^k\psi(Q_0).
	\end{align*}
	The sets $\cup_{S\in\mathcal{S}_{k}}S$ decrease with $k$. Since $\eta < 1$ and $\mu\ll\psi$, the previous estimate gives
	\[
	\mu\bigl(Q_0\setminus \cup_{S\in\mathcal{S}}E(S)\bigr) = \mu\bigl(\cap_{k}\cup_{S\in\mathcal{S}_k}S\bigr) = 0.
	\]
	Hence, $\mu$-almost everywhere,
	\begin{align}\label{eq:sdomproof2}
		1_{Q_0} = \sum_{S\in\mathcal{S}}1_{E(S)}.
	\end{align}	
	Fix $S\in\mathcal{S}.$ We first show that every cube $Q\in\mathcal{D}(Q_0)$ with $Q\cap E(S)\not=\emptyset$ satisfies
	\[
	\langle |f|\rangle_{\mu,Q}^{\psi} \leq \frac{1}{1-\eta} \langle |f|\rangle_{\mu,S}^{\psi}.
	\]
	If $Q\subset S$, this follows from the maximality of the stopping children of $S$, since $Q$ is either $S$ itself or is not contained in any stopping child. If $Q\not\subset S$, then dyadicity gives $S\subset Q$. Let $Q_0=S_0\supset S_1\supset\cdots\supset S_m=S$ be the chain of stopping ancestors of $S$. If $Q\neq Q_0$, choose $i$ with $S_i\subset Q\subset S_{i-1}$. The maximality of the stopping child $S_i$ of $S_{i-1}$ gives
	\[
	\langle |f|\rangle_{\mu,Q}^{\psi}\leq \frac{1}{1-\eta}\langle |f|\rangle_{\mu,S_{i-1}}^{\psi}\leq \frac{1}{1-\eta}\langle |f|\rangle_{\mu,S}^{\psi}.
	\]
	The case $Q=Q_0$ is even simpler, because $\langle |f|\rangle_{\mu,Q_0}^{\psi}\leq \langle |f|\rangle_{\mu,S}^{\psi}$. Therefore
\begin{equation}\label{eq:sdomproof3}
		1_{E(S)}M^{\psi}_{\mu,\mathcal{D}(Q_0)}f \leq \frac{1}{1-\eta}\langle |f|\rangle_{\mu,S}^{\psi}1_{E(S)}.
	\end{equation}
	Together \eqref{eq:sdomproof2} and \eqref{eq:sdomproof3} give
	\begin{equation*}
		M^{\psi}_{\mu,\mathcal{D}(Q_0)}f=1_{Q_0}M^{\psi}_{\mu,\mathcal{D}(Q_0)}f= \sum_{S\in\mathcal{S}}1_{E(S)}M^{\psi}_{\mu,\mathcal{D}(Q_0)}f\leq \frac{1}{1-\eta}\sum_{S\in\mathcal{S}}\langle |f|\rangle_{\mu,S}^{\psi}1_{E(S)}.\qedhere
	\end{equation*}
\end{proof}

\begin{proof}[Proof of Theorem \ref{thm:MainIntro2}]  
	The lower bound is immediate by Proposition \ref{prop:Necessity}.
	For the other direction we argue on finite subcollections and then pass to the limit. Since $\mathcal D$ is countable, write $\mathcal D=\{Q_j\}_{j=1}^\infty$ and set $\mathcal D_N:=\{Q_j:1\leq j\leq N\}.$ Then 
	\(
	M^{\alpha(\cdot)}_{\mu,\mathcal D_N}f\uparrow M^{\alpha(\cdot)}_{\mu,\mathcal D}f
	\)
	pointwise as $N\to\infty$. We prove a bound independent of $N.$
	Define $\psi(A) := \mu(A)^{1-\langle\alpha\rangle_A}$ for measurable sets $A$ of finite measure
	so that $M^{\alpha(\cdot)}_{\mu,\mathcal{D}_N} = M^{\psi}_{\mu,\mathcal{D}_N}.$
	Since $1/\alpha(\cdot)\in\mathcal P(X,\mu)$, we have $0\leq \alpha(\cdot)\leq 1$ almost everywhere, and hence $(1-\alpha(\cdot))^{-1}\in\mathcal P(X,\mu).$ Lemma \ref{lem:PsiSubAdd} gives that $\psi$ is sub-additive. Moreover, $\mu\ll \psi$ on every finite top cube. Indeed, if $A\subset Q_0$, $\mu(Q_0)<\infty$ and $\mu(A)>0$, then $1-\langle\alpha\rangle_A\in [0,1]$ and
	\[
	\psi(A)=\mu(A)^{1-\langle\alpha\rangle_A}\geq \min\{\mu(A),1\}.
	\]
	Thus, for any decreasing sequence $A_k\subset Q_0$, $\psi(A_k)\to0$ implies $\mu(A_k)\to0$. Now let 

	\[
	\eta := \eta([1]_{A_{1/\alpha(\cdot)}(\mathcal{D})}) := 1-\frac{1}{50[1]_{A_{1/\alpha(\cdot)}(\mathcal{D})}}.
	\]
	Let $\mathcal T_N$ be the collection of maximal cubes in $\mathcal D_N$. Since $\mathcal D$ is dyadic, the cubes in $\mathcal T_N$ are pairwise disjoint and every cube in $\mathcal D_N$ is contained in exactly one cube of $\mathcal T_N$. For each fixed $Q_0\in \mathcal T_N$, Proposition \ref{prop:SDomSub} applied to $\mathcal D_N(Q_0):=\{Q\in\mathcal D_N:Q\subset Q_0\}$ yields an $(\eta,\psi)$-sparse collection $\mathcal{S}_f(Q_0)\subset\mathcal D_N(Q_0)$. The disjoint union
	\(
	\mathcal{S}_N :=  \bigcup_{Q_0\in\mathcal T_N}\mathcal{S}_f(Q_0) \subset \mathcal D_N
	\)
	is automatically $(\eta,\psi)$-sparse. Thus, 
	\begin{align*}
		M^{\alpha(\cdot)}_{\mu,\mathcal D_N}f
		&= \sum_{Q_0\in\mathcal T_N}M^{\psi}_{\mu,\mathcal D_N(Q_0)}f \\
		&\leq \frac{1}{1-\eta}\sum_{Q_0\in\mathcal T_N}\sum_{S\in\mathcal{S}_f(Q_0)}\langle |f|\rangle_{\mu,S}^{\psi}1_{E(S)}
		\lesssim  [1]_{A_{\frac{1}{\alpha(\cdot)}}(\mathcal{D})}\sum_{S\in\mathcal{S}_N}\langle |f|\rangle_{\mu,S}^{\alpha(\cdot)}1_{E(S)}.
	\end{align*}
	Hence 
	\[
	\|M^{\alpha(\cdot)}_{\mu,\mathcal D_N}f\|_{L^{q(\cdot)}}
	\lesssim  [1]_{A_{\frac{1}{\alpha(\cdot)}}(\mathcal{D})}
	\Bigl\|\sum_{S\in\mathcal{S}_N}\langle |f|\rangle_{\mu,S}^{\alpha(\cdot)}1_{E(S)}\Bigr\|_{L^{q(\cdot)}}
	\leq [1]_{A_{\frac{1}{\alpha(\cdot)}}(\mathcal{D})} \big\| \mathcal{A}^{\alpha(\cdot)}_{\mathcal S_N}\big\|_{L^{p(\cdot)}\to L^{q(\cdot)}}	\|f\|_{L^{p(\cdot)}}. 
	\]
	Since $\mathcal{S}_N$ is sparse with the parameter determined by $[1]_{A_{1/\alpha(\cdot)}(\mathcal D)}$, and since $[1]_{A_{1/\alpha(\cdot)}(\mathcal S_N)}\leq [1]_{A_{1/\alpha(\cdot)}(\mathcal D)}$, the monotonicity of $\eta(t)=1-\frac1{50t}$ shows that $\mathcal{S}_N$ is also sparse with the parameter required in Theorem \ref{thm:SparseLpLq}. Therefore, Theorem \ref{thm:SparseLpLq} yields 
\begin{align*}
\big\| \mathcal{A}^{\alpha(\cdot)}_{\mathcal S_N}\big\|_{L^{p(\cdot)}\to L^{q(\cdot)}}
&\lesssim [1]_{A_{p(\cdot),q(\cdot)}^{\alpha(\cdot)}(\mathcal{S}_N)}
\|M_{\mathcal S_N}\|_{ L^{q'(\cdot)}} \|M_{\mathcal S_N}\|_{ L^{p(\cdot)}} \\
&\leq [1]_{A_{p(\cdot),q(\cdot)}^{\alpha(\cdot)}(\mathcal{D})}
\|M_{\mathcal D}\|_{ L^{q'(\cdot)}} \|M_{\mathcal D}\|_{ L^{p(\cdot)}}.
\end{align*}
Combining the previous estimates gives a bound for $\|M^{\alpha(\cdot)}_{\mu,\mathcal D_N}f\|_{L^{q(\cdot)}}$ that is independent of $N$. By Proposition \ref{prop:Lp=Fatou}, the space $L^{q(\cdot)}(X,\mu)$ has the Fatou property, and hence
\[
\|M^{\alpha(\cdot)}_{\mu,\mathcal D}f\|_{L^{q(\cdot)}}
=\sup_N\|M^{\alpha(\cdot)}_{\mu,\mathcal D_N}f\|_{L^{q(\cdot)}}.\qedhere
\]
\end{proof}

\begin{proof}[Proof of Theorem \ref{thm:SparseLpLq}]  
	The lower bound is immediate by Proposition \ref{prop:Necessity}; indeed, since the sparse operator is bounded, the single scale averaging operators $M^{\alpha(\cdot)}_{\mu,S},$ for each fixed $S\in\mathcal{S},$ are uniformly bounded and hence
	\[
	[1]_{A_{p(\cdot),q(\cdot)}^{\alpha(\cdot)}(S)} \lesssim \| M^{\alpha(\cdot)}_{S} \|_{L^{p(\cdot)}\to L^{q(\cdot)}} \leq  \| \mathcal{A}^{\alpha(\cdot)}_{\mathcal{S}} \|_{L^{p(\cdot)}\to L^{q(\cdot)}}.
	\]

    We move to the upper bound. By positivity and the lattice property, it is enough to consider $f\geq0$. By Lemma \ref{lem:dualityFormula}, it suffices to estimate the pairing against arbitrary non-negative functions $g$ with $\|g\|_{L^{q'(\cdot)}}\leq1$. For such $g$,
	\begin{align*}
			\int \mathcal{A}^{\alpha(\cdot)}_{\mathcal{S}}f\,g\ud\mu
		&=
		\sum_{S\in\mathcal S}\frac{\int_Sf\,\mathrm d \mu}{\mu(S)^{1-\langle \alpha\rangle_S}}\int_Sg\,\mathrm d\mu
        =
		\sum_{S\in\mathcal S}\langle f\rangle_S \mu(S)^{1+\langle \alpha\rangle_S} \langle g\rangle _S
		\\
		&=\sum_{S\in\mathcal S}\frac{\mu(S)^{1+\langle \alpha\rangle_S}}{\mu\bigl(E(S)\bigr)^{1+\langle \beta\rangle_{E(S)}}}
		\langle f\rangle_S \mu\bigl(E(S)\bigr)^{\frac1{p^\mu_{E(S)}}} \langle g\rangle_S\mu\bigl(E(S)\bigr)^{\frac1{(q')^\mu_{E(S)}}},
	\end{align*}
	where $\beta(\cdot)$ is defined through the relation $1+\beta(\cdot) :=\tfrac1{p(\cdot)}+\frac1{q'(\cdot)}.$
	Taking the supremum over all such $g$, Lemma \ref{lem:Holder} gives
	\begin{equation}\label{eq:adhoc777}
		\begin{aligned}
				\bigl\| \mathcal{A}^{\alpha(\cdot)}_{\mathcal S}f \bigr\|_{L^{q(\cdot)}}&\lesssim \sup_{S\in\mathcal S}\Bigl(\frac{\mu(S)^{1+\langle \alpha\rangle_S}}{\mu\bigl(E(S)\bigr)^{1+\langle \beta\rangle_{E(S)}}}\Bigr) \Bigl\| \Bigl\{ \langle f\rangle_S \mu\bigl(E(S)\bigr)^\frac1{p^\mu_{E(S)}}\Bigr\}_{S\in \mathcal S}\Bigr\|_{\ell^{p^\mu_{E(S)}}}\\&
			\qquad\qquad
			\sup_{\substack{g\geq0\\ \|g\|_{L^{q'(\cdot)}}\leq1}}
			\Bigl\| \Bigl\{ \langle g\rangle_S \mu\bigl(E(S)\bigr)^\frac1{(q')^\mu_{E(S)}}\Bigr\}_{S\in \mathcal S}\Bigr\|_{\ell^{(p')^\mu_{E(S)}}}.
		\end{aligned}
	\end{equation}
	Applying Corollary \ref{corollary:carlesonembeddingapplication} gives
	\[
	\Bigl\| \Bigl\{ \langle f\rangle_S \mu\bigl(E(S)\bigr)^\frac1{p^\mu_{E(S)}}\Bigr\}_{S\in \mathcal S}\Bigr\|_{\ell^{p^\mu_{E(S)}}}\lesssim\|M_\mathcal S f\|_{L^{p(\cdot)}}.
	\]
	By Lemma \ref{lem:sequential}, using the standing assumption $p'(\cdot)\geq q'(\cdot)$, followed by Corollary \ref{corollary:carlesonembeddingapplication}, for every such $g$ we have
	\[
	\Bigl\| \Bigl\{ \langle g\rangle_S \mu\bigl(E(S)\bigr)^\frac1{(q')^\mu_{E(S)}}\Bigr\}_{S\in \mathcal S}\Bigr\|_{\ell^{(p')^\mu_{E(S)}}}\leq\Bigl\| \Bigl\{ \langle g\rangle_S \mu\bigl(E(S)\bigr)^\frac1{(q')^\mu_{E(S)}}\Bigr\}_{S\in \mathcal S}\Bigr\|_{\ell^{(q')^\mu_{E(S)}}}\lesssim\|M_\mathcal Sg\|_{L^{q'(\cdot)}}.
	\]
	Chaining the previous estimates gives
	\begin{align*}
		\bigl\| \mathcal{A}^{\alpha(\cdot)}_{\mathcal S}f \bigr\|_{L^{q(\cdot)}}&\lesssim 
		\sup_{S\in\mathcal{S}} \frac{\mu(S)^{1+\langle\alpha\rangle_S}}{\mu(E(S))^{1+\langle\beta\rangle_{E(S)}}} 
		\|M_{\mathcal S}\|_{L^{p(\cdot)}}\|M_{\mathcal S}\|_{L^{q'(\cdot)}}\|f\|_{L^{p(\cdot)}}.
	\end{align*}
It remains to bound the supremum in front. 
	By Lemma \ref{lem:IndicatorKopaliani} and the relation $1-\beta(\cdot) = 1/p'(\cdot)+1/q(\cdot)$ we have   
	\begin{align*}
		\frac{\mu(S)^{1+\langle\alpha\rangle_S}}{\mu(E(S))^{1+\langle\beta\rangle_{E(S)}}} &= \left(\frac{\mu(S)}{\mu(E(S))}\right)^2 \frac{\mu(E(S))^{1-\langle\beta\rangle_{E(S)}}}{\mu(S)^{1-\langle\alpha\rangle_S}} \\ &= [1]_{A_{p(\cdot),q(\cdot)}^{\alpha(\cdot)}(S)}\left(\frac{\mu(S)}{\mu(E(S))}\right)^2 \frac{\mu(E(S))^{1-\langle\beta\rangle_{E(S)}}}{\|1_S\|_{L^{p'(\cdot)}}\|1_S\|_{L^{q(\cdot)}}} \lesssim [1]_{A_{p(\cdot),q(\cdot)}^{\alpha(\cdot)}(S)}\left(\frac{\mu(S)}{\mu(E(S))}\right)^2.
	\end{align*}   
Denote 
\[
\eta := 1- \frac{1}{50[1]_{A_{1/\alpha(\cdot)}(\mathcal{S})}}.
\]
By the definition of $E(S)$ being an $(\eta,\mu(\cdot)^{1-\langle\alpha\rangle_{\cdot}})$-major subset of $S$ we have
\begin{align*}
    \mu(G)^{1-\langle\alpha\rangle_{G}}\leq (1-\eta)\mu(S)^{1-\langle\alpha\rangle_{S}},\qquad G := S\setminus E(S).
\end{align*} 
Thus, by using Lemma \ref{lem:IndicatorKopaliani}, and recalling the definition of $\eta,$ we have 
	\begin{align*}
			\frac{\mu(G)}{\mu(S)} &= \frac{\mu(G)^{1-\langle\alpha\rangle_G}}{\mu(S)^{1-\langle\alpha\rangle_S}} \frac{\mu(G)^{\langle\alpha\rangle_G}}{\mu(S)^{\langle\alpha\rangle_S}} \\ &\leq (1-\eta) 25[1]_{A_{1/\alpha(\cdot)}(\mathcal{S})} \frac{\|1_{G}\|_{L^{1/\alpha(\cdot)}}}{\|1_{S}\|_{L^{1/\alpha(\cdot)}}} \leq(1-\eta) 25[1]_{A_{1/\alpha(\cdot)}(\mathcal{S})} = \frac{1}{2}.
	\end{align*}
Thus $\mu(E(S)) = \mu(S) - \mu(G) \geq \mu(S)/2$ and hence, chaining the above estimates, we have shown that 
\[
\frac{\mu(S)^{1+\langle\alpha\rangle_S}}{\mu(E(S))^{1+\langle\beta\rangle_{E(S)}}} \lesssim [1]_{A_{p(\cdot),q(\cdot)}^{\alpha(\cdot)}(S)}\left(\frac{\mu(S)}{\mu(E(S))}\right)^2 \lesssim [1]_{A_{p(\cdot),q(\cdot)}^{\alpha(\cdot)}(S)}.
\]
\end{proof}

\subsection{Dyadic Riesz potential with variable fractionality}\label{sect:dyadic2}
The operator  $M^{\alpha(\cdot)}_{\mu,\mathcal{D}}$ 
picks pointwise the single scale with the largest variable fractional mean, while the dyadic variable Riesz potential
\begin{align*}
	I^{\alpha(\cdot)}_{\mu, \mathcal{D}}f := \sum_{Q\in\mathcal{D}} \frac{1_Q}{\mu(Q)^{1-\langle\alpha\rangle_Q}}\int_Q f\ud\mu
\end{align*}
sums them all up.   
When $\alpha(\cdot) = \alpha>0$ is a constant and $\mu = \mathcal{L}$ is the Lebesgue measure on $\mathbb{R}^n,$ a geometric sum on scales is possible: $\sum_{P\in\mathcal{D}(Q)}\mathcal{L}(P)^{\alpha}1_P\lesssim_{\alpha}\mathcal{L}(Q)^{\alpha},$ which can be used to control $I^{\alpha(\cdot)}_{\mathcal{D}}$ by $M^{\alpha(\cdot)}_{\mu,\mathcal{D}}$ on the level of norms.  When $\alpha(\cdot)$ is a function there is no reason to expect such bounds to hold. It turns out, by testing the operator with indicators of cubes, that the scales need to sum up in terms of a packing condition.

To formulate the result, we assume the collection $\mathcal{D}$ to have the $A_1$ self-improvement property ($A_1$ SIP), see Section \ref{sect:A1SIP} below. If $\mathcal{D}$ has the $A_1$ SIP, then $M_{\mathcal{D}}$ has the self-improving property on $L^{q'(\cdot)}:$ there exists $s>1$ so that $M_{s,\mathcal{D}}f := \sup_{Q\in\mathcal{D}}1_Q\langle |f|^s\rangle_Q^{1/s}$ is bounded on $L^{q'(\cdot)}$ provided $M_{\mathcal{D}}$ is bounded on $L^{q'(\cdot)},$ see Corollary \ref{cor:SIPMaxFunDyadic} below. 
Notably, any dyadic lattice $\mathcal{D}$ on $\mathbb{R}^n$, equipped with the Lebesgue measure, has the $A_1$ SIP. To state the result, we write 
\begin{align}\label{eq:thm:DyadRieszNec}
 C^{\alpha(\cdot)}_{\downarrow}(\mathcal{D},\mu) :=	\sup_{Q_0\in\mathcal{D}}  \frac{\sum_{Q\in\mathcal{D}(Q_0)}  \mu(Q)^{1+\langle\alpha\rangle_Q}}{\mu(Q_0)^{1+\langle\alpha\rangle_{Q_0}}} < \infty
\end{align}

\begin{theorem}\label{thm:RieszPot} 
	Let $\mathcal{D}$ be a collection of dyadic cubes.
	Let  $p(\cdot),$  $q(\cdot),$ $1/\alpha(\cdot)\in \mathcal{P}(X,\mu)$ be such that $1<p(\cdot)\leq q(\cdot)<\infty.$
	The following assertions hold. 
	\begin{itemize} 
		\item  Suppose that $\mathcal{D}$ has the $A_1$ SIP, that $M_{\mathcal{D}}$ is bounded on $L^{q'(\cdot)}$ and that $1\in A_{\frac{1}{\alpha(\cdot)}}(\mathcal{D}).$ If $M^{\alpha(\cdot)}_{\mathcal{D}}:L^{p(\cdot)}\to L^{q(\cdot)}$ is bounded and $C^{\alpha(\cdot)}_{\downarrow}(\mathcal{D},\mu)<\infty$, then  $I^{\alpha(\cdot)}_{\mathcal{D}}:L^{p(\cdot)}\to L^{q(\cdot)}$ is bounded.
		\item  Suppose that $\alpha(\cdot)= \frac{1}{p(\cdot)}-\frac{1}{q(\cdot)}$ and $1\in A_{p(\cdot)}(\mathcal{D})\cap A_{q(\cdot)}(\mathcal{D}).$ If $I^{\alpha(\cdot)}_{\mathcal{D}}:L^{p(\cdot)}\to L^{q(\cdot)}$ is bounded, then $M^{\alpha(\cdot)}_{\mathcal{D}}:L^{p(\cdot)}\to L^{q(\cdot)}$ is bounded and $C^{\alpha(\cdot)}_{\downarrow}(\mathcal{D},\mu)<\infty.$ 
\end{itemize}
\end{theorem}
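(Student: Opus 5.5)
The second bullet follows from Proposition \ref{prop:Necessity}, so the real work is the first bullet. There I would dualize, stop on the averages of $f$ (principal cubes), absorb the scale sum inside each stopping cube by a John--Nirenberg argument driven by $C^{\alpha(\cdot)}_{\downarrow}$, and finish with $M^{\alpha(\cdot)}_{\mathcal D}$ and the self-improved maximal function.

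\emph{Second bullet.} Boundedness of $M^{\alpha(\cdot)}_{\mathcal D}$ is immediate from $M^{\alpha(\cdot)}_{\mathcal D}f\le I^{\alpha(\cdot)}_{\mathcal D}|f|$.
\begin{itemize}
\item For the packing condition I apply \eqref{eq:prop:NecessityPotential} with $Z=L^{p(\cdot)}$, $Y=L^{q(\cdot)}$ and $\psi(Q)=\mu(Q)^{1-\langle\alpha\rangle_Q}$.
\item I then pair the resulting function with $1_{Q_0}$ and use Lemma \ref{lem:Holder}:
\[
\sum_{Q\in\mathcal D(Q_0)}\mu(Q)^{1+\langle\alpha\rangle_Q}
\le 4\Bigl\|\sum_{Q\subset Q_0}\mu(Q)^{\langle\alpha\rangle_Q}1_Q\Bigr\|_{q(\cdot)}\|1_{Q_0}\|_{q'(\cdot)}
\lesssim \|I^{\alpha(\cdot)}_{\mathcal D}\|\,\|1_{Q_0}\|_{p(\cdot)}\|1_{Q_0}\|_{q'(\cdot)}.
\]
\item By Lemma \ref{lem:IndicatorKopaliani} and $1\in A_{p(\cdot)}(\mathcal D)$, we get $\|1_{Q_0}\|_{p(\cdot)}\lesssim\mu(Q_0)^{1/p_{Q_0}}$.
\item By $1\in A_{q(\cdot)}(\mathcal D)$ and the lower bound of Lemma \ref{lem:IndicatorKopaliani}, we get $\|1_{Q_0}\|_{q'(\cdot)}\le[1]_{A_{q(\cdot)}}\mu(Q_0)/\|1_{Q_0}\|_{q(\cdot)}\lesssim\mu(Q_0)^{1-1/q_{Q_0}}$.
\item The product is $\mu(Q_0)^{1+1/p_{Q_0}-1/q_{Q_0}}=\mu(Q_0)^{1+\langle\alpha\rangle_{Q_0}}$, using $\alpha=1/p-1/q$.
\end{itemize}

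\emph{First bullet, setup.} Take $f,g\ge0$ with $\|g\|_{q'(\cdot)}\le1$, and reduce to finite subcollections of $\mathcal D$ via Fatou, as in the proof of Theorem \ref{thm:MainIntro2}. Then
\[
\int I^{\alpha(\cdot)}_{\mathcal D}f\,g=\sum_Q\langle f\rangle_Q\,\mu(Q)^{1+\langle\alpha\rangle_Q}\langle g\rangle_Q .
\]
I run the standard principal cubes algorithm on $\langle f\rangle_Q$ with threshold $2$. This gives a $\mu$-sparse family $\mathcal F$ with sets $E(F)$ satisfying $\mu(E(F))\ge\mu(F)/2$, and, writing $\pi(Q)=F$ for the minimal $F\in\mathcal F$ containing $Q$, we have $\langle f\rangle_Q\le2\langle f\rangle_{\pi(Q)}$. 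Hence the sum is at most
\[
2\sum_{F}\langle f\rangle_F\int_F g\,\varphi_F,\qquad \varphi_F:=\sum_{Q\in\mathcal D(F)}\mu(Q)^{\langle\alpha\rangle_Q}1_Q .
\]

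\emph{Key step (main obstacle).} I expect the main difficulty to be a John--Nirenberg estimate
\[
\langle\varphi_F^{s'}\rangle_F^{1/s'}\lesssim_{s,C_\downarrow,[1]}\mu(F)^{\langle\alpha\rangle_F}
\]
for every finite $s'$, where $s>1$ is the self-improvement exponent from Corollary \ref{cor:SIPMaxFunDyadic}.
\begin{itemize}
\item The packing condition gives $\int_{R}\varphi_R=\sum_{Q\subset R}\mu(Q)^{1+\langle\alpha\rangle_Q}\le C_\downarrow\,\mu(R)\,\lambda_R$, where $\lambda_R:=\mu(R)^{\langle\alpha\rangle_R}$.
\item The coefficients are quasi-monotone: for $Q\subset R$, Lemma \ref{lem:IndicatorKopaliani} with $1\in A_{1/\alpha(\cdot)}(\mathcal D)$ gives $\lambda_Q\lesssim\|1_Q\|_{1/\alpha(\cdot)}\le\|1_R\|_{1/\alpha(\cdot)}\lesssim[1]\lambda_R$.
\item I then iterate the classical argument. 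Chebyshev selects maximal $R\subset F$ with $\varphi_F-\varphi_R$ large on $R$, which is roughly where the partial sums from above exceed $K\lambda_F$. These cubes have total measure at most $\mu(F)/2$, and on them the remaining tail is of the same form with $\lambda_R\lesssim\lambda_F$. This yields an exponential distribution bound for $\varphi_F/\lambda_F$.
\item The delicate point is that the stopping must be done with the quasi-monotonicity constant kept under control.
\end{itemize}

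\emph{First bullet, conclusion.} Given the key step, Hölder on $F$ gives
\[
\int_F g\,\varphi_F\lesssim\mu(F)^{1+\langle\alpha\rangle_F}\langle g^s\rangle_F^{1/s}.
\]
Using $\mu(F)\le2\mu(E(F))$, the sum is then bounded by
\[
\int\sum_F\langle f\rangle_F\,\mu(F)^{\langle\alpha\rangle_F}1_{E(F)}\,M_{s,\mathcal D}g
\le\int M^{\alpha(\cdot)}_{\mathcal D}f\cdot M_{s,\mathcal D}g
\lesssim\|M^{\alpha(\cdot)}_{\mathcal D}f\|_{q(\cdot)}\|M_{s,\mathcal D}g\|_{q'(\cdot)} ,
\]
where the middle inequality uses that the $E(F)$ are disjoint. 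Corollary \ref{cor:SIPMaxFunDyadic} bounds the last factor, and Lemma \ref{lem:dualityFormula} concludes.
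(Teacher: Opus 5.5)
Your proposal is correct and follows essentially the paper's proof. The second bullet goes through \eqref{eq:prop:NecessityPotential}, H\"older and Lemma \ref{lem:IndicatorKopaliani}, and the first bullet through duality, principal cubes with threshold $2$, a John--Nirenberg bound for $\varphi_F=h_F$ (built from the packing condition and the quasi-monotonicity $\lambda_Q\le 25[1]_{A_{1/\alpha(\cdot)}(\mathcal D)}\lambda_R$), and H\"older against the self-improved maximal function. The deviations are minor:
\begin{itemize}
\item The paper proves the John--Nirenberg step by an extremal-cube absorption argument, Lemma \ref{lem:JNforSequences} with $M=(2A_1)^s$, instead of your layered stopping. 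Your layered stopping does give exponential decay, provided you compare each $\lambda_{R_j}$ directly with $\lambda_F$ so that the quasi-monotonicity constant does not compound.
\item Corollary \ref{cor:SIPMaxFunDyadic} only controls the restricted operators $M_{s,\mathcal D(Q_0)}$. The paper therefore replaces your $M_{s,\mathcal D}g$ by $G_N=\sum_{Q_0\in\mathcal T_N}1_{Q_0}M_{s,\mathcal D(Q_0)}(g1_{Q_0})$, taken over the top cubes $\mathcal T_N$.
\end{itemize}
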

\begin{remark}\label{rem:AlphaRelRieszPot}
In contrast to the maximal function theorems, we  assume above in the second bullet that $\alpha(\cdot)= \frac{1}{p(\cdot)}-\frac{1}{q(\cdot)}.$ The effect is that in the bound \eqref{eq:lem:DyadRieszNec2} below we do not start tracking the fractions
$\tfrac{\mu(Q_0)^{1/p^{\mu}_{Q_0}- 1/q^{\mu}_{Q_0}}}{ \mu(Q_0)^{\langle\alpha\rangle_{Q_0}}}.$
The interested reader can see that their uniform boundedness would act as a necessary condition weaker than $\alpha(\cdot)= \tfrac{1}{p(\cdot)}-\tfrac{1}{q(\cdot)}$ in the second bullet.
\end{remark}

By $M^{\alpha(\cdot)}_{\mathcal{D}}f \leq I^{\alpha(\cdot)}_{\mathcal{D}}|f|$ the first conclusion in the second bullet is immediate. The necessity of the packing condition will follow by testing the norm estimate with indicators and using the Muckenhoupt type conditions appropriately. We now start the proof. 
Before proceeding, we recall the following notation for $L^s$-averages 
$\|f\|_{\avL^{s}(Q)} := \frac{\|f1_Q\|_{L^{s}}}{\|1_Q\|_{L^{s}}}.$

\begin{lemma}\label{lem:DyadRieszNec} Let  $p(\cdot),q(\cdot),1/\alpha(\cdot)\in \mathcal{P}(X,\mu)$ be such that $\alpha(\cdot) = \frac{1}{p(\cdot)}-\frac{1}{q(\cdot)}.$ Suppose that $I^{\alpha(\cdot)}_{\mathcal{D}}:L^{p(\cdot)}\to L^{q(\cdot)}$ is bounded and $1\in A_{p(\cdot)}(\mathcal{D})\cap A_{q(\cdot)}(\mathcal{D}).$ 
	Then, \eqref{eq:thm:DyadRieszNec} holds, under  the standing assumptions of Theorem \ref{thm:RieszPot}, since we have 
	\begin{align}\label{eq:lem:DyadRieszNec}
	 \sup_{Q_0\in\mathcal{D}}  \frac{\sum_{Q\in\mathcal{D}(Q_0)}  \mu(Q)^{1+\langle\alpha\rangle_Q}}{\mu(Q_0)^{1+\langle\alpha\rangle_{Q_0}}} \leq  5^2[1]_{A_{p(\cdot)}(\mathcal{D})}  [1]_{A_{q(\cdot)}(\mathcal{D})}\|I^{\alpha(\cdot)}_{\mathcal{D}}\|_{L^{p(\cdot)}\to L^{q(\cdot)}}. 
	\end{align}
\end{lemma}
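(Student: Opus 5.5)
The plan is to test the boundedness of $I^{\alpha(\cdot)}_{\mathcal{D}}$ on indicators of cubes, via the bound \eqref{eq:prop:NecessityPotential} of Proposition \ref{prop:Necessity}, with $\psi(Q)=\mu(Q)^{1-\langle\alpha\rangle_Q}$, $Z=L^{p(\cdot)}$ and $Y=L^{q(\cdot)}$. Fix $Q_0\in\mathcal{D}$. Since $\mu(Q)/\psi(Q)=\mu(Q)^{\langle\alpha\rangle_Q}$, that bound gives
\[
\Bigl\|\sum_{Q\in\mathcal{D}(Q_0)}\mu(Q)^{\langle\alpha\rangle_Q}1_Q\Bigr\|_{L^{q(\cdot)}}\leq \|I^{\alpha(\cdot)}_{\mathcal{D}}\|_{L^{p(\cdot)}\to L^{q(\cdot)}}\|1_{Q_0}\|_{L^{p(\cdot)}}.
\]
To reach the sum in \eqref{eq:lem:DyadRieszNec}, I will pair the left-hand side with $1_{Q_0}$ and apply the Hölder inequality in the form $\int_{Q_0} F\,\mathrm d\mu\leq \|F\|_{L^{q(\cdot)}}\|1_{Q_0}\|_{L^{q'(\cdot)}}$. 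For this I will use the Köthe-dual definition of the associate norm rather than Lemma \ref{lem:Holder}, so that no factor $4$ appears. Since every $Q\in\mathcal{D}(Q_0)$ lies inside $Q_0$, this yields
\[
\sum_{Q\in\mathcal{D}(Q_0)}\mu(Q)^{1+\langle\alpha\rangle_Q}\leq \|I^{\alpha(\cdot)}_{\mathcal{D}}\|\,\|1_{Q_0}\|_{L^{p(\cdot)}}\|1_{Q_0}\|_{L^{q'(\cdot)}}.
\]

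It remains to bound $\|1_{Q_0}\|_{L^{p(\cdot)}}\|1_{Q_0}\|_{L^{q'(\cdot)}}$ by $\mu(Q_0)^{1+\langle\alpha\rangle_{Q_0}}$. The right inequality of Lemma \ref{lem:IndicatorKopaliani} gives $\|1_{Q_0}\|_{p(\cdot)}\leq 5[1]_{A_{p(\cdot)}(\mathcal{D})}\mu(Q_0)^{1/p_{Q_0}}$. Applying the same lemma to $q'(\cdot)$, and noting that $[1]_{A_{q'(\cdot)}}=[1]_{A_{q(\cdot)}}$ by symmetry of the definition, gives $\|1_{Q_0}\|_{q'(\cdot)}\leq 5[1]_{A_{q(\cdot)}(\mathcal{D})}\mu(Q_0)^{1/(q')_{Q_0}}$. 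Harmonic means are linear in the reciprocal exponent, so
\[
\frac1{p_{Q_0}}+\frac1{(q')_{Q_0}}=1+\Bigl\langle \tfrac1p-\tfrac1q\Bigr\rangle_{Q_0}=1+\langle\alpha\rangle_{Q_0},
\]
where the last equality uses the pointwise identity $\alpha(\cdot)=1/p(\cdot)-1/q(\cdot)$. Dividing by $\mu(Q_0)^{1+\langle\alpha\rangle_{Q_0}}$ and taking the supremum over $Q_0$ gives \eqref{eq:lem:DyadRieszNec} with the constant $5^2[1]_{A_{p(\cdot)}}[1]_{A_{q(\cdot)}}$.

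The main point to check is the constant bookkeeping. The pairing step must use the exact dual norm, and the lemma must be applied to $q'(\cdot)$ through the symmetry $[1]_{A_{q'(\cdot)}}=[1]_{A_{q(\cdot)}}$. It is also in the final exponent identity that the assumption $\alpha=1/p-1/q$ is genuinely used (cf. Remark \ref{rem:AlphaRelRieszPot}).
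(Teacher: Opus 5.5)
Your argument is the paper's: test $I^{\alpha(\cdot)}_{\mathcal D}$ on $1_{Q_0}$ via \eqref{eq:prop:NecessityPotential}, pair with $1_{Q_0}$ by H\"older, and finish with Lemma \ref{lem:IndicatorKopaliani} and the identity $\alpha=1/p-1/q$. The paper bounds $\|1_{Q_0}\|_{L^{q'(\cdot)}}\le[1]_{A_{q(\cdot)}}\mu(Q_0)/\|1_{Q_0}\|_{L^{q(\cdot)}}$ and then uses the lower indicator bound for $q(\cdot)$. That is exactly how the upper bound of Lemma \ref{lem:IndicatorKopaliani} for $q'(\cdot)$ is proved, so your route through $[1]_{A_{q'(\cdot)}}=[1]_{A_{q(\cdot)}}$ is the same computation.

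One justification is wrong, however: the claim that the K\"othe-dual definition removes the factor $4$. That definition gives $\int_{Q_0}F\,\mathrm d\mu\le\|F\|_{L^{q(\cdot)}}\|1_{Q_0}\|_{(L^{q(\cdot)})'}$. The associate norm is not the Luxemburg norm of $L^{q'(\cdot)}$, which is the norm entering $[1]_{A_{q(\cdot)}}$ and Lemma \ref{lem:IndicatorKopaliani}. The two are only equivalent up to constants, as the factors $4$ in Lemma \ref{lem:dualityFormula} record.

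H\"older with constant one between Luxemburg norms genuinely fails for nonconstant exponents, even against an indicator. Take two atoms $a,b$ with $\mu(\{a\})=\tfrac25$, $\mu(\{b\})=\tfrac35$, and set $q(a)=2$, $q(b)=4$. Then $\rho_{q'(\cdot)}(1_X)=1$, so $\|1_X\|_{L^{q'(\cdot)}}=1$. The function $f$ with $f(a)=(5/4)^{1/2}$, $f(b)=(5/6)^{1/4}$ has $\rho_{q(\cdot)}(f)=1$, so $\|f\|_{L^{q(\cdot)}}=1$. Yet $\int_X f\,\mathrm d\mu=\sqrt{1/5}+\tfrac35(5/6)^{1/4}\approx1.02$.

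So you must pass through Lemma \ref{lem:Holder}, and your chain then gives $4\cdot5^2[1]_{A_{p(\cdot)}(\mathcal D)}[1]_{A_{q(\cdot)}(\mathcal D)}\|I^{\alpha(\cdot)}_{\mathcal D}\|_{L^{p(\cdot)}\to L^{q(\cdot)}}$. The paper's own ``by H\"older's inequality'' step passes over the same factor. Neither version affects the conclusion that \eqref{eq:thm:DyadRieszNec} holds.
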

\begin{proof} By H\"older's inequality and $1\in A_{q(\cdot)}(\mathcal{D}),$ and by Proposition \ref{prop:Necessity}, we have 
	\begin{multline}
	\frac{1}{\mu(Q_0)}\sum_{Q\in\mathcal{D}(Q_0)}  \mu(Q)^{1+\langle\alpha\rangle_Q} =\Bigl\| \sum_{Q\in\mathcal{D}(Q_0)} \mu(Q)^{\langle\alpha\rangle_Q} 1_Q \Bigr\|_{\avL^{1}}  \\ \leq [1]_{A_{q(\cdot)}(\mathcal{D})} \Bigl\| \sum_{Q\in\mathcal{D}(Q_0)} \mu(Q)^{\langle\alpha\rangle_Q} 1_Q \Bigr\|_{\avL^{q(\cdot)}}	
	\leq	[1]_{A_{q(\cdot)}(\mathcal{D})} \frac{  \|1_{Q_0}\|_{L^{p(\cdot)}}}{  \|1_{Q_0}\|_{L^{q(\cdot)}}}  \|I^{\alpha(\cdot)}_{\mathcal{D}}\|_{L^{p(\cdot)}\to L^{q(\cdot)}}.
	\end{multline}
We estimate the fraction by Lemma \ref{lem:IndicatorKopaliani} as
	\begin{align}\label{eq:lem:DyadRieszNec2}
	\frac{\|1_{Q_0}\|_{L^{p(\cdot)}}}{\|1_{Q_0}\|_{L^{q(\cdot)}}} \leq 5^2[1]_{A_{p(\cdot)}(\mathcal{D})} \frac{\mu(Q_0)^{1/p^{\mu}_{Q_0}}}{\mu(Q_0)^{1/q^{\mu}_{Q_0}}}  = 5^2[1]_{A_{p(\cdot)}(\mathcal{D})}  \mu(Q_0)^{\langle\alpha\rangle_{Q_0}}.
\end{align} 
\end{proof}

The main goal in proving Theorem \ref{thm:RieszPot} is then to reverse the implication. 
We already saw above that the packing condition \eqref{eq:thm:DyadRieszNec} can be written in the equivalent form 
\[
\sup_{Q_0\in\mathcal{D}}\biggl\| \frac{1}{\mu(Q_0)^{\langle\alpha\rangle_{Q_0}}}\sum_{Q\in\mathcal{D}(Q_0)} \mu(Q)^{\langle\alpha\rangle_Q} 1_Q \biggr\|_{\avL^{1}} < \infty.
\]
An important step, for us, will be to replace the $\avL^{1}(Q)$ averages by $\avL^{s}(Q)$ averages for some large exponent $s>1$. That is the content of the following John-Nirenberg type lemma \ref{lem:JNforSequences}. Such proofs are standard, and the proof we present is largely the same as \cite{MuscaluSchlag2013vol2}*{Theorem 2.7}; however, we have the extra $\beta_Q$ coefficients and thus present the entire proof.

\begin{lemma}\label{lem:JNforSequences} Let  $\mathcal{D}$ be a collection of dyadic cubes that index two sequences $\beta_Q, \gamma_Q \geq 0$ such that 
	\begin{align}\label{eq:prop:CarEmbed1}
		A_1 := \sup_{Q\in\mathcal{D}}\sup_{P\in\mathcal{D}(Q)} \frac{\beta_Q}{\beta_P} < \infty,\qquad  A_2 :=	\sup_{Q\in\mathcal{D}}\beta_Q \Big\| \sum_{P\in\mathcal{D}(Q)} 1_P \gamma_P\Big\|_{\avL^{1}(Q)} < \infty.
	\end{align} 
	Let $s\in [1,\infty)$ be arbitrary. Then, there exists a finite constant $C(s,A_1,A_2)>0$ so that 
	\begin{align}\label{eq:prop:JNseq2}
		B := \sup_{Q\in\mathcal{D}}\beta_Q \Big\| \sum_{P\in\mathcal{D}(Q)} 1_P \gamma_P\Big\|_{\avL^{s}(Q)} \leq C(s,A_1,A_2).
	\end{align}
\end{lemma}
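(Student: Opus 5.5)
We prove the lemma by the standard John--Nirenberg good-$\lambda$/stopping argument for dyadic Carleson sequences, carrying the weights $\beta_Q$ along through the constant $A_1$. Fix $Q\in\mathcal{D}$ and write $F_Q:=\sum_{P\in\mathcal{D}(Q)}1_P\gamma_P$. By monotone convergence we may assume $\mathcal{D}(Q)$ is finite, so $F_Q$ is finite and all level sets are finite unions of cubes. The goal is a distributional estimate of the form
\[
\mu\bigl(\{x\in Q: \beta_Q F_Q(x) > 2kA_1A_2\}\bigr)\leq 2^{-k}\mu(Q),\qquad k\geq 1,
\]
after which integrating the level sets gives $\beta_Q^s\langle F_Q^s\rangle_Q\lesssim_s (A_1A_2)^s$. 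We then remove the implicit $\mu$-normalisation, noting that $\|\cdot\|_{\avL^{s}(Q)}$ for constant $s$ is just $\langle|\cdot|^s\rangle_Q^{1/s}$, since $\|1_Q\|_{L^s}=\mu(Q)^{1/s}$.

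\textbf{Key step.} For a subcube $R\in\mathcal{D}(Q)$ we have
\[
\beta_Q\langle F_R\rangle_R\leq A_1\beta_R\langle F_R\rangle_R\leq A_1A_2 .
\]
So every truncated sum, renormalised by $\beta_Q$, is Carleson-bounded on each subcube with constant $\lambda_0:=A_1A_2$. Let $\lambda:=2\lambda_0$ and take the maximal cubes $R\in\mathcal{D}(Q)$ satisfying $\beta_Q\sum_{P\supseteq R,\,P\in\mathcal{D}(Q)}\gamma_P>\lambda$. These are the first stopping generation.
\begin{itemize}
\item \emph{Chebyshev.} $\sum_R\mu(R)\leq \mu(Q)/2$, since on $R$ we have $\beta_QF_Q\geq$ the ancestral sum, and $\beta_Q\langle F_Q\rangle_Q\leq\lambda_0$.
\item \emph{Parent control.} Off $\bigcup R$, the function $\beta_QF_Q$ is at most $\lambda$. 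On a stopping cube $R$ with dyadic parent $\hat R$ inside $Q$, the ancestral sum up to $\hat R$ is at most $\lambda$. Hence $\beta_QF_Q\leq \lambda+\beta_Q\gamma_R+\beta_QF_R^{\circ}$, where $F_R^\circ$ sums over strict descendants.
\end{itemize}

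\textbf{Main obstacle.} The single jump term $\beta_Q\gamma_R$ is the one place where the extra $\beta$ coefficients and the lack of a regular dyadic parent structure could cause trouble. We handle it by $\beta_Q\gamma_R\leq A_1\beta_R\gamma_R\leq A_1\beta_R\langle F_R\rangle_R\leq\lambda_0$, since $\gamma_R\leq F_R$ on $R$. To avoid needing parents at all, we include $\gamma_R$ in the stopping quantity, stopping on the ancestral sum including $R$ itself. Then $R$ is maximal, its strict ancestors within $Q$ give a total of at most $\lambda$, and the jump is absorbed as above; if $R=Q$ there is nothing to absorb.

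\textbf{Iteration.} Iterating inside each $R$ uses the same constant $\lambda_0$, because $A_1$ controls $\beta_Q/\beta_P$ uniformly for all nested pairs. The $k$-th generation then has total measure at most $2^{-k}\mu(Q)$, and off the $k$-th generation $\beta_QF_Q\leq k(\lambda+\lambda_0)$. Summing the layer-cake gives
\[
B\leq C_s A_1A_2,
\]
with $C_s$ depending only on $s$. This is the claimed $C(s,A_1,A_2)$.
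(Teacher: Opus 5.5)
Your proof is correct, but it follows a different route from the paper.

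\emph{The paper's route.} The paper uses an absorption (bootstrap) argument. It first reduces to finite $\mathcal{D}$, so the supremum $B$ is finite and attained at some cube $Q$. It then does a single stopping layer at height $A_2M/\beta_Q$, with $\gamma_J$ included in the stopping sum exactly as you do. Chebyshev gives $\mu(E_M)\le\mu(Q)/M$, and on the stopping cubes it uses the bound $\beta_J\|F_J\|_{\avL^{s}(J)}\le B$ together with $\beta_Q/\beta_J\le A_1$. This yields
\[
B\le 2A_2M+A_1M^{-1/s}B,
\]
and the choice $M=(2A_1)^s$ gives $B\le 4A_2(2A_1)^s$.

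\emph{Your route.} You run the full John--Nirenberg iteration over all generations. The point is that you always normalise by the top coefficient $\beta_Q$, so the ratio $\beta_Q/\beta_R\le A_1$ enters only once, through the level $\lambda_0=A_1A_2$, and never compounds. This gives the exponential decay
\[
\mu(\{x\in Q:\beta_QF_Q(x)>3kA_1A_2\})\le 2^{-k}\mu(Q).
\]
The threshold is $k(\lambda+\lambda_0)=3kA_1A_2$, not the $2kA_1A_2$ of your first display; this is harmless. The layer-cake formula then gives $B\lesssim s\,A_1A_2$.

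\emph{What each buys.} The paper's absorption trick is shorter and avoids bookkeeping over generations. However, it needs $B<\infty$ a priori, and it pays a factor $A_1^s$, because $A_1$ multiplies $B$ and has to be beaten by the smallness $M^{-1/s}$. Your iteration costs a little more writing but gives a constant linear in $A_1$, and in fact exponential integrability of $\beta_QF_Q$ on $Q$. In Corollary~\ref{cor:RieszPotPacking}, $A_1\sim[1]_{A_{1/\alpha(\cdot)}(\mathcal{D})}$, so your version would give a better quantitative dependence there.
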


\begin{proof} 
Without loss of generality we can assume that $\mathcal{D}$ is finite.
Indeed, if \eqref{eq:prop:JNseq2} holds with $\mathcal{D}$ replaced with an arbitrary finite subcollection $\mathcal{D}'\subset\mathcal{D}$, then clearly the claim holds also with the countable $\mathcal{D}.$ 
	 By finiteness of $\mathcal{D}$ also $B$ is finite and there exists a cube $Q$ that realizes $B,$ i.e. $\beta_Q \Big\| \sum_{P\in\mathcal{D}(Q)} 1_P \gamma_P\Big\|_{\avL^{s}(Q)} = B.$  Let $M>0$ and consider the set 
	\[
	E_M := Q\cap \biggl\{ \sum_{P\in\mathcal{D}(Q)} 1_P \gamma_P > A_2\beta_Q^{-1}M \biggr\}.
	\]
	By the definition of the constant $A_2$ and Chebyshev's inequality $E_M$ is small
	\begin{align}\label{eq:prop;CarEmbed3}
	\frac{\mu(E_M)}{\mu(Q)}= \|1_{E_M}\|_{\avL^1(Q)} \leq \frac{1}{M}.
\end{align}
	Write $E_M$ as a disjoint union 
	\begin{align*}
		E_M = \bigcup_{J\in \mathcal{J}_{max} } J,\qquad 	{\mathcal{J}}_{max} = \Bigl\{\text{maximal } J \in \mathcal{D}(Q) :   \sum_{J \subset P\in\mathcal{D}(Q)} 1_P \gamma_P > A_2\beta_Q^{-1}M  \Bigr\}.
	\end{align*}
	Denote  $h_J := \sum_{P\in\mathcal{D}(J)} 1_P \gamma_P.$
	By the maximality of the cubes $J\in\mathcal{J}_{max}$, we have that 
	\begin{align*}
		\|h_Q\|_{L^s}&\leq \|1_{Q\setminus E_M}h_Q\|_{L^s} +  \|1_{E_M}h_Q\|_{L^s} \\ 
		&\leq \frac{A_2}{\beta_Q}M \|1_{Q\setminus E_M}\|_{L^s} 
		+  \Bigl\|  \sum_{J\in\mathcal{J}_{max}} \sum_{J\subsetneq P\in\mathcal{D}(Q)} 1_P \gamma_P \Bigr\|_{L^s} 
		+  \Bigl\|  \sum_{J\in\mathcal{J}_{max}} h_J \Bigr\|_{L^s} \\ 
		&\leq \frac{A_2}{\beta_Q}M \|1_{Q\setminus E_M}\|_{L^s} 
		+   \frac{A_2}{\beta_Q}M \|1_{E_M}\|_{L^s} 
		+  \Bigl\|  \sum_{J\in\mathcal{J}_{max}} h_J \Bigr\|_{L^s}  \\ 
		&\leq 2\frac{A_2}{\beta_Q}M \|1_{Q}\|_{L^s} 
		+  \Bigl(  \sum_{J\in\mathcal{J}_{max}} \|h_J\|_{L^s}^s\Bigr)^{1/s}.
	\end{align*}
	We estimate the right-most term as 
	\begin{align*}
		\Bigl(  \sum_{J\in\mathcal{J}_{max}} \|h_J\|_{L^s}^s\Bigr)^{1/s} &=  \Bigl(  \sum_{J\in\mathcal{J}_{max}} \mu(J) \|h_J\|_{\avL^s}^s\Bigr)^{1/s} \\  
		&\leq  \Bigl(  \sum_{J\in\mathcal{J}_{max}} \mu(J) \bigl( \frac{B}{\beta_J}\bigr)^s\Bigr)^{1/s}  \leq \sup_{J\in\mathcal{J}_{max}} \frac{B}{\beta_J}  \mu(E_M)^{1/s}.
	\end{align*}
	By the definition of $B$, chaining the above bounds and multiplying with $\beta_Q/\|1_Q\|_{L^s}$ yields
	\begin{align*}
		B = \beta_Q\|h_Q\|_{\avL^s} \leq 2A_2M +  \left(\sup_{J\in\mathcal{J}_{max}} \frac{\beta_Q}{\beta_J}   \frac{\mu(E_M)^{1/s}}{\mu(Q)^{1/s}} \right)B \leq 2A_2M +  A_1 \left( \frac{1}{M}\right)^{1/s} B.
	\end{align*}
	Choosing 
	$M := (2A_1)^{s}$ and rearranging gives $B\leq 4A_2 (2A_1)^s =: C(s, A_1, A_2) < \infty.$
\end{proof}

Combining the above lemmas we obtain the following corollary.

\begin{corollary}\label{cor:RieszPotPacking} Let $\mathcal{D}$ be a collection of dyadic cubes, $s\in [1,\infty)$ a constant and $1/\alpha(\cdot)\in\mathcal{P}(X,\mu)$. Suppose that \eqref{eq:thm:DyadRieszNec} holds and $1\in A_{\frac1{\alpha(\cdot)}}(\mathcal D).$
	Then, there exists a finite constant so that 
	\begin{align}\label{eq:cor:RieszPotPacking}
    \sup_{Q_0\in\mathcal{D}}\Bigl\|  \frac{1}{\mu(Q_0)^{\langle\alpha\rangle_{Q_0}}} \sum_{Q\in\mathcal{D}(Q_0)} 1_Q  \mu(Q)^{\langle\alpha\rangle_Q} \Bigr\|_{\avL^{s}(Q_0)} \leq  C(s,[1]_{A_{1/\alpha(\cdot)}(\mathcal{D})}) < \infty.
	\end{align}
\end{corollary}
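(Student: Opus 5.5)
The plan is to apply Lemma \ref{lem:JNforSequences} with the choices
\[
\beta_Q := \mu(Q)^{-\langle\alpha\rangle_Q},\qquad \gamma_P := \mu(P)^{\langle\alpha\rangle_P},
\]
so that the quantity in \eqref{eq:cor:RieszPotPacking} is exactly $B$ of \eqref{eq:prop:JNseq2}. It then suffices to verify the two hypotheses $A_1<\infty$ and $A_2<\infty$ in \eqref{eq:prop:CarEmbed1}, with $A_1$ controlled by $[1]_{A_{1/\alpha(\cdot)}(\mathcal D)}$. Once both are checked, the lemma gives the constant $C(s,A_1,A_2)$, and we rename it $C(s,[1]_{A_{1/\alpha(\cdot)}(\mathcal{D})})$. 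Strictly speaking, this constant also depends on $C^{\alpha(\cdot)}_{\downarrow}(\mathcal D,\mu)$ through $A_2$, and we would note that.

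For $A_2$, observe that
\[
\beta_{Q_0}\Bigl\|\sum_{P\in\mathcal D(Q_0)}1_P\gamma_P\Bigr\|_{\avL^1(Q_0)}=\frac{\sum_{P\in\mathcal D(Q_0)}\mu(P)^{1+\langle\alpha\rangle_P}}{\mu(Q_0)^{1+\langle\alpha\rangle_{Q_0}}}.
\]
Here we use $\|1_{Q_0}\|_{L^1}=\mu(Q_0)$ and Tonelli. This is exactly the packing quantity \eqref{eq:thm:DyadRieszNec}, so $A_2= C^{\alpha(\cdot)}_{\downarrow}(\mathcal D,\mu)<\infty$.

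For $A_1$, we need $\mu(P)^{\langle\alpha\rangle_P}\lesssim \mu(Q)^{\langle\alpha\rangle_Q}$ whenever $P\subset Q$ are both in $\mathcal D$. We apply Lemma \ref{lem:IndicatorKopaliani} with exponent $1/\alpha(\cdot)$; its harmonic mean satisfies $1/(1/\alpha)^\mu_Q=\langle\alpha\rangle_Q$. This gives
\[
\mu(P)^{\langle\alpha\rangle_P}\le 5\|1_P\|_{L^{1/\alpha(\cdot)}}\le 5\|1_Q\|_{L^{1/\alpha(\cdot)}}\le 25[1]_{A_{1/\alpha(\cdot)}(\mathcal D)}\,\mu(Q)^{\langle\alpha\rangle_Q},
\]
using the lattice property in the middle step. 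Hence $A_1\le 25[1]_{A_{1/\alpha(\cdot)}(\mathcal D)}$; this is the same manipulation already used at the end of the proof of Theorem \ref{thm:SparseLpLq}.

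The only mildly delicate point is the degenerate case $\alpha=0$ on a set. There the norm $L^{1/\alpha(\cdot)}$ involves $L^\infty$ on $\{\alpha=0\}$, which Lemma \ref{lem:IndicatorKopaliani} already accommodates since $1/\alpha(\cdot)\in\mathcal P$ allows the value $\infty$. The application of Lemma \ref{lem:JNforSequences} is otherwise direct, including its reduction to finite $\mathcal D$, so I expect no real obstacle. The main content is recognizing that the packing condition is precisely the $s=1$ case and that the monotonicity hypothesis $A_1$ is supplied by $1\in A_{1/\alpha(\cdot)}(\mathcal D)$.
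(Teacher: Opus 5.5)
Your proposal is correct and matches the paper's proof essentially line for line. The paper also applies Lemma \ref{lem:JNforSequences} with $\beta_Q=\mu(Q)^{-\langle\alpha\rangle_Q}$ (and implicitly $\gamma_P=\mu(P)^{\langle\alpha\rangle_P}$), takes $A_2<\infty$ directly from the packing condition, and bounds $A_1\le 25[1]_{A_{1/\alpha(\cdot)}(\mathcal D)}$ via Lemma \ref{lem:IndicatorKopaliani} and the lattice property. Your remark that the resulting constant $4A_2(2A_1)^s$ also depends on $C^{\alpha(\cdot)}_{\downarrow}(\mathcal D,\mu)$ is accurate; the paper's notation $C(s,[1]_{A_{1/\alpha(\cdot)}(\mathcal{D})})$ simply suppresses this dependence.
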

\begin{proof} 
	We apply Lemma \ref{lem:JNforSequences} with the sequence $\beta_Q := \mu(Q)^{-\langle\alpha\rangle_Q}.$ The finiteness of $A_2$ is a standing assumption and hence it  remains to check that $A_1$ is finite. For each $P\subset Q$, we have
	\[
	\frac{\beta_Q}{\beta_P} = \frac{\mu(P)^{\langle\alpha\rangle_P}}{\mu(Q)^{\langle\alpha\rangle_Q}} \leq 25[1]_{A_{1/\alpha(\cdot)}(\mathcal{D})}\frac{\|1_P\|_{L^{1/\alpha(\cdot)}}}{\|1_Q\|_{L^{1/\alpha(\cdot)}}} \leq 25[1]_{A_{1/\alpha(\cdot)}(\mathcal{D})} < \infty.
	\qedhere\]
\end{proof}

\subsubsection{$A_1$ self-improvement property of a dyadic collection}\label{sect:A1SIP}
Before putting together the remaining part of the proof of Theorem \ref{thm:RieszPot}
we need one more property from $L^{q'(\cdot)}$, which is the self-improving property of the maximal function $M_{\mathcal{D}}$. 
We formulate a sufficient condition in terms of the $A_1$ self-improving property of bases $\mathcal{E},$ which results in the self-improving property of the maximal operator. We direct the reader to Theorem \ref{thm:SIPMaxFun} below. We closely follow Nieraeth \cite{Nier23Extrap}*{Section 2}.

\begin{definition}[Basis]\label{defn:basis} A countable collection $\mathcal{E}$ of measurable sets in a $\sigma$-finite measure space $(X,\mu)$ is a basis if (i) $0<\mu(E)<\infty,$ for all $E\in\mathcal{E},$ (ii) $\bigcup_{E\in\mathcal{E}} E = X,$ and (iii) for all $x_1,x_2\in X$ there exists $E\in\mathcal{E}$ so that $x_1,x_2\in E.$
\end{definition}

\begin{definition}[$A_1$ weights]\label{defn:A1} Let $\mathcal{E}$ be a countable collection of measurable sets in a $\sigma$-finite measure space $(X,\mu)$ such that $0<\mu(E)<\infty$ for all $E\in\mathcal{E}.$ Then we say that an almost everywhere positive function $w>0$ is an $A_1(\mathcal{E})$ weight if there exists a constant $C = C_w>0$ so that 
	\[
	M_{\mathcal{E}}w := \sup_{E\in\mathcal{E}} \frac{1_E}{\mu(E)}\int_E w\ud\mu \leq C w.
	\] 
\end{definition}

\begin{definition}[$A_1$ self-improvement for bases]\label{defn:A1SelfImprovement} We say that a basis of sets $\mathcal{E}$ has the $A_1$ self-improvement property if for every constant $C_1>0$ there exist \(s=s(C_1)>1\) and \(C_2=C(C_1)\) such that, for every weight \(w\), 
		\[
		[w]_{A_1(\mathcal{E})} \leq C_1 \Longrightarrow 	[w^s]_{A_1(\mathcal{E})} \leq C_2.
		\]
\end{definition}

To state the self-improving property of the maximal function, denote  
\[
M_{s,\mathcal{E}}f := \sup_{E\in\mathcal{E}} 1_E\left( \frac{1}{\mu(E)} \int_E|f|^s\ud\mu \right)^{1/s}.
\]
\begin{theorem}[{\cite{Nier23Extrap}*{Theorem~2.34}}]\label{thm:SIPMaxFun} Let $\mathcal{E}$ be a basis of sets in $X$ with the $A_1$ self-improvement property with $s>1$. Let $Y$ be a BFS over $X$ and suppose that $\|M_{\mathcal{E}}\|_Y < \infty$. Then, there holds that $\|M_{s,\mathcal{E}}\|_Y < \infty.$
\end{theorem}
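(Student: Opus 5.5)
The plan is to construct an $A_1(\mathcal{E})$ weight that dominates $|f|$ using the Rubio de Francia algorithm in $Y$, and then to transfer the $A_1$ self-improvement of that weight to $M_{s,\mathcal{E}}$ pointwise. Throughout, write $\|M\|:=\|M_{\mathcal{E}}\|_{Y\to Y}<\infty$. Measurability of the iterates $M^k_{\mathcal{E}}g$ is not an issue, since $\mathcal{E}$ is countable. The exponent $s$ will be $s(C_1)$ from Definition~\ref{defn:A1SelfImprovement} with the choice $C_1:=2\|M\|$. This choice is fixed in advance and does not depend on $f$, which is what the statement requires.

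First we need a strictly positive element of $Y$. Since $Y$ is a Banach function space (saturated) over a $\sigma$-finite space, there is $h\in Y$ with $h>0$ almost everywhere. To build it, exhaust $X$ by countably many sets $F_j$ with $1_{F_j}\in Y$, which saturation and $\sigma$-finiteness allow via a standard maximality argument. Then set $h:=\sum_j 2^{-j}1_{F_j}/(1+\|1_{F_j}\|_Y)$; this series converges in $Y$ by completeness.

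Given $f\in Y$ and $\varepsilon>0$, put $g:=|f|+\varepsilon h$, which satisfies $g>0$ a.e., and define
\[
Rg:=\sum_{k=0}^\infty \frac{M_{\mathcal{E}}^k g}{(2\|M\|)^k}.
\]
The series converges absolutely in $Y$, because $\|M^k_{\mathcal{E}}g\|_Y\le \|M\|^k\|g\|_Y$; by completeness and the lattice property (together with Fatou) this gives $\|Rg\|_Y\le 2\|g\|_Y$. Consequently $Rg<\infty$ a.e. We also have $Rg\ge g>0$. Sublinearity of $M_{\mathcal{E}}$ applied to the partial sums, followed by monotone convergence, gives $M_{\mathcal{E}}(Rg)\le 2\|M\|\,Rg$. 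Hence $Rg$ is a weight with $[Rg]_{A_1(\mathcal{E})}\le C_1$.

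By the $A_1$ self-improvement property, $[(Rg)^s]_{A_1(\mathcal{E})}\le C_2=C(C_1)$. Pointwise, for $E\in\mathcal{E}$ containing $x$,
\[
\Big(\tfrac{1}{\mu(E)}\int_E|f|^s\,\mathrm{d}\mu\Big)^{1/s}\le \big(M_{\mathcal{E}}((Rg)^s)(x)\big)^{1/s}\le C_2^{1/s}\,Rg(x).
\]
Taking the supremum over such $E$ gives $M_{s,\mathcal{E}}f\le C_2^{1/s}Rg$. The lattice property then yields
\[
\|M_{s,\mathcal{E}}f\|_Y\le 2C_2^{1/s}\bigl(\|f\|_Y+\varepsilon\|h\|_Y\bigr).
\]
Letting $\varepsilon\to0$ gives $\|M_{s,\mathcal{E}}\|_Y\le 2C_2^{1/s}$.

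The main obstacle is only technical: making sure $Rg$ is a genuine weight, that is, finite and strictly positive almost everywhere, so that Definition~\ref{defn:A1SelfImprovement} applies. The positive element $h$ handles positivity and membership in $Y$ handles finiteness. Note that the basis axioms (ii)--(iii) of Definition~\ref{defn:basis} are not used in this argument beyond the framework in which the $A_1$ self-improvement property is posed.
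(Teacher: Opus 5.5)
Your proposal is correct: the Rubio de Francia iteration $Rg=\sum_k M_{\mathcal E}^k g/(2\|M_{\mathcal E}\|_{Y\to Y})^k$ gives an $A_1(\mathcal E)$ majorant of $|f|$, and applying the $A_1$ self-improvement to it is the standard (Lerner--P\'erez type) argument behind the result of Nieraeth, which the paper quotes without proof. One small point to tidy up is your appeal to the Fatou property, which is not part of the paper's definition of a BFS and is not needed: completeness gives a norm limit $S$ of the increasing partial sums $S_K$, closedness of the positive cone gives $S\geq S_K$ for every $K$, hence $Rg\leq S$, and the lattice property then gives $Rg\in Y$ with $\|Rg\|_Y\leq\|S\|_Y\leq 2\|g\|_Y$.
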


An arbitrary collection of dyadic cubes $\mathcal{D}$ might fail to be a basis (for example, lattices with quadrants). However, for each fixed cube $Q_0\in\mathcal{D}$, the restricted collection $\mathcal{D}(Q_0)$ is a basis on the restricted measure space $(Q_0,\mu|_{Q_0})$. 

\begin{definition}\label{defn:A1SIPdyadic} A collection $\mathcal{D}$ of dyadic cubes has the $A_1$ self-improvement property if, for each fixed $Q_0\in\mathcal{D}$, the collection $\mathcal{D}(Q_0)$, viewed as a basis on $(Q_0,\mu|_{Q_0})$, has the $A_1$ self-improvement property with constants $s=s(C_1)>1$ and $C_2=C_2(C_1)$ of Definition \ref{defn:A1SelfImprovement} that do not depend on $Q_0.$
\end{definition}

The following corollary is an immediate consequence of Definition \ref{defn:A1SIPdyadic} and Theorem \ref{thm:SIPMaxFun}, applied on the restricted measure spaces $(Q_0,\mu|_{Q_0})$ with constants uniform in $Q_0$.

\begin{corollary}\label{cor:SIPMaxFunDyadic} Let $Y$ be a BFS over a measure space $(X,\mu)$. Let $\mathcal{D}$ be a collection of dyadic cubes with the $A_1$ self-improvement property. Then, there exists $s>1$ so that 
	\[
	\|M_{\mathcal{D}}\|_Y < \infty\Longrightarrow \sup_{Q_0\in\mathcal{D}}\|M_{s,\mathcal{D}(Q_0)}\|_Y < \infty.
	\]
    	Here $M_{s,\mathcal{D}(Q_0)}$ is computed on $(Q_0,\mu|_{Q_0})$ and extended by zero outside $Q_0$. Indeed, if $Y(Q_0)$ denotes the restriction of $Y$ to $Q_0$ with the zero-extension norm, then $\|M_{\mathcal{D}(Q_0)}\|_{Y(Q_0)}\leq \|M_{\mathcal{D}}\|_Y$. Definition \ref{defn:A1SIPdyadic} and Theorem \ref{thm:SIPMaxFun} therefore give a bound for $\|M_{s,\mathcal{D}(Q_0)}\|_{Y(Q_0)}$ with the same $s$ and with constants independent of $Q_0$, which is the displayed estimate.
\end{corollary}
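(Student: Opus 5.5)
The plan is to reduce to one fixed top cube, check that the hypotheses of Theorem~\ref{thm:SIPMaxFun} hold there with data independent of that cube, and then apply the theorem. Fix $Q_0\in\mathcal{D}$. First I will check that $\mathcal{D}(Q_0)$ is a basis on $(Q_0,\mu|_{Q_0})$ in the sense of Definition~\ref{defn:basis}:
\begin{itemize}
\item every $Q\in\mathcal{D}(Q_0)$ has $0<\mu(Q)<\infty$ by the standing assumptions;
\item the union of the cubes in $\mathcal{D}(Q_0)$ contains $Q_0$, hence equals $Q_0$;
\item any two points $x_1,x_2\in Q_0$ lie in the common cube $Q_0$.
\end{itemize}
Since $\mathcal{D}$ is countable and $\mu(Q_0)<\infty$, the restricted space is $\sigma$-finite.

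Next I define $Y(Q_0)$ as the functions on $Q_0$ whose zero extension lies in $Y$, normed by $\|f\|_{Y(Q_0)}:=\|f1_{Q_0}\|_Y$. The lattice property and completeness are inherited directly from $Y$. Saturation is also inherited: if $E\subset Q_0$ has positive measure, saturation of $Y$ gives $F\subset E$ with $\mu(F)>0$ and $1_F\in Y$. Hence $Y(Q_0)$ is a Banach function space over $(Q_0,\mu|_{Q_0})$.

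For $f$ supported in $Q_0$, every cube $Q\in\mathcal{D}$ with $Q\cap Q_0\neq\emptyset$ is either in $\mathcal{D}(Q_0)$ or contains $Q_0$. Therefore $M_{\mathcal{D}(Q_0)}f\leq 1_{Q_0}M_{\mathcal{D}}f$ pointwise. By the lattice property this gives
\[
\|M_{\mathcal{D}(Q_0)}\|_{Y(Q_0)}\leq \|M_{\mathcal{D}}\|_Y .
\]

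The main obstacle is uniformity in $Q_0$. Theorem~\ref{thm:SIPMaxFun} as quoted is qualitative, so I will go into its proof in \cite{Nier23Extrap}. It runs through a Rubio de Francia type construction. There, boundedness of $M_{\mathcal{E}}$ on $Y$, and its consequence for $Y'$ used via duality, produces $A_1(\mathcal{E})$ weights whose constants are bounded by $C_1:=2\|M_{\mathcal{E}}\|_Y$, or the analogous dual quantity. The $A_1$ self-improvement then upgrades these to weights $w^s$ with $[w^s]_{A_1}\leq C_2(C_1)$, and $\|M_{s,\mathcal{E}}\|_Y$ is bounded by a quantity depending only on $s$, $C_2$ and $\|M_{\mathcal{E}}\|_Y$.

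I apply this with $\mathcal{E}=\mathcal{D}(Q_0)$ and the space $Y(Q_0)$. The bound $\|M_{\mathcal{D}(Q_0)}\|_{Y(Q_0)}\leq\|M_{\mathcal{D}}\|_Y$ controls $C_1$ uniformly in $Q_0$, so I may take $C_1$ from $\|M_{\mathcal{D}}\|_Y$ alone. Definition~\ref{defn:A1SIPdyadic} then supplies $s=s(C_1)>1$ and $C_2=C_2(C_1)$ independent of $Q_0$. Consequently $\|M_{s,\mathcal{D}(Q_0)}\|_{Y(Q_0)}$ is bounded uniformly in $Q_0$. Finally, $M_{s,\mathcal{D}(Q_0)}f$, extended by zero outside $Q_0$, depends only on $f1_{Q_0}$, and $\|f1_{Q_0}\|_{Y(Q_0)}\leq\|f\|_Y$. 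This gives the displayed estimate $\sup_{Q_0\in\mathcal{D}}\|M_{s,\mathcal{D}(Q_0)}\|_Y<\infty$.
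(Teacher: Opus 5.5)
Your proposal is correct and follows the paper's argument. You restrict to $(Q_0,\mu|_{Q_0})$, use $\|M_{\mathcal D(Q_0)}\|_{Y(Q_0)}\le\|M_{\mathcal D}\|_Y$, and invoke the $Q_0$-uniform $A_1$ self-improvement constants through a quantitative reading of Theorem~\ref{thm:SIPMaxFun}, which you justify in more detail than the paper does. One tightening: the standard Lerner--Ombrosi-type argument behind that theorem uses no duality, because iterating $Rf=\sum_{k\ge0}M^k_{\mathcal E}|f|/(2\|M_{\mathcal E}\|_Y)^k$ in $Y$ itself gives $[Rf]_{A_1(\mathcal E)}\le 2\|M_{\mathcal E}\|_Y$, hence $M_{s,\mathcal E}f\le C_2^{1/s}Rf$ and $\|M_{s,\mathcal E}\|_Y\le 2C_2^{1/s}$; so drop your hedge about ``the analogous dual quantity,'' since a dependence on $\|M\|_{Y'}$ would not be controlled by your hypotheses.
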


Now we are in a position to conclude the proof of Theorem \ref{thm:RieszPot}.

\begin{proof}[Proof of Theorem \ref{thm:RieszPot}] The testing criteria were already checked above and it remains to prove these imply the boundedness of $I^{\alpha(\cdot)}_{\mathcal{D}}.$ We first reduce to finite subcollections. Enumerate \(\mathcal D=\{Q_j\}_{j=1}^{\infty}\), let \(\mathcal D_N:=\{Q_1,\ldots,Q_N\}\), and fix a non-negative function \(f\). It is enough to prove
\[
\|I^{\alpha(\cdot)}_{\mathcal D_N}f\|_{L^{q(\cdot)}}
\lesssim
\|M^{\alpha(\cdot)}_{\mathcal D}f\|_{L^{q(\cdot)}}
\]
with an implicit constant independent of \(N\). Indeed, \(I^{\alpha(\cdot)}_{\mathcal D_N}f\uparrow I^{\alpha(\cdot)}_{\mathcal D}f\) pointwise, so the Fatou property of \(L^{q(\cdot)}\), Proposition \ref{prop:Lp=Fatou}, gives the bound for \(I^{\alpha(\cdot)}_{\mathcal D}f\). Applying this to \(|f|\) then gives the estimate for general \(f\).

Fix \(N\), and let \(\mathcal T_N\) be the collection of maximal cubes in \(\mathcal D_N\). Since \(\mathcal D\) is dyadic, the cubes in \(\mathcal T_N\) are pairwise disjoint and every cube in \(\mathcal D_N\) is contained in exactly one cube of \(\mathcal T_N\). For each \(Q_0\in\mathcal T_N\), we use Proposition \ref{prop:SDomSub} on \(\mathcal D_N(Q_0)\) with the auxiliary function \(\psi=\mu\), which gives the usual principal stopping cubes. Equivalently, start with \(Q_0\) and, for each stopping cube \(Q\), stop at the maximal subcubes \(P\) satisfying \(\langle f\rangle_P>2\langle f\rangle_Q\). Let \(\mathcal S_f(Q_0)\) be the resulting sparse collection and put
\[
\mathcal S_N:=\bigcup_{Q_0\in\mathcal T_N}\mathcal S_f(Q_0).
\]
The union \(\mathcal S_N\) is still \(\mu\)-sparse. For \(Q\in\mathcal D_N\), let \(\Pi_{\mathcal S_N}Q\) denote the unique minimal cube \(P\in\mathcal S_N\) with \(Q\subset P\). Using the maximality of the stopping parent, we have
	\begin{align*}
		I^{\alpha(\cdot)}_{\mathcal D_N}f
		&= \sum_{P\in\mathcal{S}_N} \sum_{\substack{Q\in\mathcal D_N\\\Pi_{\mathcal{S}_N}Q = P}} \frac{1_Q}{\mu(Q)^{1-\langle\alpha\rangle_Q}}\int_Q f\ud\mu  \\
		&\lesssim \sum_{P\in\mathcal{S}_N} \langle f\rangle_P  \sum_{\substack{Q\in\mathcal D_N\\\Pi_{\mathcal{S}_N}Q = P}} 1_Q \mu(Q)^{\langle\alpha\rangle_Q} \\
		&\leq  \sum_{P\in\mathcal{S}_N} \langle f\rangle_P  \sum_{Q\in\mathcal{D}(P)} 1_Q \mu(Q)^{\langle\alpha\rangle_Q}
		= \sum_{P\in\mathcal{S}_N} \langle f\rangle_P 1_P h_P,
	\end{align*}
	where
	$
	h_P :=  \sum_{Q\in\mathcal{D}(P)} 1_Q \mu(Q)^{\langle\alpha\rangle_Q}.
	$
	By Corollary \ref{cor:SIPMaxFunDyadic}, applied to the restricted bases $\mathcal{D}(Q_0)$, we find $s = s(\|M_{\mathcal{D}}\|_{L^{q'(\cdot)}\to L^{q'(\cdot)}})>1$ so that 
	\[
	\sup_{Q_0\in\mathcal{D}}\|M_{s,\mathcal{D}(Q_0)}\|_{L^{q'(\cdot)}} < \infty, \qquad \sup_{P\in\mathcal{S}_N}\Bigl\| \frac{1}{\mu(P)^{\langle\alpha\rangle_P}} h_P\Bigr\|_{\avL^{s'}(P)} < \infty,
	\]
	where the latter bound follows from Corollary \ref{cor:RieszPotPacking}. 
	By Lemma \ref{lem:dualityFormula}, it is enough to bound the pairing uniformly over non-negative functions $g$ with $\|g\|_{L^{q'(\cdot)}} \leq 1$. For such \(g\), define
\[
G_N:=\sum_{Q_0\in\mathcal T_N}1_{Q_0}M_{s,\mathcal D(Q_0)}(g1_{Q_0}).
\]
The disjointness of the top cubes and the uniform restricted bound above, with the restricted maximal operators extended by zero, give \(\|G_N\|_{L^{q'(\cdot)}}\lesssim\|g\|_{L^{q'(\cdot)}}\), uniformly in \(N\). H\"older's inequality and \(\mu(\cdot)\)-sparsity give
\begin{equation}\label{eq:xcv}
	\begin{split}
		\int g \sum_{P\in\mathcal{S}_N} \langle f\rangle_P 1_P h_P \ud\mu&= 	\sum_{P\in\mathcal{S}_N} \langle f\rangle_P  \|gh_P\|_{\avL^1(P)}\mu(P) \\
				&\leq \sum_{P\in\mathcal{S}_N} \langle f\rangle_P \mu(P)^{\langle\alpha\rangle_P}  \left\| \frac{1}{\mu(P)^{\langle\alpha\rangle_P}} h_P\right\|_{\avL^{s'}(P)}   \|g\|_{\avL^{s}(P)} \mu(P) \\
		&\lesssim \sum_{P\in\mathcal{S}_N} \langle f\rangle_P \mu(P)^{\langle\alpha\rangle_P} \|g\|_{\avL^{s}(P)} \mu(P) \\ 
		&\lesssim  \int \sum_{P\in\mathcal{S}_N}\left( \frac{1}{\mu(P)^{1-\langle\alpha\rangle_P}}\int_Pf\ud\mu\right) \|g\|_{\avL^{s}(P)} 1_{E(P)} \ud\mu\\
		&\lesssim  \int \sum_{P\in\mathcal{S}_N}\left( \frac{1_{E(P)}}{\mu(P)^{1-\langle\alpha\rangle_P}}\int_Pf\ud\mu\right)  G_N \ud\mu\\
		&\lesssim  \left\|\sum_{P\in\mathcal{S}_N} \frac{1_{E(P)}}{\mu(P)^{1-\langle\alpha\rangle_P}}\int_Pf\ud\mu\right\|_{L^{q(\cdot)}} \|G_N\|_{L^{q'(\cdot)}} \\
		&\leq   \|M^{\alpha(\cdot)}_{\mathcal{D}}f\|_{L^{q(\cdot)}} \|G_N\|_{L^{q'(\cdot)}} \lesssim  \|M^{\alpha(\cdot)}_{\mathcal{D}}f\|_{L^{q(\cdot)}}.
	\end{split}
\end{equation}
Taking the supremum over all such $g$ and using Lemma \ref{lem:dualityFormula} gives the desired bound for \(I^{\alpha(\cdot)}_{\mathcal D_N}f\), uniformly in \(N\). The assumed boundedness of \(M^{\alpha(\cdot)}_{\mathcal D}\) then completes the finite-subcollection reduction and the proof.
\end{proof}

\subsection{Dyadic Morrey spaces with variable fractionality}\label{sect:dyadic3}
Next we consider the maximal and Riesz potential operators between dyadic variable Morrey spaces,
$$
M^{\alpha(\cdot)}_{\mathcal{D}}, I^{\alpha(\cdot)}_{\mathcal{D}}: \mathcal M^{r(\cdot)}_{p(\cdot)}(\mathcal{D})\to \mathcal M^{s(\cdot)}_{q(\cdot)}(\mathcal{D}).
$$
We recall Definition \ref{defn:MorreyNorm} above and Proposition \ref{prop:MorreyProperties}, where basic properties of these Morrey spaces were checked. 
The upper bounds in Theorems \ref{thm:MorreyMaxIff} and \ref{thm:MorreyRieszIff} below will involve the operator norms
$
M^{\alpha(\cdot)}_{\mathcal{D}}, I^{\alpha(\cdot)}_{\mathcal{D}}:  L^{p(\cdot)}\to L^{q(\cdot)}
$
and both the upper and lower bounds involve the necessary testing constants provided by Proposition \ref{prop:Necessity}. 
Moreover, we make the background assumption that both pairs $(p(\cdot),q(\cdot))$ and $(r(\cdot),s(\cdot))$ result in the same fractionality
\begin{align}\label{eq:sectMorreyFracAssumption}
	\frac1{p(\cdot)}-\frac1{q(\cdot)}
	=\frac1{r(\cdot)}-\frac1{s(\cdot)}.
\end{align}
It would be interesting to know how far we can move away from the identity \eqref{eq:sectMorreyFracAssumption} and how to quantify this in the estimates below. 
\begin{theorem}\label{thm:dMaxMorrey}
Let 
$p(\cdot), q(\cdot), r(\cdot),s(\cdot),1/\alpha(\cdot)\in \mathcal P(X,\mu)$ satisfy \eqref{eq:sectMorreyFracAssumption}.
Let $\mathcal D$ be a dyadic collection of cubes. Then
\begin{equation}\label{eq:thm:dMaxMorrey}
C_{\eqref{thm:dMaxMorrey}} \leq\|M^{\alpha(\cdot)}_{\mathcal{D}}\|_{\mathcal M^{r(\cdot)}_{p(\cdot)}(\mathcal{D})\rightarrow \mathcal M^{s(\cdot)}_{q(\cdot)}(\mathcal{D})}
\lesssim
\|M^{\alpha(\cdot)}_{\mathcal{D}}\|_{L^{p(\cdot)}\to L^{q(\cdot)}}
+C_{\eqref{thm:dMaxMorrey}},
\end{equation}
where 
\[
C_{\eqref{thm:dMaxMorrey}}:=\sup_{R\in\mathcal{D}}\frac{\|1_R\|_{\mathcal M^{s(\cdot)}_{q(\cdot)}}\|1_R\|_{(\mathcal M^{r(\cdot)}_{p(\cdot)})'}}{\mu(R)^{1-\langle \alpha\rangle_R}}.
\]
\end{theorem}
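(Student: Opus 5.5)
The lower bound is immediate from \eqref{eq:prop:NecessityMax} of Proposition \ref{prop:Necessity}. We apply it with $Z=\mathcal M^{r(\cdot)}_{p(\cdot)}(\mathcal D)$, $Y=\mathcal M^{s(\cdot)}_{q(\cdot)}(\mathcal D)$, $\mathcal B=\mathcal D$ and $\psi(R)=\mu(R)^{1-\langle\alpha\rangle_R}$. Proposition \ref{prop:MorreyProperties} guarantees that $Z$ is a Banach function lattice, so its K\"othe dual is well defined and the proposition applies verbatim. This gives $C_{\eqref{thm:dMaxMorrey}}\le\|M^{\alpha(\cdot)}_{\mathcal D}\|_{Z\to Y}$.

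For the upper bound, fix $f\ge0$ with $\|f\|_{\mathcal M^{r(\cdot)}_{p(\cdot)}}\le 1$ and a cube $R\in\mathcal D$. We must estimate $\mu(R)^{1/s_R-1/q_R}\|1_R M^{\alpha(\cdot)}_{\mathcal D}f\|_{q(\cdot)}$. On $R$ we split the supremum into two parts. The first is over cubes $Q\subset R$, which is dominated by $M^{\alpha(\cdot)}_{\mathcal D}(f1_R)$. The second is over cubes $Q\supsetneq R$, which is constant on $R$. For the local part, $L^{p(\cdot)}\to L^{q(\cdot)}$ boundedness gives
\[
\mu(R)^{\frac1{s_R}-\frac1{q_R}}\|M^{\alpha(\cdot)}_{\mathcal D}\|_{L^{p(\cdot)}\to L^{q(\cdot)}}\|1_Rf\|_{p(\cdot)}.
\]
Averaging \eqref{eq:sectMorreyFracAssumption} over $R$ gives $\frac1{s_R}-\frac1{q_R}=\frac1{r_R}-\frac1{p_R}$. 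The Morrey norm of $f$ tested at $R$ therefore bounds this term by $\|M^{\alpha(\cdot)}_{\mathcal D}\|_{L^{p(\cdot)}\to L^{q(\cdot)}}$. This identity is exactly why the fractionality assumption is imposed.

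For the nonlocal part, for each $Q\supset R$ we use the K\"othe-dual H\"older inequality in the Morrey space:
\[
\int_Q f\,\ud\mu\le\|f\|_{\mathcal M^{r(\cdot)}_{p(\cdot)}}\|1_Q\|_{(\mathcal M^{r(\cdot)}_{p(\cdot)})'}\le \|1_Q\|_{(\mathcal M^{r(\cdot)}_{p(\cdot)})'}.
\]
This gives the pointwise bound on $R$
\[
\sup_{Q\supset R}\frac{\|1_Q\|_{(\mathcal M^{r}_{p})'}}{\mu(Q)^{1-\langle\alpha\rangle_Q}}\le C_{\eqref{thm:dMaxMorrey}}\sup_{Q\supset R}\frac1{\|1_Q\|_{\mathcal M^{s(\cdot)}_{q(\cdot)}}}.
\]
Multiplying by $\mu(R)^{1/s_R-1/q_R}\|1_R\|_{q(\cdot)}$, we recognise the term of the Morrey norm of $1_Q$ tested at the cube $R$. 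Since $R\subset Q$, that term is at most $\mu(R)^{1/s_R-1/q_R}\|1_{R}1_Q\|_{q(\cdot)}\le\|1_Q\|_{\mathcal M^{s(\cdot)}_{q(\cdot)}}$. Hence the nonlocal contribution is at most $C_{\eqref{thm:dMaxMorrey}}$.

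Adding the two parts via the triangle inequality in $L^{q(\cdot)}$ and taking the supremum over $R$ gives \eqref{eq:thm:dMaxMorrey}. The main point requiring care is the nonlocal step: one has to notice that $\mu(R)^{1/s_R-1/q_R}\|1_R\|_{q(\cdot)}\le\|1_Q\|_{\mathcal M^{s(\cdot)}_{q(\cdot)}}$ for every $Q\supset R$. This follows simply by testing the Morrey norm of $1_Q$ at the cube $R$, so no monotonicity of $\mu(Q)^{1/s_Q-1/q_Q}$ in $Q$ is needed. A minor additional point is that the supremum over $Q\supset R$ is taken pointwise; it is constant on $R$, so taking norms is harmless.
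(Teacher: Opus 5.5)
Your proposal is correct and follows essentially the same route as the paper. You use the same split at a fixed cube $R$ into the local part (handled by $L^{p(\cdot)}\to L^{q(\cdot)}$ boundedness and the averaged identity $\frac1{s_R}-\frac1{q_R}=\frac1{r_R}-\frac1{p_R}$) and the global part (handled by K\"othe duality, the testing constant, and testing the Morrey norm of $1_Q$ at $R$); the only difference is that you compute the $L^{q(\cdot)}$ norm of the global term directly, since it is constant on $R$, rather than through the duality formula as the paper does, which is slightly cleaner.
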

\begin{proof}
The left bound of \eqref{eq:thm:dMaxMorrey} follows from \eqref{eq:prop:NecessityMax} of Proposition \ref{prop:Necessity}. We move to the second estimate.
By the lattice property it suffices to estimate $|f|$, so we may assume $f\geq0$.
Fix $R\in\mathcal{D}$. Then
\begin{equation}\label{eq: splitting maximal operator based on cube}
1_{R} M^{\alpha(\cdot)}_{\mathcal{D}}f\leq\sup_{\substack{Q\in\mathcal{D}\\Q\subseteq R}}\langle |f|\rangle_Q^{\alpha(\cdot)}1_Q+\sup_{\substack{Q\in\mathcal{D}\\R\subsetneq Q}}\langle |f|\rangle_Q^{\alpha(\cdot)}1_R=:M^{\alpha(\cdot)}_{\mathcal{D}(R)}f+M^{\alpha(\cdot),\mathrm{glo}}_{R}f.
\end{equation}
We estimate these two terms separately.
For the local term, we have 
\begin{equation}\label{eq: estimate for morrey norm local}
\begin{aligned}
\mu(R)^{\frac1{s^\mu_R}-\frac1{q^\mu_R}}
\|M^{\alpha(\cdot)}_{\mathcal{D}(R)}f\|_{L^{q(\cdot)}}
&\le
\mu(R)^{\frac1{s^\mu_R}-\frac1{q^\mu_R}}
\|M^{\alpha(\cdot)}_{\mathcal{D}(R)}\|_{L^{p(\cdot)}\to L^{q(\cdot)}}
\|f1_R\|_{L^{p(\cdot)}} \\
&\le
\|M^{\alpha(\cdot)}_{\mathcal{D}(R)}\|_{L^{p(\cdot)}\to L^{q(\cdot)}}
\|f\|_{\mathcal M^{r(\cdot)}_{p(\cdot)}(\mathcal{D})},
\end{aligned}
\end{equation}
where we have used \eqref{eq:sectMorreyFracAssumption} in the second line.

For the global term, we use Lemma \ref{lem:dualityFormula} to obtain
\begin{align*}
	\bigl\|M^{\alpha(\cdot),\mathrm{glo}}_{R}f\bigr\|_{L^{q(\cdot)}}&\eqsim \sup_{\|g\|_{q'(\cdot)}=1}\sup_{\substack{Q\in\mathcal D\\R\subsetneq Q}}\frac{1}{\mu(Q)^{1-\langle \alpha\rangle_Q}}\int_Q f\,\mathrm d\mu\int_R g\,\mathrm d\mu
	\\& 
	\lesssim\sup_{\|g\|_{q'(\cdot)}=1}\sup_{\substack{Q\in\mathcal D\\R\subsetneq Q}} \frac{\|1_Q \|_{(\mathcal M^{r(\cdot)}_{p(\cdot)})'}}{\mu(Q)^{1-\langle \alpha\rangle_Q}}\|f\|_{\mathcal M^{r(\cdot)}_{p(\cdot)}} \|g\|_{L^{q'(\cdot)}}\|1_R\|_{L^{q(\cdot)}}, 
\end{align*}
where we used Lemma \ref{lem:dualityFormula} and K\"othe duality in the last estimate. Using \eqref{eq:prop:NecessityMax} of Proposition \ref{prop:Necessity} and the lattice monotonicity $\|1_R\|_{\mathcal M^{s(\cdot)}_{q(\cdot)}}\leq\|1_Q\|_{\mathcal M^{s(\cdot)}_{q(\cdot)}}$ for $R\subset Q$ gives
\begin{equation*}
	\bigl\|M^{\alpha(\cdot),\mathrm{glo}}_{R}f\bigr\|_{L^{q(\cdot)}}\lesssim C_{\eqref{thm:dMaxMorrey}}\|f\|_{\mathcal M^{r(\cdot)}_{p(\cdot)}}\frac{\|1_R\|_{L^{q(\cdot)}}}{\|1_R\|_{\mathcal M^{s(\cdot)}_{q(\cdot)}}}\leq C_{\eqref{thm:dMaxMorrey}}\|f\|_{\mathcal M^{r(\cdot)}_{p(\cdot)}}\mu(R)^{\frac{1}{q^\mu_{R}}-\frac1{s^\mu_R}}.
\end{equation*}
Combining the local and global estimates taking the supremum over $R\in\mathcal D$, we conclude that
\begin{align*}
	\|M^{\alpha(\cdot)}_{\mathcal D}f\|_{\mathcal M^{s(\cdot)}_{q(\cdot)}(\mathcal D)}
	&=
	\sup_{R\in\mathcal D}
	\mu(R)^{\frac1{s^\mu_R}-\frac1{q^\mu_R}}
	\|1_R M^{\alpha(\cdot)}_{\mathcal D}f\|_{L^{q(\cdot)}} \\
	&\lesssim
	\Bigl(
	\|M^{\alpha(\cdot)}_{\mathcal D}\|_{L^{p(\cdot)}\to L^{q(\cdot)}}
	+C_{\eqref{thm:dMaxMorrey}}
	\Bigr)
	\|f\|_{\mathcal M^{r(\cdot)}_{p(\cdot)}(\mathcal D)}.\qedhere
\end{align*}
\end{proof}

The dyadic potential estimate is analogous.

\begin{theorem}\label{thm:dRiezMorrey}
Let 
$p(\cdot), q(\cdot), r(\cdot),s(\cdot),1/\alpha(\cdot)\in \mathcal P(X,\mu)$ satisfy \eqref{eq:sectMorreyFracAssumption}.
Let $\mathcal D$ be a dyadic collection of cubes. Then
\begin{equation}\label{eq:thm:dRiezMorrey}
C_{\eqref{thm:dRiezMorrey}} \leq\|I^{\alpha(\cdot)}_{\mathcal{D}}\|_{\mathcal M^{r(\cdot)}_{p(\cdot)}(\mathcal{D})\rightarrow\mathcal M^{s(\cdot)}_{q(\cdot)}(\mathcal{D})}
\lesssim
\|I^{\alpha(\cdot)}_{\mathcal{D}}\|_{L^{p(\cdot)}\to L^{q(\cdot)}}
+C_{\eqref{thm:dRiezMorrey}},
\end{equation}
where
\[
C_{\eqref{thm:dRiezMorrey}} :=\sup_{R\in\mathcal{D}}
\|1_R\|_{\mathcal M^{s(\cdot)}_{q(\cdot)}} \, \Bigl\|\sum_{\substack{Q\in\mathcal{D}\\ R\subset Q}}
\frac{1_Q}{\mu(Q)^{1-\langle \alpha\rangle_Q}}\Bigr\|_{(\mathcal M^{r(\cdot)}_{p(\cdot)})'}.
\]
\end{theorem}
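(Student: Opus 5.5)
The plan is to mirror the proof of Theorem \ref{thm:dMaxMorrey}, with the supremum replaced by a sum and the testing constant \eqref{eq:prop:NecessityMax} replaced by \eqref{eq:prop:NecessityPotential1}.

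\emph{Lower bound.} This is exactly \eqref{eq:prop:NecessityPotential1} of Proposition \ref{prop:Necessity}, applied with $\psi(Q)=\mu(Q)^{1-\langle\alpha\rangle_Q}$, $Y=\mathcal M^{s(\cdot)}_{q(\cdot)}(\mathcal D)$ and $Z=\mathcal M^{r(\cdot)}_{p(\cdot)}(\mathcal D)$. The hypotheses are met because $Z$ is a Banach function lattice by Proposition \ref{prop:MorreyProperties}.

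\emph{Upper bound.} By positivity we may take $f\geq 0$. Fix $R\in\mathcal D$ and split, using dyadicity (every $Q$ meeting $R$ is either contained in $R$ or strictly contains it):
\[
1_R I^{\alpha(\cdot)}_{\mathcal D}f = 1_R I^{\alpha(\cdot)}_{\mathcal D(R)}(f1_R) + 1_R\sum_{\substack{Q\in\mathcal D\\ R\subsetneq Q}}\frac{1}{\mu(Q)^{1-\langle\alpha\rangle_Q}}\int_Q f\,\mathrm d\mu .
\]

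For the local term, dominate $I^{\alpha(\cdot)}_{\mathcal D(R)}(f1_R)\leq I^{\alpha(\cdot)}_{\mathcal D}(f1_R)$ and use the $L^{p(\cdot)}\to L^{q(\cdot)}$ bound. Multiplying by $\mu(R)^{1/s^\mu_R-1/q^\mu_R}$ and invoking \eqref{eq:sectMorreyFracAssumption} turns this factor into $\mu(R)^{1/r^\mu_R-1/p^\mu_R}$. Hence this term is at most $\|I^{\alpha(\cdot)}_{\mathcal D}\|_{L^{p(\cdot)}\to L^{q(\cdot)}}\|f\|_{\mathcal M^{r(\cdot)}_{p(\cdot)}}$, exactly as in \eqref{eq: estimate for morrey norm local}.

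For the global term, the summand is constant on $R$. By Tonelli it equals
\[
1_R\int_X f\,\Phi_R\,\mathrm d\mu,\qquad \Phi_R:=\sum_{R\subset Q\in\mathcal D}\frac{1_Q}{\mu(Q)^{1-\langle\alpha\rangle_Q}} ,
\]
where including $Q=R$ only enlarges the expression. Köthe duality (Hölder in the Banach function lattice $\mathcal M^{r(\cdot)}_{p(\cdot)}$) bounds the integral by $\|f\|_{\mathcal M^{r(\cdot)}_{p(\cdot)}}\|\Phi_R\|_{(\mathcal M^{r(\cdot)}_{p(\cdot)})'}$. The $L^{q(\cdot)}$ norm of the global term is therefore at most $\|1_R\|_{L^{q(\cdot)}}\|f\|_{\mathcal M^{r(\cdot)}_{p(\cdot)}}\|\Phi_R\|_{(\mathcal M^{r(\cdot)}_{p(\cdot)})'}$.

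Next we multiply by $\mu(R)^{1/s^\mu_R-1/q^\mu_R}$. Testing the sup in the Morrey norm at $B=R$ gives
\[
\mu(R)^{1/s^\mu_R-1/q^\mu_R}\|1_R\|_{L^{q(\cdot)}}\leq \|1_R\|_{\mathcal M^{s(\cdot)}_{q(\cdot)}} .
\]
The product is then bounded by $C_{\eqref{thm:dRiezMorrey}}\|f\|_{\mathcal M^{r(\cdot)}_{p(\cdot)}}$. Taking the supremum over $R$ and adding the local and global estimates gives the claim.

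This route is even more direct than the maximal case, since no transfer from $Q$ to $R$ via lattice monotonicity is needed: the constant already contains exactly $\Phi_R$. The only care points are the following. First, the splitting must be exact, which needs dyadicity. Second, the duality/Hölder step must hold without constant loss in a general Banach function lattice; this is just the definition of the associate norm. Third, for the local term we use $I_{\mathcal D(R)}(f1_R)\le I_{\mathcal D}(f1_R)$ pointwise, which holds because $f\ge0$. I expect no genuine obstacle beyond bookkeeping of the exponent identity \eqref{eq:sectMorreyFracAssumption}.
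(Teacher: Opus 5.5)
Your proposal is correct and follows the paper's proof essentially step by step. Both obtain the lower bound from \eqref{eq:prop:NecessityPotential1}, use the same dyadic local/global split at a fixed $R$, and bound the local term via the $L^{p(\cdot)}\to L^{q(\cdot)}$ norm and \eqref{eq:sectMorreyFracAssumption}. The global term is handled in both by K\"othe duality and by testing the target Morrey norm at $R$. The only cosmetic difference is that you compute the $L^{q(\cdot)}$ norm of the global term directly, since it is a constant multiple of $1_R$, and use the definition of the testing constant directly. The paper instead passes through Lemma \ref{lem:dualityFormula} and cites Proposition \ref{prop:Necessity} at that point, which only costs an absolute constant.
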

\begin{proof}
The left bound of \eqref{eq:thm:dRiezMorrey} follows immediately from \eqref{eq:prop:NecessityPotential1}. We turn our attention to the second estimate.
By the lattice property we may assume that $f\geq0$.
Fix $R\in\mathcal{D}$. Then
\begin{equation}\label{eq: splitting riesz operator based on cube}
1_{R} I^{\alpha(\cdot)}_{\mathcal{D}}f=\sum_{\substack{Q\in\mathcal{D}\\Q\subseteq R}}\langle f1_R\rangle_Q^{\alpha(\cdot)}1_Q+\sum_{\substack{Q\in\mathcal{D}\\R\subsetneq Q}}\langle f\rangle_Q^{\alpha(\cdot)}1_R= I^{\alpha(\cdot)}_{\mathcal{D}(R)}(f1_R)+I^{\alpha(\cdot),\mathrm{glo}}_{R}(f).
\end{equation}
For the local term, we have
\begin{equation}\label{eq: estimate for morrey norm local riesz potential}
\begin{aligned}
\mu(R)^{\frac1{s^\mu_R}-\frac1{q^\mu_R}}
\|I^{\alpha(\cdot)}_{\mathcal{D}(R)}(f1_R)\|_{L^{q(\cdot)}}
&\le
\mu(R)^{\frac1{s^\mu_R}-\frac1{q^\mu_R}}
\|I^{\alpha(\cdot)}_{\mathcal{D}}\|_{L^{p(\cdot)}\to L^{q(\cdot)}}
\|f1_R\|_{L^{p(\cdot)}} \\
&\le
\|I^{\alpha(\cdot)}_{\mathcal{D}}\|_{L^{p(\cdot)}\to L^{q(\cdot)}}
\|f\|_{\mathcal M^{r(\cdot)}_{p(\cdot)}(\mathcal{D})} 
\end{aligned}
\end{equation}
where we have used \eqref{eq:sectMorreyFracAssumption} in the second line.

For the global term, we use Lemma \ref{lem:dualityFormula} to obtain
\begin{align*}
\bigl\|I^{\alpha(\cdot),\mathrm{glo}}_{R}f\bigr\|_{L^{q(\cdot)}}&\eqsim \sup_{\|g\|_{q'(\cdot)}=1}\sum_{\substack{Q\in\mathcal{D}\\R\subset Q}}\mu(Q)^{\langle \alpha\rangle_Q-1}\int_Qf\,\mathrm d\mu\int_R g\,\mathrm d\mu
\\&
\lesssim\sup_{\|g\|_{q'(\cdot)}=1}\Bigl\|\sum_{\substack{Q\in\mathcal{D}\\R\subset Q}}\frac{1_Q}{\mu(Q)^{1-\langle \alpha\rangle_Q}} \Bigr\|_{(\mathcal M^{r(\cdot)}_{p(\cdot)})'}\|f\|_{\mathcal M^{r(\cdot)}_{p(\cdot)}} \|g\|_{L^{q'(\cdot)}}\|1_R\|_{L^{q(\cdot)}}, 
\end{align*}
where we used Lemma \ref{lem:dualityFormula} and K\"othe duality in the last estimate. By \eqref{eq:prop:NecessityPotential1} of Proposition \ref{prop:Necessity},
\begin{equation*}
\bigl\|I^{\alpha(\cdot),\mathrm{glo}}_{R}f\bigr\|_{L^{q(\cdot)}}\lesssim C_{\eqref{thm:dRiezMorrey}} \|f\|_{\mathcal M^{r(\cdot)}_{p(\cdot)}}\frac{\|1_R\|_{L^{q(\cdot)}}}{\|1_R\|_{\mathcal M^{s(\cdot)}_{q(\cdot)}}}\leq C_{\eqref{thm:dRiezMorrey}} \|f\|_{\mathcal M^{r(\cdot)}_{p(\cdot)}}\mu(R)^{\frac{1}{q^\mu_{R}}-\frac1{s^\mu_R}},
\end{equation*}
which implies
\begin{equation*}
\mu(R)^{\frac1{s^\mu_R}-\frac1{q^\mu_R}}\bigl\|I^{\alpha(\cdot),\mathrm{glo}}_{R}f\bigr\|_{L^{q(\cdot)}}\lesssim C_{\eqref{thm:dRiezMorrey}} \|f\|_{\mathcal M^{r(\cdot)}_{p(\cdot)}}.
\end{equation*}
Combining the local and global estimates taking the supremum over $R\in\mathcal{D}$, we conclude that
\begin{align*}
\|I^{\alpha(\cdot)}_{\mathcal{D}}f\|_{\mathcal M^{s(\cdot)}_{q(\cdot)}(\mathcal{D})}
=
\sup_{R\in\mathcal{D}}
\mu(R)^{\frac1{s^\mu_R}-\frac1{q^\mu_R}}
\|1_R I^{\alpha(\cdot)}_{\mathcal{D}}f\|_{L^{q(\cdot)}}\lesssim
\Bigl(
\|I^{\alpha(\cdot)}_{\mathcal{D}}\|_{L^{p(\cdot)}\to L^{q(\cdot)}}
+C_{\eqref{thm:dRiezMorrey}} 
\Bigr)
\|f\|_{\mathcal M^{r(\cdot)}_{p(\cdot)}(\mathcal{D})}.
\end{align*}
\end{proof}

\section{Comparison with the dyadic models}\label{sect:cont-to-dyadic}
In this section we bootstrap the dyadic results of the previous section to the non-dyadic ones and provide proofs of Theorems \ref{thm:MainIntro1} and \ref{thm:MainIntro4}. 
The following classical covering dyadic lattices play the key role.
\begin{lemma}\label{lem:CoveringLattices} There exist $j=1,\dots,3^n$ dyadic lattices of cubes $\mathcal{D}^j$ on $\mathbb{R}^n$ with the following property. For each ball $B\subset\mathbb{R}^n$ there exists a $j = j(B)$ and a cube $Q = Q(B)\in \mathcal{D}^j$ so that $B\subset Q \subset 10B.$ 
\end{lemma}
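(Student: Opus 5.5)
We use the standard family of $3^n$ one-third-shifted dyadic lattices and work coordinatewise. The dilation $10B$ is taken in the $\ell^\infty$ sense, i.e.\ balls are cubes $B=c+[-r,r)^n$ and $10B=c+[-10r,10r)^n$. This choice is forced. If $B$ is a Euclidean ball of radius $r$, any cube $Q\supset B$ has side at least $2r$ and hence diameter at least $2r\sqrt n$, while $10B$ has diameter $20r$. So a Euclidean version of the statement cannot hold once $n>100$, and we do not attempt one.

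\emph{Construction.} For $t=(t_1,\dots,t_n)\in\{0,1,2\}^n$ we set
\[
\mathcal D^t:=\bigl\{2^{k}\bigl([0,1)^n+m+(-1)^k t/3\bigr): k\in\mathbb Z,\ m\in\mathbb Z^n\bigr\}.
\]
The first step is the routine check that each $\mathcal D^t$ is a dyadic lattice: every cube of generation $k$ is a union of $2^n$ cubes of generation $k-1$. This works because the alternating sign $(-1)^k$ makes the shift $(-1)^k t_i 2^k/3$ agree modulo $2^{k-1}$ with the shift at the next finer scale. Coordinate $i$ of $\mathcal D^t$ depends only on $t_i$. Hence, at a fixed scale $\ell=2^k$, the endpoint sets of the three choices $t_i=0,1,2$ are three translates of $\ell\mathbb Z$. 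Their union is the single grid $p_0+(\ell/3)\mathbb Z$.

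\emph{Choice of scale and of $t$.} Given $B=c+[-r,r)^n$, we pick $k$ with $6r<\ell=2^k\le 12r$. Fix a coordinate $i$. The interval $I_i=[c_i-r,c_i+r)$ has length $2r<\ell/3$, so it meets at most one point $p$ of the union grid.
\begin{itemize}
\item If $I_i$ meets no grid point, any choice of $t_i$ places $I_i$ inside a single generation-$k$ interval.
\item If $I_i$ meets exactly one point $p$, two admissible choices of $t_i$ remain. Their cells containing $I_i$ are $[p-2\ell/3,p+\ell/3)$ and $[p-\ell/3,p+2\ell/3)$.
\end{itemize}
In the second case we choose the cell that extends further on the side where $c_i$ lies. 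If $c_i\ge p$ we take $[p-\ell/3,p+2\ell/3)$. The far endpoint is then at distance at most $2\ell/3$ from $c_i$, and the near endpoint at distance at most $\ell/3+r$, using $|c_i-p|\le r$. The case $c_i<p$ is symmetric. Since the coordinates are independent, these choices assemble into a single $t=j(B)$ and a cube $Q\in\mathcal D^t$ with $B\subset Q$.

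\emph{Containment in $10B$.} In every coordinate, each point of $Q$ lies within distance $\max(2\ell/3,\ \ell/3+r,\ \ell-r)$ of $c_i$. The last term covers the no-grid-point case, but there we may also re-choose $t_i$ among three cells to do better. It remains to bound this quantity by $10r$, and the main obstacle is the no-grid-point case. The crude bound $\ell-r\le 11r$ just misses $10r$.

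The fix is to pick the cell more carefully in that case as well. The three candidate cells containing $I_i$ have left endpoints spaced $\ell/3$ apart. Hence one of them contains $c_i$ with both endpoints at distance at most $2\ell/3$ from $c_i$. Together with the one-point case, every coordinate is then within $\max(2\ell/3,\ \ell/3+r)\le 8r\le 10r$ of $c_i$, which gives $B\subset Q\subset 10B$. If this refinement turns out to be delicate, the fallback is to tighten the scale choice to $\ell\in(6r,12r]$ with $\ell\le 11r$ where possible. Otherwise we accept the constant $11$, and the later uses in Section~\ref{sect:cont-to-dyadic} only need some absolute dilation.
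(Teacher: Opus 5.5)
Your argument is correct. It is the standard one-third-shift proof of the classical three-lattice lemma, which the paper quotes without proof. The closing fallback is unnecessary: in the no-grid-point case the middle cell already keeps every coordinate of $Q$ within $\max(2\ell/3,\ \ell/3+r)\le 8r$ of $c_i$.

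Your dimensional caveat is accurate and worth keeping. For Euclidean balls no dimension-free constant can work. Still, your coordinatewise bound gives $B\subset Q\subset 8\sqrt{n}\,B$. That is all Section~\ref{sect:cont-to-dyadic} actually uses, namely $|Q|\lesssim_n|B|$ and $x,y\in Q$. The factor $10$ hard-coded into the localized operators of the Appendix would, however, have to be replaced by a dimensional constant.
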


Next we compare the variable maximal and Riesz potential operators with their dyadic counterparts. 
\begin{proposition}\label{prop:mean-to-dyadic} Let $\frac{1}{\alpha(\cdot)}\in\mathcal P(\mathbb R^n)$ and $\mathcal B$ be the collection of all balls in $\mathbb R^n$. Let $\mathcal{D}$ stand for an arbitrary dyadic lattice of cubes and $\mathcal{D}^j$ for the $j=1,\dots, 3^n$ dyadic lattices of Lemma \ref{lem:CoveringLattices}. 

	Then, 
	\begin{align}
		M^{\alpha(\cdot)}f &\lesssim_n  [1]_{A_{\frac{1}{\alpha(\cdot)}}(\mathcal B)}\sum_{j=1}^{3^n} 	M^{\alpha(\cdot)}_{\mathcal{D}^j}f \label{eq:1},\\ 	
		M^{\alpha(\cdot)}_{\mathcal{D}}f&\lesssim_n  [1]_{A_{\frac{1}{\alpha(\cdot)}}(\mathcal B)}M^{\alpha(\cdot)}f,  \label{eq:2} \\ 
		I^{\alpha(\cdot)}|f| &\lesssim_n [1]_{A_{\frac{1}{\alpha(\cdot)}}(\mathcal B)}\sum_{j=1}^{3^n} 	I^{\alpha(\cdot)}_{\mathcal{D}^j}|f|, \label{eq:3} \\ 
				I^{\alpha(\cdot)}_{\mathcal{D}}|f| &\lesssim_n C^{\alpha(\cdot)}_{\uparrow}(\mathcal{D})[1]_{A_{\frac{1}{\alpha(\cdot)}}(\mathcal B)}	I^{\alpha(\cdot)}|f|\label{eq:4},
	\end{align}
where the constant in the bound \eqref{eq:4} is
\[
C^{\alpha(\cdot)}_{\uparrow}(\mathcal{D}) := \sup_{Q_0\in\mathcal{D}}
\sum_{Q_0\subset Q\in\mathcal{D}}
\frac{|Q_0|^{1-\langle\alpha\rangle_{Q_0}}}{|Q|^{1-\langle\alpha\rangle_Q}}.
\]
\end{proposition}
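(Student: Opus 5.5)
The four bounds are proved by one mechanism: comparing the fractional normalisations $|B|^{1-\langle\alpha\rangle_B}$ and $|Q|^{1-\langle\alpha\rangle_Q}$ for sets $B\subset Q$ of comparable measure. The comparison lemma I aim for is the following. If $E\subset F$ and $|F|\le C_n|E|$, then
\[
\frac{|E|^{\langle\alpha\rangle_E}}{|F|^{\langle\alpha\rangle_F}}\le 25\,[1]_{A_{1/\alpha(\cdot)}(F)}\frac{\|1_E\|_{L^{1/\alpha(\cdot)}}}{\|1_F\|_{L^{1/\alpha(\cdot)}}}\le 25\,[1]_{A_{1/\alpha(\cdot)}(F)}.
\]
This follows from Lemma \ref{lem:IndicatorKopaliani}, exactly as in the proof of Corollary \ref{cor:RieszPotPacking}. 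In the reverse direction, Lemma \ref{lem:IndicatorKopaliani} together with the lattice property gives
\[
|F|^{\langle\alpha\rangle_F}\lesssim [1]_{A_{1/\alpha(\cdot)}(E)}\,\|1_F\|_{L^{1/\alpha(\cdot)}}.
\]
The remaining issue is to bound $\|1_F\|_{L^{1/\alpha}}\lesssim_n \|1_E\|_{L^{1/\alpha}}$ when $|F|\le C_n|E|$. For this I would use the $A_{1/\alpha}$ condition on $F$: write $\|1_F\|\,\|1_F\|_{(1/\alpha)'}\le [1]|F|\lesssim_n [1]|E|$, and bound $|E|\le 4\|1_E\|_{1/\alpha}\|1_E\|_{(1/\alpha)'}\le 4\|1_E\|_{1/\alpha}\|1_F\|_{(1/\alpha)'}$ by Hölder. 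This yields $\|1_F\|\lesssim_n[1]\|1_E\|$. In every case the ratio of normalisations is therefore controlled by $[1]_{A_{1/\alpha(\cdot)}(\mathcal B)}$ up to a dimensional constant, provided balls and cubes are compared through balls containing them; cubes lie between comparable balls.

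\textbf{\eqref{eq:1} and \eqref{eq:3}.} For \eqref{eq:1}, fix a ball $B\ni x$ and pick $Q=Q(B)\in\mathcal D^j$ with $B\subset Q\subset 10B$ by Lemma \ref{lem:CoveringLattices}. Since $|Q|\sim_n|B|$, we get
\[
\frac{1}{|B|^{1-\langle\alpha\rangle_B}}\int_B|f|
=\frac{|Q|}{|B|}\cdot\frac{|B|^{\langle\alpha\rangle_B}}{|Q|^{\langle\alpha\rangle_Q}}\cdot\frac{1}{|Q|^{1-\langle\alpha\rangle_Q}}\int_B|f|
\lesssim_n [1]\,\frac{1}{|Q|^{1-\langle\alpha\rangle_Q}}\int_Q|f|.
\]
For \eqref{eq:3}, decompose $\mathbb R^n\setminus\{x\}$ into annuli $2^{k-1}<|x-y|\le 2^k$ and set $B_k=B(x,2^k)$. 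Each annulus contributes at most $\lesssim |B_k|^{\langle\alpha\rangle_{B(x,|x-y|)}-1}\int_{B_k}|f|$. The radius $|x-y|$ lies in $(2^{k-1},2^k]$, so $B(x,|x-y|)\subset B_k$ with comparable measure, and the comparison lemma lets me replace the averaged ball by $B_k$ and then by $Q(B_k)\ni x$. Distinct $k$ give cubes of distinct side lengths up to a bounded multiplicity (since $\ell(Q(B_k))\sim 2^k$), so within each $\mathcal D^j$ each dyadic cube is counted $O_n(1)$ times. This yields the sum of the $I^{\alpha}_{\mathcal D^j}|f|$.

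\textbf{\eqref{eq:2} and \eqref{eq:4}.} For \eqref{eq:2}, take a cube $Q\ni x$ and the ball $B\supset Q$ with the same centre and radius $\sqrt n\,\ell(Q)$. Then $|B|\sim_n|Q|$, and the comparison lemma (with $E=Q$, $F=B$) gives the claim pointwise. For \eqref{eq:4}, which I expect to be the main obstacle, the difficulty is that the dyadic cubes $Q\ni x$ are not indexed by $|x-y|$. I would write
\[
I^{\alpha}_{\mathcal D}|f|(x)=\sum_{Q\ni x}|Q|^{\langle\alpha\rangle_Q-1}\int_Q|f|,
\]
and let $Q_0\subset Q_1\subset\cdots$ be the chain of cubes containing $x$. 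I would split $\int_{Q_k}|f|=\sum_{i\le k}\int_{Q_i\setminus Q_{i-1}}|f|$, with $Q_{-1}$ understood as a limit, and exchange the sums. The $y$-piece $Q_i\setminus Q_{i-1}$ then carries the weight $\sum_{k\ge i}|Q_k|^{\langle\alpha\rangle_{Q_k}-1}$. The upward packing bounds this weight by $C_\uparrow(\mathcal D)\,|Q_i|^{\langle\alpha\rangle_{Q_i}-1}$. Finally, for $y\in Q_i$ we have $|x-y|\le\sqrt n\,\ell(Q_i)$, so $B(x,|x-y|)\subset B(x,\sqrt n\ell(Q_i))=:B_i$. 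The step still to be secured is the pointwise comparison of the kernel $|x-y|^{-n(1-\langle\alpha\rangle_{B(x,|x-y|)})}$ with $|Q_i|^{\langle\alpha\rangle_{Q_i}-1}$ when $|x-y|$ is much smaller than $\ell(Q_i)$. I would handle this by further summing over the dyadic annuli of $y$ inside $Q_i$, again using the comparison lemma on comparable sets, and the accumulated constant is where $C_\uparrow(\mathcal D)$ and $[1]_{A_{1/\alpha(\cdot)}}$ enter.
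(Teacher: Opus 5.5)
Your arguments for \eqref{eq:1}, \eqref{eq:2} and \eqref{eq:3} are sound. For \eqref{eq:3}, your annular decomposition with bounded multiplicity works. The paper instead bounds the kernel pointwise by the single term of a covering cube $Q_{x,y}\supset B(x,|x-y|)$, which avoids the counting. In either version, apply your comparison once, to $B(x,|x-y|)\subset Q(B_k)$, rather than twice via $B_k$; otherwise you get $[1]^2$ instead of the claimed linear dependence.

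The genuine gap is \eqref{eq:4}. Your use of $C^{\alpha(\cdot)}_{\uparrow}(\mathcal D)$ is correct and matches the paper's first step: for $y\in Q_i\setminus Q_{i-1}$, the cube $Q_*:=Q_i$ is the smallest dyadic cube containing $x$ and $y$. What remains is the pointwise kernel bound
\[
\frac{1}{|Q_*|^{1-\langle\alpha\rangle_{Q_*}}}\lesssim_n [1]_{A_{\frac{1}{\alpha(\cdot)}}(\mathcal B)}\,\frac{1}{|B(x,|x-y|)|^{1-\langle\alpha\rangle_{B(x,|x-y|)}}},
\]
and $|x-y|$ can be arbitrarily small compared with $\ell(Q_*)$ (take $x$ near the boundary of $Q_{i-1}$). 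Summing over annuli of $y$ and comparing only sets of comparable size cannot give this. Since $f$ is arbitrary (it may live on the part of $Q_*\setminus Q_{i-1}$ next to $x$), you cannot avoid comparing the non-comparable sets $B(x,|x-y|)$ and $Q_*$ directly. Chaining comparable-set comparisons instead multiplies a constant larger than one over $\sim\log_2(\ell(Q_*)/|x-y|)$ scales, which is unbounded.

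The missing idea is that $E\mapsto |E|^{1-\langle\alpha\rangle_E}$ is quasi-increasing under inclusion with no size restriction. This holds because $1/(1-\alpha(\cdot))=(1/\alpha(\cdot))'$, indicator norms are monotone, and $[1]_{A_{p(\cdot)}}=[1]_{A_{p'(\cdot)}}$. It is just your first comparison inequality, whose proof never used $|F|\le C_n|E|$, applied to $1-\alpha(\cdot)$ in place of $\alpha(\cdot)$.

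To keep a single power of the characteristic, route it as the paper does. Let $B=B(x,|x-y|)$ and $B^*=B(x,2\operatorname{diam}Q_*)$, which contains $B\cup Q_*$ and has $|B^*|\lesssim_n|Q_*|$. Then Lemma \ref{lem:IndicatorKopaliani}, monotonicity, and the definition of $[1]_{A_{1/\alpha(\cdot)}(B^*)}$ give
\[
|B|^{1-\langle\alpha\rangle_B}\le 5\|1_B\|_{L^{\frac{1}{1-\alpha(\cdot)}}}\le 5\|1_{B^*}\|_{L^{\frac{1}{1-\alpha(\cdot)}}}\le 5[1]_{A_{\frac{1}{\alpha(\cdot)}}(\mathcal B)}\frac{|B^*|}{\|1_{B^*}\|_{L^{\frac{1}{\alpha(\cdot)}}}}\lesssim_n [1]_{A_{\frac{1}{\alpha(\cdot)}}(\mathcal B)}\frac{|Q_*|}{\|1_{Q_*}\|_{L^{\frac{1}{\alpha(\cdot)}}}}\le 5[1]_{A_{\frac{1}{\alpha(\cdot)}}(\mathcal B)}|Q_*|^{1-\langle\alpha\rangle_{Q_*}}.
\]
This is the required kernel bound. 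Combined with your $C^{\alpha(\cdot)}_{\uparrow}(\mathcal D)$ step and Tonelli, it proves \eqref{eq:4}.
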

\begin{proof}
	Before verifying the bounds, we note the following. Let $s(\cdot)\in\mathcal{P}.$ For any cube $Q \subset B_Q,$ where $|B_Q|\lesssim_n |Q|,$ there holds that $[1]_{A_{s(\cdot)}(Q)} \lesssim_n [1]_{A_{s(\cdot)}(B_Q)} \leq [1]_{A_{s(\cdot)}(\mathcal{B})}.$ Conversely, using the $j=1,\dots, 3^n$ dyadic lattices of Lemma \ref{lem:CoveringLattices}, stating that for each ball $B$ there exists $Q=Q(B)$ so that $B\subset Q$ and $|Q|\lesssim_n|B|,$ we have $[1]_{A_{s(\cdot)}(Q)} \gtrsim_n [1]_{A_{s(\cdot)}(B)}.$ Thus, 
\begin{align}\label{eq:MuckAlphaBallVsCubes}
[1]_{A_{s(\cdot)}(\mathcal{B})} \sim_n [1]_{A_{s(\cdot)}(\cup_{j=1}^{3^n}\mathcal{D}^j)}.
\end{align}

Next we verify \eqref{eq:1}.
Let $x\in B.$ Let $Q = Q_B\in\mathcal{D}^j$ be the covering cube as provided by Lemma \ref{lem:CoveringLattices}. Then, by Lemma \ref{lem:IndicatorKopaliani}, used in the passage to the second line, and \eqref{eq:MuckAlphaBallVsCubes} with the choice $s(\cdot) = 1/\alpha(\cdot)$, we have 
	\begin{multline}\label{eq:11}
		\frac{1_B}{|B|^{1-\langle \alpha\rangle_B}}\int_B|f(y)|\ud y \leq \frac{|Q|^{1-\langle \alpha\rangle_Q}}{|B|^{1-\langle \alpha\rangle_B}} 	\frac{1_Q}{|Q|^{1-\langle \alpha\rangle_Q}}\int_Q|f(y)|\ud y  \lesssim_n \frac{|B|^{\langle \alpha\rangle_B}}{|Q|^{\langle \alpha\rangle_Q}} 	M^{\alpha(\cdot)}_{\mathcal{D}^j}f(x) \\ \lesssim[1]_{A_{\frac{1}{\alpha(\cdot)}}(Q)}\frac{\|1_B\|_{L^{\frac{1}{\alpha(\cdot)}}}}{\|1_Q\|_{L^{\frac{1}{\alpha(\cdot)}}}}	M^{\alpha(\cdot)}_{\mathcal{D}^j}f(x)\lesssim [1]_{A_{\frac{1}{\alpha(\cdot)}}(\mathcal B)}	M^{\alpha(\cdot)}_{\mathcal{D}^j}f(x) \lesssim [1]_{A_{\frac{1}{\alpha(\cdot)}}(\mathcal B)} \sum_{j=1}^{3^n}	M^{\alpha(\cdot)}_{\mathcal{D}^j}f(x).
	\end{multline}
The verification of \eqref{eq:2} is analogous, with the roles of balls and cubes swapped; we leave the details to the reader. 

For \eqref{eq:3}, note first that $|I^{\alpha(\cdot)}f|\leq I^{\alpha(\cdot)}|f|$, so it is enough to consider $f\geq0$. For $y\ne x$, set $B_{x,y}:=B(x,|x-y|)$ and choose, by Lemma \ref{lem:CoveringLattices}, a cube $Q_{x,y}\in\mathcal D^{j(x,y)}$ such that $B_{x,y}\subset Q_{x,y}\subset 10B_{x,y}.$
Then $x,y\in Q_{x,y}$ and the same bound employed in the bound \eqref{eq:11} above gives 
\[
\frac{1}{|x-y|^{n(1-\langle \alpha\rangle_{B_{x,y}})}}
\lesssim_n
[1]_{A_{\frac{1}{\alpha(\cdot)}}(Q_{x,y})}
\frac{1_{Q_{x,y}}(x)1_{Q_{x,y}}(y)}{|Q_{x,y}|^{1-\langle \alpha\rangle_{Q_{x,y}}}} \lesssim_n
[1]_{A_{\frac{1}{\alpha(\cdot)}}(\mathcal B)}\sum_{j=1}^{3^n}\sum_{Q\in\mathcal D^j}
\frac{1_Q(x)1_Q(y)}{|Q|^{1-\langle \alpha\rangle_Q}}.
\]
Multiplying by $f(y)$ and integrating in $y$ proves \eqref{eq:3}.

It remains to verify \eqref{eq:4}. Again it suffices to consider $f\geq0$. By Tonelli's theorem,
\begin{align}\label{eq:adhoc99}
	I^{\alpha(\cdot)}_{\mathcal D}f(x)
	= \sum_{Q\in\mathcal{D}} \frac{1_Q(x)}{|Q|^{1-\langle \alpha\rangle_Q}}\int_Q f(y)\,\mathrm dy =
	\int_{\mathbb R^n} f(y)
	\sum_{Q\in\mathcal D}
	\frac{1_{Q}(x)1_Q(y)}{|Q|^{1-\langle \alpha\rangle_Q}}
	\,\mathrm dy.
\end{align}
Let $Q_*$ be the smallest cube that contains both $x$ and $y$; if it does not exist we are done. We have 
\begin{align}\label{eq:adhoc999}
    \sum_{Q\in\mathcal D}
\frac{1_{Q}(x)1_Q(y)}{|Q|^{1-\langle \alpha\rangle_Q}} = \sum_{Q_*\subset Q\in\mathcal D}
\frac{1}{|Q|^{1-\langle \alpha\rangle_Q}} \leq  \frac{C^{\alpha(\cdot)}_{\uparrow}(\mathcal{D})}{|Q_*|^{1-\langle \alpha\rangle_{Q_*}}}.
\end{align}

Let $B:= B(x,|x-y|)$ and $B^*:=B(x,2\diam(Q_*))$. Since $x,y\in Q_*$, both $B\subset B^*$ and $Q_*\subset B^*$, while $|B^*|\lesssim_n |Q_*|$.
By Lemma \ref{lem:IndicatorKopaliani} we bound 
\begin{multline}\label{eq:adhoc9999}
		|B|^{1-\langle\alpha\rangle_B} \lesssim \|1_B\|_{L^{\frac{1}{1-\alpha(\cdot)}}} \leq  \|1_{B^*}\|_{L^{\frac{1}{1-\alpha(\cdot)}}} \lesssim_n [1]_{A_{\frac{1}{\alpha(\cdot)}}(\mathcal{B})} \frac{|B^*|}{\|1_{B^*}\|_{L^{1/\alpha(\cdot)}}} \\ \lesssim_n [1]_{A_{\frac{1}{\alpha(\cdot)}}(\mathcal{B})} \frac{|Q_{*}|}{\|1_{Q_{*}}\|_{L^{1/\alpha(\cdot)}}} \lesssim_n [1]_{A_{\frac{1}{\alpha(\cdot)}}(\mathcal{B})} \frac{|Q_{*}|}{|Q_*|^{\langle\alpha\rangle_{Q_*}}}  \lesssim_n  [1]_{A_{\frac{1}{\alpha(\cdot)}}(\mathcal{B})}|Q_*|^{1-\langle\alpha\rangle_{Q_*}}.
\end{multline}
Thus,
\[
\ \frac{1}{|Q_*|^{1-\langle \alpha\rangle_{Q_*}}}
\lesssim_n
\frac{[1]_{A_{\frac{1}{\alpha(\cdot)}}(\mathcal B)}}{|B|^{1-\langle\alpha\rangle_B}} = 
\frac{[1]_{A_{\frac{1}{\alpha(\cdot)}}(\mathcal B)}}{|B(x,|x-y|)|^{1-\langle\alpha\rangle_{B(x,|x-y|)}}}.
\]
Chaining the bounds \eqref{eq:adhoc99}, \eqref{eq:adhoc999} and \eqref{eq:adhoc9999} proves \eqref{eq:4}.
\end{proof}

Next we give the proofs of Theorems \ref{thm:MainIntro1} and \ref{thm:MainIntro4}.

\begin{proof}[Proof of Theorem \ref{thm:MainIntro1}] 
Fix a ball $B$. For $f\geq 0$ we have
\(
\frac{1_B}{|B|^{1-\langle\alpha\rangle_B}}\int_B f
\leq
M^{\alpha(\cdot)}f.
\)
Taking the $L^{q(\cdot)}$ norm and then supremum over
$\|f\|_{L^{p(\cdot)}}\leq1$ gives, by Lemma \ref{lem:dualityFormula}, that
\[
[1]_{A_{p(\cdot),q(\cdot)}^{\alpha(\cdot)}(\mathbb R^n)} = \sup_{B}\frac{\|1_B\|_{L^{q(\cdot)}}\|1_B\|_{L^{p'(\cdot)}}}
{|B|^{1-\langle\alpha\rangle_B}}
\lesssim
\|M^{\alpha(\cdot)}\|_{L^{p(\cdot)}\to L^{q(\cdot)}}.
\]
For the upper bound we use the pointwise bound \eqref{eq:1} of Proposition \ref{prop:mean-to-dyadic} and bound 
\begin{multline}
    \|M^{\alpha(\cdot)}\|_{L^{p(\cdot)}(\mathbb{R}^n)\to L^{q(\cdot)}(\mathbb{R}^n)} \lesssim [1]_{A_{\frac{1}{\alpha(\cdot)}}(\mathbb{R}^n)} \sum_{j=1}^{3^n}\|M^{\alpha(\cdot)}_{\mathcal{D}^j}\|_{L^{p(\cdot)}(\mathbb{R}^n)\to L^{q(\cdot)}(\mathbb{R}^n)} \\ 
    \lesssim_n [1]_{A_{p(\cdot),q(\cdot)}^{\alpha(\cdot)}(\mathcal{B})}  \big([1]_{A_{\frac{1}{\alpha(\cdot)}}(\mathcal B)}\big)^3   \|M_{\mathcal B}\|_{ L^{q'(\cdot)}} \|M_{\mathcal B}\|_{ L^{p(\cdot)}},
\end{multline}
where, in the second bound, for each $j$, we have by Theorem \ref{thm:MainIntro2} and using \eqref{eq:MuckAlphaBallVsCubes} that
\begin{align*}
    \|M^{\alpha(\cdot)}_{\mathcal{D}^j}\|_{L^{p(\cdot)}(\mathbb{R}^n)\to L^{q(\cdot)}(\mathbb{R}^n)} &\leq [1]_{A_{p(\cdot),q(\cdot)}^{\alpha(\cdot)}(\mathcal{D}^j)}  [1]_{A_{\frac{1}{\alpha(\cdot)}}(\mathcal{D}^j)}    \|M_{\mathcal{D}^j}\|_{ L^{q'(\cdot)}} \|M_{\mathcal{D}^j}\|_{ L^{p(\cdot)}} \\
    &\lesssim_n [1]_{A_{p(\cdot),q(\cdot)}^{\alpha(\cdot)}(\mathcal{B})}  ([1]_{A_{\frac{1}{\alpha(\cdot)}}(\mathcal B)})^2    \|M_{\mathcal B}\|_{ L^{q'(\cdot)}} \|M_{\mathcal B}\|_{ L^{p(\cdot)}},
\end{align*}
where we used the bound 
\[
[1]_{A_{p(\cdot),q(\cdot)}^{\alpha(\cdot)}(\mathcal{D}^j)} \lesssim_n [1]_{A_{\frac{1}{\alpha(\cdot)}}(\mathcal B)}[1]_{A_{p(\cdot),q(\cdot)}^{\alpha(\cdot)}(\mathcal{B})}.
\]
This proves the required estimate.
\end{proof}

\begin{proof}[Proof of Theorem \ref{thm:MainIntro4}]
We begin with the first bullet. 
Suppose that $M$ is bounded on $L^{q'(\cdot)}$ and $1\in A_{\frac{1}{\alpha(\cdot)}}(\mathbb{R}^n).$
We prove that if $M^{\alpha(\cdot)}:L^{p(\cdot)}\to L^{q(\cdot)}$ is bounded and $C^{\alpha(\cdot)}_{\downarrow}<\infty$, then $I^{\alpha(\cdot)}$ is bounded. 
By $1\in A_{\frac{1}{\alpha(\cdot)}}$ and \eqref{eq:3} it is enough to show that $I^{\alpha(\cdot)}_{\mathcal D^j}:L^{p(\cdot)}\to L^{q(\cdot)}$ is bounded for the $j=1,\ldots,3^n$ dyadic lattices of Lemma \ref{lem:CoveringLattices}. We verify their boundedness through the first bullet of Theorem \ref{thm:RieszPot}.
Since $M^{\alpha(\cdot)}:L^{p(\cdot)}\to L^{q(\cdot)}$ is bounded and $1\in A_{\frac{1}{\alpha(\cdot)}}$ it follows from \eqref{eq:2} that
\(
M^{\alpha(\cdot)}_{\mathcal D^j}:L^{p(\cdot)}\to L^{q(\cdot)}
\)
are bounded. On the other hand, the condition
$C^{\alpha(\cdot)}_{\downarrow}<\infty$ automatically gives $C^{\alpha(\cdot)}_{\downarrow}(\mathcal{D}^j)<\infty$, i.e. the packing condition of
Theorem \ref{thm:RieszPot}. Moreover, each
$\mathcal D^j$ has the $A_1$ self-improvement
property, which follows from the classical theory of $A_1$ weights, see e.g. the book of Duoandikoetxea \cite{Duoandikoetxea2001}. Moreover, $M_{\mathcal D^j}\lesssim_n M$, hence $M_{\mathcal D^j}$ is bounded on $L^{q'(\cdot)}$  and $1\in A_{\frac{1}{\alpha(\cdot)}}(\mathcal D^j)$ by
$[1]_{A_{\frac{1}{\alpha(\cdot)}}(\mathcal D^j)}
\lesssim_n[1]_{A_{\frac{1}{\alpha(\cdot)}}(\mathbb{R}^n)}$; hence the assumptions of 
Theorem \ref{thm:RieszPot} are in force and the proof of the first claim is concluded. 

We turn to the second bullet. 
Suppose that $1\in A_{q(\cdot)}$ and $\alpha(\cdot) = \tfrac{1}{p(\cdot)}-\tfrac{1}{q(\cdot)}$ and $C_{\uparrow}^{\alpha(\cdot)} <\infty.$ We prove that if 
\(
I^{\alpha(\cdot)}:L^{p(\cdot)}\to L^{q(\cdot)}
\)
is bounded, then both conclusions of the second bullet follow.
Let $\mathcal{D}$ be an arbitrary dyadic lattice. By $C_{\uparrow}^{\alpha(\cdot)} <\infty$ and \eqref{eq:4}, the boundedness of
\(
I^{\alpha(\cdot)}:L^{p(\cdot)}(\mathbb R^n)\to L^{q(\cdot)}(\mathbb R^n)
\)
implies that
\(
I^{\alpha(\cdot)}_{\mathcal{D}}:L^{p(\cdot)}(\mathbb{R}^n)\to L^{q(\cdot)}(\mathbb{R}^n)
\)
is bounded, with a bound independent of $\mathcal D$. The Muckenhoupt assumptions also pass uniformly to $\mathcal D$: for
$s(\cdot)\in\{\tfrac{1}{\alpha(\cdot)},p(\cdot),q(\cdot)\}$,
\(
[1]_{A_{s(\cdot)}(\mathcal D)}
\lesssim_n
[1]_{A_{s(\cdot)}(\mathbb R^n)}.
\)
Therefore the assumptions of the second bullet of Theorem \ref{thm:RieszPot} are in force, uniformly in $\mathcal D$. Thus
\(
M^{\alpha(\cdot)}_{\mathcal D}:L^{p(\cdot)}(\mathbb R^n)\to L^{q(\cdot)}(\mathbb R^n)
\)
is bounded uniformly in $\mathcal D$, and $C^{\alpha(\cdot)}_{\downarrow}(\mathcal{D}) < \infty$ with a bound that does not depend on $\mathcal D$. Taking the supremum over $\mathcal D$ gives $C_{\downarrow}^{\alpha(\cdot)}<\infty$.

It remains only to pass from dyadic maximal operators to the continuous one. Applying the preceding dyadic maximal bound to the finitely many lattices $\mathcal D^j$, $j=1,\ldots,3^n$, of Lemma \ref{lem:CoveringLattices}, and then using \eqref{eq:1}, gives
\(
M^{\alpha(\cdot)}:L^{p(\cdot)}(\mathbb R^n)\to L^{q(\cdot)}(\mathbb R^n).
\)
\end{proof}

The same covering lattices allow us to compare Morrey norms.

\begin{proposition}\label{prop:MorreyBallsDyadic}
Assume that $p(\cdot),r(\cdot)\in\mathcal P(\mathbb R^n)$.
Write $\mathcal B$ for the collection of all balls in $\mathbb R^n$.
Let $\mathcal{D}^1,\ldots,\mathcal{D}^{3^n}$ be the dyadic lattices of Lemma \ref{lem:CoveringLattices}. Then, we have 
\begin{align}
    \|f\|_{\mathcal M^{r(\cdot)}_{p(\cdot)}(\mathcal B)}
&\lesssim_n
[1]_{A_{r(\cdot)}(\mathcal B)}[1]_{A_{p(\cdot)}(\mathcal B)}
\max_{1\leq j\leq 3^n}
\|f\|_{\mathcal M^{r(\cdot)}_{p(\cdot)}(\mathcal D^j)}, \\ 
\max_{1\leq j\leq 3^n}
\|f\|_{\mathcal M^{r(\cdot)}_{p(\cdot)}(\mathcal D^j)}
&\lesssim_n
[1]_{A_{r(\cdot)}(\mathcal B)}[1]_{A_{p(\cdot)}(\mathcal B)}
\|f\|_{\mathcal M^{r(\cdot)}_{p(\cdot)}(\mathcal B)}.
\end{align}
\end{proposition}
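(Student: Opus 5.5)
The plan is to compare the two suprema term by term. Each ball is contained in a comparable dyadic cube, and each dyadic cube is contained in a comparable ball. The ratio of the Morrey factors $\mu(\cdot)^{1/r_{(\cdot)}-1/p_{(\cdot)}}$ over the two comparable sets is then controlled through Lemma \ref{lem:IndicatorKopaliani}, which converts $|B|^{1/r_B}$ into $\|1_B\|_{L^{r(\cdot)}}$ and similarly for $p$.

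For the first inequality, fix a ball $B$. By Lemma \ref{lem:CoveringLattices} choose $j$ and $Q\in\mathcal D^j$ with $B\subset Q\subset 10B$. By the lattice property, $\|1_Bf\|_{L^{p(\cdot)}}\le \|1_Qf\|_{L^{p(\cdot)}}$. It then suffices to show
\[
\frac{|B|^{1/r_B}}{|Q|^{1/r_Q}}\cdot\frac{|Q|^{1/p_Q}}{|B|^{1/p_B}}\lesssim_n [1]_{A_{r(\cdot)}(\mathcal B)}[1]_{A_{p(\cdot)}(\mathcal B)}.
\]
For the $r$-factor, Lemma \ref{lem:IndicatorKopaliani} gives $|B|^{1/r_B}\le 5\|1_B\|_{r(\cdot)}\le 5\|1_Q\|_{r(\cdot)}$. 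Applying the right half of the same lemma on $Q$ gives $\|1_Q\|_{r(\cdot)}\le 5[1]_{A_{r(\cdot)}(Q)}|Q|^{1/r_Q}$. By the observation \eqref{eq:MuckAlphaBallVsCubes} (a cube inside a ball of comparable measure, here $10B$), we have $[1]_{A_{r(\cdot)}(Q)}\lesssim_n[1]_{A_{r(\cdot)}(\mathcal B)}$.

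The $p$-factor points the other way, since we need $|Q|^{1/p_Q}\lesssim |B|^{1/p_B}$, i.e.\ an upper bound for the larger set by the smaller. Here I would use duality instead. Lemma \ref{lem:IndicatorKopaliani} gives $|Q|^{1/p_Q}=|Q|/|Q|^{1/(p')_Q}\le 5|Q|/\|1_Q\|_{p'(\cdot)}\cdot [1]_{A_{p'(\cdot)}(Q)}$. Then $\|1_Q\|_{p'(\cdot)}\ge\|1_B\|_{p'(\cdot)}$ and $|Q|\lesssim_n|B|$. Finally, $|B|/\|1_B\|_{p'(\cdot)}\le [1]_{A_{p(\cdot)}(B)}\cdot\|1_B\|_{p(\cdot)}/\|1_B\|_{p(\cdot)}\cdot$, or more simply $|B|/\|1_B\|_{p'(\cdot)}\le 5|B|^{1/p_B}$ by the left bound of the lemma applied to $p'$. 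Since $[1]_{A_{p'(\cdot)}}=[1]_{A_{p(\cdot)}}$ by symmetry of the definition, this yields one factor of $[1]_{A_{p(\cdot)}(\mathcal B)}$.

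The same mechanism handles the $r$-factor. The smaller set's power is bounded above via the left half of the lemma plus monotonicity. The larger set's power is bounded below by the same lemma, at the cost of one Muckenhoupt constant on the larger set. Each of the two exponents therefore costs exactly one Muckenhoupt constant, giving the claimed product.

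The second inequality is symmetric. Given $Q\in\mathcal D^j$, take the ball $B$ centred at the centre of $Q$ with radius $\sqrt n\,\ell(Q)$, so that $Q\subset B$ and $|B|\lesssim_n|Q|$. We have $\|1_Qf\|_{p(\cdot)}\le\|1_Bf\|_{p(\cdot)}$, and the same two-sided manipulation now uses $[1]_{A_{s(\cdot)}(B)}\le[1]_{A_{s(\cdot)}(\mathcal B)}$ directly.

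The main obstacle is getting the direction of the $p$-factor right. There the smaller set sits in the denominator, so a reverse monotonicity is required. This is why both the upper and lower bounds of Lemma \ref{lem:IndicatorKopaliani}, and the conjugate exponent, must be used. Keeping track of which set the Muckenhoupt constant lands on is what ensures only $[1]_{A_{r(\cdot)}(\mathcal B)}[1]_{A_{p(\cdot)}(\mathcal B)}$ appears, up to dimensional constants from $|Q|\sim_n|B|$.
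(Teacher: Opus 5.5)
Your proposal is correct and is essentially the paper's proof. The paper isolates your two computations as the nested-set comparisons $|E|^{1/s_E}\lesssim_{n,c}[1]_{A_{s(\cdot)}(\mathcal B)}|F|^{1/s_F}$ and $|F|^{1/s_F}\lesssim_{n,c}[1]_{A_{s(\cdot)}(\mathcal B)}|E|^{1/s_E}$ for $E\subset F$ with $|F|\le c|E|$, applies them with $s=r$ and $s=p$ respectively, and uses the same ingredients you do: the two bounds of Lemma \ref{lem:IndicatorKopaliani}, monotonicity of indicator norms, and the conjugate exponent. One small fix: delete the dangling fragment before ``or more simply'' in your $p$-factor paragraph, since the route you give after it is the correct one.
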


\begin{proof}
We first compare the scaling factors for nested sets of comparable size. Take $s(\cdot)\in\mathcal P(\mathbb R^n)$. Let $E\subset F$ be either balls or dyadic cubes, and assume that $|F|\leq c|E|$. We will use
\begin{align}
|E|^{1/s_E}
&\lesssim_{n,c}
[1]_{A_{s(\cdot)}(\mathcal B)}|F|^{1/s_F},
\label{eq:MorreyNestedSmallLarge}
\\
|F|^{1/s_F}
&\lesssim_{n,c}
[1]_{A_{s(\cdot)}(\mathcal B)}|E|^{1/s_E}.
\label{eq:MorreyNestedLargeSmall}
\end{align}
For every dyadic cube $S$, the quantity $[1]_{A_{s(\cdot)}(S)}$ is controlled by $[1]_{A_{s(\cdot)}(\mathcal B)}$. Indeed, choose a ball $B$ such that $S\subset B$ and $|B|\lesssim_n |S|$. Then monotonicity of indicator norms gives
\begin{equation}\label{eq:MorreyCubeABallControl}
[1]_{A_{s(\cdot)}(S)}
=
\frac{\|1_S\|_{L^{s(\cdot)}}\|1_S\|_{L^{s'(\cdot)}}}{|S|}
\lesssim_n
\frac{\|1_{B}\|_{L^{s(\cdot)}}\|1_{B}\|_{L^{s'(\cdot)}}}{|B|}
\leq
[1]_{A_{s(\cdot)}(\mathcal B)}.
\end{equation}
We prove \eqref{eq:MorreyNestedSmallLarge}. By Lemma \ref{lem:IndicatorKopaliani} and monotonicity,
\[
|E|^{1/s_E}
\lesssim
\|1_E\|_{L^{s(\cdot)}}
\leq
\|1_F\|_{L^{s(\cdot)}}
\lesssim_{n,c}
[1]_{A_{s(\cdot)}(\mathcal B)}|F|^{1/s_F}.
\]
In the last step, if $F$ is a ball this follows from the definition of
$[1]_{A_{s(\cdot)}(\mathcal B)}$; if $F$ is a cube, it follows from
\eqref{eq:MorreyCubeABallControl}.

We next prove \eqref{eq:MorreyNestedLargeSmall}. The definition of the $A_{s(\cdot)}$ constant, Lemma \ref{lem:IndicatorKopaliani}, and \eqref{eq:MorreyCubeABallControl} when $F$ is a cube give
\[
|F|^{1/s_F}
\lesssim
[1]_{A_{s(\cdot)}(\mathcal B)}
\frac{|F|}{\|1_F\|_{L^{s'(\cdot)}}}.
\]
Since $E\subset F$ and $|F|\leq c|E|$, we also have
\[
\frac{|F|}{\|1_F\|_{L^{s'(\cdot)}}}
\lesssim_c
\frac{|E|}{\|1_E\|_{L^{s'(\cdot)}}}\lesssim |E|^{1/s_E}.
\]
Here we used Lemma \ref{lem:IndicatorKopaliani} in the last estimate.
Applying \eqref{eq:MorreyNestedSmallLarge} with $s(\cdot)=r(\cdot)$ and \eqref{eq:MorreyNestedLargeSmall} with $s(\cdot)=p(\cdot)$ gives
\begin{equation}\label{eq:MorreyScaleComparable}
|E|^{\frac1{r_E}-\frac1{p_E}}
\lesssim_{n,c}
[1]_{A_{r(\cdot)}(\mathcal B)}[1]_{A_{p(\cdot)}(\mathcal B)}
|F|^{\frac1{r_F}-\frac1{p_F}}.
\end{equation}

To prove the first inequality, fix $B\in\mathcal B$.
By Lemma \ref{lem:CoveringLattices}, there are $j$ and $Q\in\mathcal D^j$ such that $B\subset Q\subset 10B$. The estimate \eqref{eq:MorreyScaleComparable} and the lattice property of $L^{p(\cdot)}$ yield
\[
\begin{aligned}
|B|^{\frac1{r_B}-\frac1{p_B}}\|1_Bf\|_{L^{p(\cdot)}}
&\lesssim_n
[1]_{A_{r(\cdot)}(\mathcal B)}[1]_{A_{p(\cdot)}(\mathcal B)}
|Q|^{\frac1{r_Q}-\frac1{p_Q}}\|1_Qf\|_{L^{p(\cdot)}}
\\
&\leq
[1]_{A_{r(\cdot)}(\mathcal B)}[1]_{A_{p(\cdot)}(\mathcal B)}
\max_{1\leq j\leq 3^n}
\|f\|_{\mathcal M^{r(\cdot)}_{p(\cdot)}(\mathcal D^j)}.
\end{aligned}
\]
Taking the supremum over all balls $B$ proves the first inequality.

To prove the reverse inequality, fix $Q\in\mathcal D^j$ for some $j$.
Choose a ball $B_Q$ such that $Q\subset B_Q$ and $|B_Q|\lesssim_n |Q|$. Using \eqref{eq:MorreyScaleComparable} again, together with the lattice property, gives
\[
\begin{aligned}
|Q|^{\frac1{r_Q}-\frac1{p_Q}}\|1_Qf\|_{L^{p(\cdot)}}
&\lesssim_n
[1]_{A_{r(\cdot)}(\mathcal B)}[1]_{A_{p(\cdot)}(\mathcal B)}
|B_Q|^{\frac1{r_{B_Q}}-\frac1{p_{B_Q}}}\|1_{B_Q}f\|_{L^{p(\cdot)}}
\\
&\leq
[1]_{A_{r(\cdot)}(\mathcal B)}[1]_{A_{p(\cdot)}(\mathcal B)}
\|f\|_{\mathcal M^{r(\cdot)}_{p(\cdot)}(\mathcal B)}.
\end{aligned}
\]
Taking the supremum over $Q\in\mathcal D^j$ and then the maximum over $j$ proves the reverse inequality.
\end{proof}

The continuous Morrey bounds are characterized by the following testing criteria.

\begin{theorem}\label{thm:MorreyIff}\label{thm:MorreyMaxIff}\label{thm:MorreyRieszIff}
Let $\mathcal B$ be the collection of all balls in $\mathbb R^n$.
Let
$p(\cdot),q(\cdot),r(\cdot),s(\cdot),1/\alpha(\cdot)\in\mathcal P(\mathbb R^n)$
and assume that
\(
\frac1{p(\cdot)}-\frac1{q(\cdot)}=\frac1{r(\cdot)}-\frac1{s(\cdot)}
\)
almost everywhere on $\mathbb R^n$. Suppose also that
\[
[1]_{A_{p(\cdot)}(\mathcal B)}+
[1]_{A_{q(\cdot)}(\mathcal B)}+
[1]_{A_{r(\cdot)}(\mathcal B)}+
[1]_{A_{s(\cdot)}(\mathcal B)}+
[1]_{A_{\frac1{\alpha(\cdot)}}(\mathcal B)}
<\infty.
\]
Then the following criteria hold.
\begin{itemize}
\item[Maximal.] Assume that
\(
M^{\alpha(\cdot)}:L^{p(\cdot)}(\mathbb R^n)\to L^{q(\cdot)}(\mathbb R^n)
\)
is bounded. Then
\(
M^{\alpha(\cdot)}:
\mathcal M^{r(\cdot)}_{p(\cdot)}(\mathcal B)
\to
\mathcal M^{s(\cdot)}_{q(\cdot)}(\mathcal B)
\)
is bounded \textbf{if and only if}
\begin{equation}\label{eq:MorreyIffMaxTesting}
C_M^{\mathcal B}:=
\sup_{B\in\mathcal B}
\frac{
\|1_B\|_{\mathcal M^{s(\cdot)}_{q(\cdot)}(\mathcal B)}
\|1_B\|_{(\mathcal M^{r(\cdot)}_{p(\cdot)}(\mathcal B))'}
}
{|B|^{1-\langle\alpha\rangle_B}}
<\infty.
\end{equation}
\item[Riesz.] Assume that
\(
I^{\alpha(\cdot)}:L^{p(\cdot)}(\mathbb R^n)\to L^{q(\cdot)}(\mathbb R^n)
\)
is bounded. Then
\(
I^{\alpha(\cdot)}:
\mathcal M^{r(\cdot)}_{p(\cdot)}(\mathcal B)
\to
\mathcal M^{s(\cdot)}_{q(\cdot)}(\mathcal B)
\)
is bounded \textbf{if and only if}
\begin{equation}\label{eq:MorreyIffRieszTesting}
C_I^{\mathcal B}:=
\sup_{B_0\in\mathcal B}
\|1_{B_0}\|_{\mathcal M^{s(\cdot)}_{q(\cdot)}(\mathcal B)}
\left\|
K_{B_0}^{\alpha(\cdot)}
\right\|_{(\mathcal M^{r(\cdot)}_{p(\cdot)}(\mathcal B))'}
<\infty,
\end{equation}
where \(c_{B_0}\) is the centre of \(B_0\) and
\[
K_{B_0}^{\alpha(\cdot)}(y):=
\frac{1_{\mathbb R^n\setminus 2B_0}(y)}
{|y-c_{B_0}|^{n\left(1-\langle\alpha\rangle_{B(c_{B_0},|y-c_{B_0}|)}\right)}}.
\]
\end{itemize}
\end{theorem}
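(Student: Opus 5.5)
The plan is to transfer the dyadic Morrey results, Theorems \ref{thm:dMaxMorrey} and \ref{thm:dRiezMorrey}, to balls. Proposition \ref{prop:MorreyBallsDyadic} (Morrey norm comparison) and Proposition \ref{prop:mean-to-dyadic} (pointwise operator comparison) do the transfer. Necessity of both testing conditions comes directly from Proposition \ref{prop:Necessity}, without any dyadic reduction.

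\emph{Necessity.} Set $Z=\mathcal M^{r(\cdot)}_{p(\cdot)}(\mathcal B)$ and $Y=\mathcal M^{s(\cdot)}_{q(\cdot)}(\mathcal B)$. Proposition \ref{prop:MorreyProperties} shows that $Z$ is a Banach function lattice with the Fatou property and that $Y$ is a normed lattice. Saturation is not needed for \eqref{eq:prop:NecessityMax}, since that estimate only uses the Hölder-type bound for $Z'$.

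For the maximal operator, I would rerun the proof of \eqref{eq:prop:NecessityMax} with $\mathcal B$ equal to all balls and $\psi(B)=|B|^{1-\langle\alpha\rangle_B}$. That proof only uses the pointwise bound $\frac{1_B}{\psi(B)}\int_B|f|\le M^{\alpha(\cdot)}f$, so countability of $\mathcal B$ is irrelevant. This gives $C_M^{\mathcal B}\le\|M^{\alpha(\cdot)}\|_{Z\to Y}$.

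For the Riesz potential, fix $B_0$, $f\ge0$, $x\in B_0$ and $y\notin 2B_0$. Then $|x-y|\sim|y-c_{B_0}|$, and the balls $B(x,|x-y|)$ and $B(c_{B_0},|y-c_{B_0}|)$ are each contained in a comparable dilate of the other. The nested-comparable-sets estimates from the proof of Proposition \ref{prop:MorreyBallsDyadic}, applied with $s=1/(1-\alpha)$ and using $[1]_{A_{1/(1-\alpha)}}=[1]_{A_{1/\alpha}}$, then give
\[
|x-y|^{-n(1-\langle\alpha\rangle_{B(x,|x-y|)})}\gtrsim_n [1]_{A_{1/\alpha(\cdot)}(\mathcal B)}^{-2}K^{\alpha(\cdot)}_{B_0}(y).
\]
Hence $1_{B_0}I^{\alpha(\cdot)}f\gtrsim 1_{B_0}\int fK_{B_0}^{\alpha(\cdot)}$. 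Taking the $Y$-norm and the supremum over $\|f\|_Z\le1$, exactly as in \eqref{eq:prop:NecessityPotential1}, yields $C_I^{\mathcal B}\lesssim\|I^{\alpha(\cdot)}\|_{Z\to Y}$.

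\emph{Sufficiency.} By \eqref{eq:1} and Proposition \ref{prop:MorreyBallsDyadic} applied twice, it is enough to bound each $M^{\alpha(\cdot)}_{\mathcal D^j}:\mathcal M^{r(\cdot)}_{p(\cdot)}(\mathcal D^j)\to\mathcal M^{s(\cdot)}_{q(\cdot)}(\mathcal D^j)$. I would apply Theorem \ref{thm:dMaxMorrey}, which needs two inputs. First, $\|M^{\alpha(\cdot)}_{\mathcal D^j}\|_{L^{p(\cdot)}\to L^{q(\cdot)}}<\infty$, which follows from \eqref{eq:2}. Second, the dyadic testing constant must be controlled by $C^{\mathcal B}_M$. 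For a cube $R\in\mathcal D^j$, take a ball $B\supset R$ with $|B|\lesssim_n|R|$. Then:
\begin{itemize}
\item $\|1_R\|_{\mathcal M^s_q(\mathcal D^j)}\lesssim\|1_B\|_{\mathcal M^s_q(\mathcal B)}$, by Proposition \ref{prop:MorreyBallsDyadic} and monotonicity;
\item $\|1_R\|_{(\mathcal M^r_p(\mathcal D^j))'}\lesssim\|1_B\|_{(\mathcal M^r_p(\mathcal B))'}$, since the two Morrey norms are equivalent up to the $\max_j$, so the duals compare;
\item $|B|^{1-\langle\alpha\rangle_B}\sim|R|^{1-\langle\alpha\rangle_R}$, by \eqref{eq:MorreyNestedSmallLarge}–\eqref{eq:MorreyNestedLargeSmall}.
\end{itemize}
The duality step needs care. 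The comparison in Proposition \ref{prop:MorreyBallsDyadic} involves all lattices at once, so the dual norm over a single $\mathcal D^j$ is controlled only via $\|f\|_{\mathcal M(\mathcal B)}\lesssim\max_j\|f\|_{\mathcal M(\mathcal D^j)}$. To keep the argument clean, I would avoid the dual comparison entirely. Instead I would redo the local/global splitting of Theorem \ref{thm:dMaxMorrey} directly with balls $R\in\mathcal B$: the local part uses $M^{\alpha(\cdot)}$ on $L^{p(\cdot)}\to L^{q(\cdot)}$ applied to $f1_{2R}$, and the global part handles balls $Q$ not contained in $2R$ that meet $R$. Such $Q$ satisfy $R\subset 3Q$, so $\int_Q f\le\|1_{3Q}\|_{Z'}\|f\|_Z$, and $3Q$ is comparable to $Q$. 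This reproduces $C_M^{\mathcal B}$ up to Muckenhoupt constants.

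For the Riesz potential I would do the same directly on balls. For a ball $R$, split $f=f1_{2R}+f1_{\mathbb R^n\setminus2R}$. The local part is handled by the assumed $L^{p(\cdot)}\to L^{q(\cdot)}$ bound together with \eqref{eq:sectMorreyFracAssumption}, as in \eqref{eq: estimate for morrey norm local riesz potential}. A further step is needed there: $\|f1_{2R}\|_{L^{p(\cdot)}}\,|R|^{1/r_R-1/p_R}$ must be controlled by $\|f\|_Z$, using \eqref{eq:MorreyScaleComparable} for $R\subset2R$. For $x\in R$, the global part is pointwise $\lesssim[1]^2_{A_{1/\alpha}}\int fK^{\alpha(\cdot)}_R$, by the same kernel comparison as in the necessity step, and the $Z$–$Z'$ pairing then gives $C_I^{\mathcal B}$.

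I expect the main obstacle to be the kernel comparison: showing that $|x-y|^{-n(1-\langle\alpha\rangle_{B(x,|x-y|)})}$ and $K^{\alpha(\cdot)}_{B_0}(y)$ are comparable uniformly in $x\in B_0$, $y\notin2B_0$, with only $[1]_{A_{1/\alpha}}$ losses. I would argue via the common containing ball $B(c_{B_0},3|y-c_{B_0}|)$ and Lemma \ref{lem:IndicatorKopaliani}, exactly as in \eqref{eq:adhoc9999}.
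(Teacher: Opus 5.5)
Your final plan is essentially the paper's proof: necessity by testing with indicators and by the two-sided kernel comparison via Lemma \ref{lem:IndicatorKopaliani} for the exponent $1/(1-\alpha(\cdot))$, and sufficiency by a direct local/global splitting on balls rather than the dyadic transfer, which you rightly abandon. One small correction: if $x\in R\cap Q$ and $Q\not\subset 2R$, you only get $\mathrm{rad}(Q)>\mathrm{rad}(R)/2$, hence $R\subset 5Q$ and not $R\subset 3Q$; this is harmless, since $5Q$ is still comparable to $Q$, and the paper sidesteps it by splitting at $4B_0$.
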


\begin{proof}
We prove the maximal statement first, and then the Riesz statement.

\emph{Maximal necessity.} Suppose that \(M^{\alpha(\cdot)}\) is bounded between
the Morrey spaces. Fix a ball \(B\). For every \(f\),
\[
\frac{1_B}{|B|^{1-\langle\alpha\rangle_B}}\int_B |f|
\leq
M^{\alpha(\cdot)}f.
\]
Taking the target Morrey norm and the suprema over
\(\|f\|_{\mathcal M^{r(\cdot)}_{p(\cdot)}(\mathcal B)}\le1\) and
\(B\in\mathcal B\) gives
\eqref{eq:MorreyIffMaxTesting}.

\emph{Maximal sufficiency.} Conversely, assume that
\eqref{eq:MorreyIffMaxTesting} holds and fix a ball
$B_0$. For \(x\in B_0\), split the balls in the supremum defining
\(M^{\alpha(\cdot)}f(x)\) into those contained in \(4B_0\) and the remaining
balls. The local part is dominated by \(M^{\alpha(\cdot)}(f1_{4B_0})\).
The assumed \(L^{p(\cdot)}\)-to-\(L^{q(\cdot)}\) boundedness, the relation
\(\frac1{p(\cdot)}-\frac1{q(\cdot)}=\frac1{r(\cdot)}-\frac1{s(\cdot)}\), and
\eqref{eq:MorreyScaleComparable} for \(B_0\) and \(4B_0\) give
\[
|B_0|^{\frac1{s_{B_0}}-\frac1{q_{B_0}}}
\|1_{B_0}M^{\alpha(\cdot)}(f1_{4B_0})\|_{L^{q(\cdot)}}
\lesssim
\|f\|_{\mathcal M^{r(\cdot)}_{p(\cdot)}(\mathcal B)}.
\]
For the remaining part, let \(x\in B_0\). For each ball \(B\ni x\) not contained
in \(4B_0\), there is a ball \(\widetilde B\supset B\cup B_0\) with
\(|\widetilde B|\sim_n |B|\) such that, by Lemma
\ref{lem:IndicatorKopaliani} applied with exponent \(1/(1-\alpha(\cdot))\),
\[
\frac{1}{|B|^{1-\langle\alpha\rangle_B}}\int_B |f|
\lesssim
\frac{1}{|\widetilde B|^{1-\langle\alpha\rangle_{\widetilde B}}}
\int_{\widetilde B}|f|.
\]
By the testing condition,
\[
\frac{1}{|\widetilde B|^{1-\langle\alpha\rangle_{\widetilde B}}}
\int_{\widetilde B}|f|
\leq
C_M^{\mathcal B}
\frac{\|f\|_{\mathcal M^{r(\cdot)}_{p(\cdot)}(\mathcal B)}}
{\|1_{\widetilde B}\|_{\mathcal M^{s(\cdot)}_{q(\cdot)}(\mathcal B)}}.
\]
Since \(B_0\subset\widetilde B\), the lattice property gives
\(\|1_{B_0}\|_{\mathcal M^{s(\cdot)}_{q(\cdot)}(\mathcal B)}
\leq\|1_{\widetilde B}\|_{\mathcal M^{s(\cdot)}_{q(\cdot)}(\mathcal B)}\).
This bounds the global part by
\(
C_M^{\mathcal B}\|f\|_{\mathcal M^{r(\cdot)}_{p(\cdot)}(\mathcal B)}.
\)
Taking the supremum over \(B_0\) gives the desired Morrey bound for
\(M^{\alpha(\cdot)}\).

\emph{Riesz necessity.} Suppose that \(I^{\alpha(\cdot)}\) is bounded between
the Morrey spaces. Fix a ball \(B_0\). If \(x\in B_0\) and
\(y\notin 2B_0\), then the balls \(B(x,|x-y|)\) and
\(B(c_{B_0},|y-c_{B_0}|)\) have comparable radii and each is contained in a
fixed multiple of the other. Applying Lemma \ref{lem:IndicatorKopaliani}
with exponent \(1/(1-\alpha(\cdot))\) therefore gives
\[
\frac{1}{|x-y|^{n(1-\langle\alpha\rangle_{B(x,|x-y|)})}}
\gtrsim
K_{B_0}^{\alpha(\cdot)}(y),
\qquad x\in B_0,\ y\notin 2B_0.
\]
Thus, for \(f\geq0\),
\[
1_{B_0}(x)\int f(y)K_{B_0}^{\alpha(\cdot)}(y)\,\mathrm dy
\lesssim
1_{B_0}(x)I^{\alpha(\cdot)}f(x).
\]
Taking the target Morrey norm and the suprema over
\(\|f\|_{\mathcal M^{r(\cdot)}_{p(\cdot)}(\mathcal B)}\leq1\) and
\(B_0\in\mathcal B\) gives
\eqref{eq:MorreyIffRieszTesting}.

\emph{Riesz sufficiency.} Conversely, assume \eqref{eq:MorreyIffRieszTesting}.
Fix a ball \(B_0\) and write
\[
1_{B_0}I^{\alpha(\cdot)}f
=
1_{B_0}I^{\alpha(\cdot)}(f1_{2B_0})
+
1_{B_0}I^{\alpha(\cdot)}(f1_{\mathbb R^n\setminus 2B_0}).
\]
The first term is bounded as in the local maximal estimate, using the assumed
\(L^{p(\cdot)}\)-to-\(L^{q(\cdot)}\) boundedness of
\(I^{\alpha(\cdot)}\) and \eqref{eq:MorreyScaleComparable} for \(B_0\) and
\(2B_0\). The reverse kernel comparison gives
\[
1_{B_0}I^{\alpha(\cdot)}(f1_{\mathbb R^n\setminus 2B_0})
\lesssim
1_{B_0}\int |f(y)|K_{B_0}^{\alpha(\cdot)}(y)\,\mathrm dy.
\]
Using \eqref{eq:MorreyIffRieszTesting} and
\(|B_0|^{\frac1{s_{B_0}}-\frac1{q_{B_0}}}\|1_{B_0}\|_{L^{q(\cdot)}}
\leq \|1_{B_0}\|_{\mathcal M^{s(\cdot)}_{q(\cdot)}(\mathcal B)}\), we obtain
\[
|B_0|^{\frac1{s_{B_0}}-\frac1{q_{B_0}}}
\|1_{B_0}I^{\alpha(\cdot)}(f1_{\mathbb R^n\setminus 2B_0})\|_{L^{q(\cdot)}}
\lesssim
C_I^{\mathcal B}\|f\|_{\mathcal M^{r(\cdot)}_{p(\cdot)}(\mathcal B)}.
\]
Taking the supremum over \(B_0\) gives the desired Morrey bound for
\(I^{\alpha(\cdot)}\).
\end{proof}

\appendix
\section{Interpretation in the logarithmic H\"older continuous case}\label{sect:logHolder}
Much of the classical variable exponent theory is phrased in terms of the local $\mathcal{LH}_0$
and asymptotic $\mathcal{LH}_{\infty}$ logarithmic H\"older continuity classes; see, for instance,
\cite{DienHHR2011Book}*{Chapter 4}. In this section
$X\subset\mathbb R^n$ stands for an open set. We say that $p(\cdot)\in \mathcal{LH}_0(X)$ if there
exists $C_0>0$ so that
\[
|p(x)-p(y)|\leq\frac{C_0}{-\log |x-y|},
\qquad x,y\in X,\quad 0<|x-y|<\frac12.
\]
We say that $p(\cdot)\in \mathcal{LH}_{\infty}(X)$ if there are
$p_\infty\in\mathbb R$ and $C_\infty>0$ so that 
\[
|p(x)-p_\infty|\leq\frac{C_\infty}{\log(e+|x|)},
\qquad x\in X.
\]
We write $\mathcal{LH}(X):=\mathcal{LH}_0(X)\cap \mathcal{LH}_{\infty}(X)$ for the class of log-H\"older continuous exponents. 
In this section we give one interpretation of our results in the log-H\"older continuous case by using the following consequence of log-H\"older continuity.
\begin{lemma}[{\cite{DienHHR2011Book}*{Corollary 4.5.9}}]\label{lem:LogHolderStability}
Let $X \subset\mathbb{R}^n$ and $B\subset X$ be a ball.  
\begin{itemize}
\item If $|B|\leq1$ and $p(\cdot)\in \mathcal{LH}_0(X)$, then
\[
|B|^{1/p_B}\sim |B|^{1/p(x)},\qquad x\in B.
\]
\item If $|B|\geq1$ and $p(\cdot)\in \mathcal{LH}_{\infty}(X)$, then
\[
|B|^{1/p_B}\sim |B|^{1/p_\infty}.
\]
\end{itemize}
The implicit constants do not depend on $B$ or $x$, and balls $B$ can be replaced with cubes $Q.$
\end{lemma}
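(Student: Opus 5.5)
The plan is to reduce the statement to the classical log-H\"older comparison between harmonic means and pointwise exponents. I treat the two bullets separately and begin with the local one.

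\emph{Local part.} Take $|B|\le 1$ and $p(\cdot)\in\mathcal{LH}_0(X)$. I would show that $|1/p(y)-1/p(x)|\lesssim C_0/(-\log|B|)$ for all $x,y\in B$, up to the usual adjustment for small balls where $\operatorname{diam}B\sim_n|B|^{1/n}$. This follows from $|1/p(y)-1/p(x)|\le|p(x)-p(y)|$, which holds because $p\ge1$, combined with the $\mathcal{LH}_0$ bound. Averaging in $y$ then gives $|1/p_B-1/p(x)|\lesssim_n C_0/\log(1/|B|)$. Hence
\[
|B|^{1/p_B-1/p(x)}=\exp\bigl((1/p_B-1/p(x))\log|B|\bigr)\in[e^{-C},e^{C}],
\]
with $C$ depending only on $n$ and $C_0$. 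The balls whose diameter exceeds $1/2$ while $|B|\le 1$ need a separate word. For these, $\log(1/|B|)$ is bounded, so the exponent difference, which is at most $1$, times $|\log|B||$ is bounded. The comparison is therefore trivial in that range.

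\emph{Global part.} Take $|B|\ge1$ and $p(\cdot)\in\mathcal{LH}_\infty(X)$. I need to bound $|1/p_B-1/p_\infty|\log|B|$. I would average the pointwise bound
\[
|1/p(y)-1/p_\infty|\le \min\{1,\;C_\infty/\log(e+|y|)\}
\]
over $B$. This is the main obstacle, because $B$ need not be centred at the origin. For balls far from the origin, the bound $\log(e+|y|)\gtrsim\log(e+|B|^{1/n})$ holds on all of $B$, and this gives $|1/p_B-1/p_\infty|\lesssim_n C_\infty/\log|B|$ directly. For balls containing points near the origin, I would split $B$ into the part where $|y|\le|B|^{1/(2n)}$ and its complement. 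The first part has measure at most $c_n|B|^{1/2}$ and contributes $\lesssim|B|^{-1/2}$ to the average; since $|B|^{-1/2}\log|B|$ is bounded, this is harmless. On the complement, $\log(e+|y|)\gtrsim \tfrac1{2n}\log|B|$, which gives the same $C/\log|B|$ bound. Exponentiating as in the local case yields $|B|^{1/p_B}\sim|B|^{1/p_\infty}$.

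\emph{Cubes.} For cubes the same arguments apply verbatim, since only $\operatorname{diam}\sim_n|Q|^{1/n}$ and the measure estimate were used. Alternatively, one may simply cite \cite{DienHHR2011Book}*{Corollary 4.5.9}, of which this is a restatement.
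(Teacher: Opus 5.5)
Your proof is correct. The paper gives no argument for this lemma; it imports it as Corollary 4.5.9 of \cite{DienHHR2011Book}, where the log-H\"older classes are formulated for $1/p$ rather than for $p$.

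Your route is a direct, elementary computation. You reduce the two bullets to bounding $|1/p_B-1/p(x)|\log(1/|B|)$ and $|1/p_B-1/p_\infty|\log|B|$, respectively. You obtain these bounds by averaging pointwise estimates for $1/p$ over $B$, and then exponentiate. This gives a self-contained proof with explicit constants of the form $e^{C(n,C_0)}$ and $e^{C(n,C_\infty)}$. It works verbatim for balls or cubes inside an arbitrary $X\subset\mathbb R^n$, since only $\mathrm{diam}\,B\sim_n|B|^{1/n}$ and a volume estimate are used. You pass at once to $1/p$ through $|1/p(x)-1/p(y)|\le|p(x)-p(y)|$. This step also makes explicit the translation from the paper's $p$-based definitions of $\mathcal{LH}_0$ and $\mathcal{LH}_\infty$ to the weaker $1/p$-based hypotheses of the cited source, which the paper leaves implicit.

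Two minor remarks.

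\emph{The near/far case split is unnecessary.} Your global part distinguishes balls far from the origin from balls near it, but the second argument alone suffices. Splitting $B$ at $|y|=|B|^{1/(2n)}$ gives
$|1/p_B-1/p_\infty|\log|B|\le c_n|B|^{-1/2}\log|B|+2nC_\infty$
for every ball with $|B|\ge1$, where $c_n$ is the volume of the unit ball. The right-hand side is bounded uniformly.

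\emph{The pointwise bound needs $p_\infty\ge1$.} The estimate $|1/p(y)-1/p_\infty|\le\min\{1,C_\infty/\log(e+|y|)\}$ uses $p_\infty\ge1$. This is automatic when $X$ is unbounded, because then $p_\infty=\lim_k p(x_k)$ along a sequence with $|x_k|\to\infty$. For bounded $X$ it should simply be assumed, since there the value $p_\infty$ is not determined by $p$.
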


On small domains (e.g. not containing balls with $|B|\gtrsim 1$) Lemma \ref{lem:LogHolderStability} allows us to compare the following pointwise-fractionality operators
\begin{align}\label{eq:defn:pwOperators}
M^{\alpha(x)}_{X, \mathrm{loc}}f(x)
&:= \sup_{10B\subset X}\frac{1_B(x)}{|B|^{1-\alpha(x)}}\int_B |f(y)|\,\mathrm dy,\\
I^{\alpha(x)}_{X, \mathrm{loc}}f(x)
&:= \int_{X\cap \{10|x-y|\leq \dist(x,\partial X)\}}\frac{f(y)}{|x-y|^{n(1-\alpha(x))}}\,\mathrm dy,
\end{align}
with the mean-evaluation operators considered in previous sections. 
The constant $10$ localizing the above operators is related to Lemma \ref{lem:CoveringLattices} and passing between the dyadic and non-dyadic results. 
Such or similar operators with pointwise evaluation of fractionality have appeared in \cite{KokSam04}, for instance.
For domains containing large scales one should take the asymptotic $\mathcal{LH}_{\infty}$ log-H\"older behaviour into
account, while restricting to small scales/balls avoids this case distinction. 
As a consequence of the main results of this paper, we record the following example.
\begin{corollary} \label{cor:logholderbounds}
Let $p(\cdot),q(\cdot),1/\alpha(\cdot)\in \mathcal{LH}_0(\mathbb{R}^n)\cap \mathcal{P}(\mathbb{R}^n)$. Suppose that
\(
1<p_-\leq p(\cdot)\leq q(\cdot) \leq q_+<\infty
\)
everywhere. Fix a ball $B_0\subset\mathbb R^n$ with $|B_0|\leq 1.$ 
The following are equivalent:
\begin{enumerate}[(i)]
\item \(\alpha(\cdot)\geq \tfrac1{p(\cdot)}-\tfrac1{q(\cdot)}\) in $B_0$,
\item $M^{\alpha(x)}_{B_0,\mathrm{loc}}:L^{p(\cdot)}(B_0) \to L^{q(\cdot)}(B_0)$ is bounded,
\item $I^{\alpha(x)}_{B_0,\mathrm{loc}}:L^{p(\cdot)}(B_0) \to L^{q(\cdot)}(B_0)$ is bounded.
\end{enumerate}
\end{corollary}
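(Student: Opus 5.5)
The implications (iii)$\Rightarrow$(ii)$\Rightarrow$(i) go by pointwise comparison and testing; the substantive one, (i)$\Rightarrow$(iii), goes through Theorem \ref{thm:MainIntro4}. Throughout, Lemma \ref{lem:LogHolderStability} is used to pass between the pointwise operators $M^{\alpha(x)}_{B_0,\mathrm{loc}},I^{\alpha(x)}_{B_0,\mathrm{loc}}$ and the mean operators of \eqref{eq:defn:VarMax}, \eqref{eq:defn:VarRiesz}. Every ball that occurs satisfies $|B|\lesssim_n|B_0|\leq 1$, so $|B|^{\alpha(x)}\sim|B|^{\langle\alpha\rangle_B}$ for $x\in B$. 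The same applies to $1/p,1/q$, which are $\mathcal{LH}_0$ because $1<p_-\le p\le q\le q_+<\infty$.

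\emph{(iii)$\Rightarrow$(ii).} Take a ball $B=B(c,r)$ with $10B\subset B_0$ and $x\in B$. For $y\in B$ we have $|x-y|\le 2r$ and $10|x-y|\le 20r\le \mathrm{dist}(x,\partial B_0)+$ a harmless constant factor. After possibly shrinking the localization constant, which changes nothing essential, the kernel satisfies $|x-y|^{-n(1-\alpha(x))}\gtrsim |B|^{-(1-\alpha(x))}$ on $B$. Hence $M^{\alpha(x)}_{B_0,\mathrm{loc}}f\lesssim I^{\alpha(x)}_{B_0,\mathrm{loc}}|f|$ pointwise.

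\emph{(ii)$\Rightarrow$(i).} Via Lemma \ref{lem:LogHolderStability}, (ii) gives the boundedness of the mean operator $M^{\alpha(\cdot)}_{\mathcal B}$ over the basis $\mathcal B=\{B:10B\subset B_0\}$. This basis differentiates $\alpha,1/p,1/q$ (Lebesgue differentiation) and shrinks at every point of $B_0$. Corollary \ref{cor:AlphaRelation} then gives $\alpha\ge 1/p-1/q$ a.e. in $B_0$.

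\emph{(i)$\Rightarrow$(iii).} First, extend $p,q,\alpha$ from $B_0$ to exponents in $\mathcal{LH}(\mathbb R^n)$ with the same bounds, using the standard extension (e.g.\ \cite{DienHHR2011Book}, Chapter 4). Then $M$ is bounded on $L^{p(\cdot)}$ and $L^{q'(\cdot)}$, and $[1]_{A_{s(\cdot)}}<\infty$ for $s\in\{p,q,1/\alpha\}$. Second, I dominate $I^{\alpha(x)}_{B_0,\mathrm{loc}}|f|\lesssim I^{\alpha(\cdot)}(|f|1_{B_0})$, again by Lemma \ref{lem:LogHolderStability} applied to $B(x,|x-y|)$, whose measure is $\le1$. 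It therefore suffices to check the hypotheses of the first bullet of Theorem \ref{thm:MainIntro4}, restricted to balls and cubes of measure $\lesssim1$. This restriction is legitimate because $f$ is supported in $B_0$ and only $Q\cap B_0\neq\emptyset$ with $|Q|\lesssim1$ contribute.

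For the boundedness of $M^{\alpha(\cdot)}$ I use Theorem \ref{thm:MainIntro1}, which requires $[1]_{A^{\alpha(\cdot)}_{p(\cdot),q(\cdot)}}<\infty$ on small balls. By Lemma \ref{lem:IndicatorKopaliani} and log-H\"older stability,
\[
\frac{\|1_B\|_{q(\cdot)}\|1_B\|_{p'(\cdot)}}{|B|^{1-\langle\alpha\rangle_B}}\sim |B|^{\langle\alpha\rangle_B-(1/p_B-1/q_B)}\le 1 .
\]
The last inequality holds because (i) gives $\langle\alpha\rangle_B\ge 1/p_B-1/q_B$ and $|B|\le1$. This is exactly where (i) enters.

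\emph{Main obstacle: the packing condition $C^{\alpha(\cdot)}_{\downarrow}$.} I would prove it by summing by generations. For $Q\subset Q_0$ with $|Q|=2^{-kn}|Q_0|$, log-H\"older stability gives $|Q|^{\langle\alpha\rangle_Q}\sim|Q|^{\alpha(x)}$ and $|Q_0|^{\langle\alpha\rangle_{Q_0}}\sim|Q_0|^{\alpha(x)}$ for $x\in Q$. Hence
\[
\sum_{Q\in\mathcal D(Q_0)}|Q|^{1+\langle\alpha\rangle_Q}\lesssim \int_{Q_0}\sum_k 2^{-kn\alpha(x)}|Q_0|^{\alpha(x)}\,dx .
\]
This is summable provided $\alpha$ is bounded below by a positive constant on $B_0$. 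The resulting integral is then compared with $|Q_0|^{1+\langle\alpha\rangle_{Q_0}}$ once more by stability. I expect the need for $\alpha_->0$, which I would extract from $1/\alpha\in\mathcal{LH}_0$ being finite-valued on $\overline{B_0}$, to be the delicate point. Without it the local potential is genuinely unbounded, so this lower bound must be secured explicitly.
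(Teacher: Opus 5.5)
Your (ii)$\Rightarrow$(i) argument matches the paper, and so do your two key computations: $|Q|^{\langle\alpha\rangle_Q-(1/p_Q-1/q_Q)}\le 1$ from (i), and the generation-by-generation packing estimate using $\alpha_-(B_0)>0$. The problem is the framework you feed them into, which has a genuine gap.

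Theorems \ref{thm:MainIntro1} and \ref{thm:MainIntro4} are global. Their hypotheses involve $[1]_{A^{\alpha(\cdot)}_{p(\cdot),q(\cdot)}(\mathbb R^n)}$ over all balls, $C^{\alpha(\cdot)}_{\downarrow}$ over all lattices and all cubes, and boundedness of the global $M^{\alpha(\cdot)}$. By Corollary \ref{cor:AlphaRelation} and the lower bound in Theorem \ref{thm:MainIntro1}, that last hypothesis is only possible if your extended exponents satisfy two things:
\begin{itemize}
\item $\tilde\alpha\ge 1/\tilde p-1/\tilde q$ a.e.\ on all of $\mathbb R^n$;
\item $|B|^{\langle\tilde\alpha\rangle_B-(1/\tilde p_B-1/\tilde q_B)}$ stays bounded over large balls, roughly $\tilde\alpha_\infty\le 1/\tilde p_\infty-1/\tilde q_\infty$.
\end{itemize}
A standard extension ``with the same bounds'' guarantees neither.

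You try to sidestep this by restricting to cubes of measure $\lesssim 1$, but no localized version of these theorems is available. Your justification also fails. Once you dominate $I^{\alpha(x)}_{B_0,\mathrm{loc}}|f|$ by the global $I^{\alpha(\cdot)}(|f|1_{B_0})$ and pass through \eqref{eq:3} and full lattices, every dyadic cube containing $x$ contributes, at every scale.

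The missing idea is to localize dyadically and keep the constraint $10B(x,|x-y|)\subset B_0$.
\begin{itemize}
\item The covering cube $Q_{x,y}\subset 10B(x,|x-y|)$ then lies in $\mathcal D^j(B_0)=\{Q\in\mathcal D^j:Q\subset B_0\}$, so $I^{\alpha(x)}_{B_0,\mathrm{loc}}|f|\lesssim\sum_j I^{\alpha(\cdot)}_{\mathcal D^j(B_0)}|f|$.
\item Theorems \ref{thm:MainIntro2} and \ref{thm:RieszPot} hold for arbitrary dyadic collections on arbitrary $\sigma$-finite spaces, so you can apply them on $(B_0,\mathrm dx)$ with $\mathcal D^j(B_0)$.
\item There every cube lies in $B_0$ and has $|Q|\le 1$. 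Your two computations then go through verbatim, and no extension is needed.
\end{itemize}
This is the paper's route.

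Your step (iii)$\Rightarrow$(ii) is also wrong as written. If $B=B(c,r)$ satisfies $10B\subset B_0$ and $x\in B$, you only know $\mathrm{dist}(x,\partial B_0)\ge 9r$. Meanwhile $10|x-y|$ can approach $20r$ for $y\in B$. Take $10B$ internally tangent to $\partial B_0$, take $x$ near the point of $B$ closest to $\partial B_0$, and take $f$ supported near the opposite side of $B$. Then $I^{\alpha(x)}_{B_0,\mathrm{loc}}|f|(x)=0<M^{\alpha(x)}_{B_0,\mathrm{loc}}f(x)$. The localization constants are part of the fixed operators in (ii) and (iii), so you are not allowed to ``shrink'' them.

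The fix is the paper's. Test only with $\{B:30B\subset B_0\}$, where $\mathrm{dist}(x,\partial B_0)\ge 29r\ge 20r$. This family still differentiates and shrinks at every point, so (iii) gives (i) directly via Corollary \ref{cor:AlphaRelation}. Then (i)$\Rightarrow$(ii) follows from the localized maximal bound above.

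Finally, Corollary \ref{cor:AlphaRelation} only yields (i) almost everywhere. To get it at every point of $B_0$, as stated, you need the continuity of the log-H\"older representatives.
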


\begin{proof}
We verify that $(iii) \Rightarrow (i) \Rightarrow (ii) \Rightarrow (iii).$

We first show that either operator bound implies (i). Let
\[
\mathcal B_{10}:=\{B:10B\subset B_0\},\qquad
\mathcal B_{30}:=\{B:30B\subset B_0\}.
\]
If \(M^{\alpha(x)}_{B_0,\mathrm{loc}}\) is bounded, then Lemma
\ref{lem:LogHolderStability}, applied to $1/\alpha(\cdot)$, gives boundedness
of \(M^{\alpha(\cdot)}_{\mathcal B_{10}}\).
Suppose instead that \(I^{\alpha(x)}_{B_0,\mathrm{loc}}\) is bounded. Fix
\(B=B(c,r)\in\mathcal B_{30}\), \(x\in B\), and \(y\in B\). Since
\(30B\subset B_0\), we have \(\dist(x,\partial B_0)\geq 29r\), while
\(10|x-y|\leq 20r\). Hence \(y\) belongs to the integration region in
\(I^{\alpha(x)}_{B_0,\mathrm{loc}}f(x)\). Therefore, for \(f\geq0\),
\[
1_B(x)I^{\alpha(x)}_{B_0,\mathrm{loc}}f(x)
\geq
1_B(x)\int_B \frac{f(y)}{|x-y|^{n(1-\alpha(x))}}\,\mathrm dy
\gtrsim
\frac{1_B(x)}{|B|^{1-\alpha(x)}}\int_B f(y)\,\mathrm dy.
\]
Using Lemma \ref{lem:LogHolderStability} once more, we obtain
\(
M^{\alpha(\cdot)}_{\mathcal B_{30}}f
\lesssim
I^{\alpha(x)}_{B_0,\mathrm{loc}}f.
\)
Applying this to \(|f|\), boundedness of \(I^{\alpha(x)}_{B_0,\mathrm{loc}}\) gives boundedness of \(M^{\alpha(\cdot)}_{\mathcal B_{30}}\).
The collections \(\mathcal B_{10}\) and \(\mathcal B_{30}\)
differentiate every point of \(B_0\).
Corollary \ref{cor:AlphaRelation} on \(B_0\) gives
$\alpha(x)\geq \tfrac1{p(x)}-\tfrac1{q(x)}$
for almost every $x\in B_0$.
For the chosen log-H\"older representatives, the functions $p(\cdot),q(\cdot)$ and $1/\alpha(\cdot)$ are continuous, and hence, using $1<p_-\leq p(\cdot)\leq q(\cdot)\leq q_+<\infty$ and $1/\alpha(\cdot)\geq 1$, so are $\alpha(\cdot),1/p(\cdot)$ and $1/q(\cdot)$. If the displayed inequality failed at some point of \(B_0\), it would fail on a small ball of positive measure by continuity, contradicting the almost everywhere conclusion. Thus (i) holds pointwise for these representatives.

Then we verify that $(i) \Rightarrow (ii).$ Let $\mathcal D^j$, $j=1,\dots,3^n$, be the dyadic lattices of Lemma \ref{lem:CoveringLattices}, and write
\(
\mathcal D^j(B_0):=\{Q\in\mathcal D^j:Q\subset B_0\}.
\)
The dyadic covering argument used for \eqref{eq:1}, together with Lemma \ref{lem:LogHolderStability} applied to $1/\alpha(\cdot)$, gives
\[
\|M^{\alpha(x)}_{B_0,\mathrm{loc}}\|_{L^{p(\cdot)}\to L^{q(\cdot)}} \sim \|M^{\alpha(\cdot)}_{B_0,\mathrm{loc}}\|_{L^{p(\cdot)}\to L^{q(\cdot)}} \lesssim \sum_{j=1}^{3^n}\|M^{\alpha(\cdot)}_{\mathcal{D}^j(B_0)}\|_{L^{p(\cdot)}\to L^{q(\cdot)}}.
\]
For each fixed $j$, Theorem \ref{thm:MainIntro2}, applied on $B_0$, gives
\[
\|M^{\alpha(\cdot)}_{\mathcal{D}^j(B_0)}\|_{L^{p(\cdot)}\to L^{q(\cdot)}} \lesssim  [1]_{A_{p(\cdot),q(\cdot)}^{\alpha(\cdot)}(\mathcal{D}^j(B_0))} \left[ [1]_{A_{1/\alpha(\cdot)}(\mathcal{D}^j(B_0))}    \|M_{\mathcal{D}^j(B_0)}\|_{ L^{q'(\cdot)}} \|M_{\mathcal{D}^j(B_0)}\|_{ L^{p(\cdot)}}\right].
\]
The local log-H\"older assumptions and $(q')_-,p_->1$ give the boundedness of the dyadic maximal operators in the bracket. Lemma \ref{lem:LogHolderStability} and Lemma \ref{lem:IndicatorKopaliani}, applied with $s(\cdot)=1/\alpha(\cdot)$, give the finiteness of the local \(A_{1/\alpha(\cdot)}\) characteristic. The same two lemmas also give
\[
[1]_{A_{p(\cdot),q(\cdot)}^{\alpha(\cdot)}(\mathcal{D}^j(B_0))}
\lesssim
\sup_{Q\in \mathcal{D}^j(B_0)}	|Q|^{\langle \alpha-\frac1p+\frac1q\rangle_Q}
\leq 1.
\]
The last inequality follows from $|Q|\leq |B_0|\leq 1$ and the assumed almost everywhere inequality. Hence \(M^{\alpha(x)}_{B_0,\mathrm{loc}}\) is bounded.

Then we verify that $(ii) \Rightarrow (iii).$ Since $(ii)$ implies (i) by the first part of the proof, the dyadic maximal bounds above are available. We first reduce the local potential to the restricted dyadic potentials. By Lemma \ref{lem:LogHolderStability},
\[
I^{\alpha(x)}_{B_0,\mathrm{loc}}|f|(x)
\lesssim
\int_{\{y:\,10B(x,|x-y|)\subset B_0\}}
\frac{|f(y)|\ud y}{|x-y|^{n(1-\langle\alpha\rangle_{B(x,|x-y|)})}}.
\]
For \(y\ne x\) in this integration region, set \(B_{x,y}:=B(x,|x-y|)\) and choose, by Lemma \ref{lem:CoveringLattices}, a cube \(Q_{x,y}\in\mathcal D^{j(x,y)}\) such that
\[
B_{x,y}\subset Q_{x,y}\subset 10B_{x,y}.
\]
Since \(10B_{x,y}\subset B_0\), we have \(Q_{x,y}\in\mathcal D^{j(x,y)}(B_0)\). Lemma \ref{lem:IndicatorKopaliani}, applied with \(s(\cdot)=1/\alpha(\cdot)\), gives
\[
\frac{1}{|x-y|^{n(1-\langle \alpha\rangle_{B_{x,y}})}}
\lesssim_n
[1]_{A_{\frac{1}{\alpha(\cdot)}}(Q_{x,y})}
\frac{1_{Q_{x,y}}(x)1_{Q_{x,y}}(y)}{|Q_{x,y}|^{1-\langle \alpha\rangle_{Q_{x,y}}}}.
\]
Thus
\[
I^{\alpha(x)}_{B_0,\mathrm{loc}}|f|
\lesssim
[1]_{A_{\frac{1}{\alpha(\cdot)}}(\cup_{j=1}^{3^n}\mathcal D^j(B_0))}
\sum_{j=1}^{3^n}I^{\alpha(\cdot)}_{\mathcal D^j(B_0)}|f|.
\]
The local \(A_{1/\alpha(\cdot)}\) characteristic is finite by Lemma \ref{lem:LogHolderStability} and Lemma \ref{lem:IndicatorKopaliani}. Hence it remains to check that each dyadic Riesz potential \(I^{\alpha(\cdot)}_{\mathcal{D}^j(B_0)}\) is bounded. We use the first bullet of Theorem \ref{thm:RieszPot} on the restricted space $B_0$. The dyadic maximal bound was proved above, the local log-H\"older assumptions give the boundedness of \(M_{\mathcal D^j(B_0)}\) on \(L^{q'(\cdot)}(B_0)\), the restricted dyadic lattice has the \(A_1\) self-improvement property, and the local \(A_{1/\alpha(\cdot)}\) condition is finite. Thus the only remaining hypothesis to verify is that
\begin{equation}\label{eq:LogHolderPacking}
C^{\alpha(\cdot)}_{\downarrow}(\mathcal{D}^j(B_0)) := \sup_{Q_0\in \mathcal D^j(B_0)}
\frac{\sum_{Q\in\mathcal D^j(Q_0)} |Q|^{1+\langle\alpha\rangle_Q}}
{|Q_0|^{1+\langle\alpha\rangle_{Q_0}}}
<\infty.
\end{equation}
Since $1/\alpha(\cdot)\in\mathcal{LH}_0(\mathbb R^n)$, \cite{CruzFio2013Book}*{Proposition 2.3} gives $(1/\alpha)_+(B_0)<\infty$, and hence $\alpha_-(B_0)>0$.
For $x\in Q_0$, let $P_k(x)$ be the dyadic descendant of $Q_0$ of generation $k$ containing $x$. By Lemma \ref{lem:LogHolderStability},
\[
\sum_{Q\in\mathcal D^j(Q_0)}1_Q(x)|Q|^{\langle\alpha\rangle_Q}
=
\sum_{k=0}^\infty |P_k(x)|^{\langle\alpha\rangle_{P_k(x)}}
\lesssim
|Q_0|^{\alpha(x)}\sum_{k=0}^\infty 2^{-kn\alpha(x)}
\lesssim_{\alpha_-(B_0),n}
|Q_0|^{\langle\alpha\rangle_{Q_0}}.
\]
Integrating over $Q_0$ proves \eqref{eq:LogHolderPacking}. Theorem \ref{thm:RieszPot} therefore gives the boundedness of \(I^{\alpha(\cdot)}_{\mathcal D^j(B_0)}\) for each \(j\), and the dyadic reduction above gives the boundedness of \(I^{\alpha(x)}_{B_0,\mathrm{loc}}\). This concludes the proof of $(ii) \Rightarrow (iii).$

\end{proof}
\subsection{Other definitions of variable exponent Morrey spaces}
We then compare Definition \ref{defn:MorreyNorm} with the variable exponent Morrey spaces introduced in
\cite{AlmHasSam08}*{Section 3} by Almeida--Hasanov--Samko.
\begin{definition}[{\cite{AlmHasSam08}*{Section 3}}]
Let $X\subset\mathbb R^n$ be bounded, let
$p(\cdot)\in\mathcal P(X)$, and let
$\lambda:X\to[0,n].$ Then $L^{p(\cdot),\lambda(\cdot)}(X)$ consists of those functions $f$ for which 
\[
\|f\|_{L^{p(\cdot),\lambda(\cdot)}(X)}
:=
\sup_{\substack{x\in X\\0<\rho\leq \operatorname{diam}(X)}}
\left\|
\rho^{-\lambda(x)/p(\cdot)}
f1_{B(x,\rho)\cap X}
\right\|_{L^{p(\cdot)}(X)} < \infty.
\]
\end{definition}
This is one of the two characterisations of the variable exponent Morrey norm given in \cite{AlmHasSam08}*{Section 3}.
The Almeida--Hasanov--Samko parameter $\lambda$ is centered: the
factor in the norm uses the value $\lambda(x)$ at the center of the ball,
rather than a quantity averaged over the testing set.
 In our Definition \ref{defn:MorreyNorm} the
scaling is averaged through the harmonic mean. In the log-H\"older setting this distinction is harmless up to
constants, as recorded in the following proposition.

\begin{proposition}\label{prop:MorreyAHSNormEquivalence}
Let $X\subset\mathbb R^n$ be a bounded domain such that
$|B(x,\rho)\cap X|\sim \rho^n$ uniformly for
$x\in X$ and $0<\rho\leq\operatorname{diam}(X)$. Suppose that
$p(\cdot),r(\cdot)\in \mathcal{LH}_0(X)$ satisfy $1 < p_-\leq p(\cdot)\leq r(\cdot)\leq r_+ < \infty$ almost everywhere and let $\lambda(x)$ be defined through the relation $\lambda(x)/n := 1-p(x)/r(x).$
Then, there holds that 
\begin{align*}
&\|f\|_{L^{p(\cdot),\lambda(\cdot)}(X)} \sim \|f\|_{\mathcal M^{r(\cdot)}_{p(\cdot)}(X)}
\\ 
&\qquad :=
\sup_{\substack{x\in X\\0<\rho\leq \operatorname{diam}(X)}}
|B(x,\rho)\cap X|^{\frac1{r_{B(x,\rho)\cap X}}
	-\frac1{p_{B(x,\rho)\cap X}}}
\|f1_{B(x,\rho)\cap X}\|_{L^{p(\cdot)}(X)}.
\end{align*}
\end{proposition}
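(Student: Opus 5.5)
The plan is to compare the two scaling factors ball by ball. Fix $x\in X$ and $0<\rho\le\operatorname{diam}(X)$, and write $B:=B(x,\rho)\cap X$. Since $X$ is bounded, $|B|\lesssim 1$. The aim is to show that, uniformly in $B$,
\[
\Bigl\|\rho^{-\lambda(x)/p(\cdot)}f1_B\Bigr\|_{L^{p(\cdot)}(X)}\sim |B|^{\frac1{r_B}-\frac1{p_B}}\|f1_B\|_{L^{p(\cdot)}(X)}.
\]
Taking suprema over $x$ and $\rho$ then gives the claimed equivalence. Rescaling so that $\operatorname{diam}(X)\lesssim 1$ costs only a constant. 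In that regime only the local part of Lemma \ref{lem:LogHolderStability} is needed. That lemma is stated for balls; the regularity assumption $|B(x,\rho)\cap X|\sim\rho^n$ lets us apply the same argument to the sets $B$, so I take its conclusions for $B$ as given.

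\emph{Step 1: freeze the exponent inside the norm.} For $y\in B$ we have $|x-y|<\rho$. Log-H\"older continuity of $p(\cdot)$ together with $p_->1$ gives
\[
\Bigl|\tfrac{1}{p(y)}-\tfrac1{p(x)}\Bigr|\lesssim \frac{1}{\log(e+1/\rho)}.
\]
Since $0\le\lambda\le n$, the factor satisfies $\rho^{-\lambda(x)/p(y)}\sim\rho^{-\lambda(x)/p(x)}$ uniformly for $y\in B$. By the lattice property and homogeneity of the norm,
\[
\Bigl\|\rho^{-\lambda(x)/p(\cdot)}f1_B\Bigr\|_{L^{p(\cdot)}}\sim \rho^{-\lambda(x)/p(x)}\|f1_B\|_{L^{p(\cdot)}}.
\]

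\emph{Step 2: identify the scaling constant.} By definition of $\lambda$, $\lambda(x)/p(x)=n\bigl(\tfrac1{p(x)}-\tfrac1{r(x)}\bigr)$, so $\rho^{-\lambda(x)/p(x)}=(\rho^n)^{\frac1{r(x)}-\frac1{p(x)}}$. Using $|B|\sim\rho^n$ with exponent in $[-1,0]$ gives $(\rho^n)^{\frac1{r(x)}-\frac1{p(x)}}\sim |B|^{\frac1{r(x)}-\frac1{p(x)}}$. Lemma \ref{lem:LogHolderStability}, applied separately to $p(\cdot)$ and $r(\cdot)$, replaces the point values by harmonic means: $|B|^{1/p(x)}\sim|B|^{1/p_B}$ and $|B|^{1/r(x)}\sim|B|^{1/r_B}$. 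Combining this with Step 1 yields the display above.

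\emph{Main obstacle.} The hard step is justifying Lemma \ref{lem:LogHolderStability} for $B=B(x,\rho)\cap X$ rather than for a genuine ball, and applying it with the center value $p(x)$. I would redo its short proof directly. The harmonic mean $1/p_B$ is an average of $1/p(y)$ over $y\in B\subset B(x,\rho)$, and each such value is within $C/\log(e+1/\rho)$ of $1/p(x)$. Hence
\[
|B|^{|1/p_B-1/p(x)|}\le (C\rho^{n})^{-C/\log(e+1/\rho)}\lesssim 1,
\]
and the same bound holds for $r(\cdot)$. Measure regularity enters only through $|B|\sim\rho^n$, and boundedness of $X$ excludes the $\mathcal{LH}_\infty$ regime.
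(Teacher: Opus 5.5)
Your proposal is correct and follows essentially the same route as the paper. Like the paper, you freeze $\rho^{-\lambda(x)/p(y)}\sim\rho^{-\lambda(x)/p(x)}$ on $B(x,\rho)\cap X$ using $\mathcal{LH}_0$ and $0\le\lambda\le n$, identify $\rho^{-\lambda(x)/p(x)}=\rho^{n(1/r(x)-1/p(x))}\sim|\widetilde B|^{1/r_{\widetilde B}-1/p_{\widetilde B}}$ via $|\widetilde B|\sim\rho^n$, and conclude by the lattice property and homogeneity of the norm. Your averaging argument for the truncated sets spells out what the paper leaves implicit; note, though, that the nontrivial direction there is bounding $|B|^{-|1/p_B-1/p(x)|}$, which is where $|B|\gtrsim\rho^n$ enters, and the rescaling step is unnecessary because for $\rho\gtrsim1$ all factors are trivially comparable on a bounded domain.
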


\begin{proof}
Write $\widetilde B=B(x,\rho)\cap X$. By 
$p(\cdot), r(\cdot)\in \mathcal{LH}_{0}(X)$, together with
$|\widetilde B|\sim\rho^n$, we have 
\[
|\widetilde B|^{\frac1{r_{\widetilde B}}-\frac1{p_{\widetilde B}}}
\sim
\rho^{n(\frac1{r(x)}-\frac1{p(x)})}
=
\rho^{-\lambda(x)/p(x)},
\]
uniformly in $x,\rho.$
Since \(0\leq\lambda(x)\leq n\), the same log-H\"older estimate gives
\[
\rho^{-\lambda(x)/p(y)}
\sim
\rho^{-\lambda(x)/p(x)},
\qquad y\in\widetilde B.
\]
Hence, by the lattice property and homogeneity of the norm,
\[
\left\|
\rho^{-\lambda(x)/p(\cdot)}f1_{\widetilde B}
\right\|_{L^{p(\cdot)}(X)}
\sim
\rho^{-\lambda(x)/p(x)}
\|f1_{\widetilde B}\|_{L^{p(\cdot)}(X)}
\sim
|\widetilde B|^{\frac1{r_{\widetilde B}}-\frac1{p_{\widetilde B}}}
\|f1_{\widetilde B}\|_{L^{p(\cdot)}(X)}.
\]
Taking the supremum over $x\in X$ and
$0<\rho\leq\operatorname{diam}(X)$ gives the claimed norm equivalence.
\end{proof}



	\bibliographystyle{plain}
	\bibliography{references}
\end{document}